  \DeclareMathOperator{\csch}{csch}
\numberwithin{equation}{section}
\newtheorem{definition}{Definition}[section]
\newtheorem{theorem}{Theorem}[section]
\newtheorem{corollary}[theorem]{Corollary}
\newtheorem{lemma}[theorem]{Lemma}
\newtheorem{proposition}[theorem]{Proposition}
\newtheorem{remark}[theorem]{Remark}
\newcommand{\bke}[1]{\left ( #1 \right )}
\newcommand{\bkt}[1]{\left [ #1 \right ]}
\newcommand{\bket}[1]{\left \{ #1 \right \}}
\newcommand{\norm}[1]{\left \| #1 \right \|}
\newcommand{\bka}[1]{{\langle #1 \rangle}}
\newcommand{\abs}[1]{\left | #1 \right |}
\newcommand\al{\alpha}
\newcommand\be{\beta}
\newcommand\ga{\gamma}
\newcommand\de{\delta}
\newcommand\ve{\varepsilon}
\newcommand\ze{\zeta}
\renewcommand\th{\theta}
\newcommand\ka{\kappa}
\newcommand\la{\lambda}
\newcommand\si{\sigma}
\newcommand\om{\omega}
\newcommand\Ga{\Gamma}
\newcommand\De{\Delta}
\newcommand\Th{\Theta}
\newcommand\La{\Lambda}
\newcommand\Om{\Omega}
\newcommand\Rg{\mathscr{R}_g}
\newcommand{\R}{\mathbb{R}}
\newcommand{\CC}{\mathbb{C}}
\renewcommand{\Re} {\mathop{\mathrm{Re}}\nolimits}
\renewcommand{\div}{\mathop{\rm div}\nolimits}
\newcommand{\dist} {\mathop{\mathrm{dist}}\nolimits}
\newcommand{\pd}{\partial}
\newcommand{\nb}{\nabla}
\newcommand{\wt}[1]{\widetilde {#1}}
\renewcommand{\bar}[1]{\overline{#1}}
\newcommand{\lec}{{\ \lesssim \ }}
\newcommand{\EQ}[1]{\begin{equation}\begin{split} #1 \end{split}\end{equation}}
\newcommand{\EQN}[1]{\begin{equation*}\begin{split} #1 \end{split}\end{equation*}}
\newcommand{\EN}[1]{\begin{enumerate} #1 \end{enumerate}}
\DeclarePairedDelimiter{\oldnormaux}{\bracevert}{\bracevert}
\NewDocumentCommand{\oldnorm}{som}{
  \IfBooleanTF{#1}
    {\oldnormaux*{#3}}
    {\IfNoValueTF{#2}
       {\oldnormaux*{\vphantom{dq}#3}}
       {\oldnormaux[#2]{#3}}
    }
}
\begin{document}
\title[Exponential stability of the spherical bubble]{Free boundary problem for a gas bubble in a liquid, and exponential stability of the manifold of spherically symmetric equilibria}

\author[C.-C. Lai]{Chen-Chih Lai}
\address{\noindent
Department of Mathematics,
Columbia University , New York, NY, 10027, USA}
\email{cl4205@columbia.edu}

\author[M. I. Weinstein]{Michael I. Weinstein}
\address{\noindent
Department of Applied Physics and Applied Mathematics and Department of Mathematics,
Columbia University , New York, NY, 10027, USA}
\email{miw2103@columbia.edu}

\begin{abstract}
We consider the dynamics of a gas bubble immersed in an incompressible fluid of fixed temperature, and focus on the relaxation of an expanding and contracting spherically symmetric bubble due to thermal effects. 
We study two models, both systems of PDEs
 with an evolving free boundary: the full mathematical model as well as an approximate model, arising for example  
 in the study of sonoluminescence.
For fixed physical parameters (surface tension of the gas--liquid interface, liquid viscosity, thermal conductivity of the gas, 
  etc.), both models share a family of spherically symmetric equilibria, smoothly parametrized by the mass of the gas bubble.
Our main result concerns the approximate model. We prove the nonlinear asymptotic stability of the manifold of equilibria with respect to small spherically symmetric perturbations. The rate of convergence is exponential in time. To prove this result we first prove a weak form of nonlinear asymptotic stability --with no explicit rate of time-decay-- using
the energy dissipation law, and then, via a center manifold analysis,  bootstrap the weak time-decay to exponential time-decay.

We also study the uniqueness of the family of spherically symmetric equilibria within each model.
The family of spherically symmetric equilibria captures all regular spherically symmetric equilibria of the approximate system. 
However within the full model, this family is embedded in a larger family of spherically symmetric solutions.
For the approximate system, we prove that all equilibrium bubbles are spherically symmetric, by an application of  
Alexandrov's theorem on closed surfaces of constant mean curvature.

\end{abstract}
\maketitle

\tableofcontents

\newpage

\section{Introduction}

This paper considers a free boundary problem  for the dynamics of a gas bubble immersed in a liquid. 
The bubble occupies a bounded and simply connected subset of $\mathbb{R}^3$, denoted $\Om(t)$.
The gas within the bubble is a compressible fluid characterized by its density, velocity, pressure and temperature, as well as constitutive relations relating these variables and the specific entropy. 
The surrounding liquid is assumed to be incompressible and is described by its velocity, pressure and temperature. 
The gas inside the bubble and liquid outside the bubble are coupled at the boundary by kinematic and stress-balance equations.  Section \ref{sec:formulation} contains the full mathematical formulation of the liquid / gas model.
We are interested in the long time evolution of the coupled bubble / liquid system for initial conditions which are near a spherically symmetric equilibrium. 

 Energy dissipation plays an important role in the  bubble / fluid dynamics.
Generally, there are three mechanisms for energy dissipation of bubbles \cite[p. 175]{Leighton-book2012}: 
\emph{radiation damping} (of sound waves toward infinity for the compressible fluid case), \emph{thermal damping}  (transfer of energy from the gas into the fluid via thermal conduction) and \emph{viscous damping}.
We consider an approximation to the full liquid / gas dynamics
 in which \underline{thermal damping} is the dominant dissipation mechanism; viscous damping is comparatively negligible, and there is no radiation damping due to sound wave emission because the liquid surrounding the gas bubble is assumed incompressible. 

The model we study is an asymptotic model introduced by Prosperetti \cite{Prosperetti-JFM1991}. 
In \cite{Prosperetti-JFM1991} the linearized problem was studied by means of Laplace transform, and under various simplifying approximations, linear  asymptotic stability of spherical equilibria is argued.
When the liquid is inviscid on the liquid--gas interface, the Prosperetti model coincides with the approximate model derived by Biro and Vel\'azquez in \cite{bv-SIMA2000} based on the parameter regimes of sonoluminescence experiments \cite{barber1991observation, barber1992light, barber1994sensitivity}.
We present the asymptotic model of \cite{Prosperetti-JFM1991,bv-SIMA2000} in Section \ref{sec-asymp-model}.
In this model, the gas pressure,  gas density and gas temperature all vary and are related via the ideal gas equation of state. 
Solutions which are spherically symmetric are determined by a reduced free boundary problem \eqref{eq-bv-3.10prime}--\eqref{eq-bv-3.16prime}: a quasilinear parabolic PDE (nonlinear diffusion) for the density $\rho_g(r,t)$ in the gas bubble region, $0\le r\le R(t)$,  coupled  to a second-order nonlinear ODE  for the  bubble radius, $R(t)$; see Section \ref{reduce}. Local-in-time well-posedness in H\"older spaces was proved for the initial value problem  in 
 \cite{bv-SIMA2000}.
 
\subsection{Main results}

\subsubsection{Exponential stability of the spherical equilibrium bubble} 
The system \eqref{eq-bv-3.10prime}--\eqref{eq-bv-3.16prime}, in which spherical symmetry is imposed, has an equilibrium solution for any prescribed gas bubble mass.  In \cite{bv-SIMA2000} these spherically symmetric equilibria were proved to be {\it Lyapunov stable}. That is, a small spherically symmetric perturbation of the spherical equilibrium bubble of the same mass will evolve, under the dynamics \eqref{eq-bv-3.10prime}--\eqref{eq-bv-3.16prime}, as a spherically symmetric solution which remains near the equilibrium bubble for all $t>0$.

The collection of all such spherical equilibrium bubbles forms a smooth manifold of spherically symmetric equilibria parameterized by the bubble mass; see Section \ref{sec-equilibria}.  Our main result, Theorem \ref{thm-nonlinear-exp-decay},  is the nonlinear asymptotic stability, with an exponential rate of time-decay, of the manifold of spherically symmetric equilibria with respect to small spherically symmetric perturbations: 

\begin{quote}
{\it
For sufficiently small spherically symmetric initial data perturbations of any given spherical equilibrium bubble, the evolving bubble shape and surrounding liquid relax, as time advances, toward a spherical equilibrium bubble exponentially fast. 
}
\end{quote}
\noindent
The equilibrium bubble evolving from the perturbed initial data are typically different from the given equilibrium bubble unless the perturbed initial data has the same mass as the given equilibrium bubble.
More precisely, the equilibrium bubble radius and uniform gas density, which emerge as $t\to\infty$, are determined by the bubble mass of the initial data.
These results are stated in detail in Theorem \ref{thm-nonlinear-exp-decay}.
The proof of Theorem \ref{thm-nonlinear-exp-decay} consists of the following two major steps.

\medskip
\noindent{\bf Step 1.}
Building on the well-posedness and Lyapunov stability work of   Biro--Vel\'azquez \cite{bv-SIMA2000}, we first prove a weak form of asymptotic stability --with no rate of time-decay-- using the energy dissipation law.
 Detailed statements are presented in Proposition \ref{thm-asystab-1} (asymptotic stability of a fixed equilibrium relative to small mass preserving perturbations) and Theorem \ref{thm-asystab} (asymptotic stability of the manifold of equilibria relative to arbitrary small perturbations). Theorem \ref{thm-asystab} is a consequence of Proposition \ref{thm-asystab-1} and the continuity of functionals.

\medskip
\noindent{\bf Step 2.} 
We bootstrap the weak asymptotic stability of Step 1 to obtain quantitative exponential asymptotic stability,
 Theorem \ref{thm-nonlinear-exp-decay}.
In particular, we show that the manifold of spherically symmetric equilibria is an attracting center manifold in Section \ref{sec-main-proof}.
This requires analysis of  spectrum of the linearized operator (Proposition \ref{prop-spectrum}), 
a proof of an exponential time-decay estimate on a codimension one subspace associated with the manifold of equilibria (Proposition \ref{prop-stab-center-mfd}), and estimates of nonlinear terms (Proposition \ref{prop-nonlinear-est}).
To implement this program we must extend the standard  center manifold analysis  to a class of fully nonlinear autonomous dynamical system equipped with {\it a priori} estimates coming from Step 1; see Appendix \ref{sec-center-manifold}.

\subsubsection{Symmetry of all equilibrium bubbles of the asymptotic model.}
We also study the general version of \eqref{eq-bv-3.10prime}--\eqref{eq-bv-3.16prime}, not constrained by the assumption of spherical symmetry, {\it i.e.} the asymptotic model \eqref{eq1.1simplified}--\eqref{eq1.3simplified}.
We show that all equilibrium bubbles of \eqref{eq1.1simplified}--\eqref{eq1.3simplified} must be spherically symmetric.
The detailed result is presented in Part (1) of Proposition \ref{prop-equilib}.

\subsection{General context of our work and  relation to other physical models}

The dynamics of gas bubbles immersed in a liquid play an important role in fundamental and applied physics and in engineering applications.
Examples  include 
underwater explosion bubbles \cite{KK-JAP1956}, 
bubble jetting \cite{KS-JAP1944, PC-JFM1971},
seismic wave-producing bubbles in magma \cite{RG-JGRSE1999}, 
bubbles at the ocean surface \cite{Longuet-JFM1989},
sonochemistry \cite{Suslick-Science1990}, 
sonoluminescence \cite{BLD-PRL1995, PR-PRL1998}.
Engineering and industrial examples include
microfluidics \cite{XA-PF2007}, 
ultrasonic cavitation cleaning \cite{OAIDVDL-BJ2006, SHLC-JAP2004},
and
applications of ultrasound cavitation bubbles such as
medical imaging \cite{FPB-ARBE2007}, 
shock wave lithotripsy (ESWL) \cite{KFTKSCSZ-JFM2007, CIS-JFM2008, IFS-JFM2008, LTBW-PRSL2013, LFCMRHDW-Ultra2008},
tissue ablation \cite{RHIWFC-TJU2006, CR-ARFM2008, CLWB-JFM2013},
oncology and cardiology \cite{LK-UQ2006}.
For a discussion of these and other applications of bubble dynamics, see the excellent review articles \cite{Leighton-IJMPB2004, Prosperetti-PF2004, Liu-thesis2018} and the book \cite{Leighton-book2012}, and references cited therein.

The study of bubble dynamics was initiated in 1917 by Lord Rayleigh \cite{Rayleigh-London1917} during his work with Royal Navy to investigate cavitation damage on ship propellers.
He derived an equation for the radial oscillations of a spherically symmetric gas bubble in an incompressible, inviscid liquid with surface tension and examined the pressure prediction during the collapse of a spherical bubble.
Over several decades his work was refined and developed by numerous researchers.
The Rayleigh-Plesset equation \cite{Plesset-JAM1949} is a second-order nonlinear ODE for the bubble radius. 
J.B. Keller and collaborators \cite{KK-JAP1956,KE-1972,KM-JASA1980}
  incorporated the effect of liquid compressibility on the bubble dynamics and incorporated sound radiation from the oscillating bubble.
These models have been  extensively used in modeling and studied by asymptotic analytical and numerical and methods; see, for example, \cite{PP-ANFM1977, TS-BJSME1977, Vokurka-AAUA1986, FL-ARFM1997, Prosperetti-PF2004, ZL-JH2012, VanGorder-JFM2016, FDFADL-EJMBF2018, SW-JFM2018, FOSY-JMAA2015, OS-MFDPF-2016} and references therein.
 These models all impose isothermal or adiabatic approximations in which the gas obeys polytropic equation of state
 (pressure $\times$ a power of the volume is equal to a constant).
Over the course of bubble oscillations, there are periods where the isothermal assumption and hence an adiabatic pressure volume law is valid (expansion), and periods over which the isothermal approximation is violated; strong compression, as in sonoluminescence experiments. 
Numerous works compare the two approximations and find a balance between them, {\it e.g.} \cite{Prosperetti-JFM1991,bv-SIMA2000,Goldsztein-SAM2004}.

The model we study \eqref{eq1.1simplified}--\eqref{eq1.3simplified}, or more specifically its spherically symmetric reduction, \eqref{eq-bv-3.10prime}--\eqref{eq-bv-3.16prime},  was introduced by Prosperetti \cite{Prosperetti-JFM1991}, as an asymptotic approximation of the full 
 liquid / gas-bubble system \eqref{eq-1.1}--\eqref{eq-1.3}, in which the gas pressure, density and temperature are related by an ideal gas law.
Neither an isothermal nor adiabatic assumption is made.
Over the years, researchers have extensively investigated the model using various approximation techniques. For instance, in \cite{ZP-JFM2020}, quadratic and biquadratic approximations were employed to transform the full PDE model into a simplified ODE model, leading to a significant reduction in computational costs during simulation. 
The model studied by Biro and Vel\'azquez in \cite{bv-SIMA2000}  reduces to that of Prosperetti \cite{Prosperetti-JFM1991} when the liquid viscosity, $\mu_l$, is assumed to be zero on the bubble interface.
The article \cite{bv-SIMA2000} studies, in the spherically symmetric setting:
 (i) local well-posedness in the space-time H\"older space, (ii) global well-posedness for initial data near a spherically symmetric equilibrium, and (iii) Lyapunov stability of the equilibrium relative to small mass-preserving perturbations.
At the heart of their stability result is  an energy dissipation identity and  a coercivity  estimate (lower bound) on the energy around the equilibrium, showing that spherically symmetric equilibria are constrained local minimizers.

Our work extends the result of \cite{bv-SIMA2000} in the following directions.

\EN{

\item We consider a more general model, {\it i.e.}, the Prosperetti model \cite{Prosperetti-JFM1991}, which incorporates liquid viscosity, $\mu_l\ge0$, on the liquid-bubble interface.

\item We construct a manifold of spherically symmetric equilibria parametrized by the mass of gas bubble (Proposition \ref{prop-equilib-original}).

\item We show that equilibrium bubbles of the approximate model are spherically symmetric provided $\mu_l\neq0$ (Part (1) of Proposition \ref{prop-equilib}).

\item We extend the conditional Lyapunov stability  result of \cite{bv-SIMA2000} to  Lyapunov stability relative to \underline{arbitrary} spherically symmetric perturbations which are small.

\item Most significantly, we prove asymptotic stability of the manifold of equilibria (Theorem \ref{thm-asystab}) with an exponential rate of convergence (Theorem \ref{thm-nonlinear-exp-decay}).
Our analysis demonstrates that the 
equilibrium gas-bubble, which emerges as $t\to\infty$, 
is determined by the initial data (and prescribed parameters of the model); it is the equilibrium bubble on the manifold of spherical states
having the same mass as the initial (perturbed) bubble data.

\item We also study the persistence (structural stability) of the asymptotic stability result, Theorem \ref{thm-asystab}, under a far-field time-dependent pressure, $p_\infty(t)$;
 see Corollary \ref{cor:rem-lyapunov-general-pinf} and Corollary \ref{cor:rem-asystab-general-pinf}.
 
}

Finally, we remark that 
there is an analogy of the present study with the 
asymptotic stability
of coherent structures that the equilibrium state is determined by initial data in other nonlinear diffusive dynamical systems,
{\it e.g.} 
volume-preserving geometric flows in the Mullins-Sekerka model \cite{ES-JDE1998},
smoothed out shock profiles in viscous perturbations of hyperbolic conservation laws \cite{Goodman:86}, 
traveling front solutions in nonlinear reaction diffusion dynamics \cite{McKean-CPAM1983, McKean-CPAM1984, Bramson-MAMS1983}, and 
spatial uniform equilibrium in two dimensional chemotaxis-fluid model \cite{Winkler-ARMA2014}.

 \subsection{Some future directions and open problems in the context of the current and closely related models}
{\ }\\

1. \emph{Time-periodically expanding and contracting bubble oscillations.}
The far-field liquid pressure $p_\infty(t)$, is an external forcing term in our free boundary problem. In this present work, it  is prescribed to be either the constant ($p_\infty(t)\equiv p_{\infty,*}$) or such that $p_\infty(t)-p_{\infty,*}$ is small and decaying to zero sufficiently rapidly as $t\to\infty$. 
It is also of interest to study the bubble dynamics when the far-field pressure $p_\infty(t)$ is time-periodic, for example of the form $p_\infty(t) = p_{\infty,*}+A\cos(\om t)$, corresponding to far-field periodic acoustic forcing as in physical experiments \cite{barber1991observation, barber1992light, barber1994sensitivity}.
In a forthcoming paper \cite{LW-vbas-linear}, we prove that for sufficiently small forcing amplitude, $A$, there exists a unique asymptotically stable $2\pi/\om$-time periodic spherically symmetric pulsating bubble solution.

2. \emph{Uniqueness of the spherically symmetric equilibria.} 
Spherically symmetric equilibria of the asymptotic model \eqref{eq1.1simplified}--\eqref{eq1.3simplified} are uniquely characterized in Part (2) of Proposition \ref{prop-equilib}.
In the context of the general evolution for the asymptotic model, any equilibrium bubble is necessarily spherical (Part (1) of Proposition \ref{prop-equilib}).
However, there exist non-trivial (rotational) equilibrium gas flows inside the equilibrium spherical bubble (Remark \ref{rmk-nonunique-gas}).
In other words, the spherically symmetric equilibria are not unique within the asymptotic model.
Under what circumstances is the family spherically symmetric equilibria are unique?
Certainly, the above rotational equilibrium gas flows are ruled out if only seek gas flows which are irrotational.
But are the spherically symmetric equilibria are unique within another closely related model?
We expect that adding a gas viscosity term in the stress balance equation \eqref{eq1.3simplified-b} can help us exclude the case of non-trivial equilibrium gas flow in an equilibrium spherical bubble.

3. \emph{Nonspherically symmetric dynamics.} 
 {\it Are spherically symmetric bubbles stable against small perturbations, unconstrained by symmetry?}
The main asymptotic stability result of the present paper requires spherically symmetric perturbations which are small.
It is then natural to ask: 
{\it Is the manifold of spherically symmetric equilibria 
asymptotically stable relative to small arbitrary (non-spherically symmetric) perturbations in the approximate system \eqref{eq1.1simplified}--\eqref{eq1.3simplified} (or in the full system \eqref{eq-1.1}--\eqref{eq-1.3})?}
We expect that surface tension plays an important role in rounding out bubbles during the evolution.

A related question was studied in a model of a spherical \emph{polytropic} gas bubble in a \emph{compressible} and \emph{non-viscous} liquid \cite{SW-SIMA2011, CTW-SIMA2013}. In this case, the damping mechanism  is acoustic radiation of waves to spatial infinity, rather than thermal diffusion. In \cite{SW-SIMA2011} it is proved that 
the spherically symmetric bubble is linearly asymptotically stable relative to general (not necessarily spherical) perturbations. In the weakly compressible regime, the sharp exponential decay rate of perturbations was determined in \cite{SW-SIMA2011,CTW-SIMA2013} and is proved to be governed by very high angular momentum {\it shape mode}
deformations of the bubble. The energy of these shape modes is transferred  very slowly to the surrounding compressible liquid and is radiated to infinity via acoustic waves.

\medskip

Finally, we list some related broader open questions:
\begin{enumerate}
\item \emph{Well-posedness} of the full liquid / gas model \eqref{eq-1.1}--\eqref{eq-1.3} for general data appears to be open.
\item
\emph{Nonuniqueness of spherical equilibria of the full liquid / gas model.}
Even the classification of equilibrium solutions of \eqref{eq-1.1}--\eqref{eq-1.3} appears to be non-trivial. It is not known, for example, whether
there are non-spherical equilibria. And, 
within the class of spherically symmetric equilibria of \eqref{eq-1.1}--\eqref{eq-1.3} there are solutions  with spatially \underline{non-uniform} temperature profiles (Remark \ref{rem-prop-equilib}). 
Some of these equilibria  have a singularity in the gas temperature at the origin. 
It would be an interesting and challenging mathematical problem  to investigate the dynamics for perturbations of such equilibria and to see whether they may participate in singularity formation for some classes of smooth initial conditions.

\item \emph{Radiation condition for the full liquid / gas model.}
The radiation condition \eqref{eq-radiation-condition} for the liquid temperature is required in proving the uniqueness of the spherical equilibrium solutions in \eqref{eq-equilibrium} of a specified mass.
\emph{Can one ensure that this radiation condition holds for the time-evolution  if it is imposed on the initial data?}
The proof would require an a priori regularity of solutions to a free boundary problem of parabolic equations with time-dependent boundary condition.

\item \emph{Global energy minimizer.} 
For the asymptotic model we have shown that the equilibrium $(\rho_*,R_*,\dot R_*=0)$ is a conditional local energy minimizer of the total energy $\mathcal{E}_{\rm total}$ (Definition \ref{def:total-en}), constrained to fixed mass.
Our analysis relies on the Taylor expansion of the energy near an equilibrium.
{\it Is the equilibrium bubble solution of mass $M$   a global minimizer of $\mathcal{E}_{\rm total}$
relative to arbitrarily large spherically symmetric deformations of mass $M$?}

\end{enumerate}

 \subsection{Notation and conventions}

\begin{enumerate}
\item $B_R=\{x\in\mathbb{R}^3: |x|<R\}$
\item For a function $f(r)$ defined for $0<r<R$, we set $\overline f(x) = f(|x|)$ for $x\in B_R$ and denote
\[
\norm{f}_{C^{2+2\al}_r} 
= \norm{\overline f}_{C^{2+2\al}_x} 
= \max_{|\be|\le 2} \sup_{x\in B_R} |D^\be \overline f(x)| + \sup_{\substack{x\neq y\\x,y\in B_R}} \frac{| D^2\overline f(x) - D^2\overline f(y) |}{|x-y|^{2\al}}.
\]
\item Denote $\nb_r f = \nb_x\overline f$ and $\De_r f = \De_x\overline f = \frac1{r^2}\pd_r(r^2\pd_rf)$ the radial part of the Laplace operator in $\mathbb R^3$.
\item For a state variable, such as the density $\rho$, if it corresponds to value of a constant equilibrium solution, then we denote it 
by $\rho_*$, and similarly for the values of other equilibrium state variables.
\item If ${\bf u}, {\bf v}$ and ${\bf w}$ are vector fields, then $ {\bf u}\cdot\nabla{\bf v}\cdot {\bf w} =
\left[ \left({\bf u}\cdot\nabla\right){\bf v}\right] \cdot {\bf w} $.
\item If $A=(A_{ij})$ and $B=(B_{ij})$, the $A:B = {\rm tr}(AB^\top) = \sum_{i,j} A_{ij}B_{ij}.$
\item $\nabla {\bf u} = (\pd_i u_j )$; $|\nabla{\bf u}|^2 \equiv \sum_{i,j} \pd_i u_j \pd_i u_j = {\rm tr}\left(\nabla {\bf u} (\nabla {\bf u})^\top\right)$.
\item To a function $f(t)$, defined for $t\ge0$, we introduce the Laplace transform and its inverse
\begin{equation}\label{inversion}
\tilde{f}(\tau) = \int_0^\infty e^{-t\tau}f(t)dt,\
\text{ where }\quad
 f(t) = \frac{1}{2\pi i}\int_{\Gamma_a} e^{\tau t} \tilde{f}(\tau) d\tau.
 \end{equation}
Here, $\Gamma_a = \{\tau:\Re\tau=a\}$ where $a$ is chosen such that $\tilde{f}(\tau)$ is analytic on the set
 $\{\tau: \Re\tau> a\}$. 

\item We introduce the normalized radial-Dirichlet eigenfunctions and eigenvalues in the unit ball $B_1$, {\it i.e.}
 \[ -\De_y\phi_j=\la_j\phi_j,\quad  \phi_j|_{y=1}=0,\quad 1=\int_{B_1} \phi_j^2(|x|)\, dx = 4\pi \int_0^1 \phi_j^2(y)y^2\, dy.\]
\EQ{\label{eq-eigen-formula}
\phi_j(y) = \frac{\sin(j\pi y)}{\sqrt{2\pi} y},\quad j=1,2,\ldots,\qquad
\la_j = (j\pi)^2,\quad j=1,2,\ldots.
}
\end{enumerate}

\bigskip\noindent{\bf Acknowledgements.}
The authors thank Juan J. L. Vel\'azquez for detailed discussions concerning the article \cite{bv-SIMA2000}, which motivated the present work. We also thank Panagiota Daskalopoulos, Qiang Du and Christophe Josserand for very stimulating discussions. CL acknowledges support by Simons Foundation as well as support from Department of Mathematics at Columbia University. MIW was supported in part by National Science Foundation Grant DMS-1908657 and Simons Foundation Math + X Investigator Award \#376319 (Michael I. Weinstein).

\section{Gas bubble in an incompressible liquid; the complete mathematical formulation}\label{sec:formulation}

In this section we first discuss the complete mathematical description of a 
 gas bubble immersed in an incompressible liquid 
 with constant surface tension. 
 We then present in Section \ref{sec-asymp-model} the asymptotic approximation studied in \cite{bv-SIMA2000}
  in which thermal diffusion is the key dissipation mechanism. 
  In Section \ref{sec-equilibria}, we derive an explicit family of our spherically symmetric equilibrium solutions of  both of the full and approximate systems.
  
\bigskip\noindent{\bf Equations for the liquid.}
Let ${\bf v}_l(x,t)$ denote the liquid velocity, $p_l(x,t)$ the liquid pressure and $T_l(x,t)$ is the liquid temperature.
We assume that the dynamics of the liquid outside the bubble is described by the incompressible (constant density) Navier--Stokes equations
\begin{subequations}\label{eq-1.1}
\begin{empheq}[right=\empheqrbrace\text{in $\R^3\setminus \Om(t)$, $t>0$,}]{align}
\pd_t (\rho_l {\bf v}_l) + \div(\rho_l {\bf v}_l \otimes {\bf v}_l) =&\,
\div \mathbb T_l, \label{eq-1.1-a}\\
\div {\bf v}_l =&\, 0, \label{eq-1.1-b}\\
\rho_l c_l (\pd_tT_l + {\bf v}_l\cdot\nb T_l) =&\, \div(\ka_l\nb T_l) + \mathbb{S}_l : \nb{\bf v}_l . \label{eq-1.1-c}
\end{empheq}
\end{subequations}
The stress tensor, $\mathbb T_l$ and viscous stress tensor,  $\mathbb{S}_l$, are given, respectively, by:
\[
\mathbb T_l= -p_l \mathbb I + \mathbb{S}_l({\bf v}_l)\quad{\rm and}\quad \mathbb{S}_l({\bf v}_l)=2\mu_l \mathbb{D}({\bf v}_l) ,
\quad 
\mathbb{D}({\bf u}) = \frac12 (\nb{\bf u} + \bke{\nb{\bf u}}^\top ).
\] 
Here,  $({\bf u}\otimes{\bf v})_{ij} = u_i v_j$ denotes the tensor product of vectors and  $\mathbb{A}:\mathbb{B} = \sum_{i,j=1}^3 A_{ij}B_{ij}={\rm tr}(AB^\top)$, where  $\mathbb{A} = (A)_{i,j=1}^3$, $\mathbb{B} = (B)_{i,j=1}^3$.
The equations depend on parameters: $\rho_l>0$, the density of the liquid, $\mu_l\ge0$, the dynamic viscosity of the liquid, $c_l$, the specific heat of the liquid, and $\ka_l$, the thermal conductivity of the liquid. 
Equations \eqref{eq-1.1-a} and \eqref{eq-1.1-b}  express, respectively, balance of momentum and conservation of mass.
These are coupled to equation \eqref{eq-1.1-c}, which governs the temperature field in the liquid.

\bigskip\noindent{\bf Equations for the gas.} The gas within the bubble is assumed to be a compressible fluid,
 characterized by its velocity ${\bf v}_g(x,t)$, pressure $p_g(x,t)$, density $\rho_g(x,t)$, temperature $T_g(x,t)$, and entropy per unit mass (specific entropy)  $s(x,t)$, with the assumption of the ideal gas law relating $p_g, T_g$ and $\rho_g$.
The governing equations are the viscous, compressible Navier--Stokes equations
 \begin{subequations}\label{eq-1.2}
\begin{empheq}[right=\empheqrbrace\text{in $\Om(t)$, $t>0$,}]{align}
\pd_t \rho_g + \div(\rho_g{\bf v}_g) =&\, 0, \label{eq-1.2-a}\\
\pd_t (\rho_g {\bf v}_g) + \div(\rho_g {\bf v}_g \otimes {\bf v}_g) =&\,
\div \mathbb T_g, \label{eq-1.2-b}\\
\rho_g T_g \bke{\pd_t s + {\bf v}_g\cdot\nb s } =&\, \div(\ka_g\nb T_g) + \mathbb S_g: \nb{\bf v}_g, \label{eq-1.2-c}\\
p_g =&\, \Rg T_g\rho_g , \label{eq-1.2-d}\\
s =&\, c_v \log\bke{\dfrac{p_g}{\rho_g^\ga} }, \label{eq-1.2-e}
\end{empheq}
\end{subequations}
where
\[
\mathbb T_g = -p_g\mathbb I + 2\mu_g\bke{\mathbb D({\bf v}_g) - \frac13\, (\div{\bf v}_g) \mathbb I} + \ze_g (\div{\bf v}_g) \mathbb I
\]
is the stress tensor of the gas in which
$\mu_g>0$ and $\ze_g$ are the dynamic viscosity and the bulk viscosity for the gas, respectively, and
\[\mathbb S_g = 2\mu_g\bke{\mathbb{D}({\bf v}_g) - \frac13 (\div{\bf v}_g)\mathbb{I}} + \ze_g(\div{\bf v}_g)\mathbb{I}\]
 is the viscous tensor of gas.
The constant $\ka_g$ is the thermal conductivity of the gas.
The constant $\Rg $ is the specific gas constant, the ratio of the ideal gas constant to the molar mass.
The  constant 
\EQ{\label{eq-def-gamma}
\ga \equiv 1 + \frac{\Rg}{c_v} = \frac{c_p}{c_v}>1
}
is called the adiabatic constant. 
Here, $c_p$ denotes the heat capacity at constant pressure and $c_v$ denotes the heat capacity at constant volume.
Equations \eqref{eq-1.2-a} and \eqref{eq-1.2-b} are the equations of motion and continuity, respectively, of a compressible fluid. Equation \eqref{eq-1.2-c} is the entropy equation. Equation \eqref{eq-1.2-d} is the equation of state (Boyle's law) for ideal gases. Equation \eqref{eq-1.2-e} is a consequence of the second law of thermodynamics, \eqref{eq-1.2-d}, and Joule's second law for ideal gases. 

\bigskip\noindent{\bf Boundary conditions at the liquid / gas interface.}
 Let the bubble surface, $\pd\Om(t)$, be given in spherical coordinates
\[
\boldsymbol{\om} = \boldsymbol{\om}(\th,\varphi,t)
\]
and let ${\bf n}$ denote the outward pointing unit outer normal on $\pd\Om(t)$.
The boundary conditions on $\pd\Om(t)$ are 
 \begin{subequations}\label{eq-1.3}
\begin{empheq}[right=\empheqrbrace\text{on $\pd\Om(t)$, $t>0$,}]{align}
{\bf v}_l(\boldsymbol{\om},t)\cdot\hat{\bf n} = {\bf v}_g(\boldsymbol{\om},t)\cdot\hat{\bf n} = \pd_t{\boldsymbol{\om}}\cdot\hat{\bf n}, \label{eq-1.3-a}\\
\hat {\bf n} \cdot \mathbb T_l - \hat {\bf n} \cdot \mathbb T_g = \si \hat {\bf n} (\nb_S\cdot \hat {\bf n}), \label{eq-1.3-b}\\
T_g = T_l, \label{eq-1.3-c}
\end{empheq}
\end{subequations}
where $\nb_S\,\cdot$ denotes the surface divergence, and $\si>0$ is the surface tension of the liquid - gas interface, here assumed to be a constant.
Equation \eqref{eq-1.3-a} is the kinematic boundary condition; the normal velocity of the material point on the bubble surface moves with the normal velocity of both the gas and the liquid. 
Equation \eqref{eq-1.3-b} is the stress balance equation.
Equation \eqref{eq-1.3-c} means the temperature is continuous across the interface. 
A detailed derivation of the fundamental equations of fluid dynamics is presented, for example, in \cite{Novotny-book2004,Feireisl-book2004}.

The system \eqref{eq-1.1}--\eqref{eq-1.3} depends on the
\begin{equation}
\textrm{ physical parameters: $\nu_l=\mu_l/\rho_l$, $\rho_l$, $c_l$, $\ka_l$, $\mu_g$, $\ka_g$, $\Rg $, $\ga$, $c_v$, $\ze_g$, $\si$,}\label{params}
\end{equation}
where $\nu_l$, the kinematic viscosity, is a nonnegative constant and all other parameters are all strictly positive constants.  We assume these parameters to be prescribed and fixed.  Furthermore, one prescribes:
\begin{align*}
&\textrm{the far-field liquid velocity ${\bf v}_\infty(t):=\lim_{|x|\to\infty}{\bf v}_l(x,t),$}\\
&\textrm{the far-field liquid pressure $p_\infty(t) := \lim_{|x|\to\infty}p_{l}(x,t)$, and}\\
&\textrm{the far-field liquid temperature $T_\infty(t):=\lim_{|x|\to\infty}T_l(x,t)$}.\end{align*}

Here, we assume that the far-field pressure in the liquid, $p_\infty(t)$,  is spatially uniform and is a small perturbation of a positive constant $p_{\infty,*}$: 
\[
p_\infty \in C^{1+\al}_t(\R_+),\quad
|p_\infty(t) - p_{\infty,*} | + \norm{\pd_tp_\infty}_{L^1_t(\R_+)}\le \eta_0,\quad
p_\infty(t)\to p_{\infty,*}\ \text{ as } t\to\infty,
\]
where $\eta_0>0$ is some small number to be chosen later.
We also assume that the far-field liquid temperature is a constant, $T_\infty$, and that the far-field liquid velocity vanishes:
\[
T_\infty(t) \equiv T_\infty,\qquad\qquad
{\bf v}_\infty(t) \equiv {\bf 0}.
\]

For fixed physical parameters \eqref{params}, $p_\infty(t)$, and $T_\infty$,
the system \eqref{eq-1.1}--\eqref{eq-1.3} governs the time-evolution of the 
\begin{align*}
 &\textrm{state variables in the liquid:}\quad  {\bf v}_l(x,t),\, p_l(x,t),\,T_l(x,t), \\
&\textrm{state variables in the gas:}\quad \rho_g(x,t) ,\, {\bf v}_g(x,t) ,\, p_g(x,t),\, T_g(x,t),\, s(x,t),\quad {\rm and}\\
& \textrm{the gas bubble  region,  $\Om(t)\subset \mathbb{R}^3$}.  \end{align*}
To this we add constitutive relations \eqref{eq-1.2-d}--\eqref{eq-1.2-e}, which enable us to express $T_g$ and $s$
in terms of $\rho_g, {\bf v}_g$ and $p_g$.
Moreover, since the liquid pressure $p_l$ solves the exterior Dirichlet boundary-value problem of Poisson equation 
\[
- \De p_l = \rho_l \nb{\bf v}_l : (\nb{\bf v}_l)^\top\ \text{ in }\R^3\setminus\Om(t),
\]
with the boundary conditions
\[
p_l = \hat{\bf n}\cdot\mathbb{S}_l\cdot\hat{\bf n} - \hat{\bf n}\cdot\mathbb{T}_g\cdot\hat{\bf n} - \si\nb_S\cdot \hat{\bf n}\ \text{ on }\pd\Om(t),\qquad
\lim_{|x|\to\infty} p_l(x,t) = p_\infty(t),
\]
$p_l - p_\infty(t)$ satisfies
\EQN{
- \De (p_l - p_\infty(t)) &= \rho_l \nb{\bf v}_l : (\nb{\bf v}_l)^\top\ \text{ in }\R^3\setminus\Om(t),\\
p_l - p_\infty(t) &= \hat{\bf n}\cdot\mathbb{S}_l\cdot\hat{\bf n} - \hat{\bf n}\cdot\mathbb{T}_g\cdot\hat{\bf n} - \si\nb_S\cdot \hat{\bf n} - p_\infty(t)\ \text{ on }\pd\Om(t),\qquad
\lim_{|x|\to\infty}p_l(x,t) - p_\infty(t) = 0.
}
For suitable $\nb {\bf v}_l$ with sufficient decay at spatial infinity, $p_l - p_\infty(t)$ can be expressed by means layer potentials as
\EQN{
p_l(x,t) - p_\infty(t) &= \int_{\R^3\setminus\Om(t)} G(x,y;t) \bkt{\rho_l \nb{\bf v}_l : (\nb{\bf v}_l)^\top}(y)\, dy\\
&\quad - \int_{\pd\Om(t)} \nb_y G(x,y;t)\cdot\hat{\bf n} \bkt{\hat{\bf n}\cdot\mathbb{S}_l\cdot\hat{\bf n} - \hat{\bf n}\cdot\mathbb{T}_g\cdot\hat{\bf n} - \si\nb_S\cdot \hat{\bf n} - p_\infty(t)}(y)\, dS_y,
}
where $G(x,y;t)$ is the Green's function for the exterior domain $\R^3\setminus\Om(t)$. 
Then
$p_l$, for $x\notin\Omega(t)$, can be expressed in terms ${\bf v}_l$ by
\EQN{
p_l(x,t) &= p_\infty(t) + \int_{\R^3\setminus\Om(t)} G(x,y;t) \bkt{\rho_l \nb{\bf v}_l : (\nb{\bf v}_l)^\top}(y)\, dy\\
&\quad - \int_{\pd\Om(t)} \nb_y G(x,y;t)\cdot\hat{\bf n} \bkt{\hat{\bf n}\cdot\mathbb{S}_l\cdot\hat{\bf n} - \hat{\bf n}\cdot\mathbb{T}_g\cdot\hat{\bf n} - \si\nb_S\cdot \hat{\bf n} - p_\infty(t)}(y)\, dS_y.
}
Therefore, \eqref{eq-1.1}--\eqref{eq-1.3} can be reduced to a problem for the unknown liquid and gas state variables,
 and the region filled with gas:
\[ {\bf v}_l(x,t),\,T_l(x,t),\, \rho_g(x,t) ,\, {\bf v}_g(x,t) ,\, p_g(x,t),\, \Om(t).\] 

\bigskip\noindent{\bf Initial data.} To solve for the evolution given by the full liquid / gas bubble system \eqref{eq-1.1}--\eqref{eq-1.3}, we must prescribe initial conditions
 for the state variables:
\EQ{\label{eq-initial}
{\bf v}_l(\cdot,0),\,
T_l(\cdot,0),\,
\rho_g(\cdot,0),\, 
{\bf v}_g(\cdot,0),\,
p_g(\cdot,0),
}
and for the bubble shape at time $t=0$:
\EQ{\label{eq-initial-Om} \Om(t)\Big|_{t=0} \ =\ \Om(0).
}
We assume the compatibility conditions for the initial data, {\it i.e.}, they satisfy \eqref{eq-1.1}--\eqref{eq-1.3} at $t=0$.
In particular, $\div {\bf v}_l(\cdot,0)=0$.

\section{An asymptotic approximation to \eqref{eq-1.1}--\eqref{eq-1.3}}\label{sec-asymp-model}

In this paper we work with the following approximation of Prosperetti \cite{Prosperetti-JFM1991} (see also Biro--Vel\'azquez \cite[Appendix A]{bv-SIMA2000} ) to the full liquid - bubble  system \eqref{eq-1.1}--\eqref{eq-1.3}: 
 \begin{subequations}\label{eq1.1simplified}
\begin{empheq}[right=\empheqrbrace\text{in $\R^3\setminus \Om(t)$, $t>0$,}]{align}
\pd_t {\bf v}_l =&\, \nu_l \De {\bf v}_l - {\bf v}_l\cdot\nb{\bf v}_l - \dfrac1{\rho_l}\, \nb p_l, \label{eq1.1simplified-a}\\
\div {\bf v}_l =&\, 0, \label{eq1.1simplified-b}\\
T_l(x,t) = &\, T_\infty,\quad \text{a prescribed constant},\label{eq1.1simplified-c}
\end{empheq}
\end{subequations}
where
$\nu_l = \frac{\mu_l}{\rho_l}\ge0$ is the kinematic viscosity of the liquid,
 \begin{subequations}\label{eq1.2simplified}
\begin{empheq}[right=\empheqrbrace\text{in $\Om(t)$, $t>0$,}]{align}
\pd_t \rho_g + \div(\rho_g{\bf v}_g) =&\, 0,\label{eq1.2simplified-a}\\
p_g =&\, p_g(t), \label{eq1.2simplified-b}\\
\rho_g T_g \bke{\pd_t s + {\bf v}_g\cdot\nb s } =&\, \div(\ka_g\nb T_g), \label{eq1.2simplified-c}\\
p_g =&\, \Rg T_g \rho_g , \label{eq1.2simplified-d}\\
s =&\, c_v \log\bke{\dfrac{p_g}{\rho_g^\ga} } \label{eq1.2simplified-e},
\end{empheq}
\end{subequations}
and
 \begin{subequations}\label{eq1.3simplified}
\begin{empheq}[right=\empheqrbrace\text{on $\pd\Om(t)$, $t>0$,}]{align}
{\bf v}_l(\boldsymbol{\om},t)\cdot\hat{\bf n} = {\bf v}_g(\boldsymbol{\om},t)\cdot\hat{\bf n} = \pd_t{\boldsymbol{\om}}\cdot\hat{\bf n}, \label{eq1.3simplified-a}\\
p_g \hat{\bf n} - p_l \hat{\bf n} + 2\mu_l \hat{\bf n} \cdot\mathbb{D}({\bf v}_l) = \si \hat{\bf n} (\nb_S\cdot \hat {\bf n}), \label{eq1.3simplified-b}\\
T_g = T_\infty, \label{eq1.3simplified-c}
\end{empheq}
\end{subequations}
with the far-field conditions 
\EQ{\label{eq-far-field-all}
\lim_{|x|\to\infty} {\bf v}_l(x,t) = {\bf 0},\qquad
\lim_{|x|\to\infty} p_l(x,t) = p_\infty(t),\qquad
\lim_{|x|\to\infty} T_l(x,t) = T_\infty.
}
This model reduces to that of \cite[Appendix A]{bv-SIMA2000} for the special case when $\mu_l=0$ in \eqref{eq1.3simplified-b}. 
Equation \eqref{eq1.3simplified-b} is the Young--Laplace boundary condition;
 the jump in pressure at the liquid--gas interface is equal to the surface tension, $\si$, times the mean curvature $H = \frac12 \nb_S\cdot \hat{\bf n}$. 
The approximate system \eqref{eq1.1simplified}--\eqref{eq1.3simplified} depends on the
\begin{equation}
\textrm{ physical parameters: $\nu_l$, $\rho_l$, $\ka_g$, $\Rg $, $\ga$, $c_v$, $\si$.}\label{params-approx}
\end{equation}
For fixed physical parameters \eqref{params-approx}, $p_\infty(t)$, and $T_\infty$,
the approximate system \eqref{eq1.1simplified}--\eqref{eq1.3simplified} governs the time-evolution of the state variables in the liquid:
\[ {\bf v}_l(x,t),\, p_l(x,t)\]
and in the gas
\[ \rho_g(x,t) ,\, {\bf v}_g(x,t) ,\, p_g(t),\, T_g(x,t),\, s(x,t)\]
and $\Om(t)$. 
We show below in Appendix \ref{sec-reduction} that the system \eqref{eq1.2simplified} can be reduced to a single equation \eqref{eq-bv-3.10-nonspherical} for $\rho_g$ depending only on $\Om$ and $p_g$.
Thus, \eqref{eq1.1simplified}--\eqref{eq1.3simplified} can be reduced to a problem with unknowns 
\[ {\bf v}_l(x,t),\, \rho_g(x,t) ,\, p_g(t),\, \Om(t).\]

As for the initial conditions to prescribe for the approximate system \eqref{eq1.1simplified}--\eqref{eq1.3simplified}, we need
\EQ{\label{eq-initial-approx}
{\bf v}_l(\cdot,0),\,
\rho_g(\cdot,0)
}
for state variables, and \eqref{eq-initial-Om} for the bubble shape at time $t=0$.
We do not prescribe initial data for $p_g$ since it can be derived from $p_l(\cdot,0)$ and $\Om(0)$ via \eqref{eq1.3simplified-b}.

In this article the approximate system \eqref{eq1.1simplified}--\eqref{eq1.3simplified} is considered under the assumption of spherical symmetry. 
More precisely, we assume for the system \eqref{eq1.1simplified}--\eqref{eq1.3simplified} that $\Om(t)$ is a sphere, ${\bf v}_l,{\bf v}_g$ are spherically symmetric, $p_l,\rho_g,T_g,s$ are radial.
Recall that a vector field ${\bf u}:\R^3\to\R^3$ is spherically symmetric if ${\bf u} = u(r)\hat{\bf r}$, $r=|x|$ and $\hat{\bf r}= x/|x|$, 
and a scalar function $f:\R^3\to\R$ is radial if $f=f(r)$. In this setting, \eqref{eq1.1simplified}--\eqref{eq1.3simplified}
reduces to the system \eqref{eq-bv-3.10prime}--\eqref{eq-bv-3.16prime} for $\rho(r,t), R(t)$.
Under the assumption of spherical symmetry, \eqref{eq-bv-3.10prime}--\eqref{eq-bv-3.16prime} is well-posed locally in time (Theorem \ref{thm-bv-3.1}), and well-posed globally in time for initial data which is close to the equilibrium (see \cite[Theorem 4.1]{bv-SIMA2000} and Theorem \ref{thm-bv-4.1-anyperturb}).

\section{Spherically symmetric equilibrium solutions}\label{sec-equilibria}

Both the full liquid / gas model \eqref{eq-1.1}--\eqref{eq-1.3} and the asymptotic model
 \eqref{eq1.1simplified}--\eqref{eq1.3simplified} share a family of spherically symmetric equilibrium (time-independent) solutions.  
Let $B_R$ denote the open ball  in $\mathbb{R}^3$  of radius $R$ which is centered at the origin.
Suppose a gas of density $\rho_g(x)$  occupies the region $B_R$. Then, the mass of the bubble is given by
\EQ{\label{eq-def-masss}
  \textrm{Mass}[\rho_g,R] \equiv \int_{B_R} \rho_g(x) dx.
}
Below we investigate spherical symmetric equilibrium solutions of both \eqref{eq-1.1}--\eqref{eq-1.3} and the approximation system \eqref{eq1.1simplified}--\eqref{eq1.3simplified}. 
We prove that the spherically symmetric equilibrium gas bubble of the approximate system \eqref{eq1.1simplified}--\eqref{eq1.3simplified}, and of the original system \eqref{eq-1.1}--\eqref{eq-1.3} with additional conditions, is, up to spatial translation of its center, uniquely determined by its total mass. 
Moreover, we prove that equilibrium bubbles of the approximate system \eqref{eq1.1simplified}--\eqref{eq1.3simplified} are spherical by applying the Alexandrov's theorem on closed constant-mean-curvature (CMC) surfaces.
In an equilibrium spherical bubble of the approximate system \eqref{eq1.1simplified}--\eqref{eq1.3simplified}, there exists a nontrivial, {\it e.g.} rotational, equilibrium gas flow (see Remark \ref{rmk-nonunique-gas}).

 \begin{proposition}[Spherically symmetric equilibria of the original system \eqref{eq-1.1}--\eqref{eq-1.3}]\label{prop-equilib-original}
Fix a constant $p_{\infty,*}>0$.
Assume the radiation condition for liquid temperature:
 \EQ{\label{eq-radiation-condition}
 T_l(|x|) = T_\infty + o(|x|^{-1}),\quad |x|\to\infty.
 }
 Then, there is a smooth map from values of the bubble mass to equilibrium radii, $R_*$:
   \[ M\in(0,\infty)\mapsto R_*[M],\]
   such that any regular (non-singular) spherical equilibrium solution
of \eqref{eq-1.1}--\eqref{eq-1.3} (for fixed parameters \eqref{params})  of bubble mass $M$ is expressible as:
\begin{subequations}
\label{eq-equilibrium}
\begin{align}
{\bf v}_{l,*} &= {\bf 0},\qquad\qquad 
p_{l,*} = p_{\infty,*} ,\qquad\qquad
\Om_* = B_{_{R_*[M]}}, \label{eq-equilibrium-a}\\
\rho_{g,*}[M] &= \frac1{\Rg T_\infty} \bke{ p_{\infty,*} +\frac{2\si}{R_*[M]} },\quad
{\bf v}_{g,*} = {\bf 0},\quad 
p_{g,*}[M] = p_{\infty,*} + \frac{2\si}{R_*[M]},\label{eq-equilibrium-b}\\
T_{g,*} &= T_{l,*} = T_\infty,\quad
s_* = c_v\log\bke{(\Rg T_\infty)^\ga \bke{ p_{\infty,*} +\frac{2\si}{R_*[M]}}^{1-\ga}}.\label{eq-equilibrium-c}
\end{align}
\end{subequations}

\end{proposition}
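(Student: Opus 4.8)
The plan is to use the time-independence together with spherical symmetry to turn every PDE into a radial ODE, deduce in turn that both velocity fields vanish, that both pressures are constant, and that both temperatures equal $T_\infty$; then the equation of state fixes $\rho_{g,*}$, the stress-balance (Young--Laplace) condition fixes $p_{g,*}$ in terms of $R_*$, and the mass constraint determines $R_*$ as a smooth function of $M$. Throughout we may translate the bubble's center to the origin, so that $\Om_*=B_{R_*}$ and all spherical symmetry is with respect to the origin.

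\emph{Step 1: the velocities vanish.} Since ${\bf v}_{l,*}$ is spherically symmetric, ${\bf v}_{l,*}=v_l(r)\hat{\bf r}$, and $\div{\bf v}_{l,*}=0$ gives $\pd_r(r^2v_l)=0$, hence $v_l(r)=c/r^2$. At a stationary interface the kinematic condition \eqref{eq-1.3-a} forces $v_l(R_*)=0$, so $c=0$ and ${\bf v}_{l,*}={\bf 0}$. Likewise ${\bf v}_{g,*}=v_g(r)\hat{\bf r}$, and the stationary continuity equation \eqref{eq-1.2-a} gives $\pd_r(r^2\rho_g v_g)=0$; evaluating at $r=R_*$ and using $v_g(R_*)=0$ from \eqref{eq-1.3-a} yields $r^2\rho_g v_g\equiv0$, so $v_g\equiv0$ wherever $\rho_g>0$, i.e.\ throughout $B_{R_*}$ for a regular (hence positive-density) equilibrium. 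Thus ${\bf v}_{g,*}={\bf 0}$.

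\emph{Step 2: pressures, temperatures, density.} With both velocities zero all viscous stresses vanish, so the momentum equations \eqref{eq-1.1-a} and \eqref{eq-1.2-b} collapse to $\nb p_{l,*}=0$ and $\nb p_{g,*}=0$; with the far-field condition this gives $p_{l,*}\equiv p_{\infty,*}$ and $p_{g,*}\equiv$ const. The temperature equations \eqref{eq-1.1-c} and \eqref{eq-1.2-c} collapse to $\De T_{l,*}=0$ in $\R^3\setminus B_{R_*}$ and $\De T_{g,*}=0$ in $B_{R_*}$, whose radial solutions are $A+B/r$. For $T_{g,*}$, regularity at the origin forces $B=0$, so $T_{g,*}$ is constant; for $T_{l,*}$, the far-field limit gives $A=T_\infty$, and — the one genuinely essential use of the radiation condition \eqref{eq-radiation-condition} — the requirement $T_{l,*}-T_\infty=o(|x|^{-1})$ forces $B=0$. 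Hence $T_{l,*}\equiv T_\infty$, and by the continuity condition \eqref{eq-1.3-c}, $T_{g,*}\equiv T_\infty$ as well. Then \eqref{eq-1.2-d} forces $\rho_{g,*}=p_{g,*}/(\Rg T_\infty)$ to be constant, and \eqref{eq-1.2-e} gives $s_*$ constant.

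\emph{Step 3: identifying $p_{g,*}$, $R_*[M]$, and existence.} Evaluating the stress balance \eqref{eq-1.3-b} on $\pd B_{R_*}$ with vanishing velocities, and using $\nb_S\cdot\hat{\bf n}=2/R_*$ on a sphere, gives the Young--Laplace relation $p_{g,*}-p_{\infty,*}=2\si/R_*$; combining with the state equation and \eqref{eq-1.2-e} produces exactly \eqref{eq-equilibrium-b}--\eqref{eq-equilibrium-c}. Finally, $M=\textrm{Mass}[\rho_{g,*},R_*]=\tfrac{4\pi}{3}R_*^3\rho_{g,*}=\tfrac{4\pi}{3\Rg T_\infty}\bigl(p_{\infty,*}R_*^3+2\si R_*^2\bigr)=:F(R_*)$, and since $F'(R)=\tfrac{4\pi}{3\Rg T_\infty}\bigl(3p_{\infty,*}R^2+4\si R\bigr)>0$ on $(0,\infty)$ with $F(0^+)=0$ and $F(+\infty)=+\infty$, the inverse function theorem gives a smooth strictly increasing bijection $M\mapsto R_*[M]=F^{-1}(M)$ on $(0,\infty)$. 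Conversely one checks directly that the formulas \eqref{eq-equilibrium} solve \eqref{eq-1.1}--\eqref{eq-1.3}, so the family is nonempty for every $M$. The proof itself is mostly routine; the only real subtlety — the main obstacle — is getting the dependency chain right (velocities $\Rightarrow$ pressures and harmonicity of temperatures $\Rightarrow$ density) and isolating precisely where the regularity hypothesis and the radiation condition \eqref{eq-radiation-condition} are used, since dropping either one reintroduces a one-parameter family of equilibria (singular at the origin, respectively with non-uniform liquid temperature).
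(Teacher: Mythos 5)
Your proposal is correct and follows essentially the same route as the paper's proof in Appendix \ref{sec-steadystate}: incompressibility/continuity plus the kinematic boundary condition kill both velocities, the momentum equations then force constant pressures, the temperature equations reduce to radial harmonic functions pinned down by regularity at the origin and the radiation condition, and Young--Laplace together with the mass constraint yields the cubic for $R_*$. The only cosmetic differences are that the paper invokes the maximum principle for $T_{g,*}$ where you use the explicit $A+B/r$ form, and that you spell out the monotonicity of $R\mapsto \tfrac{4\pi}{3}R^3\rho_{g,*}(R)$ where the paper appeals to smooth dependence of the simple root of the cubic on its coefficients; both are equivalent.
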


The proof of Proposition \ref{prop-equilib-original} is given in Appendix \ref{sec-steadystate}.

\begin{remark}\label{rem-prop-equilib}
The radiation condition \eqref{eq-radiation-condition} and the regularity assumption in part (1) are necessary for the uniqueness of the spherical equilibrium solutions $T_{l,*}$ and $T_{g,*}$ in \eqref{eq-equilibrium}. 
In fact, without such hypotheses there exists a two parameter family of  spherical equilibrium solutions $T_{l,*}$ and $T_{g,*}$ of \eqref{eq-1.1}--\eqref{eq-1.3} given by:
\begin{align*}
\textrm{ $T_{l,*}(r) = T_\infty - a_1/r$, $r\in[R_*,\infty)$, and $T_{g,*}(r) = T_\infty - a_1/R_* + a_2 (1/R_* - 1/r)$, $r\in[0,R*)$,}
\end{align*}
where $a_1, a_2\in\R$ are arbitrary.
\end{remark}

 \begin{proposition}[Equilibria of the approximate system \eqref{eq1.1simplified}--\eqref{eq1.3simplified}]\label{prop-equilib}
Fix a constant $p_{\infty,*}>0$.
    \begin{enumerate}
    
\item 
\underline{Equilibrium bubbles of \eqref{eq1.1simplified}--\eqref{eq1.3simplified} are spherical}:
Let $({\bf v}_{l,*},p_{l,*},\pd\Om_*,p_{g,*}, {\bf v}_{g,*}, \rho_{g,*},\cdots)$ be a $C^2$ steady-state solution of \eqref{eq1.1simplified}--\eqref{eq1.3simplified} with $\lim_{|x|\to\infty} p_{l,*}(x) = p_{\infty,*}$.
Assume $\mu_l\neq0$ and $\si\neq0$ in \eqref{eq1.3simplified-b}. 
Suppose that $\lim_{|x|\to\infty}{\bf v}_{l,*}(x) = O(|x|^{-2})$ and $\lim_{|x|\to\infty}\nb{\bf v}_{l,*}(x) = \mathbb O$.
Then ${\bf v}_{l,*}={\bf 0}$, $p_{l,*}=p_{\infty,*}$ and $\Om_*$ is a sphere.
Moreover, $\rho_{g,*}$ is constant and $\div{\bf v}_{g,*} = 0$.

\item 
\underline{Spherically symmetric equilibria of \eqref{eq1.1simplified}--\eqref{eq1.3simplified}}: The reduced / asymptotic model \eqref{eq1.1simplified}--\eqref{eq1.3simplified} shares the family of spherically symmetric equilibria displayed in \eqref{eq-equilibrium}. 
Furthermore, 
any regular spherical equilibrium solution of \eqref{eq1.1simplified}--\eqref{eq1.3simplified} is uniquely determined by its total mass as \eqref{eq-equilibrium}. No radiation condition \eqref{eq-radiation-condition} is required.

\item The mappings $M\in(0,\infty)\mapsto R_*[M]$ and $\rho_*[M]$, where $\rho_*:= \rho_{g,*}$, arising in Proposition \ref{prop-equilib-original} are continuous (even smooth).

\end{enumerate}

\end{proposition}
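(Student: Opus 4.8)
The plan is to break the proof into the three parts as stated, treating Part (1) as the heart of the matter and obtaining Parts (2)--(3) almost for free once the equilibrium equations have been unwound.

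For Part (1), I would first analyze the liquid equations \eqref{eq1.1simplified-a}--\eqref{eq1.1simplified-b} at equilibrium: a steady, divergence-free vector field ${\bf v}_{l,*}$ on the exterior domain $\R^3\setminus\Om_*$ satisfying $\nu_l\De{\bf v}_{l,*} - {\bf v}_{l,*}\cdot\nb{\bf v}_{l,*} = \tfrac1{\rho_l}\nb p_{l,*}$, with the decay $O(|x|^{-2})$ and bounded gradient. The key observation is that the only boundary condition coupling ${\bf v}_{l,*}$ to the bubble is the kinematic one \eqref{eq1.3simplified-a}, which at equilibrium (a time-independent surface) forces the normal velocity ${\bf v}_{l,*}\cdot\hat{\bf n}=0$ on $\pd\Om_*$. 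I would argue that this, combined with the decay and the incompressible steady Navier--Stokes structure, forces ${\bf v}_{l,*}\equiv{\bf 0}$ --- e.g. by an energy argument: multiply the momentum equation by ${\bf v}_{l,*}$, integrate over $\R^3\setminus\Om_*$, and use that the convective term integrates to zero (using $\div{\bf v}_{l,*}=0$ and ${\bf v}_{l,*}\cdot\hat{\bf n}=0$ on the boundary and decay at infinity), leaving $\nu_l\int|\nb{\bf v}_{l,*}|^2 = 0$ plus harmless boundary terms, hence ${\bf v}_{l,*}$ is constant, hence zero by decay. (This is exactly where $\mu_l\neq0$, i.e. $\nu_l\neq0$, is used.) Then the momentum equation gives $\nb p_{l,*}=0$, so $p_{l,*}\equiv p_{\infty,*}$.

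Next, with ${\bf v}_{l,*}={\bf 0}$ and $p_{l,*}=p_{\infty,*}$, the stress-balance condition \eqref{eq1.3simplified-b} collapses to $p_{g,*}\hat{\bf n} - p_{\infty,*}\hat{\bf n} = \si\hat{\bf n}(\nb_S\cdot\hat{\bf n})$, i.e. the mean curvature of $\pd\Om_*$ equals the constant $(p_{g,*}-p_{\infty,*})/(2\si)$ --- here I would invoke that $p_{g,*}$ is spatially constant (it is a function of $t$ alone by \eqref{eq1.2simplified-b}, hence a genuine constant at equilibrium) so the right side is truly constant. Therefore $\pd\Om_*$ is a closed embedded surface of constant mean curvature; by \textbf{Alexandrov's theorem}, $\pd\Om_*$ is a round sphere, so $\Om_* = B_{R_*}(x_0)$ for some center $x_0$ and radius $R_*$; after translation $x_0=0$. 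Finally, to get the gas conclusions: at equilibrium \eqref{eq1.2simplified-a} reads $\div(\rho_{g,*}{\bf v}_{g,*})=0$; the equation of state \eqref{eq1.2simplified-d} with $T_{g,*}=T_\infty$ (from \eqref{eq1.3simplified-c} and the fact that the steady heat equation \eqref{eq1.2simplified-c} with constant Dirichlet data $T_\infty$ on $\pd\Om_*$ and no interior sources forces $T_{g,*}\equiv T_\infty$ by the maximum principle) and constant $p_{g,*}$ forces $\rho_{g,*} = p_{g,*}/(\Rg T_\infty)$, a constant. Then $\div(\rho_{g,*}{\bf v}_{g,*})=\rho_{g,*}\div{\bf v}_{g,*}=0$ gives $\div{\bf v}_{g,*}=0$.

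For Part (2), the hard work is done: any spherical equilibrium has $\Om_*=B_{R_*}$, $\rho_{g,*}$ constant, $T_{g,*}=T_\infty$, and then \eqref{eq1.3simplified-b} on the sphere reads $p_{g,*} = p_{\infty,*} + 2\si/R_*$, while $\rho_{g,*} = p_{g,*}/(\Rg T_\infty)$, reproducing \eqref{eq-equilibrium-b}--\eqref{eq-equilibrium-c} and showing that the single scalar $R_*$ determines the whole solution; the constraint $\textrm{Mass} = \tfrac{4}{3}\pi R_*^3\,\rho_{g,*}[R_*] = \tfrac{4}{3}\pi R_*^3 (p_{\infty,*}+2\si/R_*)/(\Rg T_\infty) =: \mathcal{M}(R_*)$ is then checked to be a strictly increasing (hence invertible) function of $R_*\in(0,\infty)$ onto $(0,\infty)$, which defines $R_*[M]$; note no radiation condition is needed here because the liquid temperature is simply prescribed as the constant $T_\infty$ in \eqref{eq1.1simplified-c}, unlike in \eqref{eq-1.1}--\eqref{eq-1.3}. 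Part (3) then follows because $\mathcal{M}$ is a smooth function of $R_*$ with $\mathcal{M}'(R_*) = \tfrac{4\pi}{\Rg T_\infty}(3 R_*^2 p_{\infty,*} + 4\si R_*)>0$, so $R_*[M]=\mathcal{M}^{-1}(M)$ is smooth by the inverse function theorem, and $\rho_*[M] = (p_{\infty,*}+2\si/R_*[M])/(\Rg T_\infty)$ is a smooth composition.

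The main obstacle is Part (1), and within it the rigidity step for the liquid: making the energy identity fully rigorous on the unbounded exterior domain $\R^3\setminus\Om_*$ requires care with the boundary terms at infinity, which is precisely where the hypotheses ${\bf v}_{l,*} = O(|x|^{-2})$ and $\nb{\bf v}_{l,*}$ bounded are consumed --- one truncates to $B_\rho\setminus\Om_*$, integrates by parts, and shows the sphere-at-radius-$\rho$ contributions from the viscous, pressure, and convective terms all vanish as $\rho\to\infty$ at the stated decay rates (using also that $p_{l,*}-p_{\infty,*} = O(|x|^{-1})$ or better, which follows a posteriori from the Poisson equation for $p_{l,*}$ with the given source decay). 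A secondary subtlety is justifying $T_{g,*}\equiv T_\infty$ and $p_{g,*}=$ const before invoking Alexandrov; both are routine maximum-principle / reduction-of-order arguments but must be stated. Everything else is bookkeeping.
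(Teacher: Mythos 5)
Your overall architecture matches the paper's: an energy identity for the exterior liquid flow, Alexandrov's theorem for the shape once the stress balance collapses to a CMC condition, and a separate argument for the gas; Parts (2)--(3) via the algebraic system $\frac{4\pi}{3}\rho_*R_*^3=M$, $\Rg T_\infty\rho_*=p_{\infty,*}+2\si/R_*$ and smooth dependence of the root on $M$. However, there is a genuine gap in your liquid rigidity step. When you integrate $\nu_l\De{\bf v}_{l,*}\cdot{\bf v}_{l,*}$ by parts over $B_\rho\setminus\Om_*$, the inner boundary produces the term $\nu_l\int_{\pd\Om_*}\hat{\bf n}\cdot\nb{\bf v}_{l,*}\cdot{\bf v}_{l,*}\,dS$, which is \emph{not} harmless: the kinematic condition only kills the normal component of ${\bf v}_{l,*}$ on $\pd\Om_*$, while the tangential component is unconstrained (there is no no-slip condition here), so this term is sign-indefinite and does not vanish. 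The paper's proof handles it by dotting the stress-balance equation \eqref{eq1.3simplified-b} with ${\bf v}_{l,*}$ and using ${\bf v}_{l,*}\cdot\hat{\bf n}=0$ to conclude $\hat{\bf n}\cdot\mathbb{D}({\bf v}_{l,*})\cdot{\bf v}_{l,*}=0$ on $\pd\Om_*$ (this is the essential place where $\mu_l\neq0$ in \eqref{eq1.3simplified-b} is consumed, not merely the presence of the bulk dissipation term), then converts the remaining boundary integral back into a volume integral via the divergence theorem. The resulting identity is $\int_{\R^3\setminus\Om_*}|\nb{\bf v}_{l,*}+(\nb{\bf v}_{l,*})^\top|^2=0$, not $\int|\nb{\bf v}_{l,*}|^2=0$; the conclusion is that ${\bf v}_{l,*}$ is a rigid motion ${\bf v}_0+\boldsymbol{\om}\times(x-x_0)$, which the decay hypothesis then forces to vanish. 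Your version, which drops the boundary term and concludes ${\bf v}_{l,*}$ is constant, does not close.

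A secondary issue is the gas temperature. At equilibrium, \eqref{eq1.2simplified-c} is not a source-free Laplace equation for $T_{g,*}$: it reads $\rho_{g,*}T_{g,*}\,{\bf v}_{g,*}\cdot\nb s_*=\ka_g\De T_{g,*}$, and ${\bf v}_{g,*}$ need not vanish (the paper's Remark \ref{rmk-nonunique-gas} exhibits nontrivial rotational equilibrium gas flows). Your maximum-principle argument can be salvaged, but only after rewriting the advection term: since $p_{g,*}$ is constant, $\nb s_*=c_v\ga\,\nb T_{g,*}/T_{g,*}$, so the equation becomes the drift-diffusion equation $\ka_g\De T_{g,*}-c_v\ga\rho_{g,*}{\bf v}_{g,*}\cdot\nb T_{g,*}=0$ with constant Dirichlet data, to which the maximum principle applies. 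The paper instead avoids the maximum principle entirely, deriving $\int_{\Om_*}|\nb\rho_{g,*}|^2=0$ from a family of integral identities obtained by testing the reduced equation $\frac{\ka_g}{\ga c_v}\De(1/\rho_{g,*})=-{\bf v}_{g,*}\cdot\nb\log\rho_{g,*}$ against functions $h(\rho_{g,*})$ (specifically $h(\rho)=\rho^3$), after first showing $\int_{\pd\Om_*}\nb\rho_{g,*}\cdot\hat{\bf n}\,dS=0$. Either route works once the advection term is accounted for, but as written your step is not justified. Finally, note that Part (2) should not be deduced from Part (1) (which assumes $\mu_l\neq0$, $\si\neq0$); under the a priori assumption of spherical symmetry the liquid velocity is killed directly by incompressibility plus the kinematic condition ($v_l=a/r^2$, $a=0$), with no viscosity hypothesis needed.
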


\begin{remark}\label{rmk-nonunique-gas}
Part (1) of Proposition \ref{prop-equilib} is the uniqueness of \eqref{eq-equilibrium-a} for the equilibrium liquid flow and bubble shape. 
It does not imply the uniqueness of \eqref{eq-equilibrium-b}--\eqref{eq-equilibrium-c} for the gas phase.
Indeed, replacing ${\bf v}_{g,*}={\bf 0}$ in \eqref{eq-equilibrium-b} with any non-trivial solenoidal vector field in $B_{R_*}$ yields another steady state solution to the approximate system \eqref{eq1.1simplified}--\eqref{eq1.3simplified}. 
Recall that a vector field ${\bf u}$ in $\Om$ is solenoidal if $\div{\bf u}=0$ and ${\bf u}\cdot\hat{\bf n}|_{\pd\Om}=0$.
One can choose, for example, ${\bf v}_{g,*}(x_1,x_2,x_3) = (-x_2,x_1,0)$.
The example is ruled out by the spherically symmetric assumption in Part (1) of Proposition \ref{prop-equilib}.
Other possible way to show the uniqueness of \eqref{eq-equilibrium-b}--\eqref{eq-equilibrium-c} is to impose irrotational assumption.
The nonuniqueness is due to the absence of viscosity for the gas in the approximated stress balance equation \eqref{eq1.3simplified-b}.
\end{remark}

\begin{remark}
When the liquid is inviscid, {\it i.e.}, $\mu_l=\nu_l=0$, in Part (1) of Proposition \ref{prop-equilib}, we still expect ${\bf v}_{l,*}\equiv{\bf 0}$ because of the far-field condition $\lim_{|x|\to\infty} {\bf v}_{l,*}(x) = {\bf 0}$.
In this case, a Liouville-type theorem for the stationary Euler equations in a three-dimensional exterior domain with slip boundary condition is needed.
However, such Liouville-type result is unavailable to our best knowledge. 
For survey on related problems, see, for example, \cite[I.2.1]{Galdi-book1994} and \cite{Bang-thesis2021}. 
\end{remark}

\begin{proof}[Proof of Proposition \ref{prop-equilib}]

We first prove Part (1) concerning the uniqueness of the equilibrium \eqref{eq-equilibrium-a} of the approximate system \eqref{eq1.1simplified}--\eqref{eq1.3simplified}. 
Note that steady-state solutions of \eqref{eq1.1simplified}--\eqref{eq1.3simplified} solve
 \begin{subequations}\label{eq1.1simplified-equilib}
\begin{empheq}[right=\empheqrbrace\text{in $\R^3\setminus \Om_*$,}]{align}
{\bf 0} =&\, \nu_l \De {\bf v}_{l,*} - {\bf v}_{l,*}\cdot\nb{\bf v}_{l,*} - \dfrac1{\rho_l}\, \nb p_{l,*}, \label{eq1.1simplified-a-equilib}\\
\div {\bf v}_{l,*} =&\, 0, \label{eq1.1simplified-b-equilib}\\
T_{l,*}(x) = &\, T_\infty,\quad \text{a prescribed constant},\label{eq1.1simplified-c-equilib}
\end{empheq}
\end{subequations}
 \begin{subequations}\label{eq1.2simplified-equilib}
\begin{empheq}[right=\empheqrbrace\text{in $\Om_*$,}]{align}
\div(\rho_{g,*}{\bf v}_{g,*}) =&\, 0,\label{eq1.2simplified-a-equilib}\\
p_{g,*}(x) =&\, p_{g,*},\quad \text{a constant}, \label{eq1.2simplified-b-equilib}\\
\rho_{g,*} T_{g,*} \bke{ {\bf v}_{g,*}\cdot\nb s_* } =&\, \div(\ka_g\nb T_{g,*}), \label{eq1.2simplified-c-equilib}\\
p_{g,*} =&\, \Rg T_{g,*} \rho_{g,*} , \label{eq1.2simplified-d-equilib}\\
s_* =&\, c_v \log\bke{\dfrac{p_{g,*}}{\rho_{g,*}^\ga} } \label{eq1.2simplified-e-equilib},
\end{empheq}
\end{subequations}
 \begin{subequations}\label{eq1.3simplified-equilib}
\begin{empheq}[right=\empheqrbrace\text{on $\pd\Om_*$,}]{align}
{\bf v}_{l,*}\cdot\hat{\bf n} = {\bf v}_{g,*}\cdot\hat{\bf n} = 0, \label{eq1.3simplified-a-equilib}\\
p_{g,*} \hat{\bf n} - p_{l,*} \hat{\bf n} + 2\mu_l \hat{\bf n} \cdot\mathbb{D}({\bf v}_{l,*}) = \si \hat{\bf n} (\nb_S\cdot \hat {\bf n}), \label{eq1.3simplified-b-equilib}\\
T_{g,*} = T_\infty, \label{eq1.3simplified-c-equilib}
\end{empheq}
\end{subequations}
and the far-field velocity and pressure are
\EQ{\label{eq-far-field-pressure-equilib}
\lim_{|x|\to\infty} {\bf v}_{l,*}(x) = {\bf 0},\qquad\qquad
\lim_{|x|\to\infty} p_{l,*}(x) = p_{\infty,*}.
}

For $r>0$ sufficiently large, multiplying the equation \eqref{eq1.1simplified-a-equilib} by ${\bf v}_{l,*}$, integrating over $B_r\setminus\Om_*$, using integration by parts formula and $\div {\bf v}_{l,*}=0$, we obtain
\EQ{\label{eq-v*-energy-alt}
0 &= \nu_l \int_{B_r\setminus\Om_*} \De{\bf v}_{l,*}\cdot{\bf v}_{l,*}\, dx - \int_{B_r\setminus\Om_*} {\bf v}_{l,*}\cdot\nb{\bf v}_{l,*}\cdot{\bf v}_{l,*}\, dx - \frac1{\rho_l} \int_{B_r\setminus\Om_*} \nb p_{l,*}\cdot{\bf v}_{l,*}\\
&= - \nu_l \int_{B_r\setminus\Om_*} |\nb{\bf v}_{l,*}|^2\, dx + \nu_l \int_{\pd\Om_*} (-\hat{\bf n})\cdot\nb{\bf v}_{l,*}\cdot{\bf v}_{l,*}\, dS - \int_{\pd\Om_*} \frac{|{\bf v}_{l,*}|^2}2\, {\bf v}_{l,*}\cdot(-\hat{\bf n})\, dS\\
&\quad - \frac1{\rho_l} \int_{\pd\Om_*} p_{l,*} {\bf v}_{l,*}\cdot(-\hat{\bf n})\, dS 
+ \nu_l\int_{\pd B_r} \hat{\bf n}_{_{\pd B_r}}\cdot\nabla {\bf v}_{l,*}\cdot  {\bf v}_{l,*}\, dS
 - \int_{\pd B_r} \frac{|{\bf v}_{l,*}|^2}2\, {\bf v}_{l,*}\cdot \hat{\bf n}_{_{\pd B_r}}\, dS \\
&\quad - \frac1{\rho_l} \int_{\pd B_r} p_{l,*} {\bf v}_{l,*}\cdot\hat{\bf n}_{_{\pd B_r}}\, dS.
}
The third and fourth terms of \eqref{eq-v*-energy-alt} on the right hand side vanish since ${\bf v}_{l,*}\cdot\hat{\bf n}|_{\pd\Om_*}=0$.
Consider now the last three terms in \eqref{eq-v*-energy-alt}. The first two tend to zero as $r\to\infty$
using the hypotheses that ${\bf v}_{l,*}(x) = O(|x|^{-2})$ and $\lim_{|x|\to\infty} \nabla{\bf v}_{l,*}(x) = \mathbb O$.
Finally, 
the last term in \eqref{eq-v*-energy-alt} also tends to zero as $r\to\infty$. 
Indeed, since $\lim_{|x|\to\infty}{\bf v}_{l,*}(x) = O(|x|^{-2})$,
\EQN{
\frac1{\rho_l} \lim_{r\to\infty} \int_{\pd B_r} p_{l,*} {\bf v}_{l,*}\cdot\hat{\bf n}\, dS 
&= \frac{p_{\infty,*}}{\rho_l} \lim_{r\to\infty} \int_{\pd B_r} {\bf v}_{l,*}\cdot\hat{\bf n}_{_{\pd B_r}}\, dS + \frac1{\rho_l}  \lim_{r\to\infty} \int_{\pd B_r} (p_{l,*} - p_{\infty,*}){\bf v}_{l,*}\cdot\hat{\bf n}_{_{\pd B_r}}\, dS\\
&= \bke{ \frac{p_{\infty,*}}{\rho_l}  \int_{\R^3\setminus\Om_*} \div{\bf v}_{l,*}\, dx - \frac1{\rho_l}\, p_{\infty,*} \int_{\pd \Om_*} {\bf v}_{l,*}\cdot(-\hat{\bf n})\, dS} + 0 = 0
}
since ${\bf v}_{l,*}$ is solenoidal: $\div {\bf v}_{l,*} = 0$ and ${\bf v}_{l,*}\cdot\hat{\bf n}|_{\pd\Om_*}=0$.
Thus, by taking $r\to\infty$, \eqref{eq-v*-energy-alt} becomes 
\EQ{\label{eq-v*-energy-1}
0 = - \nu_l \int_{\R^3\setminus\Om_*} |\nb{\bf v}_{l,*}|^2\, dx - \nu_l\int_{\pd\Om_*} \hat{\bf n}\cdot\nb{\bf v}_{l,*}\cdot{\bf v}_{l,*}\, dS.
}
Multiplying the stress balance equation \eqref{eq1.3simplified-b-equilib} by ${\bf v}_{l,*}$ and using ${\bf v}_{l,*}\cdot\hat{\bf n}= 0$ yield $\hat{\bf n}\cdot\mathbb{D}({\bf v}_{l,*})\cdot{\bf v}_{l,*}|_{\pd\Om_*} = 0$ since $\mu_l\neq0$.
Using the expression of the deformation tensor $\mathbb{D}({\bf v}_{l,*}) = ( \nb{\bf v}_{l,*} + (\nb{\bf v}_{l,*})^\top )/2$,
\EQ{\label{eq-sbe-integral}
0 = 2 \hat{\bf n}\cdot\mathbb{D}({\bf v}_{l,*})\cdot{\bf v}_{l,*} 
&= \hat{\bf n}\cdot\bke{\nb{\bf v}_{l,*} + (\nb{\bf v}_{l,*})^\top}\cdot{\bf v}_{l,*}\\
&= \hat{\bf n}\cdot\nb{\bf v}_{l,*}\cdot{\bf v}_{l,*} + {\bf v}_{l,*}\cdot\nb{\bf v}_{l,*}\cdot\hat{\bf n}.
}
Using \eqref{eq-sbe-integral}, \eqref{eq-v*-energy-1} can be written as
\EQ{\label{eq-v*-energy-2}
0 = - \nu_l \int_{\R^3\setminus\Om_*} |\nb{\bf v}_{l,*}|^2\, dx + \nu_l\int_{\pd\Om_*}{\bf v}_{l,*}\cdot\nb{\bf v}_{l,*} \cdot \hat{\bf n}\, dS.
}
Since $-\hat{\bf n}$ is the outward normal of $\R^3\setminus\Om_*$ on $\pd\Om_*$, by \eqref{eq-v*-energy-2} and the divergence theorem,
\EQN{
0 &= - \nu_l \int_{\R^3\setminus\Om_*} |\nb{\bf v}_{l,*}|^2\, dx - \nu_l \int_{\pd\Om_*} {\bf v}_{l,*}\cdot\nb{\bf v}_{l,*}\cdot(-\hat{\bf n})\, dS\\
&= - \nu_l \int_{\R^3\setminus\Om_*} |\nb{\bf v}_{l,*}|^2\, dx - \nu_l \int_{\R^3\setminus\Om_*} \div({\bf v}_{l,*}\cdot\nb{\bf v}_{l,*})\, dx\\
&= - \nu_l \int_{\R^3\setminus\Om_*} |\nb{\bf v}_{l,*}|^2\, dx - \nu_l \int_{\R^3\setminus\Om_*} \nb{\bf v}_{l,*} : (\nb{\bf v}_{l,*})^\top\, dx,\quad \text{where } A:B := \sum_{i,j}A_{ij}B_{ij} = {\rm tr}(AB^\top),\\
&= - \nu_l \int_{\R^3\setminus\Om_*} \nb{\bf v}_{l,*} : \bke{\nb{\bf v}_{l,*} + (\nb{\bf v}_{l,*})^\top} dx 
 = - \nu_l \int_{\R^3\setminus\Om_*} {\rm tr}\bkt{ \nb{\bf v}_{l,*}\bke{\nb{\bf v}_{l,*} + (\nb{\bf v}_{l,*})^\top} } dx.
}
For any square matrix $A$, decomposing into symmetric and anti-symmetric parts we have:\\
  $A(A+A^\top) =  \frac{1}{2}(A+A^\top)^2 + \frac{1}{2}(A-A^\top)(A+A^\top)$.
Linearity of the trace and the identities ${\rm tr}(AC) = {\rm tr}(CA)$ and ${\rm tr}(A) = {\rm tr}(A^\top)$, then imply 
${\rm tr}\left[A(A+A^\top)\right] = \frac{1}{2}{\rm tr}\left[(A+A^\top)^2\right] = \frac{1}{2} |A+A^\top|^2$ since $A+A^\top$ is symmetric. Hence,
\EQN{ \frac12 \int_{\R^3\setminus\Om_*} | \nb{\bf v}_{l,*} + (\nb{\bf v}_{l,*})^\top |^2\, dx = 0.}
Therefore, $\nb{\bf v}_{l,*} + (\nb{\bf v}_{l,*})^\top\equiv0$ in $\R^3\setminus\Om_*$.
Integrating directly the equation $\pd_i({\bf v}_{l,*})_j + \pd_j({\bf v}_{l,*})_i = 0$, $i,j=1,2,3$, we obtain that for some vector ${\bf v}_0, \boldsymbol{\om} \in\R^3$ and some point $x_0\in\R^3$ 
\[
{\bf v}_{l,*}(x) = {\bf v}_0 + \boldsymbol{\om}\times(x-x_0).
\]
This implies ${\bf v}_{l,*}\equiv{\bf 0}$ since ${\bf v}_{l,*}(x)\to{\bf 0}$ as $|x|\to0$.
Since ${\bf v}_{l,*}\equiv{\bf 0}$, \eqref{eq1.1simplified-a-equilib} implies that $\nb p_{l,*} = {\bf 0}$, and so $p_{l,*} \equiv p_{\infty}$ is a constant.
Since $\mathbb D({\bf v}_{l,*}) = (\nb{\bf v}_{l,*} + (\nb{\bf v}_{l,*})^\top)/2 = 0$, the stress balance equation \eqref{eq1.3simplified-b-equilib} becomes
\[
p_{g,*} - p_{l,*} = \si \nb_S\cdot \hat {\bf n}\ \text{ on }\pd\Om_*.
\] 
Since both $p_{g,*}$ and $p_{l,*}$ are constant, $\pd\Om_*$ is a closed constant-mean-curvature (CMC) surface.
By Alexandrov's Theorem \cite{Alexandrov-AMPA1962}, $\Om_*$ must be a sphere.

We now deal with the system \eqref{eq1.2simplified-equilib} for the gas. 
Plugging \eqref{eq1.2simplified-e-equilib} into \eqref{eq1.2simplified-c-equilib} and using \eqref{eq1.2simplified-b-equilib} and \eqref{eq1.2simplified-d-equilib}, we have
\EQ{\label{eq1.2simplified-c+e-equilib}
\frac{\ka_g}{\ga c_v} \De\bke{\frac1{\rho_{g,*}}} = -{\bf v}_{g,*}\cdot\nb \log\rho_{g,*}.
}
Integrating the above equation over $\Om_*$, applying integration by parts formula, and using the boundary condition \eqref{eq1.3simplified-a-equilib}, we derive
\[
\int_{\Om_*} \De\bke{\frac1{\rho_{g,*}}} dx = 0.
\]
In view of \eqref{eq1.2simplified-d-equilib} and \eqref{eq1.3simplified-c-equilib}, $\rho_{g,*}|_{\pd\Om_*}$ is a constant.
So
\[
0 = \int_{\Om_*} \De\bke{\frac1{\rho_{g,*}}} dx = \int_{\pd\Om_*} \nb\bke{\frac1{\rho_{g,*}}}\cdot\hat{\bf n}\, dS = -\frac1{(\rho_{g,*}|_{\pd\Om_*})^2} \int_{\pd\Om_*} \nb\rho_{g,*}\cdot\hat{\bf n}\, dS,
\]
implying $\int_{\pd\Om_*} \nb\rho_{g,*}\cdot\hat{\bf n}\, dS = 0$.
Hence, for any $f,g\in C^\infty$,
\EQ{\label{eq-fg-nabla-rho}
\int_{\Om_*} g(\rho_{g,*})& \De\bke{f(\rho_{g,*})} dx
= -\int_{\Om_*} \nb\bke{g(\rho_{g,*})}\cdot \nb\bke{f(\rho_{g,*})} dx + \int_{\pd\Om_*} g(\rho_{g,*})\nb\bke{f(\rho_{g,*})}\cdot\hat{\bf n}\, dS\\
&= - \int_{\Om_*} g'(\rho_{g,*}) f'(\rho_{g,*}) |\nb \rho_{g,*}|^2\, dx + g(\rho_{g,*}|_{\pd\Om_*}) f'(\rho_{g,*}|_{\pd\Om_*}) \int_{\pd\Om_*} \nb\rho_{g,*}\cdot\hat{\bf n}\, dS\\
&= - \int_{\Om_*} g'(\rho_{g,*}) f'(\rho_{g,*}) |\nb \rho_{g,*}|^2\, dx.
}
Moreover, using the steady-state continuity equation \eqref{eq1.2simplified-a-equilib} and the boundary condition \eqref{eq1.3simplified-a-equilib}, we have for any $h\in C^\infty$ that
\EQN{
\int_{\Om_*} h(\rho_{g,*}) \div{\bf v}_{g,*}\, dx 
&= - \int_{\Om_*} \nb\bke{h(\rho_{g,*})}\cdot{\bf v}_{g,*}\, dx + \int_{\pd\Om_*} h(\rho_{g,*}){\bf v}_{g,*}\cdot\hat{\bf n}\, dS\\
&= - \int_{\Om_*} h'(\rho_{g,*})\nb\rho_{g,*}\cdot{\bf v}_{g,*}\, dx
= \int_{\Om_*} h'(\rho_{g,*})\rho_{g,*}\div{\bf v}_{g,*}\, dx,
}
or equivalently,
\[
0 = \int_{\Om_*} \bke{h(\rho_{g,*}) - h'(\rho_{g,*})\rho_{g,*}}\div{\bf v}_{g,*}\, dx.
\]
Using \eqref{eq1.2simplified-a-equilib}, \eqref{eq1.2simplified-c+e-equilib}, and \eqref{eq-fg-nabla-rho} with $g(\rho) = h'(\rho) - h''(\rho)\rho$, $f(\rho) = 1/\rho$, we obtain
\EQN{
0&= - \int_{\Om_*} \bke{h(\rho_{g,*}) - h'(\rho_{g,*})\rho_{g,*}} \frac{\nb\rho_{g,*}}{\rho_{g,*}} \cdot{\bf v}_{g,*}\, dx
= - \int_{\Om_*} \bke{h(\rho_{g,*}) - h'(\rho_{g,*})\rho_{g,*}} \nb\log\rho_{g,*}\cdot{\bf v}_{g,*}\, dx\\
&= \frac{\ka_g}{\ga c_v} \int_{\Om_*} \bke{h(\rho_{g,*}) - h'(\rho_{g,*})\rho_{g,*}} \De\bke{\frac1{\rho_{g,*}}} dx
= - \frac{\ka_g}{\ga c_v} \int_{\Om_*} \frac{h''(\rho_{g,*})}{\rho_{g,*}} |\nb\rho_{g,*}|^2\, dx.
}
Simply choose $h(\rho) = \rho^3$ to derive $\int_{\Om_*} |\nb\rho_{g,*}|^2\, dx = 0$.
This implies $\rho_{g,*}$ is constant and thus $\div{\bf v}_{g,*} = {\bf 0}$ by \eqref{eq1.2simplified-a-equilib}. Part (1) of Proposition \ref{prop-equilib} is asserted.

To derive the spherically symmetric equilibria of the approximate system \eqref{eq1.1simplified}--\eqref{eq1.3simplified}, we make use of Proposition \ref{prop:reduction}  (below), which 
presents a reduction of  \eqref{eq1.1simplified}--\eqref{eq1.3simplified}, in the spherically symmetric case, to an equivalent system for  $\rho$ and $R$, where $\rho = \rho_g$; see \eqref{eq-bv-3.10prime}--\eqref{eq-bv-3.16prime} below. All other state variables 
may be derived from these; see Remark \ref{red-ext}. It therefore suffices to seek time-independent solutions of  \eqref{eq-bv-3.10prime}--\eqref{eq-bv-3.16prime}. Setting $\partial_t\rho=\partial_tR=0$ 
we obtain from \eqref{eq-bv-3.10prime} and \eqref{eq-bv-3.15prime} that $R(t)\equiv R_*$ (constant equilibrium radius)  and 
\[
\De \log\rho = 0\ \text{ in }B_{R_*},\ \text{ and }\ 
\pd_r\rho(R_*) = 0.
\]
Therefore, $\rho(r)\equiv \rho_*$ for $0\le r\le R_*$ (constant equilibrium density). 
Evaluating \eqref{eq-bv-3.16prime} at $r=R_*$ and using that $\rho(R_*)=\rho_*$ we conclude
\[
\rho_* = \frac1{\Rg T_\infty}\bke{p_{\infty,*}+\frac{2\si}{R_*}}.
\]
The mass of the equilibrium gas bubble of density $\rho_*$ and radius $R_*$ is given by
\[
M = \int_{B_{R_*}} \rho_*\, dx 
= \frac{4\pi}3 \rho_* R_*^3.
\]
Therefore, for fixed mass $M$, the steady state $(\rho_*,R_*)$ is determined by the simultaneous
algebraic equations 
:
  \begin{subequations}\label{eq-steadystate}
\begin{align}
\dfrac{4\pi}3 \rho_* R_*^3 &= M, \label{eq-steadystate-a}\\
\Rg T_\infty  \rho_* &= 
 p_{\infty,*} + \dfrac{2\si}{R_*}. 
\label{eq-steadystate-b}
\end{align}
\end{subequations}
Therefore, the equilibrium radius $R_*$ is given by a solution to the cubic equation
\EQ{\label{eq-cubic}
p_{\infty,*} R_*^3 + 2\si R_*^2 - \frac{3\Rg T_\infty M}{4\pi} = 0.
}
It is readily seen that for each fixed $M>0$, the cubic \eqref{eq-cubic} has a unique positive root $R_*$. 
This choice of $R_*$ determines the equilibria gas density  and, via the relation $p_g=\Rg T_g \rho_g$, the gas pressure:
\[ \rho_* = \frac1{\Rg T_\infty}\bke{p_{\infty,*}+\frac{2\si}{R_*}},\quad p_* = \Rg T_\infty \rho_* = 
p_{\infty,*} + \dfrac{2\si}{R_*}.
\]

Once we obtain the equilibrium $(\rho_*,R_*,p_*)$, we can recover, using the formulas in \eqref{eq-reconstruct}, the corresponding steady state solution to the system \eqref{eq1.1simplified}--\eqref{eq1.3simplified} for the gas velocity ${\bf v}_g$, the gas temperature $T_g$, the specific entropy $s$ of the gas, the liquid velocity ${\bf v}_l$, and the liquid pressure $p_l$:
\[
{\bf v}_{g,*} = {\bf v}_{l,*} = {\bf 0},\quad
T_{g,*} = T_\infty,\quad
s_* = c_v \log\bke{\frac{p_*}{\rho_*^\ga}},\quad
p_{l,*} = p_{\infty,*}.
\]
Summarizing, we have derived the spherically symmetric equilibrium stated in \eqref{eq-equilibrium}. This proves Part (2). Part (3) of Proposition \ref{prop-equilib} follows from the smooth dependence of the simple roots of a given polynomial on its coefficients. 
This completes the proof of Proposition \ref{prop-equilib}.

\end{proof}

\begin{remark}
The equilibrium radius $R_*$ can be expressed explicitly in terms of $p_{\infty,*}, \si, \Rg , T_\infty, M$ by using the solution formula for the cubic equation \eqref{eq-cubic}.
\end{remark}

Below, in Proposition \ref{prop:reduction}, we shall reduce the study of spherically symmetric solutions to a closed
 system of equations for the gas density, $\rho(r,t)$ and the bubble radius $R(t)$, together with a condition on $\rho(R(t),t)$, the gas density at the free boundary.  Our proof of asymptotic stability is carried out in this setting.
We shall use the following continuity result:

\begin{proposition}\label{prop:contin} 
Fix a constant $p_{\infty,*}>0$.
Fix a density-radius pair $(\rho_0(x),R_0)\in L^\infty\times\R_+$ and let $M_0={\rm Mass}[\rho_0,R_0]$ denote the mass of the corresponding bubble. Let $(\rho_*[M_0], R_*[M_0])$ denote the equilibrium radius and density, given by Proposition \ref{prop-equilib},  for which ${\rm Mass}[\rho_*[M_0], R_*[M_0]] = M_0 = {\rm Mass}[\rho_0,R_0]$. 
Then, any equilibrium $(\rho_*[M_*], R_*[M_*])$, $M_*>0$, close to $(\rho_0(x),R_0)$ in $L^\infty\times\R$ is also close to 
  $(\rho_*[M_0], R_*[M_0])$. 
Even more strongly, there is a constant $C=C(R_*,\rho_0, R_0)>0$ such that
 \begin{equation}
  | R_*[M_0] -R_*[M_*]|  + | \rho_*[M_0] -\rho_*[M_*]|  \le C \left(|R_0-R_*[M_*]|\ +\ \|\rho_0-\rho_*[M_*]\|_{L^\infty(B_{R_0})}\right).\label{Rrho-cont}
  \end{equation}
 \end{proposition}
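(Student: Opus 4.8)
The plan is to reduce the claim to a statement about the cubic \eqref{eq-cubic}, whose positive root depends smoothly on the mass $M$. Recall that by Proposition \ref{prop-equilib}, the equilibrium pair $(\rho_*[M], R_*[M])$ is determined by the simultaneous equations \eqref{eq-steadystate-a}--\eqref{eq-steadystate-b}; eliminating $\rho_*$ gives the cubic $p_{\infty,*}R_*^3 + 2\si R_*^2 = 3\Rg T_\infty M/(4\pi)$, which has a unique positive root. Write $F(R) := p_{\infty,*}R^3 + 2\si R^2$; since $F'(R) = 3p_{\infty,*}R^2 + 4\si R > 0$ for $R>0$, the inverse function $M \mapsto R_*[M]$ is $C^\infty$ with a strictly positive, locally bounded-below derivative, and $\rho_*[M] = \tfrac1{\Rg T_\infty}(p_{\infty,*} + 2\si/R_*[M])$ is likewise $C^\infty$ in $M$ (this is Part (3) of Proposition \ref{prop-equilib}). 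Hence the map $M \mapsto (R_*[M], \rho_*[M])$ is locally Lipschitz: for $M, M'$ in a fixed compact neighborhood of $M_0$,
\begin{equation*}
|R_*[M] - R_*[M']| + |\rho_*[M] - \rho_*[M']| \le C_1 |M - M'|,
\end{equation*}
with $C_1$ depending only on $p_{\infty,*}, \si, \Rg, T_\infty$ and on a lower bound for $R_*$ over that neighborhood (equivalently on $R_*[M_0]$ and the size of the neighborhood).

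Next I would control $|M_0 - M_*|$ by the right-hand side of \eqref{Rrho-cont}. By definition $M_* = {\rm Mass}[\rho_*[M_*], R_*[M_*]] = \tfrac{4\pi}{3}\rho_*[M_*]\,R_*[M_*]^3$ and $M_0 = {\rm Mass}[\rho_0, R_0] = \int_{B_{R_0}}\rho_0(x)\,dx$. Write
\begin{equation*}
M_0 - M_* = \int_{B_{R_0}}\bigl(\rho_0(x) - \rho_*[M_*]\bigr)\,dx + \rho_*[M_*]\Bigl(|B_{R_0}| - |B_{R_*[M_*]}|\Bigr).
\end{equation*}
The first term is bounded by $|B_{R_0}|\,\|\rho_0 - \rho_*[M_*]\|_{L^\infty(B_{R_0})}$; the second by $\rho_*[M_*]\cdot\tfrac{4\pi}{3}\bigl|R_0^3 - R_*[M_*]^3\bigr| \le C_2\,|R_0 - R_*[M_*]|$, using $|R_0^3 - R_*[M_*]^3| \le (R_0^2 + R_0 R_*[M_*] + R_*[M_*]^2)|R_0 - R_*[M_*]|$ and the fact that $\rho_*[M_*]$ and $R_*[M_*]$ stay bounded once $(\rho_*[M_*], R_*[M_*])$ is close to $(\rho_0, R_0)$ — which is exactly the regime of the statement. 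Thus
\begin{equation*}
|M_0 - M_*| \le C_3\Bigl(|R_0 - R_*[M_*]| + \|\rho_0 - \rho_*[M_*]\|_{L^\infty(B_{R_0})}\Bigr),
\end{equation*}
with $C_3$ depending on $R_0$, $R_*[M_0]$ and $\rho_0$ (the latter entering only through an upper bound, e.g. via $M_0$).

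Combining the two displays gives \eqref{Rrho-cont} with $C = C_1 C_3$, provided $(\rho_*[M_*], R_*[M_*])$ lies in the fixed neighborhood where both Lipschitz bounds hold; shrinking the "closeness" hypothesis if necessary (or noting that the bound is vacuous otherwise) handles the general case, and in particular yields the qualitative "close implies close" assertion. The only mildly delicate point — and the place I would be most careful — is the bookkeeping of which quantities the constant is allowed to depend on: the neighborhood of $M_0$ on which $M \mapsto R_*[M]$ is Lipschitz must be chosen using $R_*[M_0]$ alone (so that $R_*$ is bounded below there), and one must check that "$(\rho_*[M_*], R_*[M_*])$ close to $(\rho_0, R_0)$" indeed forces $M_*$ into that neighborhood, which follows from continuity of the mass functional together with the already-established continuity of $M \mapsto (R_*[M], \rho_*[M])$. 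No genuine obstacle is expected beyond this; the estimate is elementary once the cubic is inverted.
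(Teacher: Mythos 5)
Your proposal is correct and follows essentially the same route as the paper: reduce both differences to $|M_0-M_*|$ via the (Lipschitz) dependence of the root of the cubic \eqref{eq-cubic} on $M$, then bound $|M_0-M_*|$ by expanding $M_0=\int_{B_{R_0}}\rho_0$ around $\rho_*[M_*]$ on $B_{R_*[M_*]}$. The only cosmetic difference is that the paper obtains the Lipschitz bound for $M\mapsto(R_*[M],\rho_*[M])$ explicitly by subtracting the two cubic (and mass) relations rather than invoking smoothness of the inverse of $F$, which makes the constant explicit and global in $M_*$ rather than confined to a neighborhood of $M_0$.
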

 
\begin{proof} Let $(\rho_0(x),R_0)$, $M_0$, and $(\rho_*[M_*], R_*[M_*])$  be as hypothesized. 
We first bound the difference $|R_*[M_0]-R_*[M_*]|$. 
The equilibrium radii $R_*[M_*]$ and $R_*[M_0]$ satisfy cubic equations with mass parameters $M_*$ and $M_0$, respectively:
\begin{align*}
 p_{\infty,*}\ R_*[M_*]^3 + 2\si  R_*[M_*]^2 - \frac{3\Rg M_*T_\infty}{4\pi} &= 0\ \ \textrm{and}\ \ p_{\infty,*}\ R_*[M_0]^3 + 2\si R_*[M_0]^2 - \frac{3\Rg M_0T_\infty}{4\pi} = 0.
  \end{align*}
Taking the difference of these two equations gives:
\EQN{
p_{\infty,*} (R_*[M_*] - R_*[M_0])& (R_*[M_*]^2 + R_*[M_*] R_*[M_0] + R_*[M_0]^2)\\ 
& +2\si(R_*[M_*] - R_*[M_0])(R_* + R_*[M_0]) - \frac{3\Rg T_\infty}{4\pi} (M_* - M_0) = 0,
}
and therefore
\EQ{\label{eq-diff-tdRs}
\abs{R_*[M_0] - R_*[M_*]} = \frac{3\Rg T_\infty  |M_0 - M_*|}{4\pi\bkt{ p_{\infty,*}(R_*[M_*]^2 + R_*[M_*] R_*[M_0] + R_*[M_0]^2) + 2\si(R_*[M_*] + R_*[M_0])}}.
}
Bounding  $\abs{R_*[M_0] - R_*[M_*]}$ therefore reduces to bounding $ |M_0 - M_*|$.
Expanding  $M_0$ about $ M_* $ we have:
\EQN{
M_0 &= \int_{B_{R_0}} \rho_0 = \int_{B_{R_0}} \rho_*[M_*] + \int_{B_{R_0}} (\rho_0-\rho_*[M_*])
 = \frac{4\pi}3 R_0^3\rho_*[M_*] + \int_{B_{R_0}} (\rho_0 - \rho_*[M_*])\\
 &= M_* + \frac{4\pi}3 (R_0^3 - R_*[M_*]^3)\rho_*[M_*] + \int_{B_{R_0}} (\rho_0 - \rho_*[M_*]).
 }
 Therefore,
 \EQ{\label{eq-diff-tdM0}
\abs{M_0 - M_* }\le \frac{4\pi}3 \rho_*[M_*](R_0^2 + R_0R_*[M_*] + R_*[M_*]^2)|R_0 - R_*[M_*]| + \frac{4\pi}3 R_0^3\norm{\rho_0 - \rho_*[M_*]}_{L^\infty(B_{R_0})} .
}
The bounds \eqref{eq-diff-tdM0} and \eqref{eq-diff-tdRs} imply that $\abs{R_*[M_0] - R_*[M_*]}$ satisfies the bound \eqref{Rrho-cont}. \\
${\quad}$ Finally, we bound the difference $\abs{\rho_*[M_0] - \rho_*[M_*]}$. Taking the difference of the relations 
$  \dfrac{4\pi}3 \rho_*[M_0] R_*[M_0]^3 = M_0$ and $ \dfrac{4\pi}3 \rho_*[M_*] R_*[M_*]^3 = M_*$,
we have 
\[
\frac{4\pi}3 R_*[M_0]^3 \rho_*[M_0] - \frac{4\pi}3 R_*[M_*]^3 \rho_*[M_*] = M_0 - M_*.
\]
Therefore,
\EQ{\label{eq-diff-rhos}
\frac{4\pi}{3} R_*[M_0]^3 &( \rho_*[M_0] - \rho_*[M_*])\\
& =  (M_0 - M_*)  + \frac{4\pi}{3} \left[ R_*[M_0]^2+ R_*[M_0] R_*[M_*] +R_*[M_*]^2 \right]\left( R_*[M_0] - R_*[M_*]\right).
}
The bound on $\abs{ \rho_*[M_0] - \rho_*[M_*]}$ now follows by estimating \eqref{eq-diff-rhos} using the bounds \eqref{eq-diff-tdM0} and \eqref{eq-diff-tdRs}.
 \end{proof}

\section{Reduction of the asymptotic model to a system for $\rho(r,t)$ and $R(t)$}\label{reduce}

The main purpose of this article is to study the stability of the spherically symmetric equilibrium \eqref{eq-equilibrium} of the approximation system \eqref{eq1.1simplified}--\eqref{eq1.3simplified}. The perturbations we consider are spherically symmetric and hence we work 
with the following reduction of the initial value problem:

\begin{proposition}\label{prop:reduction}
Any sufficiently regular spherically symmetric solution of \eqref{eq1.1simplified}--\eqref{eq1.3simplified}
 can be constructed from a solution of the following reduced system of equations for the gas density $\rho_g(r,t)\equiv\rho(r,t) $, for $0\le r\le R(t)$, and the bubble radius  $R(t)$, together with 
 a boundary condition on $\rho(r,t)$ at the free boundary $r=R(t)$:
 
\begin{subequations}
\label{red-eqns}
\begin{align}
\pd_t\rho(r,t) &= 
\frac{\ka}{\ga c_v} \De_r\log\rho(r,t) + \frac{1}{\ga} \frac{\pd_t p(t)}{ p(t)}\Big(  \frac13 r \pd_r\rho(r,t) 
+ \rho(r,t) \Big) ,\quad 0\le r\le R(t),\ t>0, \label{eq-bv-3.10prime}\\
\dot R(t) &= -\frac\ka{\ga c_v} \frac{\pd_r\rho(R(t),t)}{\bke{\rho(R(t),t)}^2} - \frac{R(t)}{3\ga} \frac{\pd_t p(t)}{p(t)},\quad t>0,\label{eq-bv-3.15prime}\\
\rho(R(t),t) &= \frac1{\Rg T_\infty} \bkt{p_\infty(t) + \frac{2\si}{R(t)} 
+ 4\mu_l \frac{\dot R}R
+ \rho_l\bke{R(t)\ddot R(t) + \frac32 (\dot R(t))^2} },\quad t>0,
 \label{eq-bv-3.16prime}
\end{align}
\end{subequations}
with initial data $\rho(\cdot,0)$, $R(0)$, $\dot R(0)$.
Here, $p=p(t)=p_g(t)$ (gas pressure) and $\rho(R(t),t)$ are related through the constitutive relation
\begin{align}
p(t) &= \Rg T_\infty \rho(R(t),t),\quad t>0,
\label{eq-bv-3.14prime}
\end{align}
\end{proposition}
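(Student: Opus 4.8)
The plan is to establish the stated equivalence by carrying out the spherically symmetric reduction explicitly. The substantive direction is to show that if one has a sufficiently regular spherically symmetric solution of \eqref{eq1.1simplified}--\eqref{eq1.3simplified}, then the pair $(\rho,R)$ with $\rho=\rho_g$ solves \eqref{red-eqns} and \eqref{eq-bv-3.14prime}, while \emph{all} remaining state variables are explicit functions of $(\rho,R)$; the converse (that these reconstruction formulas, recorded in Remark~\ref{red-ext} and \eqref{eq-reconstruct}, genuinely produce a solution of \eqref{eq1.1simplified}--\eqref{eq1.3simplified}) is then a routine verification that I would only indicate.

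First I would impose the ansatz $\Om(t)=B_{R(t)}$, ${\bf v}_l=v_l(r,t)\hat{\bf r}$, ${\bf v}_g=v_g(r,t)\hat{\bf r}$, $\rho_g=\rho(r,t)$, $p_g=p(t)$, and $T_g,p_l$ radial. The incompressibility constraint \eqref{eq1.1simplified-b}, regularity at infinity, and the kinematic condition \eqref{eq1.3simplified-a} (which gives $v_l(R(t),t)=\dot R(t)$) force $v_l(r,t)=\dot R(t)R(t)^2/r^2$ for $r\ge R(t)$. In particular ${\bf v}_l=\nb\phi_l$ with $\phi_l=-\dot R R^2/r$ harmonic away from the origin, so the vector Laplacian $\De{\bf v}_l$ vanishes identically on the liquid domain — this is the key simplification that collapses the liquid momentum equation to its inviscid (Bernoulli) form.

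Next, for the gas I would eliminate $T_g$ and $s$ via \eqref{eq1.2simplified-d}--\eqref{eq1.2simplified-e}: writing $T_g=p(t)/(\Rg\rho)$ and $s=c_v\log(p/\rho^\ga)$, substituting into the entropy equation \eqref{eq1.2simplified-c}, and using \eqref{eq1.2simplified-a} to replace $\pd_t\rho+{\bf v}_g\cdot\nb\rho$ by $-\rho\,\div{\bf v}_g$, one obtains after dividing by $p(t)/\Rg$ the pointwise identity $\div{\bf v}_g=\frac{\ka_g}{\ga c_v}\De(1/\rho)-\frac1\ga\frac{\dot p}{p}$. Rewriting this as $\div\big({\bf v}_g-\frac{\ka_g}{\ga c_v}\nb(1/\rho)\big)=-\frac1\ga\frac{\dot p}{p}$, whose right side is spatially constant, and invoking spherical symmetry together with regularity at $r=0$ (which excludes the singular homogeneous solution $\propto r^{-2}\hat{\bf r}$), I conclude ${\bf v}_g=\frac{\ka_g}{\ga c_v}\nb(1/\rho)-\frac1{3\ga}\frac{\dot p}{p}\,x$, i.e. $v_g(r,t)=-\frac{\ka_g}{\ga c_v}\frac{\pd_r\rho}{\rho^2}-\frac{r}{3\ga}\frac{\dot p}{p}$. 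Evaluating at $r=R(t)$ and using $v_g(R(t),t)=\dot R(t)$ gives \eqref{eq-bv-3.15prime} (with $\ka=\ka_g$); substituting the expressions for $v_g$ and $\div{\bf v}_g$ back into the continuity equation and using the elementary identities $|\nb\rho|^2=(\pd_r\rho)^2$ and $\rho\,\De(1/\rho)=-\De\log\rho+|\nb\rho|^2/\rho^2$, the two $|\nb\rho|^2/\rho^2$ contributions cancel and one is left with \eqref{eq-bv-3.10prime}.

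Finally I would derive the free-boundary condition \eqref{eq-bv-3.16prime}. Since $\De{\bf v}_l=0$ and ${\bf v}_l$ is a gradient, \eqref{eq1.1simplified-a} reduces to $\pd_t{\bf v}_l+\nb(|{\bf v}_l|^2/2)=-\rho_l^{-1}\nb p_l$; integrating the radial component from $r=R(t)$ to $r=\infty$, using $p_l\to p_\infty(t)$ and $\frac{d}{dt}(\dot R R^2)=R^2\ddot R+2R\dot R^2$, yields the Rayleigh--Plesset relation $p_l(R(t),t)=p_\infty(t)+\rho_l\big(R\ddot R+\tfrac32\dot R^2\big)$. Taking the $\hat{\bf n}$-component of the stress balance \eqref{eq1.3simplified-b}, using $\nb_S\cdot\hat{\bf n}=2/R$ for the sphere and $\hat{\bf n}\cdot\mathbb D({\bf v}_l)\cdot\hat{\bf n}=\pd_r v_l|_{r=R}=-2\dot R/R$, gives $p(t)=p_l(R(t),t)+2\si/R+4\mu_l\dot R/R$; combining with the previous relation and the boundary equation of state $p(t)=\Rg T_\infty\rho(R(t),t)$ (which is \eqref{eq-bv-3.14prime}, valid because $T_g=T_\infty$ on $\pd\Om$ by \eqref{eq1.3simplified-c}) produces \eqref{eq-bv-3.16prime}. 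The main obstacle is the gas-reduction step: one must recognize that ${\bf v}_g$ minus the diffusive gradient field $\frac{\ka_g}{\ga c_v}\nb(1/\rho)$ has spatially constant divergence and then use symmetry and regularity at the origin to pin it down uniquely, after which the remaining manipulations — the cancellation turning $\De(1/\rho)$ into $\De\log\rho$, and the sign bookkeeping in the Rayleigh--Plesset/stress-balance computation — are routine but must be handled with care.
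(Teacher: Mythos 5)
Your proposal is correct and follows essentially the same route as the paper's Appendix derivation: eliminate $T_g$ and $s$ via the constitutive relations to obtain $\div{\bf v}_g=\frac{\ka_g}{\ga c_v}\De(1/\rho)-\frac1\ga\frac{\dot p}{p}$, integrate (using symmetry and regularity at $r=0$) to recover $v_g$, substitute back into the continuity equation to get \eqref{eq-bv-3.10prime}, and combine the potential-flow Rayleigh--Plesset computation with the stress balance to get \eqref{eq-bv-3.16prime}. The only cosmetic difference is that you phrase the gas-velocity step in divergence form rather than integrating the radial ODE in $r$ as the paper does; the sign bookkeeping (including the viscous term $4\mu_l\dot R/R$) checks out.
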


Note that \eqref{eq-bv-3.16prime} and \eqref{eq-bv-3.14prime} imply
\begin{equation} p(t)-p_\infty(t) - \frac{2\si}{R(t)}
- 4\mu_l \frac{\dot R}R
 = \rho_l\bke{R(t)\ddot R(t) + \frac32 (\dot R(t))^2} .
\label{pjump}\end{equation}
The system \eqref{eq-bv-3.10prime}--\eqref{eq-bv-3.16prime} depends on the
\begin{equation}
\textrm{ physical parameters: $\ka=\ka_g$, $\Rg$, $\ga$, $c_v$, $\si$.}\label{params-approx-spherical}
\end{equation}
The proof of Proposition \ref{prop:reduction} is given in Appendix \ref{sec-reduction}. The calculations also yields the following expressions for all state variables:

\begin{proposition}\label{red-ext}
Denote the radial components
 of the gas and liquid velocities by $v_g(r,t)$ and $v_l(r,t)$, respectively.
Given a solution $(\rho(r,t),R(t))$ to the system \eqref{eq-bv-3.10prime}--\eqref{eq-bv-3.16prime}, we can reconstruct a spherically symmetric solution $(v_{l},p_l,\rho_g,v_{g}, p_g, T_g, s)$ to the system 
\eqref{eq1.1simplified}--\eqref{eq1.3simplified} in terms of $\rho$ and $R$ by 
\EQ{\label{eq-reconstruct}
\arraycolsep=1.4pt\def\arraystretch{2.2}
\begin{array}{rll}
\Om(t) &= B_{R(t)},&\quad t>0,\\
\rho_g(r,t) &= \rho(r,t),&\quad
 0\le r\le R(t),\ t>0,\\
p_g(t) &= \Rg T_\infty \rho(R(t),t) ,&\quad t>0,\\
v_{g}(r,t) &= \dfrac{\ka}{\ga c_v} \pd_r\bke{\dfrac1{\rho(r,t)}} - \dfrac{\pd_tp_g(t)}{p_g(t)}\dfrac{r}{3\ga},&\quad 0\le r\le R(t),\ t>0,\\
T_g(r,t) &= \dfrac{p_g(t)}{\Rg  \rho(r,t)},&\quad 0\le r\le R(t),\ t>0,\\
s(r,t) &= c_v \log\bke{\dfrac{p_g(t)}{(\rho(r,t))^\ga}},&\quad 0\le r\le R(t),\ t>0,\\
v_{l}(r,t) &= \dfrac{(R(t))^2\dot R(t)}{r^2},&\quad r\ge R(t),\ t>0,\\
p_l(r,t) &= p_\infty(t) + \rho_l \bke{ \dfrac{2R(t) (\dot R(t))^2 + (R(t))^2\ddot R(t)}r - \dfrac{(R(t))^4(\dot R(t))^2}{2r^4} },&\quad r\ge R(t),\ t>0.
\end{array}
}
\end{proposition}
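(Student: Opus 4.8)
The plan is to take \eqref{eq-reconstruct} as the \emph{definition} of the fields $(v_l,p_l,\rho_g,v_g,p_g,T_g,s)$ attached to a given sufficiently regular solution $(\rho(r,t),R(t))$ of \eqref{eq-bv-3.10prime}--\eqref{eq-bv-3.16prime}, and then to verify that they satisfy \eqref{eq1.1simplified}--\eqref{eq1.3simplified}, one equation at a time; this is the converse of the reduction asserted in Proposition~\ref{prop:reduction} and rests on the same computation, carried out in Appendix~\ref{sec-reduction}. Several equations hold by inspection: $\rho_g=\rho$ is a relabeling; $p_g=p_g(t)$ is spatially constant, so \eqref{eq1.2simplified-b} holds; $T_g=p_g/(\Rg\rho)$ is the ideal gas law \eqref{eq1.2simplified-d} and $s=c_v\log(p_g/\rho^\ga)$ is \eqref{eq1.2simplified-e}; $T_l\equiv T_\infty$ is \eqref{eq1.1simplified-c}; and evaluating $T_g$ at $r=R(t)$ together with the defining relation $p_g(t)=\Rg T_\infty\,\rho(R(t),t)$ --- which is \eqref{eq-bv-3.14prime} --- gives $T_g(R(t),t)=T_\infty$, i.e.\ \eqref{eq1.3simplified-c}.

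For the gas interior, the key step is to derive, from the entropy equation \eqref{eq1.2simplified-c} together with $T_g=p_g/(\Rg\rho)$ (so that $\rho_g T_g=p_g/\Rg$ and $\div(\ka_g\nb T_g)=\tfrac{\ka_g p_g}{\Rg}\De_r(1/\rho)$), $\nb s=-\ga c_v\nb\log\rho$, and the continuity equation \eqref{eq1.2simplified-a} in the form $\rho^{-1}(\pd_t\rho+v_g\pd_r\rho)=-\div{\bf v}_g$, the identity
\[ \div{\bf v}_g \;=\; \frac{\ka}{\ga c_v}\,\De_r\!\Big(\tfrac1\rho\Big)\;-\;\frac1\ga\,\frac{\pd_tp_g}{p_g}. \]
Integrating its radial form $r^{-2}\pd_r(r^2v_g)=\dots$ from $0$, using $\int_0^r s^2\De_r(1/\rho)(s,t)\,ds=r^2\pd_r(1/\rho)(r,t)$ and discarding the homogeneous solution $c(t)/r^2$ by regularity at the origin, reproduces exactly the formula for $v_g$ in \eqref{eq-reconstruct}. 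One then checks that this $v_g$ makes the remaining gas equations hold: substituting $v_g$ and the displayed $\div{\bf v}_g$ into $\pd_t\rho+\div(\rho{\bf v}_g)$ and using $\De_r\log\rho=\rho^{-2}(\pd_r\rho)^2-\rho\,\De_r(1/\rho)$, the continuity equation \eqref{eq1.2simplified-a} becomes precisely the reduced diffusion equation \eqref{eq-bv-3.10prime} (recall $p(t)=p_g(t)$), hence holds; the entropy equation \eqref{eq1.2simplified-c} then holds by reversing the first computation; and the gas kinematic condition $v_g(R(t),t)=\dot R(t)$ in \eqref{eq1.3simplified-a} is, after inserting the formula for $v_g$, exactly \eqref{eq-bv-3.15prime}.

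For the liquid exterior, $v_l=R^2\dot R/r^2$ is divergence-free --- so \eqref{eq1.1simplified-b} holds --- and meets the kinematic condition ${\bf v}_l\cdot\hat{\bf n}=\dot R$ at $r=R(t)$ in \eqref{eq1.3simplified-a}; moreover ${\bf v}_l=\nb(-R^2\dot R/r)$ is the gradient of an $r$-harmonic potential, so $\De{\bf v}_l={\bf 0}$ and the viscous term drops from \eqref{eq1.1simplified-a}. Defining $p_l$ by integrating the radial component $\rho_l(\pd_tv_l+v_l\pd_rv_l)=-\pd_rp_l$ from $r$ to $\infty$ against $p_l(\infty,t)=p_\infty(t)$ --- which, with $A(t):=R^2\dot R$, gives $p_l=p_\infty+\rho_l(\dot A/r-A^2/(2r^4))$, i.e.\ the stated formula after $\dot A=2R\dot R^2+R^2\ddot R$ --- makes \eqref{eq1.1simplified-a} hold by construction. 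It then remains to check the Young--Laplace condition \eqref{eq1.3simplified-b}: on $\pd\Om(t)=\pd B_{R(t)}$ one has $\nb_S\cdot\hat{\bf n}=2/R$, the normal viscous stress equals $2\mu_l\,\hat{\bf n}\cdot\mathbb D({\bf v}_l)\cdot\hat{\bf n}=2\mu_l\,\pd_rv_l|_{r=R}=-4\mu_l\dot R/R$ with vanishing tangential part, and substituting $p_l(R,t)=p_\infty+\rho_l(R\ddot R+\tfrac32\dot R^2)$ and $p_g=\Rg T_\infty\rho(R,t)$ turns the normal component of \eqref{eq1.3simplified-b} into \eqref{eq-bv-3.16prime} and the tangential component into $0=0$.

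The argument is largely bookkeeping and I expect no deep obstacle; the delicate point --- and the one I would regard as the crux --- is the derivation of $\div{\bf v}_g=\frac{\ka}{\ga c_v}\De_r(1/\rho)-\frac1\ga\pd_tp_g/p_g$ from the entropy equation and the recovery of $v_g$ from it. One must correctly eliminate $\pd_ts$, $\nb s$ and the material derivative of $\rho$ against the constitutive relations and the continuity equation, and one needs enough regularity of $\rho$ near $r=0$ --- so that $s^2\pd_s(1/\rho)\to0$ as $s\to0$ and the homogeneous term $c(t)/r^2$ is excluded --- for the reconstructed $v_g$ to be a genuine spherically symmetric field with $v_g(0,t)=0$. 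These regularity properties follow from the assumed regularity of $(\rho,R)$ and the smooth even extension of radial functions to $B_R$; the remaining identities are the direct calculations of Appendix~\ref{sec-reduction}.
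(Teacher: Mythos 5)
Your proposal is correct and is essentially the paper's own argument: the paper proves Proposition \ref{red-ext} by pointing to the computations of Appendix \ref{sec-reduction} (in particular the identity $c_v\{\pd_tp_g/p_g+\ga\div{\bf v}_g\}=\ka_g\De(1/\rho_g)$, its radial integration to obtain $v_g$, and the integration of the radial momentum equation to obtain $p_l$), which you reproduce and simply read in the reconstruction direction, verifying each equation of \eqref{eq1.1simplified}--\eqref{eq1.3simplified} against the reduced system. No substantive difference from the paper's route.
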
  

When $\mu_l=0$ in \eqref{eq-bv-3.16prime}, to study well-posedness  \cite{bv-SIMA2000}, Biro and Vel\'azquez mapped, by a change of variables,  the free boundary problem on $B_{R(t)}$:  \eqref{eq-bv-3.10prime}--\eqref{eq-bv-3.16prime} to a problem on the fixed domain $B_1$ as
\begin{subequations}
\label{red-eqns-fix-domain}
\begin{align}
\pd_t\overline\rho(y,t) &= \frac{\ka}{\ga c_v} \frac1{R^2} \De_y\log\overline\rho(y,t) + \frac{1}{\ga} \frac{\pd_t p(t)}{ p(t)}\Big(  \frac13 y \pd_y\overline\rho(y,t) 
+ \overline\rho(y,t) \Big),\quad 0\le y\le 1,\ t>0, \label{eq-bv-3.10prime-fix-domain}\\
\dot R(t) &= -\frac{\ka}{\ga c_v} \frac1R \frac{\pd_y\overline\rho(1,t)}{(\overline\rho(1,t))^2} - \frac{R(t)}{3\ga}\frac{\pd_tp(t)}{p(t)},\quad t>0,\label{eq-bv-3.15prime-fix-domain}\\
\overline\rho(1,t) &= \frac1{\Rg T_\infty} \bkt{p_\infty(t) + \frac{2\si}{R(t)} 
+ 4\mu_l \frac{\dot R}R
+ \rho_l\bke{R(t)\ddot R(t) + \frac32 (\dot R(t))^2} },\quad t>0. \label{eq-bv-3.16prime-fix-domain}
\end{align}
\end{subequations}
In this setting they proved the local well-posedness for the free boundary problem \eqref{eq-bv-3.10prime}--\eqref{eq-bv-3.16prime}.  The proof is based on the  derivation of a priori Schauder estimates, application of a  Leray--Schauder fixed point argument and the classical regularity theory for quasilinear parabolic equations; see, for example,  \cite[Chapter V. Theorem 6.1]{LSU-book1967}. We extend their result to the general case involving liquid viscosity on the free boundary, which implies local well-posedness of the system \eqref{eq1.1simplified}--\eqref{eq1.3simplified} in the spherically symmetric case.

\begin{theorem}[Local in time well-posedness]\label{thm-bv-3.1}
Consider the initial value problem for  \eqref{eq-bv-3.10prime}--\eqref{eq-bv-3.16prime}  with initial radius $R(0)=R_0>0$. and initial density $\rho_0\in C^{2+2\al}([0,R(0)])$, $0<\al<\frac12$. Suppose also that for some $\eta>0$, $\rho_0(r)\ge\eta$ for  $0\le r\le R_0$. Then, there exists $\de=\de(\norm{\rho_0}_{C^{2+2\al}})$ such that the free boundary problem \eqref{eq-bv-3.10prime}--\eqref{eq-bv-3.16prime} has a unique solution satisfying 
\EQN{
\rho&\in C^{1+\al}_t([0,\de]; C^{2+2\al}_r([0,R(t)))),\\
R&\in C^{3+\al}[0,\de].
}
\end{theorem}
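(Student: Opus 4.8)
The plan is to establish local-in-time well-posedness of the free boundary problem \eqref{eq-bv-3.10prime}--\eqref{eq-bv-3.16prime} by adapting the strategy of Biro--Vel\'azquez \cite{bv-SIMA2000} to accommodate the additional liquid viscosity term $4\mu_l\dot R/R$ in \eqref{eq-bv-3.16prime}. The first step is to transport the problem from the moving domain $B_{R(t)}$ to the fixed domain $B_1$, obtaining the system \eqref{eq-bv-3.10prime-fix-domain}--\eqref{eq-bv-3.16prime-fix-domain} for $\bar\rho(y,t)$ and $R(t)$. The novel feature compared to \cite{bv-SIMA2000} is that \eqref{eq-bv-3.16prime-fix-domain} now couples the boundary value $\bar\rho(1,t)$ to $\ddot R$, $\dot R$, and $R$ through the relation obtained by combining \eqref{eq-bv-3.15prime-fix-domain} (which expresses $\dot R$ via $\partial_y\bar\rho(1,t)$ and $\partial_t p/p$) with \eqref{eq-bv-3.16prime-fix-domain}. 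Differentiating \eqref{eq-bv-3.14prime}, $p(t)=\Rg T_\infty\bar\rho(1,t)$, one sees that $\partial_t p/p$ is expressible via $\partial_t\bar\rho(1,t)$, so the system closes; the viscous term contributes a lower-order (first-order in time) perturbation to the nonlinear Robin-type boundary condition for the quasilinear parabolic equation governing $\bar\rho$.

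The second step is to set up the fixed-point scheme. One introduces the solution space
\begin{equation*}
X_\delta = \left\{ (\bar\rho, R) :\ \bar\rho\in C^{1+\al}_t([0,\delta];C^{2+2\al}_y(B_1)),\ R\in C^{3+\al}[0,\delta],\ \bar\rho\ge\eta/2,\ R\ge R_0/2 \right\},
\end{equation*}
with the obvious closed ball around the initial data, and defines a map $\Phi$ on $X_\delta$ as follows: given $(\bar\rho^{\mathrm{old}},R^{\mathrm{old}})$, freeze the coefficients of the quasilinear operator $\frac{\ka}{\ga c_v R^2}\De_y\log\bar\rho$ and the transport term using the old iterate, solve the resulting \emph{linear} parabolic equation for $\bar\rho^{\mathrm{new}}$ with the boundary condition of the form $\bar\rho^{\mathrm{new}}(1,t) = \mathcal{G}[\bar\rho^{\mathrm{old}},R^{\mathrm{old}},\dot R^{\mathrm{old}},\ddot R^{\mathrm{old}}]$, and simultaneously solve the ODE \eqref{eq-bv-3.15prime-fix-domain} for $R^{\mathrm{new}}$ driven by $\partial_y\bar\rho^{\mathrm{old}}(1,t)$. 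The linear parabolic solvability and the Schauder estimate $\|\bar\rho^{\mathrm{new}}\|_{C^{1+\al}_t C^{2+2\al}_y}\lesssim \|\rho_0\|_{C^{2+2\al}} + \|\mathcal{G}\|_{C^{1+\al/2}_t}$ come from the classical theory, e.g.\ \cite[Chapter~V, Theorem~6.1]{LSU-book1967}; the ODE part gives $R^{\mathrm{new}}\in C^{1+\al}_t$ directly and, after bootstrapping via \eqref{eq-bv-3.16prime-fix-domain}, $C^{3+\al}_t$. One then verifies that for $\delta$ small depending only on $\|\rho_0\|_{C^{2+2\al}}$ and $\eta$, $\Phi$ maps the ball into itself and is a contraction in a weaker norm (e.g.\ the $C^0_t C^0_y \times C^1_t$ norm), yielding a unique fixed point; elliptic/parabolic interior regularity then upgrades the fixed point to the stated regularity class. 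Uniqueness in the full class follows from the contraction estimate.

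The main obstacle I anticipate is controlling the viscous boundary term within the Schauder framework: the term $4\mu_l\dot R/R$ in \eqref{eq-bv-3.16prime} means the Dirichlet datum $\bar\rho(1,t)$ depends on $\dot R$, and through \eqref{eq-bv-3.15prime} on $\partial_y\bar\rho(1,t)$, so the boundary condition is genuinely of a coupled parabolic--ODE nature rather than a prescribed datum. The resolution is to observe that in the fixed-point iteration this coupling is rendered \emph{explicit}: $\dot R^{\mathrm{old}}$ and $\ddot R^{\mathrm{old}}$ enter $\mathcal G$ with the old iterates, so no implicit closure is needed at the linear level; one only needs the a priori Schauder bound $\|\dot R\|_{C^{1+\al/2}_t} \lesssim \|\partial_y\bar\rho(1,\cdot)\|_{C^{\al/2}_t} \lesssim \|\bar\rho\|_{C^{1+\al}_t C^{2+2\al}_y}$ to close the ball-invariance, and then a time-integration argument showing the $\mu_l$-term contributes $O(\delta^{\al/2})$ to the relevant norms, so it does not destroy the contraction for small $\delta$. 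A secondary technical point is maintaining the lower bounds $\bar\rho\ge\eta/2$ and $R\ge R_0/2$ over $[0,\delta]$, which again follows from the smallness of $\delta$ together with the continuity of the iterates in time. Once these are in place, uniqueness and the claimed regularity $\rho\in C^{1+\al}_t([0,\delta];C^{2+2\al}_r([0,R(t))))$, $R\in C^{3+\al}[0,\delta]$ follow as in \cite{bv-SIMA2000}.
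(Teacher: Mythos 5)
Your overall plan is sound, and you correctly identify the essential structural point: the extra viscous term $4\mu_l\dot R/R$ in \eqref{eq-bv-3.16prime} is a smooth (in fact analytic) function of $(R,\dot R)$ for $R\neq 0$, hence a lower-order perturbation of the Biro--Vel\'azquez boundary condition, and it does not disturb the Schauder framework for small time. You also correctly identify the fixed-domain reformulation \eqref{eq-bv-3.10prime-fix-domain}--\eqref{eq-bv-3.16prime-fix-domain} and the coupled parabolic--ODE nature of the boundary data, together with the bootstrap chain $\bar\rho\in C^{1+\al}_t C^{2+2\al}_y \Rightarrow \bar\rho(1,\cdot)\in C^{1+\al}_t \Rightarrow \ddot R\in C^{1+\al}_t \Rightarrow R\in C^{3+\al}_t$.

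Where you diverge from the paper is in the choice of fixed-point machinery. The paper, following \cite{bv-SIMA2000} almost verbatim, invokes the \emph{Leray--Schauder} fixed-point theorem together with a priori Schauder estimates and the classical regularity theory of \cite[Chapter V, Theorem 6.1]{LSU-book1967}; the new term is absorbed by noting it is analytic in $(R,\dot R)$ and so the same a priori estimates go through. You instead propose a \emph{Banach} contraction: freeze coefficients to get a linear problem, establish ball-invariance in the full Schauder norm, and verify contraction in a weaker topology ($C^0_tC^0_y\times C^1_t$). Both routes are legitimate here. The Leray--Schauder path avoids the delicate issue of proving a genuine Lipschitz estimate for the solution map of a quasilinear parabolic free boundary problem (one only needs continuity, compactness, and uniform a priori bounds), at the cost of requiring a separate uniqueness argument. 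Your contraction-in-a-weaker-norm scheme is more constructive and packages uniqueness into the fixed-point step, but you should be explicit about why the closed ball in the stronger Schauder space is closed in the weaker topology (e.g.\ via Arzel\`a--Ascoli/lower semicontinuity of H\"older seminorms), since that is what makes the weak contraction argument yield a fixed point in the desired regularity class. Also note that, as stated in Proposition \ref{prop:reduction}, $\dot R(0)$ is additional initial data (the theorem statement omits it), and it must be prescribed for the second-order ODE structure underlying \eqref{eq-bv-3.16prime} to determine a well-posed evolution; your iteration implicitly assumes this.
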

\begin{proof}
The proof is essentially the same as the proof for the case when $\mu_l=0$ in \cite[Theorem 3.1]{bv-SIMA2000}.
The only difference is that, for the case when $\mu>0$, an extra viscous term $4\mu_l\dot R/R$ needs to be added to the right hand side of \cite[(3.18)]{bv-SIMA2000}.
Since $4\mu_l\dot R/R$ is analytic in $R$ and $\dot R$ for $R\neq0$, one can follow the same procedure in the proof of \cite[Theorem 3.1]{bv-SIMA2000}--deriving an a priori Schauder estimates and applying Leray--Schauder fixed point theorem along with the regularity theory for quasilinear parabolic equations--to conclude the same local well-posedness result.
We omit the proof and refer the reader to \cite{bv-SIMA2000}.
\end{proof}

\section{Dynamic stability of spherical bubble}\label{sec:stab}

In Section \ref{sec:LS} we recall the results in    \cite{bv-SIMA2000}  on conditional Lyapunov stability of spherical symmetric equilibria, that is Lyapunov stability relative to small perturbations  of the same bubble mass.
We then, in Theorem \ref{thm-bv-4.1-anyperturb},  extend this result to Lyapunov stability relative to \underline{arbitrary} small perturbations. Then, in Section \ref{sec:main-theo} we state Theorem \ref{thm-asystab}, the result of asymptotic stability.  The proof is presented in subsequent sections.

\subsection{Lyapunov stability}\label{sec:LS}
In \cite[Theorem 4.1]{bv-SIMA2000}, Biro and Vel\'azquez established the global well-posedness of the free boundary problem \eqref{eq-bv-3.10prime}--\eqref{eq-bv-3.16prime}, $\mu_l=0$ in \eqref{eq-bv-3.16prime},
when the initial data is sufficiently close to a given spherically symmetric equilibrium and has the same mass as the mass of the equilibrium solution.

When $\mu_l>0$ in \eqref{eq-bv-3.16prime}
the extra viscous term on the boundary leads to the extra term: $- 16\pi\mu_l R(t) (\dot R(t))^2$ on the right hand side of the energy dissipation law \eqref{eq-bv-4.16}. 
Hence, the key bound \cite[(4.41)]{bv-SIMA2000} still holds, and thus, their proof also applies.
In other words, we have that the spherical equilibrium are Lyapunov stable relative to mass preserving perturbations.  Introduce the norm 
\begin{align}
&\oldnorm{\left(\rho_1(\cdot,t)-\rho_2(\cdot,t),R_1(t)-R_2(t),\dot R_1(t)-\dot R_2(t)\right)} \notag\\
&\quad  \equiv  \| \overline\rho_1(\cdot,t)- \overline\rho_2(\cdot,t) \|_{C^{2+2\al}_y(B_1)} + |R_1(t)-R_2(t)| + \abs{\dot R_1(t)-\dot R_2(t)},\quad
\overline \rho_i(y,t) = \rho_i(R_i(t)y,t),\ i=1,2.
\label{onorm-def}\end{align}
In \cite{bv-SIMA2000} it is shown that given $\varepsilon_0>0$, there exist $\eta_0=\eta_0(\varepsilon_0)>0$ such that
\EQ{\label{eq-bv-4.28}
\oldnorm{\left(\rho_0-\rho_*[M_0],R_0-R_*[M_0],\dot R_0\right)}\le\eta_0,}
where $M_0 = \textrm{Mass}[\rho_0,R_0]$,  
then for all $t>0$
\EQ{\label{eq-bv-4.30}
\oldnorm{\left(\rho(\cdot,t)-\rho_*[M_0],R(t)-R_*[M_0],\dot R(t)\right)}\le\ve_0.
}
\begin{remark}
We note that the smallness of initial radial velocity, $\dot R(0)$, is not explicitly assumed in \cite[Theorem 4.1]{bv-SIMA2000}. 
The smallness is needed to control the kinetic energy, $KE_l$, and higher derivatives of $R$.
\end{remark}

\begin{remark}
The proof of \cite[Theorem 4.1]{bv-SIMA2000} gives a better regularity and control than $|R(t) - R_*|\le \ve_0$ and $|\dot R(t)|\le\ve_0$ which was stated in \cite[Theorem 4.1]{bv-SIMA2000}, . 
In fact, we obtain \EQ{\label{eq-higher-regularity-R}
\norm{R - R_*}_{C^{3+\al}_t(\R_+)}\le\ve_0.
}
Note further from \eqref{eq-bv-3.14prime}, that 
\EQ{\label{eq-higher-regularity-p}
\norm{p_g}_{C^{1+\al}_t(\R_+)}\le\ve_0.
}
\end{remark}

Using the continuity of functionals, 
we now extend the conditional Lyapunov stability result \cite[Theorem 4.1]{bv-SIMA2000}  to Lyapunov stability relative to arbitrary small perturbations. 
Specifically, we prove the Lyapunov stability of the manifold of equilibria to the system \eqref{eq-bv-3.10prime}--\eqref{eq-bv-3.16prime}
\EQ{\label{eq-equilib-manifold}
\mathcal{M}_*= \bket{(\rho_*[M],R_*[M], \dot R_*=0): 0<M<\infty },
}
where $\rho_*[M]$, $R_*[M]$ are given in Proposition \ref{prop-equilib}.

Introduce the distance of the state defined by $(\rho(\cdot,t),R(t),\dot R(t))$  to the manifold of equilibria:
\EQN{
{\rm dist}( (\rho(\cdot,t),R(t),\dot R(t)), \mathcal{M}_*)
&\equiv \inf \bket{ \oldnorm{\left(\rho(\cdot,t)-\rho_*,R(t)-R_*,\dot R(t)-\dot R_*\right)}: (\rho_*,R_*,\dot R_*)\in \mathcal{M}_*}\\
&=\inf_{0<M<\infty}
\oldnorm{ \left(\rho(\cdot,t)-\rho_*[M],R(t)-R_*[M],\dot R(t)\right)}.
}

\begin{theorem}[Lyapunov stability]\label{thm-bv-4.1-anyperturb}
Consider the time evolution equation 
\eqref{eq-bv-3.10prime}--\eqref{eq-bv-3.16prime} with $p_\infty(t) \equiv p_{\infty,*}$.
Let $\varepsilon_0>0$ be arbitrary. There exists $\eta_0>0$ such if the
initial data $\rho_0(r), R_0$, and $\dot R_0$ satisfies 
\[
{\rm dist}((\rho_0,R_0,\dot R_0), \mathcal{M}_*)\le\eta_0,
\]
then $(\rho(r,t), R(t))$,  the global in time solution of the initial value problem \eqref{eq-bv-3.10prime}--\eqref{eq-bv-3.16prime},  satisfies 
\[
{\rm dist}((\rho(\cdot,t),R(t),\dot R(t)), \mathcal{M}_*)\le\ve_0,\quad\textrm{for all}\quad  t>0.
\]
\end{theorem}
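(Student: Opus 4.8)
The plan is to deduce Lyapunov stability of the manifold $\mathcal{M}_*$ from the conditional (mass-preserving) Lyapunov stability estimate \eqref{eq-bv-4.28}--\eqref{eq-bv-4.30} of \cite{bv-SIMA2000}, together with the continuity results already established, namely Proposition \ref{prop:contin} and Proposition \ref{prop:reduction}. The mechanism is: given initial data close to $\mathcal{M}_*$ but not necessarily of the same mass as the equilibrium realizing the infimum, we identify the \emph{correct} equilibrium on the manifold, namely the one whose mass equals the mass of the initial data, $M_0 = {\rm Mass}[\rho_0, R_0]$. Since the reduced dynamics \eqref{eq-bv-3.10prime}--\eqref{eq-bv-3.16prime} conserves gas mass along the flow (this follows by integrating \eqref{eq-bv-3.10prime}; equivalently from conservation of mass in \eqref{eq1.2simplified-a}), the solution starting from $(\rho_0, R_0, \dot R_0)$ has mass $M_0$ for all time, so the conditional stability result \cite[Theorem 4.1]{bv-SIMA2000} applies directly to the pair (solution, equilibrium $(\rho_*[M_0], R_*[M_0])$).

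First I would reduce the hypothesis ${\rm dist}((\rho_0,R_0,\dot R_0),\mathcal{M}_*)\le\eta_0$ to a bound on $\oldnorm{(\rho_0-\rho_*[M_0], R_0-R_*[M_0], \dot R_0)}$. By definition of the infimum, there is some $M_*$ (depending on the data) with $\oldnorm{(\rho_0-\rho_*[M_*], R_0-R_*[M_*], \dot R_0)}\le 2\eta_0$, say. Then the triangle inequality gives
\[
\oldnorm{(\rho_0-\rho_*[M_0], R_0-R_*[M_0], \dot R_0)} \le 2\eta_0 + |R_*[M_0]-R_*[M_*]| + \|\rho_*[M_0]-\rho_*[M_*]\|_{C^{2+2\al}_y},
\]
and the last two terms are controlled, via Proposition \ref{prop:contin} (with $(\rho_0,R_0)$ playing the role of the reference data), by $C(|R_0-R_*[M_*]| + \|\rho_0-\rho_*[M_*]\|_{L^\infty})$, which is $\le 2C\eta_0$. (One must check that the $L^\infty$-type bound needed in Proposition \ref{prop:contin} is dominated by the stronger $C^{2+2\al}_y$-norm appearing in $\oldnorm{\cdot}$; this is immediate since $\|\cdot\|_{L^\infty(B_{R_0})}\lesssim \|\cdot\|_{C^{2+2\al}_y(B_1)}$ after the change of variables $y=r/R$.) Hence $\oldnorm{(\rho_0-\rho_*[M_0], R_0-R_*[M_0], \dot R_0)}\le C'\eta_0$ for a constant $C'$ depending only on the fixed physical parameters and on a neighborhood of the manifold.

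Next I would invoke \cite[Theorem 4.1]{bv-SIMA2000} (as extended to $\mu_l>0$ in the discussion preceding the statement, using that the extra boundary term $-16\pi\mu_l R\dot R^2$ only \emph{improves} the energy dissipation and the coercivity bound \cite[(4.41)]{bv-SIMA2000} is unaffected): given $\varepsilon_0>0$, pick $\eta_0'=\eta_0'(\varepsilon_0)$ from that theorem; then choosing $\eta_0 = \eta_0'/C'$ guarantees $\oldnorm{(\rho_0-\rho_*[M_0], R_0-R_*[M_0], \dot R_0)}\le\eta_0'$, so the solution satisfies $\oldnorm{(\rho(\cdot,t)-\rho_*[M_0], R(t)-R_*[M_0], \dot R(t))}\le\varepsilon_0$ for all $t>0$. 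Since $(\rho_*[M_0], R_*[M_0], 0)\in\mathcal{M}_*$, this yields ${\rm dist}((\rho(\cdot,t),R(t),\dot R(t)),\mathcal{M}_*)\le\varepsilon_0$ for all $t>0$, which is the claim.

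The main obstacle — really the only nontrivial point — is the bookkeeping in the first reduction step: one needs the constant in Proposition \ref{prop:contin} to be uniform over the relevant range of masses (i.e.\ locally uniform near the equilibrium one is perturbing), so that the choice of $\eta_0$ can be made independent of which $M_*$ realizes (or nearly realizes) the infimum. This is genuine but mild: the cubic \eqref{eq-cubic} has a simple positive root depending smoothly and monotonically on $M$, so $R_*[M], \rho_*[M]$ and their derivatives are bounded on compact mass-intervals, and the constant $C$ in \eqref{Rrho-cont} can be taken uniform for $(\rho_0,R_0)$ in a fixed small neighborhood of any $(\rho_*[M],R_*[M])$. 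One should also note explicitly that mass is conserved by \eqref{eq-bv-3.10prime}--\eqref{eq-bv-3.16prime} so that matching the mass of the data to the mass of a single target equilibrium is consistent for all $t>0$; this is what reduces the manifold-stability statement to the fixed-equilibrium statement already available. No new analysis is required beyond these continuity and conservation observations.
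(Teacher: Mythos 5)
Your proposal is correct and follows essentially the same route as the paper's proof: pick a nearby equilibrium $(\rho_*[M_*],R_*[M_*])$ nearly realizing the infimum, use Proposition \ref{prop:contin} and the triangle inequality to show the data is also $O(\eta_0)$-close to the mass-matched equilibrium $(\rho_*[M_0],R_*[M_0])$, and then apply the conditional (mass-preserving) Lyapunov stability result of \cite[Theorem 4.1]{bv-SIMA2000}. Your additional remarks on the local uniformity of the constant in \eqref{Rrho-cont} and on mass conservation along the flow are correct points that the paper leaves implicit, but they do not change the argument.
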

\begin{proof}
The proof is a consequence of \cite[Theorem 4.1]{bv-SIMA2000} and  Proposition \ref{prop:contin}. 
Namely, assuming $(\rho_*[M_*], R_*[M_*], \dot{R}_* = 0)\in \mathcal M_*$ is close to $(\rho_0,R_0,\dot{R}_0)$,
Proposition \ref{prop:contin}
 implies that there is a unique $( \rho_*[M_0], R_*[M_0])$ such that $\textrm{Mass}[ \rho_*[M_0], R_*[M_0]] = M_0 = \textrm{Mass}[\rho_0, R_0]$,
  and 
\EQ{
  | R_*[M_0]-R_*[M_*]|  + | \rho_*[M_0] - \rho_*[M_*] |  
  &\le C \left(|R_0-R_*[M_*]|\ +\ \|\rho_0-\rho_*[M_*]\|_{L^\infty(B_{R_0})}\right)\\
  &\le C \left(|R_0-R_*[M_*]|\ +\ \|\rho_0-\rho_*[M_*]\|_{C_r^{2+2\al}(B_{R_0})}\right).
  }
Hence, 
\begin{align*}
\|\rho_0 - \rho_*[M_0]&\|_{C_r^{2+2\alpha}} + |R_0- R_*[M_0]|\\
&= \|\rho_0-\rho_*[M_*]+\rho_*[M_*]- \rho_*[M_0]\|_{C_r^{2+2\alpha}} + |R_0-R_*[M_*]+R_*[M_*]- R_*[M_0]|\\
&\le  \|\rho_0-\rho_*[M_*]\|_{C_r^{2+2\alpha}} + |\rho_*[M_*]- \rho_*[M_0]| + |R_0-R_*[M_*]| + |R_*[M_*]- R_*[M_0]|\\
&\le C^\prime\left( \|\rho_0-\rho_*[M_*]\|_{C_r^{2+2\alpha}} + |R_0-R_*[M_*]|  \right).
\end{align*}
Therefore, choosing $\|\rho_0-\rho_*[M_*]\|_{C_r^{2+2\alpha}} + |R_0-R_*[M_*]| $ and $\dot{R}_0$ sufficiently small 
we conclude from 
\cite[Theorem 4.1]{bv-SIMA2000} (\eqref{eq-bv-4.28} implies \eqref{eq-bv-4.30})
that 
\[
{\rm dist}( (\rho(\cdot, t),R(t),\dot R(t)), \mathcal{M}_*)
\le \norm{\rho(\cdot,t)- \rho_*[M_0]}_{C^{2+2\al}_r} + |R(t) - R_*[M_0]| + |\dot R(t)| \le \ve_0.
\]
This completes the proof.
\end{proof}

The proof of the Lyapunov stability in \cite[Theorem 4.1]{bv-SIMA2000} relies on a coercive energy estimate (\cite[Lemma 4.2]{bv-SIMA2000}), for the case of constant external far-field pressure.
In Appendix \ref{sec-bv-revisit} we prove an extension of this energy estimate, Theorem \ref{thm-XYZ}, which enables us to generalize Theorem \ref{thm-bv-4.1-anyperturb}:
\begin{corollary}\label{cor:rem-lyapunov-general-pinf} 
 The conclusions of Theorem \ref{thm-bv-4.1-anyperturb} hold provided we choose $\eta_0$ sufficiently small and so that the following additional conditions are satisfied:
  \[
|p_\infty(t) - p_{\infty,*} | \le \eta_0,\qquad  
\norm{\pd_t p_\infty}_{L^1_t(\R_+)} \le\eta_0.\quad 
\]
\end{corollary}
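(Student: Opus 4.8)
The plan is to re-run the energy method of Biro--Vel\'azquez \cite{bv-SIMA2000} that underlies Theorem \ref{thm-bv-4.1-anyperturb}, now keeping track of the forcing produced by the time-dependence of $p_\infty$, and to absorb that forcing using the smallness of $\norm{\pd_t p_\infty}_{L^1_t(\R_+)}$. First I would record the modified energy balance. Working with the total energy $\mathcal{E}_{\rm total}$ of Definition \ref{def:total-en}, in which $p_\infty(t)$ enters only through the work term $p_\infty(t)\,{\rm Vol}(\Om(t)) = \tfrac{4\pi}{3}p_\infty(t)R(t)^3$, differentiation along solutions of \eqref{eq-bv-3.10prime}--\eqref{eq-bv-3.16prime} gives an identity of the form
\[
\frac{d}{dt}\mathcal{E}_{\rm total}(t) = -\mathcal{D}(t) + \dot p_\infty(t)\,{\rm Vol}(\Om(t)),
\]
where $\mathcal{D}(t)\ge 0$ is the thermal-plus-viscous dissipation, generalizing the energy dissipation law \eqref{eq-bv-4.16}; the term $\dot p_\infty(t)\,{\rm Vol}(\Om(t))$ is the only new contribution relative to the constant-pressure case treated in \cite{bv-SIMA2000}.

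Next I would set up the a priori estimate. The gas mass ${\rm Mass}[\rho(\cdot,t),R(t)]$ is conserved (this uses only the continuity equation and the kinematic boundary condition, not $p_\infty$), so ${\rm Mass}[\rho(\cdot,t),R(t)] \equiv M_0$, and I compare the state with the fixed equilibrium $(\rho_*[M_0],R_*[M_0])\in\mathcal{M}_*$ of mass $M_0$. Integrating the balance law on $[0,t]$ and discarding $-\mathcal{D}$ gives
\[
\mathcal{E}_{\rm total}(t) \le \mathcal{E}_{\rm total}(0) + \Big(\sup_{0\le s\le t}{\rm Vol}(\Om(s))\Big)\,\norm{\pd_t p_\infty}_{L^1_t(\R_+)}.
\]
Starting from the local solution provided by Theorem \ref{thm-bv-3.1}, a standard continuation argument lets me work on the maximal interval on which ${\rm dist}((\rho(\cdot,s),R(s),\dot R(s)),\mathcal{M}_*)\le\ve_0$; on that interval ${\rm Vol}(\Om(s))$ is bounded by a constant depending only on $R_*[M_0]$ and $\ve_0$, so the last term is $\le C\eta_0$. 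Also, the smallness of the initial distance together with $|p_\infty(0)-p_{\infty,*}|\le\eta_0$ makes $\mathcal{E}_{\rm total}(0)-\mathcal{E}_{\rm total}[\rho_*[M_0],R_*[M_0]]\le C\eta_0$, by Taylor-expanding the energy about the equilibrium as in \cite{bv-SIMA2000}.

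The coercive lower bound of Appendix \ref{sec-bv-revisit}, Theorem \ref{thm-XYZ}, valid uniformly for $|p_\infty(t)-p_{\infty,*}|\le\eta_0$, then converts the bound on $\mathcal{E}_{\rm total}(t)-\mathcal{E}_{\rm total}[\rho_*[M_0],R_*[M_0]]$ into
\[
\oldnorm{\left(\rho(\cdot,t)-\rho_*[M_0],R(t)-R_*[M_0],\dot R(t)\right)}^2 \ \lesssim\ \eta_0 ,
\]
so that for $\eta_0$ chosen small enough the distance stays strictly below $\ve_0$ on the whole interval; hence that interval is all of $[0,\infty)$. Finally, Proposition \ref{prop:contin} upgrades the bound relative to the single equilibrium $(\rho_*[M_0],R_*[M_0])$ to ${\rm dist}((\rho(\cdot,t),R(t),\dot R(t)),\mathcal{M}_*)\le\ve_0$ for all $t>0$, exactly as in the proof of Theorem \ref{thm-bv-4.1-anyperturb}.

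The hard part will be the step invoking Theorem \ref{thm-XYZ}: the coercivity estimate of \cite{bv-SIMA2000} is established for the energy built from the single fixed value $p_{\infty,*}$, and one must verify that it persists, up to the harmless $O(\eta_0)$ errors above, when the energy functional carries the slowly varying $p_\infty(t)$ with $|p_\infty(t)-p_{\infty,*}|\le\eta_0$; supplying this uniform coercivity is precisely the purpose of Appendix \ref{sec-bv-revisit}. A secondary point is that the higher-regularity component of the norm \eqref{onorm-def}, namely the $C^{2+2\al}_y$ bound on $\overline\rho$, must be recovered as in \cite[Section~4]{bv-SIMA2000} from interior parabolic Schauder estimates for \eqref{eq-bv-3.10prime-fix-domain}, whose coefficients now depend analytically on $R$, $\dot R$ and, through the boundary condition, on $p_\infty(t)$; since $p_\infty\in C^{1+\al}_t(\R_+)$ with the stated bounds this causes no essential difficulty.
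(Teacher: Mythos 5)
Your proposal is correct and follows exactly the route the paper intends: the generalized energy dissipation law of Proposition \ref{en-diss} (whose only new term is $\tfrac{4\pi}{3}R^3\,\pd_tp_\infty$, controlled by $\norm{\pd_tp_\infty}_{L^1_t}\le\eta_0$ and the boundedness of $R$ on the bootstrap interval), combined with the uniform coercivity of Theorem \ref{thm-XYZ} for $|p_\infty-p_{\infty,*}|\le\delta_0$, mass conservation, the Schauder bootstrap for the $C^{2+2\al}$ component of the norm, and Proposition \ref{prop:contin} to pass to the distance to $\mathcal M_*$. The paper states the corollary without a separate proof precisely because these are the ingredients supplied in Section \ref{sec:conservation} and Appendix \ref{sec-bv-revisit}; your write-up assembles them correctly.
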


\subsection{Nonlinear asymptotic stability with no decay rate}\label{sec:main-theo}

  The first goal of this article is to study the asymptotic stability of  the family of spherically symmetric equilibria of the approximate system \eqref{eq1.1simplified}--\eqref{eq1.3simplified} against small spherically symmetric perturbations. 
This is a consequence of the following result on asymptotic stability for the reduced system \eqref{eq-bv-3.10prime}--\eqref{eq-bv-3.16prime}.

\begin{theorem}[Asymptotic stability of the manifold of spherically symmetric equilibria]\label{thm-asystab}
Fix parameters \eqref{params-approx-spherical} and set $p_\infty(t)=p_{\infty,*}$ in the system \eqref{eq-bv-3.10prime}--\eqref{eq-bv-3.16prime}.
\begin{enumerate}
\item There exist a constant $\eta>0$ such that if ${\rm dist}((\rho_0,R_0,\dot R_0), \mathcal{M}_*)\le\eta$, then 
\[
{\rm dist}((\rho(\cdot,t),R(t),\dot R(t)), \mathcal{M}_*)\to0\ \text{ as }t\to+\infty.
\]
\item More precisely,  let $M_*>0$ and  $(\rho_*[M_*],R_*[M_*])$ as Proposition \ref{thm-asystab-1}, there exist constants $\eta_1>0$ and $C_1>0$ such that the following holds  for all $0<\eta\le\eta_1$: 
Consider initial data $\rho_0(r)$, $R_0$, $\dot R_0$, an \underline{arbitrary}
small perturbation of $(\rho_*[M_*],R_*[M_*],\dot R_* = 0)$:
\begin{equation} 
\oldnorm{\left(\rho_0-\rho_*[M_*],R_0-R_*[M_*],\dot R_0\right)}\le\eta.\label{data-small}
 \end{equation}
 Let $M_0=\int_{B_{R_0}}\rho_0$ denote the initial bubble mass. In general, $M_0\ne M_*$, however by Proposition \ref{prop:contin} 
  the corresponding points on the manifold of equilibria are close:
\[ |\rho_*[M_0] - \rho_*[M_*]|\le C_1\eta\quad  {\rm and}\quad |R_*[M_0] - R_*[M_*]|\le C_1\eta.\]
Let $(\rho(r,t),R(t))$ denote the global in time solution of the free boundary problem \eqref{eq-bv-3.10prime}--\eqref{eq-bv-3.16prime} with  initial data satisfying \eqref{data-small}. Then, as $t\to+\infty$
\begin{equation}
   {\rm dist}((\rho(\cdot,t),R(t),\dot R(t)), (\rho_*[M_0], R_*[M_0],\dot R_*=0))\to0,
  \label{a-stab2}\end{equation}
  and $|\ddot R(t)| + |\dddot R(t)|\to0$ as $t\to+\infty$. 
  \item The convergence of $(\rho,R)$ in \eqref{a-stab2} is sufficient to imply the convergence of the quantities in Proposition \ref{red-ext} to their equilibrium values. Therefore, the spherical equilibrium \eqref{eq-equilibrium} of the system \eqref{eq1.1simplified}--\eqref{eq1.3simplified} is asymptotically stable. 
\end{enumerate}
\end{theorem}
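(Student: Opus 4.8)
The plan is to prove Part~(2) --- convergence to the equilibrium $(\rho_*[M_0],R_*[M_0],\dot R_*=0)$ carrying the same mass $M_0$ as the data --- by a LaSalle-type invariance argument driven by the energy dissipation law, after which Parts~(1) and~(3) follow immediately. Since $p_\infty(t)\equiv p_{\infty,*}$, the reduced system \eqref{eq-bv-3.10prime}--\eqref{eq-bv-3.16prime} is autonomous, and a direct computation using \eqref{eq-bv-3.10prime}, \eqref{eq-bv-3.15prime} and the transport theorem shows that ${\rm Mass}[\rho(\cdot,t),R(t)]$ is conserved, so $M(t)\equiv M_0$ along the flow. By the Lyapunov stability of the manifold of equilibria (Theorem \ref{thm-bv-4.1-anyperturb}, using Proposition \ref{prop:contin} to pick out a nearby equilibrium of some mass $M_*$), for $\eta$ sufficiently small the solution is global and stays in a prescribed small $\oldnorm{\cdot}$-neighborhood of $\mathcal M_*$ for all $t>0$. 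In particular $\rho(\cdot,t)\ge\eta_*>0$ uniformly, $R(t)$ is bounded above and below, and --- invoking parabolic smoothing for the quasilinear problem \eqref{eq-bv-3.10prime}--\eqref{eq-bv-3.16prime} --- one has the time-uniform bounds $\norm{\overline\rho}_{C^{1+\al}_t(\R_+;C^{2+2\al}_y(B_1))}\lec1$ and $\norm{R-R_*}_{C^{3+\al}_t(\R_+)}\lec1$, together with \eqref{eq-higher-regularity-R}--\eqref{eq-higher-regularity-p}.

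Next I would use the energy dissipation law $\frac{d}{dt}\mathcal E_{\rm total}=-\mathcal D\le0$ for the total energy of Definition \ref{def:total-en}, where $\mathcal D\ge0$ is a dissipation functional whose vanishing forces $\rho$ to be spatially constant --- it controls the thermal term $\int_{B_{R(t)}}|\nb\log\rho|^2$ --- and which picks up the extra non-negative contribution $16\pi\mu_l R\dot R^2$ when $\mu_l>0$; see \eqref{eq-bv-4.16} and its $\mu_l$-dependent extension. The coercivity estimate of Biro--Vel\'azquez and its extension Theorem \ref{thm-XYZ} show that the equilibrium of mass $M_0$ is a strict constrained local minimizer of $\mathcal E_{\rm total}$; since the solution (which has mass $M_0$) remains near that equilibrium, $\mathcal E_{\rm total}(t)$ is bounded below, and being non-increasing it converges to a limit $\mathcal E_\infty$. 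Integrating the dissipation identity over $(0,\infty)$ gives $\int_0^\infty\mathcal D(t)\,dt<\infty$.

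The core of the proof is a time-shift compactness step. For $t_n\to+\infty$, set $u_n(s):=(\overline\rho(\cdot,t_n+s),R(t_n+s),\dot R(t_n+s))$; by the uniform bounds above, each $u_n$ solves \eqref{eq-bv-3.10prime}--\eqref{eq-bv-3.16prime} on $s\in[0,T]$ with norms bounded independently of $n$. By Arzel\`a--Ascoli and the compact embeddings $C^{2+2\al}_y(B_1)\hookrightarrow C^2_y(B_1)$ and $C^{3+\al}_t([0,T])\hookrightarrow C^3_t([0,T])$, a subsequence converges to a limit $u_\infty(s)=(\overline\rho_\infty(\cdot,s),R_\infty(s),\dot R_\infty(s))$, which is again a solution because the nonlinearities of \eqref{eq-bv-3.10prime}--\eqref{eq-bv-3.16prime} are continuous under the uniform positive lower bounds on $\rho$ and $R$. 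Since $\int_{t_n}^{t_n+T}\mathcal D\,dt\to0$, we get $\int_0^T\mathcal D[u_\infty(s)]\,ds=0$ and hence $\mathcal D[u_\infty(s)]\equiv0$; in particular $\overline\rho_\infty(\cdot,s)$ is spatially constant for every $s$. Substituting this into \eqref{eq-bv-3.10prime} and using the constitutive relation \eqref{eq-bv-3.14prime} (which yields $\dot p/p=\dot\rho_\infty/\rho_\infty$) forces $\dot\rho_\infty=\ga^{-1}\dot\rho_\infty$, hence $\dot\rho_\infty\equiv0$ because $\ga>1$; then $\dot p\equiv0$, so \eqref{eq-bv-3.15prime} gives $\dot R_\infty\equiv0$, whence $R_\infty$ is constant, $\ddot R_\infty\equiv0$, and \eqref{pjump} reproduces the equilibrium pressure relation. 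Thus $u_\infty$ is an equilibrium of mass $M_0$ (mass being continuous in $C^2_y(B_1)\times\R$ and conserved), so by the uniqueness in Part~(2) of Proposition \ref{prop-equilib} we get $u_\infty\equiv(\rho_*[M_0],R_*[M_0],0)$. Since this limit is independent of $t_n$ and of the subsequence, the full trajectory converges to $(\rho_*[M_0],R_*[M_0],0)$ in $C^2_y(B_1)\times\R\times\R$. To upgrade to the $C^{2+2\al}_y$ topology of \eqref{a-stab2}, I would invoke parabolic smoothing once more: for $t\ge1$ the solution has time-uniform bounds in a slightly higher H\"older class $C^{2+2\al+\de}_y$, and convergence in $C^2_y$ together with such a bound gives convergence in $C^{2+2\al}_y$ by interpolation. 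Finally, $\dot R(t)\to0$, $R(t)\to R_*[M_0]$ and $p_g(t)\to p_*$, together with \eqref{pjump} and its $t$-derivative, give $|\ddot R(t)|+|\dddot R(t)|\to0$. Part~(1) follows because $(\rho_*[M_0],R_*[M_0],0)\in\mathcal M_*$, and Part~(3) follows by inserting these convergences into the reconstruction formulas \eqref{eq-reconstruct}.

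The step I expect to be the main obstacle is this compactness/regularity input: obtaining the \emph{time-uniform} parabolic Schauder bounds --- with constants depending only on the Lyapunov-stability neighborhood, the lower bound on $\rho$, and the bounds on $R$ --- that make the time-shift argument work, and then bridging the gap between the weak $C^2_y$ topology in which the invariance argument naturally closes and the $C^{2+2\al}_y$ topology in which the theorem is stated. A secondary technical point is the precise form of the dissipation functional $\mathcal D$ for $\mu_l\ge0$ and the rigorous justification of $\int_0^\infty\mathcal D<\infty$, which hinges on the boundedness below of $\mathcal E_{\rm total}$ along the trajectory via the coercivity estimate (Theorem \ref{thm-XYZ}).
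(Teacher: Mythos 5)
Your proposal is correct in outline and rests on the same pillars as the paper's proof --- mass conservation, the dissipation identity \eqref{eq-bv-4.16}, the coercivity estimate of Theorem \ref{thm-XYZ} (which bounds $\mathcal E_{\rm total}$ below and yields $\int_0^\infty\mathcal D\,dt<\infty$), the uniform bounds supplied by Lyapunov stability, the reduction of arbitrary perturbations to mass-preserving ones via Proposition \ref{prop:contin}, and the uniqueness of the equilibrium of mass $M_0$ --- but the convergence mechanism is genuinely different. The paper (Proposition \ref{thm-asystab-1}) never passes to the limit in the full system: it applies Barbalat's lemma directly to the scalar quantities $\int_{B_{R(t)}}|\nb_r\rho|^2/\rho^2$ and $\frac{d}{dt}(p(t)R(t)^3)$, uses Poincar\'e plus conservation of mass to identify the spatial profile, extracts a subsequence only to identify $(\rho_{**},R_{**})$ through the algebraic system \eqref{eq-steadystate}, and then upgrades subsequential to full convergence via the monotone limit of $\mathcal E_{\rm total}$ combined with the coercive lower bound \eqref{eq-bv-4.35}, which delivers $L^2$ convergence of $\rho$ for the whole trajectory. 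Your route is the classical LaSalle invariance principle: time-shift compactness, identification of the $\omega$-limit set as the zero-dissipation solutions, and the sub-subsequence argument. Your computation on the invariant set (spatially constant $\rho$ forces $\dot\rho_\infty=\ga^{-1}\dot\rho_\infty$, hence $\dot\rho_\infty=0$ since $\ga>1$, hence $\dot R_\infty=0$) is exactly the paper's cancellation in the limit of \eqref{eq-bv-3.15prime}, and is correct. Your approach is more conceptual and modular; the paper's is more quantitative and avoids having to pass to the limit in the equations (in particular in the second-order boundary relation \eqref{eq-bv-3.16prime}, for which you correctly need the uniform $C^{3+\al}_t$ bound \eqref{eq-higher-regularity-R}).

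The one step you should not leave to ``parabolic smoothing'' is the final topology upgrade. The time-uniform $C^{2+2\al}_y\times C^{3+\al}_t$ bounds you need for the compactness step are indeed available, but they come from the global well-posedness/Lyapunov stability machinery of \cite[Theorem 4.1]{bv-SIMA2000} (as recorded in \eqref{eq-bv-4.30} and \eqref{eq-higher-regularity-R}), not from an independent smoothing estimate; and the uniform bound in a strictly higher class $C^{2+2\al+\de}_y$ that your interpolation argument requires is not provided anywhere in the paper and is not automatic for this quasilinear free boundary problem, whose boundary datum \eqref{eq-bv-3.16prime} involves $\ddot R$ and is only $C^{1+\al}$ in time. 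The paper closes this gap differently: once one has $\norm{\rho(\cdot,t)-\rho_*}_{L^\infty_r}+\norm{\nb_r\rho(\cdot,t)}_{L^\infty_r}+|R(t)-R_*|+|\dot R(t)|+|\ddot R(t)|+|\dddot R(t)|\to0$, it invokes the bootstrap argument at the end of the proof of \cite[Theorem 4.1]{bv-SIMA2000} to conclude convergence in $C^{2+2\al}_r$. Replacing your smoothing/interpolation step with that bootstrap (or proving the higher Schauder bound for $t\ge1$) would make your argument complete.
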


It is simple to generalize Theorem \ref{thm-asystab}, the asymptotic stability for the model of constant external far-field pressure $p_\infty$, to the following result for the case that $p_\infty(t)$ is a small perturbation of a constant.

\begin{corollary}\label{cor:rem-asystab-general-pinf}
The conclusions of Theorem \ref{thm-asystab} hold provided we choose
the constant $\eta_0>0$ sufficiently small and such the following conditions on the asymptotically constant far-field pressure hold:
\EQ{\label{eq-pinf-condition}
p_\infty \in C^{1+\al}_t(\R_+),\quad
|p_\infty(t) - p_{\infty,*} | + \norm{\pd_tp_\infty}_{L^1_t(\R_+)}\le \eta_0,\quad
p_\infty(t) \to p_{\infty,*} \text{ as } t\to\infty.
}
\end{corollary}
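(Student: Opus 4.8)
The plan is to re-run the proof of Theorem \ref{thm-asystab}, now regarding \eqref{eq-bv-3.10prime}--\eqref{eq-bv-3.16prime} with a non-constant $p_\infty(t)$ obeying \eqref{eq-pinf-condition} as an \emph{asymptotically autonomous} perturbation of the autonomous system $p_\infty\equiv p_{\infty,*}$. Two pillars of the autonomous argument carry over unchanged. First, \emph{uniform bounds}: by Corollary \ref{cor:rem-lyapunov-general-pinf}, for $\eta_0$ small the solution stays uniformly (in $t$) close to $\mathcal M_*$, so the coercive energy estimate of Theorem \ref{thm-XYZ} remains applicable and the Schauder bounds \eqref{eq-higher-regularity-R}--\eqref{eq-higher-regularity-p} hold uniformly in $t$. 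Second, \emph{mass conservation}: $\tfrac{d}{dt}\int_{B_{R(t)}}\rho\,dx=0$, hence $\mathrm{Mass}[\rho(\cdot,t),R(t)]\equiv M_0$, which follows from the continuity equation and the kinematic boundary condition and is independent of $p_\infty$. Only the energy-dissipation step genuinely changes.

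First I would establish near-monotonicity of the energy. Differentiating $\mathcal E_{\rm total}$ (Definition \ref{def:total-en}) along a solution, exactly as in the derivation of \eqref{eq-bv-4.16} (extended in Theorem \ref{thm-XYZ}), every term is manifestly non-increasing except the one coming from differentiating the far-field work term $p_\infty(t)\cdot\tfrac{4\pi}{3}R(t)^3$, which contributes $\tfrac{4\pi}{3}\dot p_\infty(t)R(t)^3$; thus
\[
\tfrac{d}{dt}\mathcal E_{\rm total}(t) = -\mathcal D(t) + \tfrac{4\pi}{3}\,\dot p_\infty(t)\,R(t)^3,
\]
where $\mathcal D(t)\ge0$ gathers the thermal dissipation together with the viscous boundary term $16\pi\mu_l R(t)\dot R(t)^2$. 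Since $R(t)$ is uniformly bounded and $\dot p_\infty\in L^1_t(\R_+)$, the forcing term lies in $L^1(\R_+)$. Hence $\mathcal E_{\rm total}(t)-\int_0^t\tfrac{4\pi}{3}\dot p_\infty(s)R(s)^3\,ds$ is non-increasing and bounded below, so $\mathcal E_{\rm total}(t)$ converges to a finite limit as $t\to\infty$ and $\int_0^\infty\mathcal D(t)\,dt<\infty$. Using the uniform Schauder bounds to see that $t\mapsto\mathcal D(t)$ is uniformly continuous, a Barbalat-type argument yields $\mathcal D(t)\to0$ as $t\to\infty$.

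Next I would run a LaSalle invariance argument for the asymptotically autonomous flow. The uniform Schauder bounds on $(\overline\rho(\cdot,t),R(t),\dot R(t))$, in a norm slightly stronger than the one in which convergence is tracked, give precompactness of the forward trajectory (Arzel\`a--Ascoli); hence the $\omega$-limit set $\omega$ is nonempty, compact and connected. Because $p_\infty(t)\to p_{\infty,*}$, $\omega$ is invariant under the limiting autonomous semiflow, i.e.\ the flow of \eqref{eq-bv-3.10prime}--\eqref{eq-bv-3.16prime} with $p_\infty\equiv p_{\infty,*}$; and $\mathcal D\equiv0$ on $\omega$, either because $\mathcal D(t)\to0$ with $\mathcal D$ continuous, or because $\mathcal E_{\rm total}$ is continuous, converges, and is non-increasing with derivative $-\mathcal D$ along the limit flow. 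Therefore $\omega\subseteq\mathcal M_*$. Since mass is conserved and equals $M_0$, and on $\mathcal M_*$ the mass determines the equilibrium uniquely (Proposition \ref{prop-equilib}), $\omega$ is the single point $(\rho_*[M_0],R_*[M_0],\dot R_*=0)$, so the whole trajectory converges to it. Then $|\ddot R(t)|+|\dddot R(t)|\to0$ follows, as in Theorem \ref{thm-asystab}, from \eqref{eq-bv-3.15prime}--\eqref{eq-bv-3.16prime} (and their time derivatives), the convergence just obtained, and $\dot p_\infty(t)\to0$; finally the reconstruction formulas \eqref{eq-reconstruct}, all continuous in their arguments away from $R=0$, transfer the convergence to every state variable, giving asymptotic stability of the equilibrium \eqref{eq-equilibrium} of \eqref{eq1.1simplified}--\eqref{eq1.3simplified} under a far-field pressure obeying \eqref{eq-pinf-condition}.

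The hard part will be the LaSalle step in this infinite-dimensional, fully nonlinear, free-boundary, \emph{non-autonomous} setting: one must (a) extract from the a priori Schauder estimates enough compactness to realize $\omega$ in a metric space on which the local solution map is continuous, and (b) verify that $\omega$ is invariant under the limiting autonomous semiflow (and that the limit flow is well defined and continuous there) --- the hypotheses of LaSalle's principle for asymptotically autonomous semiflows. A secondary, but necessary, point is the careful accounting of the boundary contributions carrying $p_\infty(t)$ and $4\mu_l\dot R/R$ in the energy identity, confirming that $\tfrac{4\pi}{3}\dot p_\infty(t)R(t)^3$ is the only non-dissipative term; this is precisely what the extended coercive energy estimate Theorem \ref{thm-XYZ} provides, and with it in hand the remaining steps are routine modifications of the autonomous proof of Theorem \ref{thm-asystab}.
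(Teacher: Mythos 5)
Your accounting of what actually changes is the same as the paper's: the only non-dissipative term in the energy identity is $\tfrac{4\pi}{3}\dot p_\infty(t)R(t)^3$ (this is exactly \eqref{eq-bv-4.16}, which Proposition \ref{en-diss} already proves for time-dependent $p_\infty$), it is integrable because $\partial_t p_\infty\in L^1_t(\R_+)$ and $R$ is uniformly bounded by Corollary \ref{cor:rem-lyapunov-general-pinf}, hence $\mathcal E_{\rm total}(t)$ converges and $\int_0^\infty\mathcal D(t)\,dt<\infty$, and the coercivity of Theorem \ref{thm-XYZ} (whose statement explicitly tolerates $|p_\infty-p_{\infty,*}|\le\de_0$) remains available. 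Up to that point you have reproduced the substantive content of the corollary.

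Where you diverge is the identification of the limit, and there you have traded a concrete, already-available argument for an abstract one whose hypotheses you do not verify. Invoking LaSalle's principle for asymptotically autonomous semiflows requires (a) precompactness of the forward orbit in a topology in which (b) the \emph{limiting} semiflow is well defined and continuous, and (c) invariance of the $\om$-limit set under that semiflow. For this quasilinear free boundary problem, (b) and (c) amount to continuous dependence of solutions of \eqref{eq-bv-3.10prime}--\eqref{eq-bv-3.16prime} on initial data in the compactness topology, which is not established in the paper and is not routine; you yourself label this ``the hard part'' and leave it open, so as written the proof is not complete. The point of the paper's route (the proof of Proposition \ref{thm-asystab-1}) is that no invariance principle is needed: Barbalat plus the Poincar\'e inequality plus mass conservation give uniform convergence of $\overline\rho(\cdot,t)-(R_*/R(t))^3\rho_*$ to zero; Barbalat applied to $\tfrac{d}{dt}\bke{p(t)R(t)^3}$ together with \eqref{eq-bv-3.15prime} gives $\dot R,\ddot R\to0$; any subsequential limit then satisfies the algebraic system \eqref{eq-steadystate} via \eqref{eq-bv-3.16prime}, using $p_\infty(t)\to p_{\infty,*}$, and is therefore the unique equilibrium of mass $M_0$; finally the convergence of $\mathcal E_{\rm total}(t)$ (which you have already secured) combined with \eqref{eq-bv-4.35} upgrades subsequential convergence to convergence of the full trajectory. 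Every step of that chain goes through verbatim under \eqref{eq-pinf-condition} (the only new inputs being $p_\infty(t)\to p_{\infty,*}$ and, for the $\dddot R$ statement, $\dot p_\infty(t)\to0$, which follows from $\dot p_\infty\in L^1\cap C^\al$). Replace your LaSalle step with this and the proof is complete.
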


\medskip\noindent{\bf Strategy of the proof of Theorem \ref{thm-asystab}.} By a continuity argument, the  proof of asymptotic stability relative to arbitrary small perturbations, can be reduced to Proposition \ref{thm-asystab-1} on asymptotic stability relative to perturbations of a spherical equilibrium which have the same bubble mass.  At the heart of the proof of Proposition \ref{thm-asystab-1}  is
\begin{enumerate}
\item  the time-integrability over $[0,\infty)$ of the energy dissipation rate:
 $\int_{B_{R(t)}} |T_g(\cdot,t)|^{-2} |\nabla T_g(\cdot,t)|^2$ and 
 \item the coercive
energy estimate, Theorem \ref{thm-XYZ}, which expresses that the spherical equilibrium of an arbitrary specified mass is a local minimizer of the total energy relative to spherically symmetric perturbations of the same mass; see \cite[Lemma 4.2]{bv-SIMA2000}.
\end{enumerate} By 
the equation of state, \eqref{eq1.2simplified-d}, $\int_{B_{R(t)}} |\rho_g(\cdot,t)|^{-2} |\nabla \rho_g(\cdot,t)|^2$ is time-integrable over $[0,\infty)$. This implies
convergence of $\rho(\cdot,t)$,  to an equilbrium density. With control of the density, $\rho(r,t)$,
 and in particular $\rho(R(t),t)$, we obtain that $\dot R(t)\to0$ as $t\to\infty$, from \eqref{eq-bv-3.15prime}, the equation 
 for the motion of the boundary. The  limiting constant values of $\rho_g$ and $R$ satisfy the system 
 \eqref{eq-steadystate} and it follows that these correspond to the unique spherically symmetric equilibrium of the given initial mass. 
We present the detailed proofs in Section \ref{sec-asystab}.

\subsection{Main result: exponential rate of convergence toward the manifold of equilibria}

The ultimate goal of this article is to show that, within the approximate system \eqref{eq1.1simplified}--\eqref{eq1.3simplified}, the manifold of the spherically symmetric equilibria, given in \eqref{eq-equilibrium} and parametrized by the bubble mass $M$ is nonlinearly and exponentially asymptotically stable with respect to small spherically symmetric perturbations.

\begin{theorem}\label{thm-nonlinear-exp-decay}
Assume constant pressure at infinity,  $p_\infty(t)\equiv p_{\infty,*}$. Then, 
the manifold of the equilibria $\mathcal M_*$ (defined in \eqref{eq-equilib-manifold}) of the free boundary problem \eqref{red-eqns} is (locally) nonlinearly exponentially stable.
Specifically,  there exist a constant $\eta>0$ such that if $\dist\bke{(\rho_0(\cdot),R(t),\dot R(t)), \mathcal M_*} \le\eta$, then for some $\bar\be>0$
\[
\dist\bke{(\rho(\cdot,t),R(t),\dot R(t)), \mathcal M_*} = O \bke{ e^{-\bar\be t}}\ \text{ as }t\to+\infty.
\]
\end{theorem}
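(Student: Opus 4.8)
\textbf{Proof strategy for Theorem \ref{thm-nonlinear-exp-decay}.}
The plan is to combine the weak (no-rate) asymptotic stability from Step~1 with a center manifold analysis in a neighborhood of the manifold of equilibria $\mathcal M_*$. Fix a mass $M_*$ and the corresponding equilibrium $(\rho_*[M_*],R_*[M_*],0)$. Writing the dynamics \eqref{red-eqns} as a fully nonlinear autonomous system for the perturbation variable, say $U(t) = (\rho(\cdot,t)-\rho_*[M_*],R(t)-R_*[M_*],\dot R(t))$, I would first reformulate the free boundary problem on the fixed ball $B_1$ (as in \eqref{red-eqns-fix-domain}) so that the state space is a fixed Banach space, with the linearization $\mathcal L$ at the equilibrium acting on $C^{2+2\alpha}_y(B_1)\times\mathbb R\times\mathbb R$. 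The first substantive step is the spectral analysis of $\mathcal L$ (invoking Proposition \ref{prop-spectrum}): one shows the spectrum consists of a simple eigenvalue $0$ whose eigenvector is tangent to $\mathcal M_*$ (reflecting the one-parameter family of equilibria, equivalently the conserved mass), together with the remainder of the spectrum contained in a half-plane $\{\Re\tau \le -\bar\beta\}$ for some $\bar\beta>0$; the Dirichlet eigenvalues $\lambda_j=(j\pi)^2$ from \eqref{eq-eigen-formula} should enter the computation of the decaying modes, and the coercive energy estimate (Theorem \ref{thm-XYZ}) is what guarantees no spectrum in the open right half-plane.

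Next I would decompose the phase space as a direct sum of the (one-dimensional) center subspace $X_c$ — spanned by the zero eigenvector, i.e.\ the tangent to $\mathcal M_*$ — and a complementary spectral subspace $X_s$ on which $e^{t\mathcal L}$ decays like $e^{-\bar\beta t}$. The mass functional ${\rm Mass}[\rho,R]$ is invariant under the flow, so the trajectory starting from $U(0)$ is confined to the level set $\{{\rm Mass}=M_0\}$, which intersects $\mathcal M_*$ in the single equilibrium $(\rho_*[M_0],R_*[M_0],0)$. The key point is then: relative to this equilibrium, the center-manifold reduction shows that $\mathcal M_*$ \emph{is} the local center manifold (an attracting invariant manifold of equilibria), and the transverse component of the perturbation satisfies a closed equation driving it to zero exponentially. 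Concretely, writing $U(t) = U_{\mathcal M_*}(t) + V(t)$ with $V(t)\in X_s$ (after subtracting the appropriate point on $\mathcal M_*$), Duhamel's formula gives
\[
V(t) = e^{t\mathcal L}V(0) + \int_0^t e^{(t-s)\mathcal L}\,\mathcal N(U(s))\,ds,
\]
where $\mathcal N$ is the (at least quadratic) nonlinearity, controlled by Proposition \ref{prop-nonlinear-est}. Since $\|e^{t\mathcal L}\|_{X_s\to X_s}\lesssim e^{-\bar\beta t}$ (Proposition \ref{prop-stab-center-mfd}) and, crucially, Step~1 already gives us $\|U(t)\|\to 0$ (hence $\|\mathcal N(U(s))\| \le \epsilon(s)\|V(s)\| + \text{(transverse-to-$\mathcal M_*$ quadratic errors)}$ with $\epsilon(s)\to 0$), a standard Gronwall/bootstrap argument on $\sup_{[0,T]} e^{\bar\beta' t}\|V(t)\|$ for $\bar\beta' < \bar\beta$ closes to yield $\|V(t)\| = O(e^{-\bar\beta' t})$. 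Because ${\rm dist}((\rho,R,\dot R),\mathcal M_*) \le \|V(t)\| + o(1)$ and in fact the distance is comparable to $\|V(t)\|$ once the trajectory is near $\mathcal M_*$, this gives the claimed exponential decay toward $\mathcal M_*$ (with the limiting equilibrium being $(\rho_*[M_0],R_*[M_0],0)$, as identified in Theorem \ref{thm-asystab}).

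The main obstacle — and the reason Appendix \ref{sec-center-manifold} is needed — is that this is \emph{not} a standard semilinear setup: the equations \eqref{red-eqns} are fully nonlinear (quasilinear diffusion $\Delta_r\log\rho$, a second-order-in-time boundary condition \eqref{eq-bv-3.16prime} coupling $\ddot R$ into the Dirichlet trace of $\rho$), so $e^{t\mathcal L}$ has only parabolic smoothing with loss, and the fixed-point construction of the center manifold must be carried out in a scale of Hölder spaces with careful Schauder estimates rather than by a naive contraction. The classical center manifold theorems do not apply directly; one must extend them to autonomous systems equipped with the \emph{a priori} decay $\|U(t)\|\to 0$ supplied by Step~1 — this a priori control is exactly what lets one dispense with global Lipschitz cutoffs of the nonlinearity and instead run the bootstrap on the already-small trajectory. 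A secondary technical point is verifying that the nonlinear map $\mathcal N$ maps the relevant Hölder spaces to themselves with a quadratic estimate uniformly along the trajectory (Proposition \ref{prop-nonlinear-est}), which requires the uniform lower bound $\rho \ge \eta > 0$ and the higher regularity $\|R-R_*\|_{C^{3+\alpha}_t}$, $\|p_g\|_{C^{1+\alpha}_t}$ recorded in \eqref{eq-higher-regularity-R}--\eqref{eq-higher-regularity-p}.
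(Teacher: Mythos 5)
Your high-level strategy matches the paper's: identify $\mathcal M_*$ as a one-dimensional center manifold (tangent to $\ker\mathcal L$, reflecting the conserved mass), obtain exponential linear decay on the complementary spectral subspace, and bootstrap the weak decay from Step~1 into exponential decay via a Gronwall-type argument adapted to a fully nonlinear autonomous system. However, there are two substantive differences (and one gap) worth flagging.

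First, the functional setting. You propose to run the center-manifold contraction directly in $C^{2+2\alpha}_y(B_1)\times\mathbb R\times\mathbb R$ with Schauder estimates. The paper instead expands $u$ in the radial Dirichlet eigenfunctions $\phi_j$ of $B_1$ (Proposition~\ref{prop-equivalent-systems}) and performs the entire spectral/Duhamel/Gronwall argument in $\ell^2$; the operator bound $\norm{e^{\mathcal L t}Q_2}_{\ell^2\to\ell^2}\le ce^{-\beta t}$ (Proposition~\ref{prop-invariance-decay}, not \ref{prop-stab-center-mfd}) comes for free from Gearhart--Pr\"uss on a Hilbert space, rather than from maximal regularity or Schauder theory. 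The price is that the center-manifold argument only delivers decay in the $L^2$-type norm \eqref{eq-exp-decay-u2-dotR}, and an additional bootstrap (Lemma~\ref{lem-H1-energy-ineq} and Proposition~\ref{prop-decay-nbu}, using the interpolation Lemma~\ref{lem-interpolate}) is needed to promote this to decay in $C^{2+2\alpha}_y$, and hence in $\oldnorm{\cdot}$. Your sketch, working directly in H\"older norms, skips this last step by design, but at the cost of leaving the hardest analytic work (Schauder estimates for the linearized quasilinear flow) implicit; the paper's eigenfunction route deliberately trades that for the extra interpolation bootstrap.

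Second, and this is a genuine gap in the argument as you wrote it: your Duhamel identity $V(t)=e^{t\mathcal L}V(0)+\int_0^t e^{(t-s)\mathcal L}\mathcal N(U(s))\,ds$ treats $\mathcal N$ as a function of $U$ alone, but in the actual system \eqref{eq-carr-6.3.1-simple-form} the nonlinearity has the form $\mathcal N({\bf w},\dot{\bf w})=\mathcal N^1({\bf w})\dot{\bf w}+\mathcal N^0({\bf w})$, so the right-hand side contains $\dot V$. Your Gronwall bound on $\sup e^{\bar\beta' t}\norm{V(t)}$ does not close without an auxiliary estimate absorbing the $\dot{\bf w}$-dependence. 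This is precisely the point of the paper's extension of center-manifold theory in Appendix~\ref{sec-center-manifold}: the a priori information used there is not merely $\norm{U(t)}\to 0$ but the smallness of $\norm{\dot{\bf w}(t)}$ for all $t\ge 0$ (cf.\ the hypothesis of Proposition~\ref{prop-stab-center-mfd} and the bound \eqref{eq-bound-dotz-by-z}, which bounds $\norm{\dot{\bf z}}$ by $\norm{\bf z}$ using that smallness). Your write-up emphasizes $\norm{U(t)}\to 0$ as the a priori input, which is necessary but not sufficient; you would need to articulate and use the corresponding decay of $\dot{\bf w}$ (supplied in Proposition~\ref{prop-a-priori} via $|\ddot R|+|\dddot R|\to 0$ and $\pd_t\rho\to 0$) to close the loop.
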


\medskip\noindent{\bf Strategy of the proof of Theorem \ref{thm-nonlinear-exp-decay}.}
The detailed proof of Theorem \ref{thm-nonlinear-exp-decay} is presented in Section \ref{sec-nonlinear} and makes use of ideas from  center manifold theory; see, {\it e.g.} \cite{Carr-book1981}. First, we rewrite the quasi-linear parabolic partial differential equation in the free boundary problem \eqref{red-eqns} as an infinite-dimensional dynamical system by means of Dirichlet eigenfunction decomposition in Proposition \ref{prop-equivalent-systems}.
 The linearized operator has a neutral direction (zero eigenvalue)
 associated with the manifold of equilibrium and all its other spectrum is contained in the open left half plane, 
 and bounded away from the imaginary axis; Proposition \ref{prop-spectrum}, Appendix \ref{sec-negative-upper-bound}.  Subject to a codimension one constraint, the linearized flow satisfies an exponential time-decay bound; Proposition \ref{prop-invariance-decay}. 
Next, we develop a center manifold analysis to obtain an exponential rate of convergence of the solution toward the center manifold
 of spherically symmetric equilibria.  

We note that an obstacle to applying the standard center manifold framework to our stability problem  is that our infinite dimensional dynamical system \eqref{eq-carr-6.3.1-simple-form} is quasilinear and autonomous:
 $\dot{\bf w} = \mathcal L {\bf w} + \mathcal N^1({\bf w})\dot{\bf w} + \mathcal N^0({\bf w})$ (or \eqref{red-eqns} equivalently). 
 The system  is nonlinear, and  linear in $\dot{\bf w}$. We are unaware of a center manifold formulation admitting direct application to our 
 free boundary problem. Hence in Appendix \ref{sec-center-manifold}, we develop an applicable general approach for a class of fully nonlinear autonomous systems equipped with ``weak'' {\it a priori} bounds; see Proposition \ref{prop-stab-center-mfd} .
The {\it a priori} information on the regularity and weak time-decay of $\dot{\bf w}$ 
(Theorem \ref{thm-asystab}) enables us, via Proposition \ref{prop-stab-center-mfd}, to  prove exponential convergence to a center manifold.

\section{Conservation of mass and energy dissipation}\label{sec:conservation}

Solutions of the free boundary problem \eqref{eq1.1simplified}--\eqref{eq1.3simplified} satisfy conservation of mass
 and an energy dissipation law. These play a central role in the Lyapunov stability theory of \cite{bv-SIMA2000}
 and in our asymptotic stability theory. To derive these statements one makes use of the following technical results.

Given a smooth velocity field, ${\bf v}:(x,t)\mapsto {\bf v}(x,t)\in\mathbb{R}^3$, define $X^t(\alpha)$ to be the particle trajectory map given by the solution of the initial value problem: $\dot{X}^t(\alpha) = {\bf v}\left(X^t(\alpha),t\right)$, $X^0(\alpha)=\alpha$. The mapping $\alpha\mapsto X^t(\alpha)$ is smooth and invertible for all $t$ sufficiently small. For an open subset 
$\Omega\subset\mathbb{R}^2$, let $X^t(\Omega)=\{X^t(\alpha): \alpha\in\Omega\}$.
We first recall the transport formula that gives the rate of change of a function in a domain transported with the fluid.

\begin{proposition}[$\text{\cite[Proposition 1.3]{MB-book2002}}$]\label{prop-MB-1.3}
Let $\Om$ denote  an open, bounded domain with a smooth boundary.  Then for any smooth function $f(x,t)$,
\[
\frac{d}{dt} \int_{X^t(\Om)} f\, dx = \int_{X^t(\Om)} \bkt{\pd_t f + \div(f{\bf v})} dx.
\]
\end{proposition}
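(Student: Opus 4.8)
The plan is to run the standard change-of-variables argument behind the Reynolds transport theorem. First I would pull the moving domain back to the fixed reference domain $\Om$ through the flow map $\al\mapsto X^t(\al)$, writing $J^t(\al) = \det\bke{\nb_\al X^t(\al)}$ for its Jacobian determinant. Since ${\bf v}$ is smooth and $\overline{\Om}$ is compact, the map $\al\mapsto X^t(\al)$ is a $C^1$ diffeomorphism onto $X^t(\Om)$ for $t$ in a neighborhood of any fixed time, with $J^t(\al)>0$ there (it equals $1$ at $t=0$ and varies continuously in $t$). The change of variables $x=X^t(\al)$ then gives
\[
\int_{X^t(\Om)} f(x,t)\,dx = \int_{\Om} f\bke{X^t(\al),t}\, J^t(\al)\, d\al.
\]

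Next I would differentiate the right-hand side in $t$. Because the integrand and its $t$-derivative are continuous in $(\al,t)$ and $\overline{\Om}$ is compact, differentiation under the integral sign is legitimate, and by the chain rule together with $\frac{d}{dt}X^t(\al) = {\bf v}\bke{X^t(\al),t}$,
\[
\frac{d}{dt}\int_{\Om} f\bke{X^t(\al),t}\, J^t(\al)\, d\al = \int_{\Om} \bkt{ \bke{\pd_t f + {\bf v}\cdot\nb f}\bke{X^t(\al),t}\, J^t(\al) + f\bke{X^t(\al),t}\, \frac{d}{dt}J^t(\al) } d\al.
\]
The main ingredient is then the Euler expansion formula
\[
\frac{d}{dt}J^t(\al) = \bke{\div{\bf v}}\bke{X^t(\al),t}\; J^t(\al).
\]

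Substituting this identity, the bracketed integrand collapses to $\bkt{\pd_t f + {\bf v}\cdot\nb f + f\,\div{\bf v}}\bke{X^t(\al),t}\, J^t(\al) = \bkt{\pd_t f + \div(f{\bf v})}\bke{X^t(\al),t}\, J^t(\al)$, and reversing the change of variables $x=X^t(\al)$ recovers the claimed identity
\[
\frac{d}{dt}\int_{X^t(\Om)} f\,dx = \int_{X^t(\Om)} \bkt{\pd_t f + \div(f{\bf v})}\, dx.
\]
The only genuinely non-routine step is the Euler expansion formula for $\dot J^t$, and I expect that to be the main obstacle. I would establish it from Jacobi's formula $\pd_t\det M(t) = \det M(t)\,{\rm tr}\bke{M(t)^{-1}\dot M(t)}$ applied to $M(t)=\nb_\al X^t(\al)$, noting that differentiating the identity $\pd_t\pd_{\al_j} X^t_i = \pd_{\al_j}\bke{{\bf v}_i(X^t,t)}$ gives $\dot M(t) = \bke{\nb_x{\bf v}}\bke{X^t(\al),t}\,M(t)$, whence ${\rm tr}\bke{M(t)^{-1}\dot M(t)} = {\rm tr}\bke{\nb_x{\bf v}} = \div{\bf v}$ evaluated along the trajectory. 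The remaining points — the diffeomorphism property for small $t$, positivity of $J^t$, and the passage of the derivative under the integral sign — all follow from smoothness of ${\bf v}$ and compactness of $\overline{\Om}$.
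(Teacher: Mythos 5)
Your argument is correct and is exactly the standard proof of the transport formula that the paper simply cites from Majda--Bertozzi: pull back to the fixed domain via the flow map, differentiate under the integral, and invoke the Euler/Liouville identity $\frac{d}{dt}J^t = (\div{\bf v})\circ X^t\; J^t$ obtained from Jacobi's formula. No gaps; this matches the approach of the cited source.
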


With the aid of this transport formula, we have the following lemma which is used to compute the time-evolution of the mass and energies on a time-varying spatial domain, $\Omega(t)$.

\begin{lemma}\label{lem-bv-append-B-nonspherical}
If $\pd_t\rho + \div(\rho{\bf v})=0$ in $\Om(t)$ and ${\bf v}(\boldsymbol{\om}(t),t) = \dot{\boldsymbol{\om}}(t)$ for $\boldsymbol{\om}(t)\in\pd\Om(t)$, then
\[
\frac{d}{dt} \int_{\Om(t)} \rho\phi\, dx
 = \int_{\Om(t)} \rho \frac{D\phi}{Dt}\, dx
\]
for any smooth function $\phi$, where $\frac{D}{Dt}$ is the material derivative of $f$ given by
\[
\frac{Df}{Dt} = \pd_tf + {\bf v}\cdot\nb f.
\]
\end{lemma}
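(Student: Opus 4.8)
The plan is to identify the moving region $\Om(t)$ with the image of a fixed region under the particle-trajectory map generated by ${\bf v}$, apply the transport formula of Proposition~\ref{prop-MB-1.3} to $f=\rho\phi$, and then use the continuity equation to cancel the unwanted terms in the resulting integrand.

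First I would check that $\Om(t)=X^t(\Om(0))$ for $t$ sufficiently small, where $X^t$ is the flow map of ${\bf v}$. This is exactly where the hypothesis ${\bf v}(\boldsymbol{\om}(t),t)=\dot{\boldsymbol{\om}}(t)$ on $\pd\Om(t)$ enters: a material point starting on $\pd\Om(0)$ obeys the same ODE as the boundary parametrization $\boldsymbol{\om}(\cdot,t)$, so by uniqueness of solutions of the ODE it stays on $\pd\Om(t)$ for all $t$; since trajectories of the (smooth) velocity field do not cross, interior points stay interior. Hence $X^t$ is a diffeomorphism carrying $\Om(0)$ onto $\Om(t)$ and $\pd\Om(0)$ onto $\pd\Om(t)$, which is precisely the hypothesis needed to invoke Proposition~\ref{prop-MB-1.3}.

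Next, applying Proposition~\ref{prop-MB-1.3} with $f=\rho\phi$ gives
\[
\frac{d}{dt}\int_{\Om(t)}\rho\phi\,dx=\int_{\Om(t)}\bkt{\pd_t(\rho\phi)+\div(\rho\phi\,{\bf v})}\,dx,
\]
and expanding the integrand by the product rule (and $\div(\rho\phi{\bf v})=\phi\div(\rho{\bf v})+\rho{\bf v}\cdot\nb\phi$) yields
\[
\pd_t(\rho\phi)+\div(\rho\phi\,{\bf v})=\phi\bke{\pd_t\rho+\div(\rho{\bf v})}+\rho\bke{\pd_t\phi+{\bf v}\cdot\nb\phi}=\rho\,\frac{D\phi}{Dt},
\]
where the first parenthesis vanishes by the continuity equation $\pd_t\rho+\div(\rho{\bf v})=0$. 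Integrating over $\Om(t)$ gives the asserted identity.

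The only genuine subtlety—hence the ``main obstacle''—is the first step: justifying that the region is materially transported and that the flow map is a diffeomorphism onto $\Om(t)$ on the relevant time interval. In the present setting this is not a serious difficulty, since the smoothness of ${\bf v}$ and of $\pd\Om(t)$ (provided by the regularity of the solution) makes the flow-map argument routine; moreover Proposition~\ref{prop-MB-1.3} is stated for domains of the form $X^t(\Om)$, so once $\Om(t)$ is recognized as such, the remainder is an elementary computation. One could even bypass the issue entirely by taking as given that the kinematic condition identifies $\Om(t)$ with a fluid-transported domain, which is how it is used throughout the paper.
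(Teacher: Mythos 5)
Your proof is correct and follows exactly the same approach as the paper: identify $\Om(t)=X^t(\Om(0))$ via the kinematic boundary condition, apply Proposition~\ref{prop-MB-1.3} to $f=\rho\phi$, expand the integrand, and use the continuity equation to cancel the unwanted terms. The paper states the first step as a one-line observation, while you flesh it out a bit more with the ODE-uniqueness argument, but the substance is identical.
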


\begin{proof}[Proof of Lemma \ref{lem-bv-append-B-nonspherical}]
Since the boundary $\pd\Om(t)$ moves along the particle-trajectory mapping $X$ of the velocity field ${\bf v}$, $\Om(t)=X(\Om(0),t)$.
By the transport formula Proposition \ref{prop-MB-1.3},
\[
\frac{d}{dt} \int_{\Om(t)} \rho\phi\, dx = \int_{\Om(t)} \bkt{\pd_t(\rho\phi) + \div_x(\rho\phi{\bf v})}\, dx.
\]
Using the continuity equation, the right hand side of above equation becomes
\[
\int_{\Om(t)} \bkt{\rho \pd_t\phi + \rho{\bf v}\cdot\nb\phi} dx.
\]
This proves the lemma.
\end{proof}

\subsection{Conservation of mass}

By taking $\phi \equiv 1$ in Lemma \ref{lem-bv-append-B-nonspherical} we have
\begin{proposition}[Bubble mass conservation]\label{mass-cons}
Let $X^t$ be the particle-trajectory mapping associated with ${\bf v}_g$.
Denote by $\Omega_0\subset\mathbb{R}^3$ the bubble region at time $t=0$, assumed to have a smooth boundary, and $\Omega(t)=X^t(\Omega_0)$.
Let $\rho=\rho_g$ denote a $C^{1+\al}_t([0,\infty); C^{2+2\al}_x(\Om(t)))$, $0<\al<\frac12$ solution  of \eqref{eq-1.2-a} (or equivalently \eqref{eq1.2simplified-a}) with ${\bf v}_g$ satisfying \eqref{eq-1.3-a} (or equivalently \eqref{eq1.3simplified-a}) 
and initial data $\rho_0\in C^{2+2\al}(\Om_0)$.
 Then, the mass of the bubble is constant in time:
 \EQ{\label{eq-mass-preserve}
\int_{\Om(t)} \rho(x,t)\, dx = \int_{\Om_0} \rho_0(x)\, dx \quad t>0.
}
\end{proposition}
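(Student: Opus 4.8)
The plan is to deduce the statement directly from Lemma \ref{lem-bv-append-B-nonspherical} with the test function $\phi\equiv1$, so the only real work is checking that the lemma's hypotheses hold in the present setting. By assumption $\rho=\rho_g$ solves the continuity equation \eqref{eq1.2simplified-a} (equivalently \eqref{eq-1.2-a}), which is precisely $\pd_t\rho+\div(\rho{\bf v}_g)=0$ in $\Om(t)$. The second hypothesis, ${\bf v}_g(\boldsymbol{\om}(t),t)=\dot{\boldsymbol{\om}}(t)$ for $\boldsymbol{\om}(t)\in\pd\Om(t)$, holds because $\Om(t)$ is \emph{defined} as $X^t(\Om_0)$, the image of $\Om_0$ under the particle-trajectory map of ${\bf v}_g$: each boundary point $\boldsymbol{\om}(t)=X^t(\al)$ with $\al\in\pd\Om_0$ satisfies $\dot{\boldsymbol{\om}}(t)=\dot X^t(\al)={\bf v}_g(X^t(\al),t)={\bf v}_g(\boldsymbol{\om}(t),t)$, the full vector identity — not merely the normal-component relation \eqref{eq1.3simplified-a}. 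The assumed regularity $\rho\in C^{1+\al}_t([0,\infty);C^{2+2\al}_x(\Om(t)))$ with $\rho_0\in C^{2+2\al}(\Om_0)$ makes ${\bf v}_g$ smooth enough for $\al\mapsto X^t(\al)$ to be a diffeomorphism for $t$ small, for the transport formula of Proposition \ref{prop-MB-1.3} (hence Lemma \ref{lem-bv-append-B-nonspherical}) to apply, and for $t\mapsto\int_{\Om(t)}\rho(x,t)\,dx$ to be $C^1$.

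With the hypotheses verified, apply Lemma \ref{lem-bv-append-B-nonspherical} with $\phi\equiv1$. Then $\frac{D\phi}{Dt}=\pd_t\phi+{\bf v}_g\cdot\nb\phi=0$, so
\[
\frac{d}{dt}\int_{\Om(t)}\rho(x,t)\,dx=\int_{\Om(t)}\rho(x,t)\,\frac{D\phi}{Dt}\,dx=0
\]
for all $t>0$. Integrating this identity over $[0,t]$ and using continuity at $t=0$ gives
\[
\int_{\Om(t)}\rho(x,t)\,dx=\int_{\Om(0)}\rho(x,0)\,dx=\int_{\Om_0}\rho_0(x)\,dx,
\]
which is \eqref{eq-mass-preserve}.

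Since every analytic ingredient is already packaged into Lemma \ref{lem-bv-append-B-nonspherical} and Proposition \ref{prop-MB-1.3}, there is no substantive obstacle. The one point worth emphasizing is that the argument uses the fact that the free boundary is transported by the \emph{full} velocity field ${\bf v}_g$, which is automatic here because $\Om(t)$ is built as the Lagrangian image $X^t(\Om_0)$; the kinematic boundary condition \eqref{eq1.3simplified-a} alone (constraining only normal components of ${\bf v}_l$ and ${\bf v}_g$) would not suffice. As an alternative one could argue directly by changing variables to $\Om_0$ and using the expansion identity $\frac{d}{dt}\bigl(\det\nb_\al X^t\bigr)=(\div{\bf v}_g)(X^t(\al),t)\,\det\nb_\al X^t$ together with the continuity equation in Lagrangian form, but invoking Lemma \ref{lem-bv-append-B-nonspherical} is the most economical route.
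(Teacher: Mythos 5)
Your proof is correct and matches the paper's argument exactly: the paper also obtains \eqref{eq-mass-preserve} by taking $\phi\equiv1$ in Lemma \ref{lem-bv-append-B-nonspherical}, so that $\frac{D\phi}{Dt}=0$ and the mass is constant. Your additional verification that the boundary of $\Om(t)=X^t(\Om_0)$ is transported by the full velocity field ${\bf v}_g$ (not merely its normal component) is a sound and worthwhile elaboration of a hypothesis the paper leaves implicit.
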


\subsection{Energy dissipation law}

\begin{definition}[The total energy]\label{def:total-en}
Consider the case of spherically symmetric solutions of \eqref{eq1.1simplified}--\eqref{eq1.3simplified}. 
The total energy of the system  is given by
\begin{equation}
\label{total-en}
\mathcal{E}_{\rm total}(t) = FE(t) + KE_l(t) + U_{g-l}(t) + PV_{p_\infty}(t),\\
\end{equation}
where the  total energy is made up of the following components:
\begin{enumerate}
\item $FE(t)$, the Helmholtz free energy:
\EQ{\label{eq-helmholtz}
FE(t) 
&= c_v\int_{B_R} \rho_g T_g\, dx - T_\infty \int_{B_R} \rho_g s\,dx\\
&= \frac{4\pi c_v}{3\Rg } p_gR^3 - T_\infty \int_{B_R} \rho_g s\,dx\\
&= \frac{4\pi c_v}{3\Rg } p_gR^3 - c_v T_\infty M_0 \log p_g + c_v\ga T_\infty \int_{B_R} \rho_g\log\rho_g\, dx,
}
where $M_0 = \textrm{Mass}[\rho_g,R]$, and the second and the last equalities hold by \eqref{eq1.2simplified-d} and \eqref{eq1.2simplified-e}, respectively.
\item $KE_l(t) $, the kinetic energy of the liquid:
\[
KE_l(t) = \frac12 \int_{\R^3\setminus B_{R(t)}} \rho_l |{\bf v}_l|^2\, dx
= 2\pi \rho_l [R(t)]^3\ [\dot R(t)]^2,\quad 
\]
\item $U_{g-l}(t)$, the surface energy of the liquid--gas interface
\[
U_{g-l}(t) = \si \int_{\pd B_{R(t)}}\, dS
= 4\pi \si\ [R(t)]^2,
\] 
\item $PV_{p_\infty}(t)$, the energy contributed by the work done by the external sound field. 
\[
PV_{p_\infty}(t) = |B_{R(t)}|\, p_\infty(t) = \frac{4\pi}3 [R(t)]^3 p_\infty(t).
\]
\end{enumerate}
\end{definition}

The energy functional \eqref{total-en} is at the heart of the stability analysis. Its importance is clear from the following
result on energy dissipation, proved  in \cite{bv-SIMA2000} (using Lemma \ref{lem-bv-append-B-nonspherical}) for the system \eqref{red-eqns} (equivalently \eqref{eq1.1simplified}--\eqref{eq1.3simplified} under the assumption of spherical symmetry) and $p_\infty=1$. We state and prove  a mild generalization to the case of a time-dependent  pressure at $p_\infty(t)$.
We shall use the abbreviated notation: $\rho=\rho_g, p=p_g, T=T_g$ and $\ka=\ka_g$.

\begin{proposition}[Energy dissipation law]\label{en-diss}
Assume that $\left(\rho(r,t), R(t), p(t)=\Rg T(r,t)\rho(r,t)\right)$ is a solution of \eqref{red-eqns}, or equivalently \eqref{eq1.1simplified}--\eqref{eq1.3simplified} under the assumption of spherical symmetry.  

Then, 
\EQ{\label{eq-bv-4.16}
\frac{d}{dt} \mathcal{E}_{\rm total}(t) =  - \ka T_\infty \int_{B_{R(t)}} \frac{|\nb_r T(|x|,t)|^2}{T^2(|x|,t)}\, dx 
- 16\pi\mu_l R(t) (\dot R(t))^2
+ \frac{4\pi}3 R^3(t) \pd_tp_\infty(t).
}
\end{proposition}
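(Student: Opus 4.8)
The plan is to compute $\frac{d}{dt}\mathcal E_{\rm total}$ by differentiating each of the four summands in \eqref{total-en} and then assembling the pieces, using the equation of motion for the boundary, the reduced PDE for the gas density, and the constitutive relations. I would organize the bookkeeping by tracking, for each term, which contributions involve $\dot R$, which involve $\ddot R$, and which come from the thermal dissipation integral.

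\textbf{Step 1: Differentiate the free energy.} Using the last expression in \eqref{eq-helmholtz}, $FE = \frac{4\pi c_v}{3\Rg}p_gR^3 - c_vT_\infty M_0\log p_g + c_v\ga T_\infty\int_{B_R}\rho_g\log\rho_g\,dx$, and noting $M_0$ is conserved (Proposition \ref{mass-cons}), I would differentiate term by term. The last integral I would handle via Lemma \ref{lem-bv-append-B-nonspherical} with $\phi = $ (something involving $\log\rho_g$); more directly, one computes $\frac{d}{dt}\int_{B_R}\rho_g\log\rho_g\,dx$ using $\pd_t\rho_g + \div(\rho_g{\bf v}_g)=0$ and the kinematic boundary condition, which gives $\int_{B_R}\rho_g\,\frac{D}{Dt}(\log\rho_g)\,dx = \int_{B_R}\frac{D\rho_g}{Dt}\,dx = -\int_{B_R}\rho_g\div{\bf v}_g\,dx$. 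Then one substitutes the formula for $v_g$ from \eqref{eq-reconstruct} (equivalently the reduced equation \eqref{eq-bv-3.10prime}), integrates by parts, and uses \eqref{eq1.2simplified-d} to convert $\nb(1/\rho_g)$ into $\nb T_g/(\Rg\,\text{const})$-type expressions. This is where the dissipation integral $\int |\nb_r T|^2/T^2$ should emerge, together with a boundary term at $r=R$ and terms proportional to $\pd_tp_g$.

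\textbf{Step 2: Differentiate $KE_l$, $U_{g-l}$, and $PV_{p_\infty}$.} These are explicit: $\frac{d}{dt}KE_l = 2\pi\rho_l(3R^2\dot R^3 + 2R^3\dot R\ddot R)$, $\frac{d}{dt}U_{g-l} = 8\pi\si R\dot R$, and $\frac{d}{dt}PV_{p_\infty} = 4\pi R^2\dot R\,p_\infty + \frac{4\pi}{3}R^3\pd_tp_\infty$. The last of these already produces the final term in \eqref{eq-bv-4.16}, so the remaining task is to show that all the $\dot R$- and $\ddot R$-dependent pieces coming from Steps 1 and 2 cancel against each other except for the viscous loss $-16\pi\mu_l R\dot R^2$.

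\textbf{Step 3: Assemble and use the boundary conditions.} The cancellation in Step 3 is the crux. The pressure jump relation \eqref{pjump}, namely $p_g - p_\infty - \frac{2\si}{R} - 4\mu_l\frac{\dot R}{R} = \rho_l(R\ddot R + \frac32\dot R^2)$, is exactly the identity that ties together the $\ddot R$ terms from $KE_l$, the $\si R\dot R$ term from $U_{g-l}$, the $R^2\dot R p_\infty$ term from $PV_{p_\infty}$, and the boundary contribution from $FE$ (which carries a factor like $4\pi R^2\dot R\, p_g$ since $\dot R = v_g(R,t) = v_l(R,t)\cdot(\text{vol factor})$). Multiplying \eqref{pjump} by $4\pi R^2\dot R$ produces precisely the combination $4\pi R^2\dot R\,p_g - 4\pi R^2\dot R\,p_\infty - 8\pi\si R\dot R - 16\pi\mu_l R\dot R^2 = 4\pi\rho_l R^2\dot R(R\ddot R + \frac32\dot R^2) = \frac{d}{dt}KE_l$, which is the algebraic heart of the matter. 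Everything else should telescope, leaving the dissipation integral (with the correct constant $\ka T_\infty$ after using $T_g = p_g/(\Rg\rho_g)$, so $\nb T_g/T_g = -\nb\rho_g/\rho_g + (\text{spatially constant})$ hence $|\nb T_g|^2/T_g^2 = |\nb\rho_g|^2/\rho_g^2$) and the viscous term.

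\textbf{Main obstacle.} The routine-but-delicate part is keeping the boundary terms at $r=R(t)$ straight: when differentiating $\int_{B_{R(t)}}(\cdots)\,dx$ one gets both an interior term and a flux term $\propto \dot R\cdot(\text{integrand at }r=R)$ via the transport formula, and these flux terms must be combined correctly with the boundary term produced by integration by parts on the interior term. The sign conventions on the outward normal and the factor relating $\dot R$ to $v_g(R,t)$ (which also involves the $\pd_tp_g/p_g$ correction in \eqref{eq-bv-3.15prime}) are the places where errors creep in. I would double-check by verifying that at equilibrium ($\dot R = 0$, $\rho_g$ constant) every term vanishes, and that the $\pd_tp_g$ terms, which appear in several places through $v_g$ and through $\frac{d}{dt}(p_gR^3)$ and $\frac{d}{dt}\log p_g$, cancel among themselves (they must, since $p_g$ is not an independent conserved-type quantity but is slaved to $\rho_g(R,t)$). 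Since \eqref{bv-SIMA2000} already carries out this computation for $\mu_l=0$ and $p_\infty=1$, I would follow their derivation and simply track the two new contributions: the $-16\pi\mu_l R\dot R^2$ from the extra viscous term in \eqref{eq-bv-3.16prime}/\eqref{pjump}, and the $\frac{4\pi}{3}R^3\pd_tp_\infty$ from not assuming $\pd_tp_\infty=0$.
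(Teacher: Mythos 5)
Your outline is correct and follows essentially the same route as the paper's proof: differentiate each summand of $\mathcal{E}_{\rm total}$ using the transport formula (Lemma \ref{lem-bv-append-B-nonspherical}), convert the thermal term into the dissipation integral plus a boundary flux via integration by parts and $T_g=p_g/(\Rg\rho_g)$, and cancel the remaining $\dot R$, $\ddot R$, and $\pd_tp_g$ contributions using \eqref{pjump} multiplied by $4\pi R^2\dot R$ together with \eqref{eq-bv-3.15prime} and $\ga=1+\Rg/c_v$. Your choice to differentiate the $\rho_g\log\rho_g$ form of $FE$ rather than the $\rho_g s$ form is immaterial, since $s=c_v\log p_g-c_v\ga\log\rho_g$ with $p_g$ spatially uniform makes the two computations identical.
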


\begin{proof}
By Lemma \ref{lem-bv-append-B-nonspherical}, differentiating $\mathcal{E}_{\rm total}$ with respect to $t$ using the third line of \eqref{eq-helmholtz} yields
\EQ{\label{eq-en-diss-pf1}
\frac{d}{dt} \mathcal{E}_{\rm total} 
&= \frac{4\pi c_v}{3\Rg}\bke{\pd_tp_gR^3 + 3p_gR^2\dot R} - T_\infty \int_{B_R} \rho_g \frac{Ds}{Dt} \, dx\\
&\quad +4\pi\rho_lR^2\dot R\left(R\ddot R + \frac32\dot R^2\right) +  8\pi\si R\dot R + 4\pi R^2\dot R\ p_\infty(t) + \frac{4\pi}3 R^3 \pd_tp_\infty.
}
Consider the second term on the right hand side of \eqref{eq-en-diss-pf1}.
 Using \eqref{eq1.2simplified-c}, integrating by parts, and  \eqref{eq1.3simplified-c}, we obtain
\begin{align}
\int_{B_R} \rho_g \frac{Ds}{Dt} \, dx 
= \ka_g \int_{B_R} \frac{\De_r T_g}{T_g} \,dx
&=  \ka_g \int_{B_R} \frac{|\nb_r T_g|^2}{T_g^2}\, dx + \frac{4\pi  \ka_g R^2}{T_\infty} \pd_rT_g(R(t),t)\nonumber\\
&= \ka_g \int_{B_R} \frac{|\nb_r T_g|^2}{T_g^2}\, dx - \frac{4\pi  \ka_g R^2 p_g(t)}{\Rg T_\infty} \frac{\partial_r\rho_g(R(t),t)}{\rho^2_g(R(t),t)},
\label{eq-en-diss-pf2}\end{align}
where the last equality follows from the constitutive relation $T_g= p_g (\Rg \rho_g)^{-1}$.

For the third term on the right hand side of \eqref{eq-en-diss-pf1} we use \eqref{pjump}
\begin{equation}
\rho_l\bke{R(t)\ddot R(t) + \frac32 (\dot R(t))^2} = p_g(t)-p_\infty(t) - \frac{2\si}{R(t)}
-4\mu_l \frac{\dot R}R.  
\label{pjump1}\end{equation}
Substituting \eqref{eq-en-diss-pf2} and \eqref{pjump1} into \eqref{eq-en-diss-pf1} we obtain
\EQ{\label{eq-en-diss-pf1A}
\frac{d}{dt} \mathcal{E}_{\rm total} 
&= \frac{4\pi c_v}{3\Rg}\ \pd_tp_gR^3 + 4\pi\left(\frac{c_v}{\Rg}+1\right)p_gR^2\dot R - T_\infty \ka_g \int_{B_R} \frac{|\nb_r T_g|^2}{T_g^2}\, dx \\
&+ 4\pi  \ka_g R^2 p_g(t)\ \frac{1}{\Rg} \frac{\partial_r\rho_g(R(t),t)}{\rho^2_g(R(t),t)} 
- 16\pi\mu_l R(t) (\dot R(t))^2
+   \frac{4\pi}3 R^3 \pd_tp_\infty.
}
or
\EQ{\label{eq-en-diss-pf1B}
\frac{d}{dt} \mathcal{E}_{\rm total} 
&= - T_\infty \ka_g \int_{B_R} \frac{|\nb_r T_g|^2}{T_g^2}\, dx 
- 16\pi\mu_l R(t) (\dot R(t))^2
+   \frac{4\pi}3 R^3 \pd_tp_\infty\\
&+ 4\pi R^2 p_g\Big[ \frac{c_v}{\Rg} \frac13 \frac{\pd_t p_g}{p_g} R + \left(1+\frac{c_v}{\Rg}\right)\dot R + \frac{\ka_g}{\Rg} \frac{\pd_r\rho_g}{\rho_g^2}\Big]  .
}
Finally we claim that  the expression in the square brackets in \eqref{eq-en-diss-pf1B} vanishes:
\begin{equation}  \mathcal{I}(r,t)\equiv \frac{c_v}{\Rg} \frac13 \frac{\pd_t p_g}{p_g} R + \left(1+\frac{c_v}{\Rg}\right)\dot R + \frac{\ka_g}{\Rg} \frac{\pd_r\rho_g}{\rho_g^2}=0, \label{claim}
\end{equation}
from which Proposition \ref{en-diss} follows. 
To prove \eqref{claim} note that the relation  $\gamma= 1 + \frac{\Rg}{c_v}$ (see \eqref{eq-def-gamma})  implies 
\begin{align}
\mathcal{I}(r,t) &=\frac{1}{\gamma-1} \frac13 \frac{\pd_t p_g}{p_g} R + \frac{\gamma}{\gamma-1}\dot R +
 \frac{\ka_g}{\Rg} \frac{\pd_r\rho_g}{\rho_g} \nonumber  \\
&= \frac{\gamma}{\gamma-1}\left(  \frac{1}{3\gamma} \frac{\pd_t p_g}{p_g} R +\dot R +
 \frac{\ka_g}{\Rg} \frac{\gamma-1}{\gamma} \frac{\pd_r\rho_g}{\rho_g^2} \right) \label{eq-en-diss-pf1C}
\end{align}
Next, we use \eqref{eq-bv-3.15prime} to simplify \eqref{eq-en-diss-pf1C}. This yields
\begin{align}
\mathcal{I}(r,t) & =
\frac{\ka_g}{\gamma-1} \frac{1}{\Rg}\left( -\frac{\Rg}{ c_v}    +
 \gamma-1  \right)\frac{\pd_r\rho_g}{\rho_g^2}=0 \label{eq-en-diss-pf1C}
 \end{align}
 The proof of Proposition \ref{en-diss} is now complete.
\end{proof}

\subsection{Coercivity energy estimate}
To prove the global existence of solutions and Lyapunov stability, the authors in \cite{bv-SIMA2000} considered the energy $\mathcal{E}_{\rm total}$ defined in \eqref{total-en} for $p_\infty(t)\equiv 1$, and used the energy dissipation formula \eqref{eq-bv-4.16}. 
By expanding the energy $\mathcal{E}_{\rm total}$ at the steady state energy up to quadratic terms, 
they derived the coercivity estimate of the perturbed energy from the steady state energy in \cite[Lemma 4.2]{bv-SIMA2000}.
We generalize \cite[Lemma 4.2]{bv-SIMA2000} to the following result for the case of general (nonstationary) external far-field pressure $p_\infty(t)$ whose proof is similar to that of \cite[Lemma 4.2]{bv-SIMA2000} and is in Appendix \ref{sec-bv-revisit} for the reader's convenience.

\begin{theorem}\label{thm-XYZ}
Given positive constants $\rho_*, R_*, p_{\infty,*}$.
Assume that there exists a constant $\nu>1$ such that 
\EQ{\label{eq-bv-4.32}
\nu^{-1} \le \rho(r) \le \nu,
}
\EQ{\label{eq-bv-4.33}
\nu^{-1} \le R \le \nu,
}
\EQ{\label{eq-bv-4.34}
\textrm{Mass}[\rho,R] = \textrm{Mass}[\rho_*,R_*],
}
where $\textrm{Mass}[\rho,R]$ is given in \eqref{eq-def-masss}.
Let 
\[ \textrm{$\varrho(y)=\rho(Ry)-\rho_*$, $\mathcal R = R - R_*$, $\dot{\mathcal R} = \dot R$, and $\mathcal P_\infty =  p_\infty - p_{\infty,*}$.} \]
Then, 
\begin{enumerate}
\item 
 \begin{align}
\mathcal{E}_{\rm total} - \mathcal{E}_*  &\ge
 \frac{c_v T_\infty R_*^3 |B_1|}{2\rho_*} \left( \varrho(1)- \frac{1}{|B_1|}\int_{B_1}\varrho \right)^2
   + 2\pi \rho_l R_*^3 \dot{\mathcal{R}}^2 +\frac{R_*^3}{\rho_*^2} \left( \frac{p_{\infty,*}}2 + \frac{2\sigma}{3R_*}\right)\int_{B_1} \varrho^2\label{eq-coercive-explicit}\\
  &\quad  - 4\pi R_*^2 |\mathcal P_\infty| |\mathcal R |
- \frac{R_*^3}{4\pi \rho_*^2} |\mathcal P_\infty| |B_1| \int_{B_1} \varrho^2 
 + O\left( |\mathcal{R}|^3 +  | \varrho(1)|^3 +\left(\int_{B_1} | \varrho|\right)^3 \right),
 \nonumber\end{align}
where $\mathcal{E}_*$ is $\mathcal{E}_{\rm total}$ evaluated at $(\rho_*,R_*, \dot R_*=0)$. Furthermore, 
\item there exist constants
$\Th>0$, $\de_0\in(0,1]$ depending only on $\textrm{Mass}[\rho_*,R_*]$, $T_\infty$, and $\nu$ such that 
if $\norm{\rho - \rho_*}_{L^\infty(B_R)} + |p_\infty - p_{\infty,*}|\le \de_0$, then 
\EQ{\label{eq-bv-4.35}
\mathcal{E}_{\rm total} - \mathcal{E}_* \ge \Th\bke{\int_{B_R} (\rho(|x|) - \rho_*)^2\, dx}.
}
\end{enumerate}
\end{theorem}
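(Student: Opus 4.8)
Following \cite[Lemma 4.2]{bv-SIMA2000}, I would prove both parts by a second-order Taylor expansion of $\mathcal E_{\rm total}$ about the equilibrium $(\rho_*,R_*,\dot R_*=0)$, carried out on the fixed ball $B_1$. First, pass to the fixed domain: put $\varrho(y)=\rho(Ry)-\rho_*$ for $y\in B_1$, so that $\int_{B_R}f(\rho)\,dx=R^3\int_{B_1}f(\rho_*+\varrho)\,dy$, the gas pressure is $p_g=\Rg T_\infty(\rho_*+\varrho(1))$ by \eqref{eq-bv-3.14prime}, and $KE_l=2\pi\rho_l R^3\dot{\mathcal R}^2$, $U_{g-l}=4\pi\sigma R^2$, $PV_{p_\infty}=\tfrac{4\pi}{3}R^3p_\infty$. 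Then use the mass constraint \eqref{eq-bv-4.34}: since $\tfrac{4\pi}{3}\rho_*(R^3-R_*^3)=-\int_{B_R}(\rho-\rho_*)\,dx$, we get $R=R_*\bigl(1+\tfrac{1}{\rho_*|B_1|}\int_{B_1}\varrho\bigr)^{-1/3}$, hence $\mathcal R:=R-R_*=-\tfrac{R_*}{3\rho_*|B_1|}\int_{B_1}\varrho+O\bigl((\int_{B_1}|\varrho|)^2\bigr)$; in particular $|\mathcal R|\lesssim\|\varrho\|_{L^1(B_1)}$. Also $M_0=\textrm{Mass}[\rho_*,R_*]=\tfrac{4\pi}{3}\rho_*R_*^3$ is a fixed constant, so $\log p_g$ enters $FE$ with constant coefficient $-c_vT_\infty M_0$. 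This reduces $\mathcal E_{\rm total}-\mathcal E_*$ to a functional of $\varrho$, $\dot{\mathcal R}$ and $\mathcal P_\infty:=p_\infty-p_{\infty,*}$.

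Next I expand each term to second order in $\varrho$ and $\mathcal R$ (keeping $\dot{\mathcal R}^2$ exactly and $\mathcal P_\infty$ to first order). The zeroth-order terms sum to $\mathcal E_*$. The first-order terms with $\mathcal P_\infty$-independent coefficients cancel identically --- this is the statement that $(\rho_*,R_*)$ is a constrained critical point of $\mathcal E_{\rm total}$ at $p_\infty=p_{\infty,*}$, and the cancellation is exactly the steady-state relation $\Rg T_\infty\rho_*=p_{\infty,*}+2\sigma/R_*$, i.e.\ \eqref{eq-steadystate-b}. The residual first-order piece $\tfrac{4\pi}{3}(R^3-R_*^3)\mathcal P_\infty=4\pi R_*^2\mathcal R\,\mathcal P_\infty+O(\mathcal R^2|\mathcal P_\infty|)\ge-4\pi R_*^2|\mathcal P_\infty||\mathcal R|-O(\mathcal R^2|\mathcal P_\infty|)$ yields the $-4\pi R_*^2|\mathcal P_\infty||\mathcal R|$ term of \eqref{eq-coercive-explicit}. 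Collecting quadratic terms: the $\rho_g\log\rho_g$ piece of $FE$ gives a positive $\int_{B_1}\varrho^2$ contribution; the $\rho_g(R)$- and $\log p_g$-pieces of $FE$, after eliminating the average of $\varrho$ via the mass constraint, assemble into the perfect square $\tfrac{c_vT_\infty R_*^3|B_1|}{2\rho_*}\bigl(\varrho(1)-\tfrac1{|B_1|}\int_{B_1}\varrho\bigr)^2$; $KE_l$ gives $2\pi\rho_l R_*^3\dot{\mathcal R}^2$; and $U_{g-l}$, $PV_{p_\infty}$ contribute through $R^2$, $R^3$, i.e.\ through $\mathcal R^2\propto(\int_{B_1}\varrho)^2$. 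Bounding $(\int_{B_1}\varrho)^2\le|B_1|\int_{B_1}\varrho^2$ (Cauchy--Schwarz) and simplifying the net coefficient with \eqref{eq-steadystate-b} produces $\tfrac{R_*^3}{\rho_*^2}\bigl(\tfrac{p_{\infty,*}}{2}+\tfrac{2\sigma}{3R_*}\bigr)$. Since $\varrho$ enters nonlinearly only through $\varrho(1)$, $\mathcal R$, $\int_{B_1}\varrho$ and the quadratic $\int_{B_1}\varrho^2$, the remainder is $O(|\mathcal R|^3+|\varrho(1)|^3+(\int_{B_1}|\varrho|)^3)$; this is part (1).

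For part (2), assume \eqref{eq-bv-4.32}--\eqref{eq-bv-4.34} and $\|\rho-\rho_*\|_{L^\infty(B_R)}+|\mathcal P_\infty|\le\delta_0$. Writing $\bar\varrho=\tfrac1{|B_1|}\int_{B_1}\varrho$ and using $\bar\varrho^2\le\tfrac1{|B_1|}\int_{B_1}\varrho^2$, $|\mathcal R|\lesssim\int_{B_1}|\varrho|\le|B_1|^{1/2}\bigl(\int_{B_1}\varrho^2\bigr)^{1/2}$, and $|\varrho(1)|\le\|\varrho\|_{L^\infty}\le\nu\delta_0$ (since $\varrho$ is bounded by \eqref{eq-bv-4.32}), each remainder term $|\varrho(1)|^3$, $(\int_{B_1}|\varrho|)^3$, $|\mathcal R|^3$ and each $\mathcal P_\infty$-dependent term in \eqref{eq-coercive-explicit} is bounded by $C_\nu\delta_0$ times the positive quadratic part $\bigl(\varrho(1)-\bar\varrho\bigr)^2+\dot{\mathcal R}^2+\int_{B_1}\varrho^2$ (for the cross term $4\pi R_*^2|\mathcal P_\infty||\mathcal R|$ one uses in addition that the $L^\infty$-smallness makes $\int_{B_1}\varrho^2$ comparable to $\|\rho-\rho_*\|_{L^2(B_R)}^2$). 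Choosing $\delta_0$ small enough that these absorb at most half of the $\int_{B_1}\varrho^2$-coefficient, and discarding the nonnegative $\bigl(\varrho(1)-\bar\varrho\bigr)^2$ and $\dot{\mathcal R}^2$ terms, gives $\mathcal E_{\rm total}-\mathcal E_*\ge c\int_{B_1}\varrho^2$ with $c=c(\textrm{Mass}[\rho_*,R_*],T_\infty,\nu)>0$. Converting back via $\int_{B_1}\varrho^2=R^{-3}\int_{B_R}(\rho-\rho_*)^2\ge\nu^{-3}\int_{B_R}(\rho-\rho_*)^2$ yields \eqref{eq-bv-4.35} with $\Theta=c\nu^{-3}$.

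The delicate step is the second-order bookkeeping in the expansion: checking that the $\mathcal P_\infty$-independent first-order terms cancel (which is precisely where \eqref{eq-steadystate-b} must enter, and where a sign slip would be fatal), and then reassembling the scattered $\int_{B_1}\varrho^2$ and $(\int_{B_1}\varrho)^2$ contributions --- from the $\rho_g\log\rho_g$ term, from the $R^2$- and $R^3$-expansions of $FE$, $U_{g-l}$, $PV_{p_\infty}$, and from cross terms between the linear-in-$\varrho$ part of $\rho_g\log\rho_g$ and the linear-in-$\mathcal R$ part of $R^3$ --- into the single clean coefficient $\tfrac{R_*^3}{\rho_*^2}\bigl(\tfrac{p_{\infty,*}}{2}+\tfrac{2\sigma}{3R_*}\bigr)$ and the boundary terms into the perfect square. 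Once \eqref{eq-coercive-explicit} is in hand, the absorption argument of part (2) is routine.
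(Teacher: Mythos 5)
Your proposal follows essentially the same route as the paper's proof in Appendix \ref{sec-bv-revisit}: pass to the fixed ball $B_1$, Taylor-expand $\mathcal{E}_{\rm total}$ to second order along the mass-preserving hypersurface, observe that the $\mathcal P_\infty$-independent first-order terms cancel precisely by the equilibrium relation $\Rg T_\infty\rho_*=p_{\infty,*}+2\si/R_*$ together with $\ga-1=\Rg/c_v$, eliminate $\mathcal R$ via $\mathcal R=-\tfrac{R_*}{4\pi\rho_*}\int_{B_1}\varrho+O(\cdot)$, assemble the quadratic form into the perfect square plus the $\int_{B_1}\varrho^2$ term via Cauchy--Schwarz, and then absorb the cubic and $\mathcal P_\infty$-dependent terms for part (2) before converting back to $B_{R}$ with the lower bound on $R$. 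All the points you identify as delicate (the first-variation cancellation, the bookkeeping of the $(\int_{B_1}\varrho)^2$ contributions, and the treatment of the $|\mathcal P_\infty||\mathcal R|$ cross term) are handled in the same way, and to the same level of rigor, as in the paper.
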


\emph{Comments on  Theorem \ref{thm-XYZ} }
:
\EN{
\item 
Fix $p_\infty=p_{\infty,*}$. Then, the coercive energy estimate of Theorem \ref{thm-XYZ} implies that, relative to perturbations of the same bubble mass, the total energy $\mathcal{E}_{\rm total}$ is locally convex around the equilibrium $(\rho_*,R_*,\dot R_*=0)$ and that the equilibrium $(\rho_*,R_*,\dot R_*=0)$ is the unique  local minimizer of the total energy $\mathcal{E}_{\rm total}$ .
\item The estimate \eqref{eq-bv-4.35} is a lower bound for the  functional $(\rho,R,\dot R)\mapsto 
\mathcal{E}_{\rm total}[\rho,R,\dot R]$. It does not depend on $(\rho,R,\dot R)$ being a solution of the evolution equations.
\item Theorem \ref{thm-XYZ}
 applies to all positive  solutions $(\rho_*,R_*)$ of the algebraic system \eqref{eq-steadystate}.
}

\begin{remark}[Surface tension versus thermal diffusion]
In the spherically symmetric approximate model we study, the surface tension $\si$ does not play a role in the relaxation of the bubble to equilibrium. 
One expects it to play a role in the rounding out of non-spherical bubble deformations, which are not under consideration here.
Although typically surface tension $\si$ is positive for liquid / gas interface, our analysis for asymptotic stability applies to $\si=0$ or even some negative range of $\si$. 
In fact, the cubic equation \eqref{eq-cubic} admits a unique positive solution for all $\si\in\R$. 
Besides, 
the equilibrium energy $\mathcal E_*$ remains the conditional minimizer as long as the coefficient of the third term on the right of \eqref{eq-coercive-explicit} is positive by the coercive energy estimate \eqref{eq-bv-4.35}. 
It is equivalent to $\si> - 3R_*p_{\infty,*}/4$.

On the other hand, the thermal conductivity of gas, $\ka_g$, and far-field liquid temperature, $T_\infty$, both play a role in energy dissipation \eqref{eq-bv-4.16}.
Our analysis fails when either of these two parameters vanishes.
However, the case when $T_\infty=0$ is excluded since it would lead to a solution that is singular everywhere:
$p_g(t) = \Rg T_\infty \rho_g(R(t),t)=0$, 
and $s=c_v\log(p_g/(\rho_g)^\ga) = -\infty$
. 
Therefore, the only physical parameter that plays a role in the damping mechanism is the thermal conductivity of gas, $\ka_g$. Indeed, $\ka_g/(\ga c_v)$ is the diffusion coefficient of the parabolic PDE \eqref{eq-bv-3.10prime} that forces the gas density to distribute uniformly inside the bubble and causes the energy dissipation. 
This then leads to the thermal damping mechanism of the bubble radius by the ODE of bubble radius.
\end{remark}

\section{Nonlinear asymptotic stability: Proof of Theorem \ref{thm-asystab}}\label{sec-asystab}

In this section, we prove Theorem \ref{thm-asystab}. 
We show that family (manifold) of sphericlly symmetric equilibrium states is asymptotically stable. Our first step is to prove

\subsection{Asymptotic stability of equilibria with respect to small mass-preserving perturbations}

We begin with proving the asymptotic stability of a fixed equilibrium relative to mass preserving perturbations.
To this end, we make use of Theorem \ref{thm-XYZ} and the temporal integrability of the right hand side of \eqref{eq-bv-4.16}.  

\begin{proposition}[Asymptotic stability of a fixed equilibrium relative to mass preserving perturbations]\label{thm-asystab-1}
~Fix parameters \eqref{params-approx-spherical} and set $p_\infty=p_{\infty,*}$ in the system \eqref{eq-bv-3.10prime}--\eqref{eq-bv-3.16prime}.
 For arbitrary fixed $M_0>0$, let $(\rho_*[M_0],R_*[M_0])$ denote the unique spherically symmetric equilibrium with bubble mass $M_0$ given in Proposition \ref{prop-equilib}, {\it i.e.} $M_0 = \frac43 \pi \rho_*[M_0] \left(R_*[M_0]\right)^3$. 
 There exists $\eta_0>0$, such that the following holds for all $0<\eta\le\eta_0$:\\
  Let $(\rho(r,t),R(t))$ be the global in time solution of the free boundary problem \eqref{eq-bv-3.10prime}--\eqref{eq-bv-3.16prime}  with initial data $\rho(r,0)=\rho_0(r)$, $R(0)=R_0$, $\dot R(0)=\dot R_0$ which is 
   a \underline{mass preserving} and small  perturbation of $(\rho_*[M_0],R_*[M_0])$, {\it i.e.} 
   \[
M_0 = \int_{R_0} \rho_0 \quad \textrm{(mass preserving perturbation)}
\]
and 

\begin{equation}
\oldnorm{\bke{ \rho_0 - \rho_*[M_0],R_0 - R_*[M_0],\dot R_0} } 
\le \eta.
\label{sm-data}
\end{equation}
 Then, as $t\to+\infty$
 \begin{equation}
  \oldnorm{\bke{\rho(\cdot,t) - \rho_*[M_0],R(t) - R_*[M_0],\dot R(t))}}
  \to0,
  \label{a-stab1}\end{equation}
where the norm $\oldnorm{\cdot}$ is defined in \eqref{onorm-def}.
Moreover, $|\ddot R(t)| + |\dddot R(t)|\to0$ as $t\to+\infty$.
\end{proposition}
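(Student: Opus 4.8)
The plan is a LaSalle--type argument. First, for $\eta$ small, \cite[Theorem~4.1]{bv-SIMA2000} (see also Theorem~\ref{thm-bv-4.1-anyperturb}; the extra boundary term $4\mu_l\dot R/R$ is harmless, as noted in Section~\ref{sec:LS}) produces a global solution $(\rho(r,t),R(t))$ that stays $\ve_0$--close to $(\rho_*[M_0],R_*[M_0],0)$ for all $t>0$ in the norm \eqref{onorm-def}; in particular, uniformly in $t$, $\nu^{-1}\le\rho(r,t)\le\nu$ and $\nu^{-1}\le R(t)\le\nu$ for some $\nu>1$, $\norm{\bar\rho(\cdot,t)}_{C^{2+2\al}_y(B_1)}\le C$, and $\norm{R-R_*[M_0]}_{C^{3+\al}_t(\R_+)}+\norm{p_g}_{C^{1+\al}_t(\R_+)}\le\ve_0$, where $\bar\rho(y,t)=\rho(R(t)y,t)$. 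Since the perturbation is mass preserving, hypotheses \eqref{eq-bv-4.32}--\eqref{eq-bv-4.34} of Theorem~\ref{thm-XYZ} hold for every $t>0$.

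Next, with $p_\infty\equiv p_{\infty,*}$, Proposition~\ref{en-diss} gives $\tfrac{d}{dt}\mathcal E_{\rm total}(t)=-D(t)\le0$ with $D(t)=\ka T_\infty\int_{B_{R(t)}}|\nb_rT_g|^2T_g^{-2}\,dx+16\pi\mu_lR(t)\dot R(t)^2\ge0$, while Theorem~\ref{thm-XYZ}(2) gives $\mathcal E_{\rm total}(t)\ge\mathcal E_*+\Th\int_{B_{R(t)}}(\rho(\cdot,t)-\rho_*[M_0])^2\,dx\ge\mathcal E_*$. Thus $\mathcal E_{\rm total}(t)$ decreases to some $\mathcal E_\infty\ge\mathcal E_*$ and $\int_0^\infty D(t)\,dt<\infty$; by the equation of state $T_g=p_g(\Rg\rho_g)^{-1}$ one has $|\nb_rT_g|^2T_g^{-2}=|\nb_r\rho_g|^2\rho_g^{-2}$, so, $\rho_g$ being bounded above and below, $h(t):=\int_{B_{R(t)}}|\nb_r\rho(\cdot,t)|^2\,dx$ lies in $L^1(0,\infty)$. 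Writing $h(t)=R(t)\int_{B_1}|\nb_y\bar\rho(\cdot,t)|^2\,dy$, differentiating in $t$ and integrating by parts in $y$, $h'(t)$ is expressed through $\bar\rho$ and its first two $y$--derivatives, $R$, $\dot R$, and $\pd_t\bar\rho$; the last is read off from the reduced equation \eqref{eq-bv-3.10prime-fix-domain}, and evaluating that equation together with \eqref{eq-bv-3.14prime} at $y=1$ gives a scalar relation for $\pd_t\bar\rho(1,t)$ whose coefficient equals $1-\ga^{-1}+O(\ve_0)$, hence is bounded away from zero because $\ga>1$; this makes $\pd_t\bar\rho$ bounded in terms of the uniform $C^{2+2\al}_y$ data. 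Therefore $h'$ is bounded, and with $h\in L^1(0,\infty)$ we get $h(t)\to0$. The Poincar\'e inequality on $B_1$ and bubble--mass conservation $\tfrac{4\pi}{3}R(t)^3\langle\bar\rho(\cdot,t)\rangle=M_0$ (with $\langle\cdot\rangle$ the average over $B_1$) then give $\bar\rho(\cdot,t)-\tfrac{3M_0}{4\pi R(t)^3}\to0$ in $L^2(B_1)$, hence, interpolating against the uniform $C^{2+2\al}_y$ bound, in $C^2_y(B_1)$.

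It then remains to identify the limit and control $R$ and its derivatives. Evaluating \eqref{eq-bv-3.10prime-fix-domain} and \eqref{eq-bv-3.14prime} at $y=1$ once more, now with $\De_y\log\bar\rho(1,t)\to0$ and $\pd_y\bar\rho(1,t)\to0$, gives $\pd_t\bar\rho(1,t)\to0$, hence $\dot p_g(t)/p_g(t)\to0$; together with $\pd_r\rho(R(t),t)=R(t)^{-1}\pd_y\bar\rho(1,t)\to0$, the boundary ODE \eqref{eq-bv-3.15prime} yields $\dot R(t)\to0$. Since $R\in C^{3+\al}_t(\R_+)$, $\dddot R$ is bounded and $\ddot R$ uniformly continuous, so a Barbalat--type lemma upgrades $\dot R(t)\to0$ to $\ddot R(t)\to0$, and the same argument applied to the once--differentiated \eqref{eq-bv-3.16prime} (using $\pd_t\bar\rho(1,t)\to0$) gives $\dddot R(t)\to0$. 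Substituting $\langle\bar\rho(\cdot,t)\rangle=\tfrac{3M_0}{4\pi R(t)^3}$ and $\dot R,\ddot R\to0$ into \eqref{eq-bv-3.16prime} forces $p_{\infty,*}R(t)^3+2\si R(t)^2-\tfrac{3\Rg T_\infty M_0}{4\pi}\to0$; as the cubic \eqref{eq-cubic} has a unique positive root and $R(t)$ stays in a compact subset of $(0,\infty)$, this forces $R(t)\to R_*[M_0]$, and therefore $\bar\rho(\cdot,t)\to\rho_*[M_0]$ in $C^2_y(B_1)$. A parabolic Schauder estimate on unit time windows---whose coefficients converge to those of a constant-coefficient heat equation---upgrades the density convergence to $C^{2+2\al}_y(B_1)$, which is \eqref{a-stab1}; the decay $|\ddot R(t)|+|\dddot R(t)|\to0$ was obtained above.

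\emph{Main obstacle.} The crux is converting the bare $L^1$--integrability of the dissipation rate into genuine time--decay. Two points carry the argument: (i) bounding $h'(t)$ and showing $\pd_t\bar\rho(1,t)\to0$, which both rest on the uniform-in-time regularity from the first step and on the algebraic solvability of the boundary relation for $\pd_t\bar\rho(1,t)$---the place where $\ga>1$ is genuinely used---and (ii) promoting $\dot R(t)\to0$ to $\ddot R(t),\dddot R(t)\to0$ via the Barbalat--type lemma and the $C^{3+\al}_t$--control of $R$. The scheme is arranged so as to work uniformly for all $\mu_l\ge0$, so the viscous contribution $-16\pi\mu_lR\dot R^2$ to \eqref{eq-bv-4.16} is never essential. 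The last, purely technical, step is the upgrade of density convergence from $C^2_y$ to the $C^{2+2\al}_y$ norm in \eqref{onorm-def}.
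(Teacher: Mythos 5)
Your proposal is correct and rests on the same pillars as the paper's proof (the energy dissipation law \eqref{eq-bv-4.16}, the coercivity estimate of Theorem \ref{thm-XYZ} to guarantee time--integrability of the dissipation rate, a Barbalat--type upgrade from integrability to decay, interpolation against the uniform $C^{2+2\al}$ bound, and identification of the limit through the algebraic system \eqref{eq-steadystate}), but two sub-steps are carried out by genuinely different routes. First, to obtain $\dot R(t)\to0$ you solve the boundary-evaluated PDE for $\pd_t\bar\rho(1,t)$, using that the coefficient $1-\ga^{-1}+O(\ve_0)$ is bounded away from zero, and then read $\dot R\to0$ directly off \eqref{eq-bv-3.15prime}; the paper instead shows $\frac{d}{dt}(p(t)R(t)^3)\to0$ via Barbalat and closes with the identity $\lim\dot R=\ga^{-1}\lim\dot R$. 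Both arguments use $\ga>1$ in an essential way; yours is more local but demands pointwise control of $\De_y\bar\rho$ at the boundary, the paper's needs only the mass-average information. Second, you conclude $R(t)\to R_*[M_0]$ directly from $P(R(t))\to0$ for the cubic \eqref{eq-cubic} together with uniqueness of its positive root and confinement of $R(t)$ to a compact subset of $(0,\infty)$; this bypasses the paper's final pass through the monotone convergence of $\mathcal E_{\rm total}(t)$ and a second application of \eqref{eq-bv-4.35}, and is a mild simplification.

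One caveat: your intermediate claim that $\bar\rho(\cdot,t)-\tfrac{3M_0}{4\pi R(t)^3}\to0$ in $C^2_y(B_1)$ (needed for $\De_y\log\bar\rho(1,t)\to0$) is not literally covered by Lemma \ref{lem-interpolate}, which requires $k<m$ and hence only yields convergence of $u$ and $\nb_yu$ in $L^\infty$ from the $L^2$ decay and the $C^{2+2\al}_y$ bound. The statement is nonetheless true --- interpolating $\norm{\nb^2u}_{L^\infty}$ between $\norm{u}_{L^2}$ and $\norm{u}_{C^{2,2\al}}$ is a standard Landau--Kolmogorov/Gagliardo--Nirenberg inequality, the H\"older seminorm supplying the needed slack --- but you should either state and use that stronger interpolation explicitly or reorder the argument so that the second-derivative convergence is harvested from the parabolic Schauder bootstrap you invoke at the end.
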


\begin{proof}
Consider a fixed equilibrium $(\rho_*,R_*,\dot R_*=0)$ and a nearby (non-constant) initial condition $(\rho_0,R_0,\dot{R}_0)$ (see the hypothesis \eqref{sm-data}) and such that 
\[ \int_{B_{R_0}} \rho_0 = \int_{B_{R_*}} \rho_* = \frac{4\pi}{3} R_*^3 \rho_*=M_0.\]
Hence, 
\begin{equation}
 \int_{B_{R(t)}} \rho(\cdot,t) = \frac{4\pi}{3} R_*^3 \rho_*,\quad \textrm{for all $t\ge0$}.
 \label{c_of_m}
 \end{equation}

First recall that $p_g=\Rg T_g\rho_g $ (see \eqref{eq-1.2-d}) and hence
\[
\frac{\nb_r T}{T} = \nb_r \log T = \nb_r \bke{\log p - \log(\Rg \rho)} = - \frac{\nb_r\rho}\rho,\qquad T=T_g,\ p=p_g,\ \rho=\rho_g.
\]
Together with the energy dissipation relation \eqref{eq-bv-4.16} we have
\EQ{\label{eq-bv-4.40}
\frac{d}{dt} \mathcal{E}_{\rm total}(t) 
= -\ka T_\infty \int_{B_{R(t)}} \frac{|\nb_r T|^2}{T^2}
-16\pi\mu_l R\dot R^2
= -\ka T_\infty \int_{B_{R(t)}} \frac{|\nb_r \rho|^2}{\rho^2}
-16\pi\mu_l R\dot R^2,
}
where $\mathcal{E}_{\rm total}(t)$ is given in Definition \ref{def:total-en}:
\begin{align*} \mathcal{E}_{\rm total}(t) =
FE(t) + 4\pi \si (R(t))^2 + \frac{4\pi}3 R^3(t) + 2\pi\rho_l \dot{R}^2(t) R^3(t).
\end{align*}
Integrating \eqref{eq-bv-4.40} with respect  time we obtain
\begin{equation}
\ka T_\infty \int_0^t\int_{B_{R(\tau)}} \frac{|\nb_r \rho(|x|,\tau)|^2}{\rho(|x|,\tau)^2}\, dxd\tau  
+ 16\pi\mu_l \int_0^t R(\tau)(\dot R(\tau))^2\, d\tau
= \mathcal{E}_{\rm total}(0) -\mathcal{E}_{\rm total}(t).\label{en-dis-int}
\end{equation}
Applying now  the key coercive lower bound on $\mathcal{E}_{\rm total}$  (Theorem \ref{thm-XYZ}, and in particular \eqref{eq-bv-4.35}) gives
\begin{align*}
\ka T_\infty \int_0^t\int_{B_{R(\tau)}} \frac{|\nb_r \rho(|x|,\tau)|^2}{\rho(|x|,\tau)^2}\, dxd\tau  &
+ 16\pi\mu_l \int_0^t R(\tau)(\dot R(\tau))^2\, d\tau\\
&\le \mathcal{E}_{\rm total}(0) - \mathcal{E}_* - \bke{\mathcal{E}_{\rm total}(t) - \mathcal{E}_*}  \\
&\le \mathcal{E}_{\rm total}(0) - \mathcal{E}_* - \Th \bke{\int_{B_R} (\rho - \rho_*)^2\, dx}
\le \mathcal{E}_{\rm total}(0) - \mathcal{E}_*.
\end{align*}

By the regularity of $\rho(x,t)$ and $R(t)$, we have that $\int_{B_{R(t)}} \frac{|\nb_r \rho(|x|,\tau)|^2}{\rho(|x|,\tau)^2}\, dx$ and $R(t)(\dot R(t))^2$ are uniformly continuous. 
Recall the following alternative form of Barbalat's lemma:
\EQ{\label{eq-barbalat}
\text{
\emph{Suppose $\int_0^t f(\tau)\, d\tau$ has a finite limit as $t\to\infty$.}}\\
\text{\emph{If $f(t)$ is uniformly continuous function, then $\lim_{t\to\infty} f(t)=0$.}}
}
By the above Barbalat's lemma, we conclude that
\begin{equation} \int_{B_{R(t)}} \frac{|\nb_r \rho(|x|,t)|^2}{\rho(|x|,t)^2}\, dx\in L^1_t((0,\infty))\quad \textrm{and hence}\quad
\lim_{t\to\infty} \int_{B_R} \frac{|\nb_r \rho(|x|,t)|^2}{\rho(|x|,t)^2}\, dx = 0,
\label{L1-dt}
\end{equation}
and, if $\mu_l>0$,
\[
R(t)(\dot R(t))^2 \in L^1_t((0,\infty))\quad \textrm{and hence}\quad
\lim_{t\to\infty} R(t)(\dot R(t))^2  = 0.
\]

We next change variables, via $|x|=r=Ry$, to transform integrals over $B_{R(t)}$ into integrals over $B_1$.
Noting that $\overline\rho(y,t) = \rho(Ry,t)$, we get
\[
\int_{B_{R(t)}} \frac{|\nb_r \rho(|x|,t)|^2}{\rho(|x|,t)^2}\, dx 
= R(t) \int_{B_1} \frac{|\nb_y \overline\rho(y,t)|^2}{\overline\rho(y,t)^2}\, dy.
\]
Since that $R(t)$ is bounded away from zero and that $\overline\rho(y,t)$ is bounded from above (see \eqref{eq-bv-4.30}, \eqref{eq-bv-4.32})  we have 
\begin{equation}
\lim_{t\to\infty} \int_{B_1} |\nb_y \overline\rho(y,t)|^2\, dy = 0.
\label{H1to0}
\end{equation} 
Using the interpolation Lemma \ref{lem-interpolate} with $k=0$, $n=3$, $p=2$, $m=1$, $\ga=2\al$, and $s=2$,
\[
\norm{\nb_y \overline\rho}_{L^\infty(B_1)} \le C_1\norm{\nb_y \overline\rho}_{L^2(B_1)}^{\frac25} \norm{\nb_y \overline\rho}_{C_y^{1+2\al}(B_1)}^{\frac35} + C_2 \norm{\nb_y \overline\rho}_{L^2(B_1)}
\]
where $\norm{\nb_y \overline\rho}_{C_y^{1+2\al}(B_1)} \sim \norm{\rho}_{C_r^{2+2\al}(B_R)}$ is uniformly bounded
 by \eqref{eq-bv-4.30}. Therefore, 
 \begin{equation*}
 \textrm{$\nb_y \overline\rho(\cdot,t)\to0$ uniformly in $B_1$ as $t\to\infty$}.
 \end{equation*}
 and by \eqref{eq-bv-4.30}
 \begin{equation}
 \textrm{$\nb_r\rho(\cdot,t)\to0$ uniformly in $B_{R(t)}$ as $t\to\infty$}.
 \end{equation}

Furthermore, by the Poincar\'e inequality and \eqref{H1to0}
\begin{equation}
\int_{B_1} \bke{ \overline\rho(|y|,t) - \frac1{|B_1|} \int_{B_1} \overline\rho(|z|,t)\, dz}^2 dy 
\lec \int_{B_1} |\nb_y \overline\rho|^2\, dy \to0\ \text{ as }t\to\infty.
\label{pineq-ap}
\end{equation}
Moreover, by \eqref{c_of_m} (conservation of mass) 
\begin{equation} \frac1{|B_1|} \int_{B_1} \overline\rho(|z|,t)\, dz = \left(\frac{R_*}{R(t)}\right)^3\rho_*.\label{mass-s}\end{equation}
and hence, 

\begin{equation}
\int_{B_1} \left( \overline\rho(\cdot,t) - \left(\frac{R_*}{R(t)}\right)^3\rho_*\right)^{2} 
 \to0\ \text{ as }t\to\infty.
\label{pineq-ap1}
\end{equation}
In fact we claim that 
\begin{equation}
\textrm{$\overline\rho(\cdot,t) - \left(\frac{R_*}{R(t)}\right)^3\rho_*\to 0$ uniformly on $B_1$.}
\label{tdr-unif}
\end{equation}
 Indeed, applying  interpolation Lemma \ref{lem-interpolate}, with $k=0$, 
 $n=3$, $p=2$, $m=2$, $\ga=2\al$, and $s=2$, we have
\EQN{
\norm{\overline\rho(\cdot,t) - \bke{\frac{R_*}{R(t)}}^3\rho_*}_{L^\infty(B_1)}  
&\le C_1 \norm{\overline\rho(\cdot,t) - \bke{\frac{R_*}{R(t)}}^3\rho_*}_{L^2(B_1)}^{\frac47}  \norm{\overline\rho(\cdot,t) - \bke{\frac{R_*}{R(t)}}^3\rho_*}_{C_y^{2+2\al}(B_1)}^{\frac37}\\
&\quad + C_2 \norm{\overline\rho(\cdot,t) - \bke{\frac{R_*}{R(t)}}^3\rho_*}_{L^2(B_1)}.
}
Since by \eqref{eq-bv-4.30}, we have  that $\norm{\overline\rho(\cdot,t) - \bke{\frac{R_*}{R(t)}}^3\rho_*}_{C_y^{2+2\al}(B_1)} \sim \norm{\rho(\cdot,t) - \bke{\frac{R_*}{R(t)}}^3\rho_*}_{C_r^{2+2\al}(B_R)}$ is uniformly bounded,
 we have \eqref{tdr-unif} and hence  
\begin{equation}
\overline\rho(\cdot,t)(R(t))^3\to\rho_* R_*^3\quad \textrm{ uniformly in $B_1$ as $t\to\infty$}.
\label{trR-uni}
\end{equation}

 We next show that as $t\to\infty$, we have $\dot R$, $\ddot R$, and $\dddot R \to0$.
Since $p(t) = \Rg T_\infty \overline\rho(y=1,t)$ (using \eqref{eq1.2simplified} and $T(r=1,t)=T_\infty$), we have $p(t)(R(t))^3\to \Rg T_\infty \rho_*R_*^3$ as $t\to\infty$. By the regularity of $p(t)$ and $R(t)$, \eqref{eq-higher-regularity-R} and \eqref{eq-higher-regularity-p}
, we have that $\frac{d}{dt} (p(t)(R(t))^3)$ is uniformly continuous. 
Recall  Barbalat's lemma:
\EQN{
\text{
\emph{Suppose $f(t)\in C^1(a,\infty)$  and $\lim_{t\to\infty} f(t)=\alpha$, with $|\alpha|<\infty$.}}\\
\text{\emph{ If $f^\prime(t)$ is uniformly continuous, then $\lim_{t\to\infty} f^\prime(t)=0$.}}
}
By the Barbalat's lemma, $\frac{d}{dt} (p(t)(R(t))^3)\to0$ as $t\to\infty$. That is, $
\pd_tp(t) (R(t))^3 + 3 p(t) (R(t))^2\dot R(t) \to0\ \text{ as } t\to\infty,
$
and therefore since $\rho(\cdot,t)$ and $R(t)$ are uniformly bounded away from zero,
\[
\frac{\pd_tp(t)}{3p(t)} R(t) + \dot R \to0\ \text{ as } t\to\infty,
\]
Sending $t\to\infty$ in \eqref{eq-bv-3.15prime} yields
\begin{align*}
\lim_{t\to\infty} \dot R(t) &= \lim_{t\to\infty}\bkt{ -\frac{\ka}{\ga c_v} \frac{\pd_r\rho(R(t),t)}{(\rho(R(t),t))^2} - \frac{R}{3\ga} \frac{\pd_tp}p }\\
&=\lim_{t\to\infty}\bkt{ -\frac{\ka}{\ga c_v} \frac{\pd_r\rho(R(t),t)}{(\rho(R(t),t))^2} - 
\frac{1}{\ga}\left(\frac{R}{3} \frac{\pd_tp}p +\dot{R}(t)\right) +  \frac{1}{\ga}\dot{R}(t)}
= \ga^{-1} \lim_{t\to\infty} \dot R(t).
\end{align*}
Hence,  $(1-\ga^{-1}) \lim_{t\to\infty}\dot R(t)=0$, and  since $\ga\neq1$ we have 
$\lim_{t\to\infty}\dot R(t)=0$.
Further application of Barbalat's lemma yields $\ddot R(t), \dddot R(t)\to0$ as $t\to\infty$.

We next prove $R(t)\to R_*$ along a sequence of time $t_k\to\infty$.
Since $R(t)$ is bounded in $t$, there is a sequence $\{t_k\}$ along which $R(t_k)\to R_{**}$ as $k\to\infty$ for some $R_{**}>0$. 
Furthermore, by \eqref{tdr-unif} $ \overline\rho(y,t_k)$ converges uniformly on $B_1$ to a limit $\rho_{**}$ as $k\to\infty$. 
By \eqref{eq-bv-3.16prime}, since $\dot{R}(t)$ and $\ddot{R}(t)$ tend to zero as $t\to\infty$,  we obtain
\[
\rho_{**} = \frac1{\Rg T_\infty} \bke{ p_{\infty,*} + \frac{2\si}{R_{**}}}.
\]
Further, passing to the limit in \eqref{c_of_m}, we obtain 
\[\rho_{**}R_{**}^3 = \rho_*R_*^3.\]
Hence, $(\rho_{**},R_{**})$  satisfies the algebraic system \eqref{eq-steadystate} characterizing the unique spherically symmetric equilibrium  of with bubble mass $(4\pi/3) R_*^3 \rho_*$ .
We conclude that  $(\rho_{**},R_{**}) = (\rho_*,R_*)$. 
 Furthermore, 
$p(t_k) \to p_*$ as $k\to\infty$,  where $p_* = \Rg T_\infty\rho_* = p_{\infty,*} + \frac{2\si}{R_*}$. 

The above discussion establishes $(\overline\rho(y,t), R(t))$ converge to $(\rho_*,R_*)$ along the sequence $\{t_k\}$.
To prove this limit holds as $t\to\infty$ we return to the energy dissipation law. 
Since $\int_{B_{R(\tau)}} \frac{|\nb \rho(|x|,\tau)|^2}{\rho(|x|,\tau)^2}\, dx$
and $R(\tau)(\dot R(\tau))^2$ are integrable on $[0,\infty)$ (see \eqref{L1-dt}) we have, from the identity 
\begin{align}
\mathcal{E}_{\rm total}(t) &= -\ka T_\infty \int_0^t\int_{B_{R(\tau)}} \frac{|\nb_r \rho(|x|,\tau)|^2}{\rho(|x|,\tau)^2}\, dxd\tau 
-16\pi\mu_l \int_0^t R(\tau)(\dot R(\tau))^2\, d\tau
+ \mathcal{E}_{\rm total}(0) ,
\end{align}
which follows from rearranging terms in \eqref{en-dis-int}, that $\lim_{t\to\infty} \mathcal{E}_{\rm total}(t) $ exists.
Since $\mathcal{E}_{\rm total}(t_k)\to \mathcal{E}_*$ as $k\to\infty$, we have $\mathcal{E}_{\rm total}(t)\to\mathcal{E}_*$ as $t\to\infty$.
By Theorem \ref{thm-XYZ},
\[
\lim_{t\to\infty} \int_{B_{R(t)}} (\rho(\cdot,t) - \rho_*)^2 \le \Th^{-1} \lim_{t\to\infty} \left( \mathcal{E}_{\rm total}(t) - \mathcal{E}_* \right)
= 0.
\]

Using the interpolation Lemma \ref{lem-interpolate} with $k=0$, $n=3$, $p=2$, $m=2$, and $\ga=2\al$, we obtain 
\begin{align}
\norm{\rho(\cdot,t) - \rho_*}_{L^\infty(B_{R(t)})} 
&\le C_1 \norm{\rho(\cdot,t) - \rho_*}_{L^2(B_{R(t)})}^{\frac47} \norm{\rho(\cdot,t) - \rho_*}_{C_r^{2+2\al}(B_{R(t)})}^{\frac37}\nonumber\\
&\qquad\qquad + C_2 \norm{\rho(\cdot,t) - \rho_*}_{L^2(B_{R(t)})} 
\to0\ \text{ as }t\to\infty.\label{rho2rhos}
\end{align}
Thus, $\rho(x,t)\to\rho_*$ uniformly on $B_{R(t)}$.

From the uniform convergence of $\rho(x,t)$ to $\rho_*$, we will finally conclude that $R(t)\to R_*$.
  By conservation of mass, 
\EQ{\label{eq-bv-4.37}
\int_{B_{R(t)}} (\rho(\cdot,t)-\rho_*) = \int_{B_{R_0}}\rho_0 - \frac{4\pi\rho_*}3 R^3(t) = \frac{4\pi\rho_*}3 (R_*^3 - R^3(t)),
}
which implies
\begin{align}
\label{eq-bv-4.38}
|R(t)- R_*| &\le \frac3{4\pi\rho_*(R^2(t)+R(t)R_*+R_*^2)} |B_{R(t)}|^{\frac12} \bke{\int_{B_{R(t)}} |\rho(\cdot,t)-\rho_*|^2 }^{\frac12} \\
&\le C \bke{\int_{B_{R(t)}} |\rho(\cdot,t)-\rho_*|^2 }^{\frac12}
\nonumber\end{align}
It follows from \eqref{rho2rhos} and \eqref{eq-bv-4.38} that $R(t)\to R_*$ as $t\to\infty$. 

Finally, since we have $\norm{\rho(\cdot,t) - \rho_*}_{L^\infty_r} + \norm{\nb_r\rho(\cdot,t)}_{L^\infty_r} + |R(t) - R_*| + |\dot R(t)| + |\ddot R(t)| + |\dddot R(t)|\to0$ as $t\to+\infty$, the same bootstrap argument in the end of the proof of \cite[Theorem 4.1]{bv-SIMA2000} yields
\[
\norm{\rho(\cdot,t) - \rho_*}_{C^{2+2\al}_r} \to0\ \text{ as }t\to\infty,
\]
completing the proof of Proposition \ref{thm-asystab-1}.
\end{proof}

\subsection{Proof of Theorem \ref{thm-asystab}; asymptotic stability of the family of spherically symmetric equilibria relative to arbitrary small  perturbations}

We are in the position to prove Theorem \ref{thm-asystab}.
The proof is similar to that of Theorem \ref{thm-bv-4.1-anyperturb}.
We now assume  that $(\rho_0, R_0, \dot R_0)$ is an \underline{arbitrary} sufficiently small perturbation 
of  some fixed equilibrium $(\rho_*[M_*],R_*[M_*],\dot R_*=0)$, in the sense of \eqref{data-small}. By Proposition \ref{prop:contin}
 there is a unique $(\rho_*[M_0], R_*[M_0])$ such that $\textrm{Mass}[ \rho_*[M_0], R_*[M_0]]=M_0=\textrm{Mass}[\rho_0, R_0]$,
  and 
\EQ{
  | R_*[M_0] - R_*[M_*]|  + | \rho_*[M_0]-\rho_*[M_*]|  
  &\le C \left(|R_0 - R_*[M_*]|\ +\ \|\rho_0 - \rho_*[M_*]\|_{L^\infty(B_{R_0})}\right)\\
  &\le C \left(|R_0 - R_*[M_*]|\ +\ \|\rho_0 - \rho_*[M_*]\|_{C_r^{2+2\al}(B_{R_0})}\right).
  }
Hence, 
\begin{align*}
\|\rho_0 - \rho_*[M_0]&\|_{C_r^{2+2\alpha}} + |R_0- R_*[M_0]|\\
&= \|\rho_0-\rho_*[M_*]+\rho_*[M_*]- \rho_*[M_0]\|_{C_r^{2+2\alpha}} + |R_0 - R_*[M_*]+R_*[M_*] - R_*[M_0]|\\
&\le  \|\rho_0-\rho_*[M_*]\|_{C_r^{2+2\alpha}} + |\rho_*[M_*] - \rho_*[M_0]| + |R_0 - R_*[M_*]| + |R_*[M_*] - R_*[M_0]|\\
&\le C^\prime\left( \|\rho_0 - \rho_*[M_*]\|_{C_r^{2+2\alpha}} + |R_0 - R_*[M_*]|  \right).
\end{align*}
Therefore, choosing $\|\rho_0 - \rho_*[M_*]\|_{C_r^{2+2\alpha}} + |R_0 - R_*[M_*]| $ and $\dot{R}_0$ sufficiently small 
we conclude from Proposition \ref{thm-asystab-1} that $(\rho(\cdot,t),R,\dot R)$ converges to $(\rho_*[M_0], R_*[M_0], \dot{R}_*=0)$ in the norm $\oldnorm{\cdot}$ as $t\to\infty$.
This completes the proof of Theorem \ref{thm-asystab}.

\section{Exponential decay in nonlinear bubble oscillations; $\mathcal{M}_*$, as an attracting center manifold: Proof of the main result Theorem \ref{thm-nonlinear-exp-decay}}\label{sec-nonlinear}

In this section, we use a weak form of the asymptotic stability result in Theorem \ref{thm-asystab} to promote the result of exponential rate of convergence toward the manifold of equilibria, Theorem \ref{thm-nonlinear-exp-decay}.
We carry out the analysis in line with center manifold theorem for a dynamical system in the phase space $\ell^2$.
To this end, we first convert the free boundary problem \eqref{red-eqns}--\eqref{eq-bv-3.14prime} into an infinite-dimensional dynamical system, \eqref{eq-carr-6.3.1-simple-form}, in Proposition \ref{prop-equivalent-systems}.
Next, we interpret the decay result from Theorem \ref{thm-asystab} as an \emph{a priori} estimate in Proposition \ref{prop-a-priori}.
We then discuss the spectrum of the linear operator in the dynamical system \eqref{eq-carr-6.3.1-simple-form} and find that there is a neutral mode and decaying modes (Proposition \ref{prop-spectrum}).
In light of this, we decompose the solution of the system a slow decaying part \eqref{eq-carr-6.3.4-x} and a fast decaying part \eqref{eq-carr-6.3.4-x}.
Moreover, we show that the manifold of equilibria is a center manifold of the system \eqref{eq-carr-6.3.1-simple-form} in Lemma \ref{lem-mfld-equil-ceneter-mfld}.
We derive an estimate of the nonlinearity in Proposition \ref{prop-nonlinear-est}.
With the {\it a priori} estimate and the nonlinear estimate in hand, we apply Proposition \ref{prop-stab-center-mfd} to conclude Theorem \ref{thm-nonlinear-exp-decay}.

\subsection{A dynamical system formulation of the free boundary problem \eqref{red-eqns}--\eqref{eq-bv-3.14prime}}

To investigate the solution near a given equilibrium solution, we derive two equivalent systems of \eqref{red-eqns}--\eqref{eq-bv-3.14prime} below. 

\begin{proposition}\label{prop-equivalent-systems}
Given $(\rho_*, R_*, \dot R_*=0)\in \mathcal M_*$. 
Let $(\rho,R)$ denote a solution of the free boundary problem \eqref{red-eqns}--\eqref{eq-bv-3.14prime} with $p_\infty(t) \equiv p_{\infty,*}$.
Decompose 
\EQ{\label{eq-perturbation}
\rho(R(t)y,t) = \rho_* + u(y,t) + z(t),\qquad 
z(t) = \rho(R(t),t) - \rho_*,\qquad
R(t) = R_* + \mathcal R(t).
}
Then,
\begin{enumerate}
\item Equations  \eqref{red-eqns}--\eqref{eq-bv-3.14prime} are equivalent to the following system for $(u, z, \mathcal R)$ with zero-Dirichlet boundary condition 
\begin{subequations}
\label{red-eqns-linear-new-1}
\begin{align}
\pd_tu &= \bar\ka \De_yu - \bke{ 1 - \frac1{\ga} } \dot z + F,\quad 0\le y\le1,\qquad u(1,t)=0,\quad \ t>0,\label{eq-u-pde-1}\\
\dot{\mathcal R} &= -\frac{R_*\bar\ka}{\rho_*}  \pd_yu(1,t) - \frac{R_*}{3\ga\rho_*}\, \dot z + G,\quad t>0,\label{eq-dotR-u-1}\\
z(t) &= \frac1{\Rg T_\infty} \bke{ -\frac{2\si}{R_*^2}\mathcal R + \frac{4\mu_l}{R_*}\, \dot{\mathcal R} + \rho_lR_*\ddot{\mathcal R} } + H,\quad t>0,\label{eq-z-1}
\end{align}
\end{subequations}
where
\EQ{\label{eq-bar-ka-def}
\bar\ka = \frac{\ka}{\ga c_vR_*^2\rho_*},
}
\begin{subequations}
\label{red-eqns-linear-nonlinear-term}
\begin{align}
F &= \frac{\ka}{\ga c_v} \bkt{\frac1{(R_*+\mathcal R)^2(\rho_*+u+z)} - \frac1{R_*^2\rho_*}} \De_yu \label{red-eqns-linear-nonlinear-term-f}\\
&\quad  - \frac{\ka}{\ga c_v}\, \frac{|\nb_yu|^2}{(R_*+\mathcal R)^2(\rho_*+u+z)^2} + \frac1\ga\, \frac{\dot z}{\rho_* + z} \bke{\frac13 y\pd_yu + u},\notag
\\
G &= -\frac{\ka}{\ga c_v} \bkt{\frac1{(R_*+\mathcal R)(\rho_*+z)^2} - \frac1{R_*\rho_*^2}} \pd_yu(1,t) - \frac{\mathcal R \dot z}{3\ga(\rho_*+z)} + \frac{R_*}{3\ga}\,\frac{z\dot z}{\rho_*(\rho_*+z)},\label{red-eqns-linear-nonlinear-term-g}\\
H &= \frac1{\Rg T_\infty} \bkt{ -\frac{\mathcal R}{R_*(R_*+\mathcal R )}\bke{ -\frac{2\si}{R_*}\, \mathcal R + 4\mu_l \dot{\mathcal R} }  + \rho_l \bke{\mathcal R \ddot{\mathcal R} + \frac32 \dot{\mathcal R}^2} }.
\end{align}
\end{subequations}
\item Let $c_j = c_j(t)$ denote the $j$-th coefficients in the radial-Dirichlet-eigenfunction decomposition of $u$ as in \eqref{eq-u-expansion}.

Then, \eqref{red-eqns-linear-new-1} is further equivalent to the following infinite-dimensional dynamical system for ${\bf w}= (z,\mathcal R,\dot{\mathcal R}, c_1,c_2,\cdots)^\top$
\EQ{\label{eq-carr-6.3.1-simple-form} 
\dot{\bf w} = \mathcal L {\bf w} + \mathcal N({\bf w},\dot{\bf w}) =\mathcal L {\bf w} + \mathcal N^1({\bf w})\dot{\bf w} + \mathcal N^0({\bf w}),
}
where $\mathcal L$ is a linear operator given in \eqref{eq-matrixC},
$\mathcal N({\bf w},\dot{\bf w})$ is defined in \eqref{eq-def-N}, and $\mathcal N^1({\bf w})$ and $\mathcal N^0({\bf w})$ are given in \eqref{def-N1-N0-simple-form}.
\end{enumerate}
\end{proposition}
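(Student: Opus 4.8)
The plan is to establish both parts by direct computation: substitute the Ansatz \eqref{eq-perturbation} into the reduced free boundary problem \eqref{red-eqns}--\eqref{eq-bv-3.14prime} (with $p_\infty\equiv p_{\infty,*}$), Taylor expand about the equilibrium $(\rho_*,R_*,0)$, and peel off the part linear in $(u,z,\mathcal R)$ from the nonlinear remainder. No analytic estimate is involved; the care points are the exact linear/nonlinear bookkeeping and the fact that the spatial domain $B_{R(t)}$ is itself $t$-dependent, so the rescaling to $B_1$ generates extra terms through the chain rule.

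For Part (1), I would first pass to the fixed ball via $y=r/R(t)$, so that $\overline\rho(y,t):=\rho(R(t)y,t)=\rho_*+u(y,t)+z(t)$ and $u(1,t)=0$ (the zero-Dirichlet condition), with $\pd_y\overline\rho=\pd_y u$ since $z$ is $y$-independent. The chain rule gives $\pd_t\rho|_{r=R(t)y}=\pd_t u+\dot z-\tfrac{\dot R}{R}y\pd_y u$, $\pd_r\rho|_{r=R(t)y}=R^{-1}\pd_y u$, and $\De_r\log\rho|_{r=R(t)y}=R^{-2}\De_y\log\overline\rho$, while \eqref{eq-bv-3.14prime} gives $p=\Rg T_\infty(\rho_*+z)$, hence $\pd_t p/p=\dot z/(\rho_*+z)$. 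Substituting these into \eqref{eq-bv-3.10prime}, expanding $\De_y\log\overline\rho=\De_y u/(\rho_*+u+z)-|\nb_y u|^2/(\rho_*+u+z)^2$ together with the factors $1/(R_*+\mathcal R)^2$ and $1/(\rho_*+u+z)$, the terms linear in $(u,z,\mathcal R)$ collect to $\bar\ka\De_y u-(1-1/\ga)\dot z$ — the coefficient $\bar\ka$ as in \eqref{eq-bar-ka-def}, and the factor $1-1/\ga$ because $\pd_t\overline\rho=\pd_t u+\dot z$ contributes $-\dot z$ while the pressure term contributes $+\dot z/\ga$ — and every remaining contribution (the correction to the variable diffusion coefficient $1/(R^2\rho)$, the $|\nb_y u|^2$ term, the $\dot z$-nonlinearities, and the contributions from the $t$-dependence of the rescaling) is collected into the remainder $F$ of \eqref{red-eqns-linear-nonlinear-term-f}; this reproduces \eqref{eq-u-pde-1}. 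The identical substitutions in \eqref{eq-bv-3.15prime} (with $\pd_r\rho(R,t)=R^{-1}\pd_y u(1,t)$ and $\rho(R,t)=\rho_*+z$) yield \eqref{eq-dotR-u-1}, and in \eqref{eq-bv-3.16prime}, after subtracting the equilibrium identity $\Rg T_\infty\rho_*=p_{\infty,*}+2\si/R_*$, yield \eqref{eq-z-1}, with $G,H$ as in \eqref{red-eqns-linear-nonlinear-term}. Every manipulation being reversible, the two systems are equivalent.

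For Part (2), I would expand $u$ in the normalized radial Dirichlet eigenfunctions $\{\phi_j\}$ of $-\De_y$ on $B_1$ from \eqref{eq-eigen-formula}, $u=\sum_{j\ge1}c_j(t)\phi_j$ as in \eqref{eq-u-expansion} (legitimate since $u(1,t)=0$). Testing \eqref{eq-u-pde-1} against $\phi_k$ in $L^2(B_1)$ and integrating by parts — the boundary terms vanish because $u|_{\pd B_1}=\phi_k|_{\pd B_1}=0$, so $\int_{B_1}\De_y u\,\phi_k=-\la_k c_k$ — gives $\dot c_k=-\bar\ka\la_k c_k-(1-1/\ga)a_k\dot z+\langle F,\phi_k\rangle$ with $\la_k=(k\pi)^2$ and $a_k=\int_{B_1}\phi_k$ an explicit constant; moreover $\pd_y u(1,t)=\sum_j\phi_j'(1)c_j$ recasts \eqref{eq-dotR-u-1} as a scalar relation among $\dot{\mathcal R}$, $\{c_j\}$, $\dot z$, and $G$, while \eqref{eq-z-1} solved for $\ddot{\mathcal R}$ expresses $\tfrac{d}{dt}\dot{\mathcal R}=\ddot{\mathcal R}$ through $(z,\mathcal R,\dot{\mathcal R})$ and $H$ (whose $\ddot{\mathcal R}$-dependence enters only via the small term $\propto\rho_l\mathcal R\ddot{\mathcal R}$). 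Setting ${\bf w}=(z,\mathcal R,\dot{\mathcal R},c_1,c_2,\cdots)^\top$, grouping all linear terms into the operator $\mathcal L$ of \eqref{eq-matrixC}, and sorting the remainder into the part linear in the components $(\dot z,\ddot{\mathcal R})$ of $\dot{\bf w}$ with coefficients vanishing at ${\bf w}=0$ (which is $\mathcal N^1({\bf w})\dot{\bf w}$) and the rest ($\mathcal N^0({\bf w})$) produces \eqref{eq-carr-6.3.1-simple-form} with $\mathcal N^1,\mathcal N^0$ as in \eqref{def-N1-N0-simple-form}. Conversely, any solution ${\bf w}$ of \eqref{eq-carr-6.3.1-simple-form} for which $\sum_j c_j\phi_j$ converges in $C^{2+2\al}_y(B_1)$ reconstructs, via \eqref{eq-perturbation}, a solution of \eqref{red-eqns-linear-new-1}, hence of \eqref{red-eqns}.

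I expect the main obstacle to be not any single step but the exact accounting of the linear/nonlinear split in \eqref{red-eqns-linear-nonlinear-term} and \eqref{def-N1-N0-simple-form} — especially the terms produced by the $t$-dependence of the rescaling $y=r/R(t)$ and by the reappearance of $\dot{\bf w}=(\dot z,\ddot{\mathcal R},\dot c_j)$ inside $F,G,H$ — together with pinning down the function spaces so that the eigenfunction expansion is a genuine isomorphism in both directions (the $C^{2+2\al}_r$ gas densities of the free boundary problem corresponding to a suitably weighted sequence space for $(z,\mathcal R,\dot{\mathcal R},\{c_j\})$), which is what makes the word ``equivalent'' rigorous. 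No smallness or invertibility of $I-\mathcal N^1({\bf w})$ is required here, since \eqref{eq-carr-6.3.1-simple-form} is left in implicit form; that inversion is deferred to the center-manifold analysis of Section \ref{sec-nonlinear}.
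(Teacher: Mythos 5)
Your route coincides with the paper's: change variables to the fixed ball $B_1$ via $y=r/R(t)$, substitute \eqref{eq-perturbation}, peel off the linear part, then project onto the radial Dirichlet eigenbasis. The derivations of $G$ and $H$ and the identification of the linear parts of \eqref{eq-u-pde-1}--\eqref{eq-z-1} come out exactly as you describe. For Part (2) the paper is more explicit than your outline in one respect: testing against $\phi_k$ gives $\dot c_k = -\bar\ka\la_k c_k-\Ga_k\dot z+\int_{B_1}F\phi_k\,dx$, and the term $-\Ga_k\dot z$ is linear in $\dot z$, a component of $\dot{\bf w}$ rather than of ${\bf w}$, so it cannot simply be ``grouped into $\mathcal L$.'' The stated $\mathcal L$ of \eqref{eq-matrixC} only emerges after moving all such terms to the left-hand side to form a bordered mass matrix (bidiagonal in the $(z,c_1,c_2,\dots)$ block) and then inverting it; that inversion is exactly what produces the coupling entries $-\Ga_j\La_k\,3\ga\rho_*/R_*$ in $\mathcal L$. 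Your write-up should make this step explicit rather than invoking ``grouping.''

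There is also a concrete bookkeeping point to resolve in Part (1). Your chain rule $\pd_t\rho\big|_{r=R(t)y}=\pd_t u+\dot z-\frac{\dot R}{R}\,y\,\pd_y u$ is correct, and you assert that the rescaling-generated advection term $\frac{\dot R}{R}\,y\,\pd_y u=\frac{\dot{\mathcal R}}{R_*+\mathcal R}\,y\,\pd_y u$ is ``collected into the remainder $F$.'' But the $F$ displayed in \eqref{red-eqns-linear-nonlinear-term-f} contains no term proportional to $\dot{\mathcal R}\,\pd_y u$, and no cancellation can absorb it since no other piece of $F$ involves $\dot{\mathcal R}$; note that the fixed-domain form \eqref{eq-bv-3.10prime-fix-domain} likewise omits the convective term generated by the moving boundary. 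Carrying out the substitution you propose yields the quadratic remainder $F+\frac{\dot{\mathcal R}}{R_*+\mathcal R}\,y\,\pd_y u$, not $F$. The extra term is a genuine quadratic correction and, since $\dot{\mathcal R}$ is itself a component of ${\bf w}$, it contributes to $\mathcal N^0({\bf w})$ without spoiling the hypotheses needed for Propositions \ref{prop-nonlinear-est} and \ref{prop-stab-center-mfd}; nevertheless the equivalence asserted in Part (1) holds only with $F$ augmented by this term. Rather than asserting that your substitution ``reproduces \eqref{eq-u-pde-1}'' with $F$ as displayed, either exhibit the cancellation you have in mind (there is none) or record the discrepancy.
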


Below, we shall study the operator $\mathcal L$ acting in $\ell^2$ with dense domain consisting of
vectors
${\bf w}= (z,\mathcal R,\dot{\mathcal R}, c_1,c_2,\cdots)^\top\in \ell^2$, such that $\sum_{j\ge1} j^4 |c_j|^2<\infty$.
 
\begin{proof}[Proof of Proposition \ref{prop-equivalent-systems}]
To begin with, plugging \eqref{eq-perturbation} into the equation \eqref{red-eqns}--\eqref{eq-bv-3.14prime} and grouping linear and nonlinear terms, one directly obtain \eqref{red-eqns-linear-new-1}.

Expand $u$ in terms of the radial-Dirichlet eigenfunctions as
\EQ{\label{eq-u-expansion}
u(y,t) = \sum_{j=1}^\infty c_j(t) \phi_j(y),
}
where $\phi_j$, $j=1,2,\ldots$, are defined in \eqref{eq-eigen-formula}.
Plugging \eqref{eq-u-expansion} into \eqref{eq-u-pde-1},
\EQ{\label{eq-u-fourier}
\sum_{j=1}^\infty \dot c_j(t) \phi_j(y) = -\bar\ka \sum_{j=1}^\infty \la_jc_j(t)\phi_j(y) - \frac{\ga-1}\ga \dot z(t) + F.
}

Taking inner product of \eqref{eq-u-fourier} in $L^2(B_1)$ with $\phi_k(y)$, $k=1,2,\ldots$, one has
\EQ{\label{eq-dotc-1}
\dot c_k(t) = -\bar\ka\la_kc_k(t) - \Ga_k\dot z(t) + F_k,
}
where
\EQ{\label{eq-Ga-formula}
\Ga_k = \frac{\ga-1}\ga \int_{B_1} \phi_k(|x|)\, dx
= \frac{2\sqrt2(\ga-1)}{\sqrt\pi \ga}\, \frac{(-1)^{k-1}}k,
}
and $F_k = \int_{B_1} F\phi_k\, dx$.

Since $\phi_j$ is explicitly given in \eqref{eq-eigen-formula}, it is readily to compute $
\pd_y\phi_j(1) = \sqrt{\frac\pi2}\, (-1)^j j$.
Then \eqref{eq-dotR-u-1} becomes
\EQ{\label{eq-dotR-u-expand-1}
\dot{\mathcal R} = -\frac{R_*\bar\ka}{\rho_*} \sum_{j=1}^\infty c_j(t) \pd_y\phi_j(1) - \frac{R_*}{3\ga\rho_*}\, \dot z + G
= \sum_{j=1}^\infty c_j(t) \La_j - \frac{R_*}{3\ga\rho_*}\, \dot z + G,
}
where
\EQ{\label{eq-def-La}
\La_j = -\frac{R_*\bar\ka}{\rho_*} \sqrt{\frac{\pi}2}\, (-1)^j  j.
}
Thus, \eqref{eq-dotR-u-expand-1}, \eqref{eq-z-1}, and \eqref{eq-dotc-1} form the infinite-dimensional dynamical system
\[
\begin{pmatrix}
\frac{R_*}{3\ga\rho_*}&1&0&0&0&\cdots\\
0&1&0&0&0&\cdots\\
0&0&1&0&0&\cdots\\
\Ga_1&0&0&1&0&\cdots\\
\Ga_2&0&0&0&1&\cdots\\
\vdots&\vdots&\vdots&\vdots&\vdots&\ddots
\end{pmatrix}
\begin{bmatrix}
z\\
\mathcal R\\
\dot{\mathcal R}\\
c_1\\
c_2\\
\vdots
\end{bmatrix}^\prime
=
\begin{pmatrix}
0&0&0&\La_1&\La_2&\cdots\\
0&0&1&0&0&\cdots\\
\frac{\Rg T_\infty}{\rho_lR_*}& \frac{2\si}{\rho_lR_*^3}& -\frac{4\mu_l}{\rho_lR_*^2}&0&0&\cdots\\
0&0&0&-\bar\ka\la_1&0&\cdots\\
0&0&0&0&-\bar\ka\la_2&\cdots\\
\vdots&\vdots&\vdots&\vdots&\vdots&\ddots
\end{pmatrix}
\begin{bmatrix}
z\\
\mathcal R\\
\dot{\mathcal R}\\
c_1\\
c_2\\
\vdots
\end{bmatrix}
+ 
\begin{bmatrix}
G\\
0\\
-\frac{\Rg T_\infty}{\rho_lR_*}\, H\\
F_1\\
F_2\\
\vdots
\end{bmatrix},
\]
where $\Ga_k$ and $\La_j$ are given in \eqref{eq-Ga-formula} and \eqref{eq-def-La}, respectively, $F_k = \int_{B_1} F\phi_k\, dx$, and $F = F({\bf w},\dot{\bf w})$, $G = G({\bf w},\dot{\bf w})$, $H = H({\bf w},\dot{\bf w})$ are defined in \eqref{red-eqns-linear-nonlinear-term-f}--\eqref{red-eqns-linear-nonlinear-term-f} with $u = \sum_{k=1}^\infty c_k\phi_k$.
The inverse of the matrix on the left hand side above is 
\[
\begin{pmatrix}
\frac{R_*}{3\ga\rho_*}&1&0&0&0&\cdots\\
0&1&0&0&0&\cdots\\
0&0&1&0&0&\cdots\\
\Ga_1&0&0&1&0&\cdots\\
\Ga_2&0&0&0&1&\cdots\\
\vdots&\vdots&\vdots&\vdots&\vdots&\ddots
\end{pmatrix}^{-1}
=
\begin{pmatrix}
\frac{3\ga\rho_*}{R_*}& -\frac{3\ga\rho_*}{R_*}&0&0&0&\cdots\\
0&1&0&0&0&\cdots\\
0&0&1&0&0&\cdots\\
-\Ga_1 \frac{3\ga\rho_*}{R_*}& \Ga_1 \frac{3\ga\rho_*}{R_*}&0&1&0&\cdots\\
-\Ga_2 \frac{3\ga\rho_*}{R_*}& \Ga_2 \frac{3\ga\rho_*}{R_*}&0&0&1&\cdots\\
\vdots&\vdots&\vdots&\vdots&\vdots&\ddots
\end{pmatrix}.
\]
Left-multiplying the inverse on both sides, we obtain
\[
\begin{bmatrix}
z\\
\mathcal R\\
\dot{\mathcal R}\\
c_1\\
c_2\\
\vdots
\end{bmatrix}^\prime
=
\begin{pmatrix}
0&0&-\frac{3\ga\rho_*}{R_*}&\La_1\frac{3\ga\rho_*}{R_*}&\La_2\frac{3\ga\rho_*}{R_*}&\cdots\\
0&0&1&0&0&\cdots\\
\frac{\Rg T_\infty}{\rho_lR_*}&\frac{2\si}{\rho_lR_*^3}&-\frac{4\mu_l}{\rho_lR_*^2}&0&0&\cdots\\
0&0&\Ga_1\frac{3\ga\rho_*}{R_*}&-\Ga_1\La_1\frac{3\ga\rho_*}{R_*} - \bar\ka\la_1&-\Ga_1\La_2\frac{3\ga\rho_*}{R_*}&\cdots\\
0&0&\Ga_2\frac{3\ga\rho_*}{R_*}&-\Ga_2\La_1\frac{3\ga\rho_*}{R_*}& -\Ga_2\La_2\frac{3\ga\rho_*}{R_*} - \bar\ka\la_2&\cdots\\
\vdots&\vdots&\vdots&\vdots&\vdots&\ddots
\end{pmatrix}
\begin{bmatrix}
z\\
\mathcal R\\
\dot{\mathcal R}\\
c_1\\
c_2\\
\vdots
\end{bmatrix}
+
\begin{bmatrix}
\frac{3\ga\rho_*}{R_*} G\\
0\\
-\frac{\Rg T_\infty}{\rho_lR_*}\, H\\
-\Ga_1\,\frac{3\ga\rho_*}{R_*}\, G + F_1\\
-\Ga_2\,\frac{3\ga\rho_*}{R_*}\, G + F_2\\
\vdots
\end{bmatrix}.
\]
It can be written in the form
\EQ{\label{eq-carr-6.3.1}
\dot{\bf w} = \mathcal L {\bf w} + \mathcal N({\bf w}, \dot{\bf w}),
}
where ${\bf w} = (z,\mathcal R,\dot{\mathcal R}, c_1, c_2,\ldots)^\top$, 
\EQ{\label{eq-matrixC}
\mathcal L = \begin{pmatrix}
0&0&-\frac{3\ga\rho_*}{R_*}&\La_1\frac{3\ga\rho_*}{R_*}&\La_2\frac{3\ga\rho_*}{R_*}&\cdots\\
0&0&1&0&0&\cdots\\
\frac{\Rg T_\infty}{\rho_lR_*}&\frac{2\si}{\rho_lR_*^3}&-\frac{4\mu_l}{\rho_lR_*^2}&0&0&\cdots\\
0&0&\Ga_1\frac{3\ga\rho_*}{R_*}&-\Ga_1\La_1\frac{3\ga\rho_*}{R_*} - \bar\ka\la_1&-\Ga_1\La_2\frac{3\ga\rho_*}{R_*}&\cdots\\
0&0&\Ga_2\frac{3\ga\rho_*}{R_*}&-\Ga_2\La_1\frac{3\ga\rho_*}{R_*}& -\Ga_2\La_2\frac{3\ga\rho_*}{R_*} - \bar\ka\la_2&\cdots\\
\vdots&\vdots&\vdots&\vdots&\vdots&\ddots
\end{pmatrix},
}
\EQ{\label{eq-def-N}
\mathcal N({\bf w}, \dot{\bf w}) = 
\begin{bmatrix}
\frac{3\ga\rho_*}{R_*} G\\
0\\
-\frac{\Rg T_\infty}{\rho_lR_*}\, H\\
-\Ga_1\,\frac{3\ga\rho_*}{R_*}\, G + F_1\\
-\Ga_2\,\frac{3\ga\rho_*}{R_*}\, G + F_2\\
\vdots
\end{bmatrix}.
}

Write $\mathcal N({\bf w},\dot{\bf w}) = \mathcal N({\bf w}, {\bf p})$, in which
\[
{\bf p} = \dot{\bf w} = (\dot z, \dot{\mathcal R}, \ddot{\mathcal R}, \dot{c_1}, \dot{c_2},\ldots)^\top\\
=:(a,\mathcal S, \mathcal U, d_1, d_2,\ldots)^\top.
\]
Then
\begin{subequations}
\begin{align}
F({\bf w}, {\bf p}) &= \frac{\ka}{\ga c_v} \bkt{\frac1{(R_*+\mathcal R)^2 \bke{\rho_*+ u +z} } - \frac1{R_*^2\rho_*}} \sum_{j=1}^\infty (-\la_j)c_j\phi_j\notag\\
&\quad - \frac{\ka}{\ga c_v}\, \frac{ \abs{\nb_y u}^2}{(R_*+\mathcal R)^2 \bke{\rho_*+ u +z}^2} + \frac1\ga\, \frac{a}{\rho_* + z} \bke{\frac13 y \sum_{j=1}^\infty c_j\pd_y\phi_j + \sum_{j=1}^\infty c_j\phi_j },\label{def-F}\\
\text{ where }\ &u= \sum_{j=1}^\infty c_j\phi_j, \ \text{ and } \notag\\
G({\bf w}, {\bf p}) &= -\frac{\ka}{\ga c_v} \bkt{\frac1{(R_*+\mathcal R)(\rho_*+z)^2} - \frac1{R_*\rho_*^2}} \sum_{j=1}^\infty \sqrt{\frac\pi2} (-1)^j j c_j - \frac{\mathcal R a}{3\ga(\rho_*+z)} + \frac{R_*}{3\ga}\,\frac{za}{\rho_*(\rho_*+z)},\label{def-G}\\
H({\bf w}, {\bf p}) &= \frac1{\Rg T_\infty} \bkt{ -\frac{\mathcal R}{R_*(R_*+\mathcal R )}\bke{ -\frac{2\si}{R_*}\, \mathcal R + 4\mu_l \dot{\mathcal R} }  + \rho_l \bke{\mathcal R \mathcal U + \frac32 \dot{\mathcal R}^2} }.\label{def-H}
\end{align}
\end{subequations}
It is easily see from above that
\[
F({\bf w}, {\bf p}) = \bka{ {\bf F}^1({\bf w}), {\bf p} } + F^0({\bf w}),\quad
G({\bf w}, {\bf p}) = \bka{ {\bf G}^1({\bf w}), {\bf p} } + G^0({\bf w}),\quad
H({\bf w}, {\bf p}) = \bka{ {\bf H}^1({\bf w}), {\bf p} } + H^0({\bf w}),
\]
where $\bka{\cdot,\cdot}$ denotes inner product in the Hilbert space $\ell^2$ and
\EQ{\label{def-FGH-simple-form}
{\bf F}^1({\bf w}) &= ( \frac1\ga \frac1{\rho_*+z} \bke{\frac13 y \sum_{j=1}^\infty c_j\pd_y\phi_j + \sum_{j=1}^\infty c_j\phi_j },0,0,0,0,\cdots)^\top,\\
F^0({\bf w}) &= \frac{\ka}{\ga c_v} \bkt{\frac1{(R_*+\mathcal R)^2 \bke{\rho_*+ u +z} } - \frac1{R_*^2\rho_*}} \sum_{j=1}^\infty (-\la_j)c_j\phi_j - \frac{\ka}{\ga c_v}\, \frac{ \abs{\nb_y u}^2}{(R_*+\mathcal R)^2 \bke{\rho_*+ u +z}^2},\\
{\bf G}^1({\bf w}) &= ( -\frac{\mathcal R}{3\ga(\rho_*+z)} + \frac{R_*z}{3\ga\rho_*(\rho_*+z)},0,0,0,0,\cdots)^\top,\\
G^0({\bf w}) &= -\frac{\ka}{\ga c_v} \bkt{\frac1{(R_*+\mathcal R)(\rho_*+z)^2} - \frac1{R_*\rho_*^2}} \sum_{j=1}^\infty \sqrt{\frac\pi2} (-1)^j j c_j,\\
{\bf H}^1({\bf w}) &= (0,0,\frac{\rho_l\mathcal R}{\Rg T_\infty},0,0,\cdots)^\top,\quad 
H^0({\bf w}) = \frac1{\Rg T_\infty} \bkt{ -\frac{\mathcal R}{R_*(R_*+\mathcal R )}\bke{ -\frac{2\si}{R_*}\, \mathcal R + 4\mu_l \dot{\mathcal R} }  + \rho_l \frac32 \dot{\mathcal R}^2 }.
}
Therefore, the nonlinearity $\mathcal N({\bf w},{\bf p})$ takes the form
\EQ{\label{eq-N-simple-form}
\mathcal N({\bf w},{\bf p}) = \mathcal N^1({\bf w}) {\bf p} + \mathcal N^0({\bf w}),
}
where
\EQ{\label{def-N1-N0-simple-form}
\mathcal N^1({\bf w}) = 
 \begin{pmatrix}
\frac{3\ga\rho_*}{R_*} {\bf G}^1({\bf w})\\
{\bf 0}\\
-\frac{\Rg T_\infty}{\rho_lR_*}\, {\bf H}^1({\bf w})\\
-\Ga_1\,\frac{3\ga\rho_*}{R_*}\, {\bf G}^1({\bf w}) + {\bf F}_1^1({\bf w})\\
-\Ga_2\,\frac{3\ga\rho_*}{R_*}\, {\bf G}^1({\bf w}) + {\bf F}_2^1({\bf w})\\
\vdots
 \end{pmatrix},\quad
\mathcal N^0({\bf w}) =  
 \begin{bmatrix}
\frac{3\ga\rho_*}{R_*} G^0({\bf w})\\
0\\
-\frac{\Rg T_\infty}{\rho_lR_*}\, H^0({\bf w})\\
-\Ga_1\,\frac{3\ga\rho_*}{R_*}\, G^0({\bf w}) + F_1^0({\bf w})\\
-\Ga_2\,\frac{3\ga\rho_*}{R_*}\, G^0({\bf w}) + F_2^0({\bf w})\\
\vdots
 \end{bmatrix}
}
in which $F_j^0({\bf w}) = \int_{B_1} F^0({\bf w}) \phi_j\, dx$ and
\[
{\bf F}_j^1({\bf w}) = \int_{B_1} {\bf F}^1({\bf w}) \phi_j\, dx = ( \frac1\ga \frac1{\rho_*+z} \int_{B_1}  \bke{\frac13 y \sum_{k=1}^\infty c_k\pd_y\phi_k + \sum_{k=1}^\infty c_k\phi_k } \phi_j\, dx,0,0,0,0,\cdots)^\top.
\]
Therefore, \eqref{eq-carr-6.3.1} is of the form \eqref{eq-carr-6.3.1-simple-form}.
This completes the proof of Proposition \ref{prop-equivalent-systems}.
\end{proof}

\begin{proposition}\label{prop-a-priori}
Given $(\rho_*, R_*, \dot R_*=0)\in \mathcal M_*$. 
Consider $(\rho_0, R_0,\dot{R}_0)\in C^{2+2\al}_r(B_{R_0})\times\R_+\times\R$. 

\begin{enumerate}
\item Let ${\bf w}(0)=(z(0),\mathcal R(0), \dot R(0), c_1(0), c_2(0), \ldots)^\top$, where $z(0) = \rho_0(R_0) - \rho_*$, $\mathcal R(0) = R_0 - R_*$, and $c_k(0) = \int_{B_1} (\rho_0(R_0y) - \rho_0(R_0) - \rho_*) \phi_k(y)dy$, $\phi_k$ is defined in \eqref{eq-eigen-formula}, as in \eqref{eq-perturbation}.

Then, 
\[
{\bf w}(0) = (z(0),\mathcal R(0), \dot R(0), c_1(0), c_2(0), \ldots)^\top \in \ell^2,\qquad
\{j^2c_j(0)\}_{j=1}^\infty\in\ell^2.
\]

\item 
Suppose $\oldnorm{ \bke{\rho_0 - \rho_*, R_0 - R_*, \dot R_0 - \dot{R}_*} }$ is sufficiently small.
Let $(\rho, R)\in C^{2+2\al,1+\al}_{r,t}(B_{R(t)}\times[0,\infty))\times C^{3+\al}_t$ be the solution of \eqref{red-eqns}--\eqref{eq-bv-3.14prime} with the initial data $(\rho_0, R_0,\dot{R}_0)$, obtained in Theorem \ref{thm-asystab}, such that $\oldnorm{ \bke{\rho(\cdot,t) - \rho_{**}, R(t) - R_{**}, \dot R(t) } } + |\ddot R(t)| + |\dddot R(t)| \to0$ as $t\to\infty$ for some $(\rho_{**}, R_{**}, 0 )\in\mathcal M_*$. 
Let ${\bf w}$ be the corresponding solution of the infinite-dimensional dynamical system \eqref{eq-carr-6.3.1-simple-form} in Proposition \ref{prop-equivalent-systems}.

Then,  we have the {\it a priori} bounds for ${\bf w}$: $\{j^2c_j\}_{j=1}^\infty\in\ell^2$, and, as $t\to\infty$, $\dot{\bf w}(t)\to{\bf0}$ and ${\bf w}(t)\to{\bf w}_*$ in $\ell^2$, where ${\bf w}_* = (\rho_{**} - \rho_*, R_{**} - R_*, 0, 0, 0,\ldots)^\top$.
\end{enumerate}

\end{proposition}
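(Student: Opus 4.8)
The plan is to establish Part (1) by a regularity/decay-of-Fourier-coefficients argument, and Part (2) by translating the convergence statements from Theorem \ref{thm-asystab} through the change of variables that defines $\mathbf w$. For Part (1), the key observation is that $u(y,0) = \rho_0(R_0 y) - \rho_0(R_0) - \rho_*$ is the ``tilde'' profile with $u(1,0)=0$, so it lies in the domain of the Dirichlet Laplacian on $B_1$; since $\rho_0 \in C^{2+2\al}_r(B_{R_0})$, we have $u(\cdot,0) \in C^{2+2\al}_y(B_1) \cap H^2_0(B_1)$. Expanding in the orthonormal radial Dirichlet eigenbasis $\{\phi_j\}$ (with $-\De_y\phi_j = \la_j\phi_j$, $\la_j = (j\pi)^2$ as in \eqref{eq-eigen-formula}), the coefficients satisfy $c_j(0) = \la_j^{-1}\langle -\De_y u(\cdot,0), \phi_j\rangle$ because $u(\cdot,0)$ vanishes on $\pd B_1$ (so the boundary term in integration by parts drops). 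Since $-\De_y u(\cdot,0) \in L^2(B_1)$, Parseval gives $\sum_j \la_j^2 |c_j(0)|^2 = \sum_j |\langle -\De_y u(\cdot,0),\phi_j\rangle|^2 = \|\De_y u(\cdot,0)\|_{L^2}^2 < \infty$, which is exactly $\{j^2 c_j(0)\}_{j\ge1} \in \ell^2$, and a fortiori $\{c_j(0)\}\in\ell^2$, so $\mathbf w(0) \in \ell^2$.

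For Part (2), I would first note that by Theorem \ref{thm-bv-3.1} and Theorem \ref{thm-asystab} the solution $(\rho,R)$ exists globally with the stated regularity $(\rho,R) \in C^{2+2\al,1+\al}_{r,t} \times C^{3+\al}_t$, and satisfies $\oldnorm{(\rho(\cdot,t)-\rho_{**}, R(t)-R_{**}, \dot R(t))} + |\ddot R(t)| + |\dddot R(t)| \to 0$. The uniform bound on $\|\rho(\cdot,t)\|_{C^{2+2\al}_r}$ (equivalently $\|\bar\rho(\cdot,t)\|_{C^{2+2\al}_y(B_1)}$) from the Lyapunov stability estimate \eqref{eq-bv-4.30} propagates, via the same Parseval identity applied to $\De_y u(\cdot,t)$, to a uniform-in-$t$ bound $\sup_t \sum_j \la_j^2 |c_j(t)|^2 \lesssim \sup_t \|u(\cdot,t)\|_{C^{2+2\al}_y(B_1)}^2 < \infty$; this gives the {\it a priori} bound $\{j^2 c_j(t)\}_{j\ge1}\in\ell^2$ uniformly. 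Next, since $z(t) = \rho(R(t),t) - \rho_* = \bar\rho(1,t) - \rho_*$ and $\mathcal R(t) = R(t)-R_*$, and since $\bar\rho(1,t) \to \rho_{**}$, $R(t)\to R_{**}$, $\dot{\mathcal R}(t) = \dot R(t)\to 0$, the first three components of $\mathbf w(t)$ converge to $(\rho_{**}-\rho_*, R_{**}-R_*, 0)$. For the tail components: $u(y,t) = \bar\rho(R(t)y/R(t)\cdot R(t)\,,t)$... more precisely $u(y,t) = \bar\rho(y,t) - \bar\rho(1,t)$ (using the decomposition \eqref{eq-perturbation}) converges uniformly on $B_1$ to $\rho_{**} - \rho_{**} = 0$ by \eqref{tdr-unif}-type estimates, and in fact in $C^{2+2\al}_y$ to $0$ by the bootstrap at the end of the proof of Proposition \ref{thm-asystab-1}; hence $\|u(\cdot,t)\|_{L^2} \to 0$, so $\sum_j |c_j(t)|^2 \to 0$. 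Combining, $\mathbf w(t) \to \mathbf w_* = (\rho_{**}-\rho_*, R_{**}-R_*, 0, 0, 0,\ldots)^\top$ in $\ell^2$.

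Finally, for $\dot{\mathbf w}(t) \to \mathbf 0$ in $\ell^2$: the first three components $\dot z(t), \ddot{\mathcal R}(t), \dddot{\mathcal R}(t)$ equal (up to the relations \eqref{eq-bv-3.14prime}) $\tfrac{d}{dt}(\Rg T_\infty \bar\rho(1,t))/(\Rg T_\infty)$... but more directly $\dot z = \pd_t\rho(R(t),t) + \dot R\,\pd_r\rho(R(t),t)$ and the convergence $\dot z\to 0$, $\ddot{\mathcal R}=\ddot R\to 0$, $\dddot{\mathcal R}=\dddot R\to 0$ follows from Theorem \ref{thm-asystab}. For the tail $\dot c_j(t)$, one uses equation \eqref{eq-dotc-1}, $\dot c_j = -\bar\ka\la_j c_j - \Ga_j \dot z + F_j$: the uniform bound $\{j^2 c_j\}\in\ell^2$ controls $\{\la_j c_j\} = \{\pi^2 j^2 c_j\} \in \ell^2$ uniformly, and one shows this quantity tends to $0$ in $\ell^2$ — this is the delicate point. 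The cleanest route is: from the uniform $C^{2+2\al}_y$ bound on $u(\cdot,t)$ plus $u(\cdot,t)\to 0$ in $L^2$, interpolation (as in Lemma \ref{lem-interpolate}, used repeatedly in Proposition \ref{thm-asystab-1}) gives $\|u(\cdot,t)\|_{H^2_y(B_1)}\to 0$, i.e. $\sum_j \la_j^2|c_j(t)|^2\to 0$, hence $\{\la_j c_j(t)\}\to 0$ in $\ell^2$; together with $\dot z(t)\to 0$, $\{\Ga_j\}\in\ell^2$, and $\|F(\cdot,t)\|_{L^2}\to 0$ (since $F$ is a sum of products of factors each of which is either small — like $\mathcal R$, $z$, $\dot z$ — or uniformly bounded, by the explicit formula \eqref{red-eqns-linear-nonlinear-term-f}), we get $\{F_j(t)\}\to 0$ in $\ell^2$, and therefore $\dot{\mathbf w}(t)\to\mathbf 0$ in $\ell^2$.

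\textbf{Main obstacle.} The most delicate step is upgrading the $L^2$-convergence $u(\cdot,t)\to 0$ to $H^2_y$-convergence (equivalently $\{\la_j c_j(t)\}\to 0$ in $\ell^2$), since the dynamical-systems formulation only directly controls $\dot c_j$ in terms of $c_j$ and the nonlinearity. This is resolved by the parabolic smoothing already encoded in the uniform $C^{2+2\al}_y$ bound from \eqref{eq-bv-4.30} combined with the interpolation inequality of Lemma \ref{lem-interpolate}, exactly as in the bootstrap at the end of the proof of Proposition \ref{thm-asystab-1}; one must be careful that the interpolation is applied to $u = \bar\rho - \bar\rho(1,\cdot)$ rather than to $\bar\rho$ itself, but the uniform bounds transfer since $\bar\rho(1,t)$ is bounded.
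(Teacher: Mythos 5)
Your proof is correct and takes essentially the same route as the paper: Part (1) via Parseval/Bessel applied to $\De_y u_0\in L^2(B_1)$ using $u_0(1)=0$, and Part (2) by translating the $C^{2+2\al}$-convergence of Theorem \ref{thm-asystab} into $\ell^2$ statements and reading $\dot{\bf w}\to{\bf 0}$ off the evolution equations (the paper simply notes $\pd_t\rho\to0$ uniformly, which yields $\{\dot c_j\}\to0$ directly, so your interpolation detour for $\{\la_jc_j\}\to0$ is not needed). The only blemish is the claim $u(\cdot,0)\in H^2_0(B_1)$, which is too strong, but your argument only uses $u(1,0)=0$, which is all that is required.
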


\begin{proof}
First of all, setting $u_0(y) = \rho_0(R_0y) - \rho_0(R_0) - \rho_*$, we have $u_0(y) \in C^{2+2\al}_y(B_1)\subset\{u\in L^2(B_1): \De u \in L^2(B_1)\}$, $y=R_0^{-1}r$.
Thus, 
\EQ{\label{eq-cj-fast-decay-initial}
\infty > \int_{B_1} (-\De u_0)^2\, dx = \sum_{j,k=1}^\infty c_j(0)\la_j c_k(0)\la_k(0) \int_{B_1} \phi_j\phi_k\, dx
= \sum_{j=1}^\infty \la_j^2 (c_j(0))^2 = \pi^4 \sum_{j=1}^\infty j^4(c_j(0))^2.
}
So $\{j^2c_j(0)\}_{j=1}^\infty\in\ell^2$. This proves Part (1) of the proposition.

For Part (1), by Proposition \ref{prop-equivalent-systems} \eqref{red-eqns}--\eqref{eq-bv-3.14prime} is equivalent to \eqref{red-eqns-linear-new-1}.
Let $(u,z,\mathcal R)$ be the corresponding solution of \eqref{red-eqns-linear-new-1}.
Since $u(\cdot, t) \in C^{2+2\al}_y\subset\{u\in L^2: \De u \in L^2\}$,
from the same argument as in \eqref{eq-cj-fast-decay-initial} one has $\{j^2c_j\}_{j=1}^\infty\in\ell^2$. 
From the convergence of $(\rho, R,\dot R)$ to $(\rho_*,R_*,0)$, it is obvious that ${\bf w}(t) \to{\bf w}_*$ in $\ell^2$ as $t\to\infty$.
It remains to prove the convergence $\dot{\bf w}(t) \to{\bf 0}$, as $t\to\infty$.
It follows from the equations \eqref{eq-bv-3.10prime} and \eqref{eq-bv-3.15prime} in the original system that, as $t\to\infty$, $\pd_t\rho\to0$ or, equivalently, $|\pd_t u| + |\dot z|\to0$.
Thus, $\dot{\bf w}(t) \to{\bf 0}$ in $\ell^2$ as $t\to\infty$. This proves Proposition \ref{prop-a-priori}.
\end{proof}

\subsection{Spectral analysis of the linear operator}

We now study the spectrum of the linear operator $\mathcal L$, defined in \eqref{eq-matrixC}, acting in the space $\ell^2$.
Recall the definition of Laplace transform and its inverse in \eqref{inversion}.
Taking Laplace transform of the linear system $\dot{\bf w} = \mathcal L{\bf w}$, one derive $(\mathcal L - \tau I)\wt{\bf w}(\tau) = -{\bf w}(0)$, where $I$ is the identity operator.
Denote the spectrum of $\mathcal L$ by
\[
\si(\mathcal L) = \{\tau\in \CC: \mathcal L - \tau I\text{ is non-bijective on $\ell^2$}\}.
\]
Then, formally $\si(\mathcal L)$ consists of all the poles of $\wt{\bf w}(\tau)$.
\begin{proposition}\label{prop-spectrum}
Let $\mathcal L$ the the linear operator defined in \eqref{eq-matrixC}.
Then
\begin{enumerate}
\item 
\[
\si(\mathcal L) = \{0\}\cup\{\tau\in\CC : Q(\tau) = 0\}, 
\]
where $\tau=0$ has multiplicity one, and $Q(\tau)$ is a meromorphic function defined by
\EQ{\label{eq-Q-simplified}
Q(\tau) = \frac1{\Rg T_\infty} \bke{\frac{4\pi}{3\ga} + \frac{8(\ga-1)}{\pi\ga} \sum_{j=1}^\infty \frac{\pi^2\bar\ka}{\pi^2\bar\ka j^2 + \tau} } \bke{\rho_lR_*\tau^2 + \frac{4\mu_l}{R_*}\, \tau - \frac{2\si}{R_*^2}} + 4\pi\,\frac{\rho_*}{R_*}.
}
\item There exists $\beta>0$ such that if $\tau\ne0$ is in $\si(\mathcal L)$, the spectrum of $\mathcal{L}$, then $\Re(\tau)\le-\beta<0$.
Moreover, there exists a constant $C=C(\be)$ such that $\norm{(\mathcal L - \tau I)^{-1}}_{\ell^2\to\ell^2} \le C(\be)$ for all $\tau\neq0$ with $\Re(\tau)>-\beta$.
\item  A negative upper bound for  $-\be$ in terms of physical parameters is displayed in  \eqref{eq-be-def} of Appendix \ref{sec-negative-upper-bound}.
\end{enumerate}
\end{proposition}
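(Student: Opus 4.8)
The plan is to read off the spectrum from the resolvent equation $(\mathcal L-\tau I)\widetilde{\bf w}=-{\bf w}(0)$, solved componentwise, exploiting the almost–triangular structure of $\mathcal L$ in \eqref{eq-matrixC}. It is cleanest to revert to the ``un–inverted'' form appearing in the proof of Proposition \ref{prop-equivalent-systems}, i.e. to seek the $\tau$ for which $(B-\tau A){\bf v}={\bf 0}$ has a nontrivial solution ${\bf v}=(z,\mathcal R,\mathcal S,c_1,c_2,\dots)$. For $\tau\neq0$ with $\tau\neq-\pi^2\bar\ka j^2$ for all $j$: the $c_j$–rows give $c_j=-\tau\Ga_j z/(\pi^2\bar\ka j^2+\tau)$, the $\dot{\mathcal R}$–row gives $\mathcal S=\tau\mathcal R$, and the $\ddot{\mathcal R}$–row gives $z=(\Rg T_\infty)^{-1}\bke{\rho_lR_*\tau^2+\tfrac{4\mu_l}{R_*}\tau-\tfrac{2\si}{R_*^2}}\mathcal R$. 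Substituting these three relations into the one remaining row and using the two elementary identities $\Ga_j\La_j=2R_*\bar\ka(\ga-1)/(\rho_*\ga)$ (independent of $j$) and $\sum_{j\ge1}\la_j^{-1}=\sum_j(\pi j)^{-2}=\tfrac16$ collapses the infinite system to the single scalar equation $Q(\tau)=0$, with $Q$ as in \eqref{eq-Q-simplified}; this identifies $\si(\mathcal L)\setminus\{0\}$ with the zero set of $Q$. The poles $\tau=-\pi^2\bar\ka j^2$ of $Q$ lie in the resolvent set, since there the $j$-th $c$-row forces $z=0$, which propagates to ${\bf v}={\bf 0}$. The point $\tau=0$ must be treated separately because the reduction divides by $\tau$: at $\tau=0$ one gets $c_j=0$, $\mathcal S=0$, and the single surviving relation $\Rg T_\infty z+\tfrac{2\si}{R_*^2}\mathcal R=0$, so the kernel is one–dimensional, spanned by the tangent $\bke{-\tfrac{2\si}{\Rg T_\infty R_*^2},1,0,0,\dots}$ to $\mathcal M_*$; rewriting the resolvent as $\mathcal R=(\text{data})/\bke{-\tau\,\tfrac{R_*}{4\pi\rho_*}Q(\tau)}$ shows $\tau=0$ is a simple pole of the resolvent provided $Q(0)\neq0$, and $Q(0)=4\pi\rho_*R_*^{-1}\,\tfrac{3R_*p_{\infty,*}+4\si}{3R_*p_{\infty,*}+6\si}$ (using $\sum\la_j^{-1}=\tfrac16$ once more), which is nonzero for $\si>-\tfrac34R_*p_{\infty,*}$, in particular for all physical $\si\ge0$. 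This establishes Part (1).

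The heart of Part (2), and the main obstacle, is to show $Q$ has no zero with $\Re\tau\ge0$. Put $Z(\tau)=\rho_lR_*\tau^2+\tfrac{4\mu_l}{R_*}\tau-\tfrac{2\si}{R_*^2}$ and $\Phi(\tau)=1+6\bar\ka(\ga-1)\sum_j(\tau+\pi^2\bar\ka j^2)^{-1}$, so that $Q(\tau)=0$ is equivalent to $\Phi(\tau)Z(\tau)=-3\ga\rho_*\Rg T_\infty/R_*$, a strictly negative real number. For $\Re\tau\ge0$ one has $\Re\Phi(\tau)>1$ and $\sgn\Im\Phi(\tau)=-\sgn\Im\tau$; and for $\Im\tau>0$ one has $\Im Z(\tau)=\Im\tau\,(2\rho_lR_*\Re\tau+\tfrac{4\mu_l}{R_*})\ge0$, while $\Re Z(\tau)$ has no fixed sign. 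Taking real and imaginary parts of $\Phi Z$ separately: if $\Re Z\le0$ then $\Im(\Phi Z)=\Re\Phi\,\Im Z+\Im\Phi\,\Re Z>0$, contradicting reality; if $\Re Z>0$ then $\Re(\Phi Z)=\Re\Phi\,\Re Z-\Im\Phi\,\Im Z>0$, contradicting negativity. (The degenerate case $\mu_l=0$ with $\tau$ purely imaginary is covered by the same parity argument, since then $\Im Z\equiv0$ and $\Im\Phi\neq0$.) The real case $\tau=\xi>0$ is handled directly: for $-\tfrac34R_*p_{\infty,*}<\si\le0$ one has $Z(\xi)>0$ hence $Q(\xi)>0$; for $\si>0$, $Z(\xi)<0$ only on a bounded interval $(0,\xi_0)$ on which $-\Phi(\xi)Z(\xi)$ is a product of two positive decreasing functions and hence maximal at $\xi=0$, so $Q(\xi)\ge Q(0)>0$. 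Thus $Q\neq0$ on $\overline{\{\Re\tau\ge0\}}$ for every $\si>-\tfrac34R_*p_{\infty,*}$; this threshold is exactly the coercivity condition of Theorem \ref{thm-XYZ} and is sharp, since $Q(0)<0$ when $\si<-\tfrac34R_*p_{\infty,*}$.

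With linear instability excluded, the spectral gap follows from analyticity together with a compactness estimate at infinity: in the half–plane $\{\Re\tau\ge-\tfrac12\pi^2\bar\ka\}$, which lies to the right of all poles of $Q$, one has $\Phi(\tau)\to1$ and $|Z(\tau)|\to\infty$ as $|\tau|\to\infty$, hence $|Q(\tau)|\to\infty$ there; since $Q$ is analytic and zero–free for $\Re\tau\ge0$ in this half–plane, it has only finitely many zeros in $\{-\tfrac12\pi^2\bar\ka\le\Re\tau<0\}$, all with $\Re\tau<0$. Taking $\be$ to be the minimum of $\tfrac12\pi^2\bar\ka$ and the negative of the largest of those real parts gives $\Re\tau\le-\be<0$ for every $\tau\neq0$ in $\si(\mathcal L)$. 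The uniform resolvent bound then follows from the explicit componentwise solution formula obtained in Part (1): on the relevant region the denominators $|\tau+\pi^2\bar\ka j^2|$ are bounded below by a fixed multiple of $j^2$ uniformly in $\tau$, and $|Q(\tau)|$ is bounded below (analytic, zero–free, $\to\infty$ at $\infty$), so $(\mathcal L-\tau I)^{-1}$ is a bounded operator on $\ell^2$ with norm controlled by a constant depending only on $\be$.

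Finally, Part (3) is the quantitative refinement of the previous step: by tracking explicit constants in the lower bound for $|Q(\tau)|$ in terms of the parameters \eqref{params-approx-spherical}, $\rho_l$, $\mu_l$, $\si$, $R_*$, $\rho_*$, $T_\infty$ — using $\Re\Phi(\tau)\ge1$ for $\Re\tau\ge0$ and an explicit lower bound for $|Z(\tau)|$ on $\{\Re\tau\ge-\be\}$ — one produces the explicit value of $-\be$ displayed in \eqref{eq-be-def}; I would carry out these estimates in Appendix \ref{sec-negative-upper-bound} exactly as the statement anticipates.
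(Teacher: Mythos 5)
Your Part (1) is essentially the paper's argument in eigenvalue form rather than Laplace-transform form: the componentwise elimination, the identity $\Ga_j\La_j=2(\ga-1)\bar\ka R_*/(\ga\rho_*)$, and the reduction to the scalar equation $Q(\tau)=0$ are exactly what the paper does to arrive at \eqref{eq-QR}, and your identification of the kernel and the computation $Q(0)=\frac{4\pi\rho_*}{R_*}\cdot\frac{3R_*p_{\infty,*}+4\si}{3R_*p_{\infty,*}+6\si}$ are correct. Where you genuinely diverge is Part (2). The paper proves the spectral gap by the brute-force case analysis of Appendix \ref{sec-negative-upper-bound}: writing $\tau=\xi+i\eta$, separating real and imaginary parts of $Q$, and extracting explicit upper bounds on $\xi$ in each of several regimes ($\eta$ small, $\eta$ large, $\eta=0$), which yields the quantitative $\be$ of \eqref{eq-be-def} in one pass. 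You instead factor $Q=0$ as $\Phi(\tau)Z(\tau)=-3\ga\Rg T_\infty\rho_*/R_*$ and rule out zeros in the closed right half-plane by a clean sign argument ($\Re\Phi>1$, $\sgn\Im\Phi=-\sgn\Im\tau$, $\Im Z\ge 0$ there), handling the real axis by monotonicity of $\Phi\cdot(-Z)$; the gap then follows from analyticity of $Q$ on $\{\Re\tau\ge-\tfrac12\pi^2\bar\ka\}$ together with $|Q|\to\infty$ at infinity, so only finitely many zeros can sit in the intermediate strip. This is correct and arguably more transparent than the paper's computation, and as a bonus it isolates the sharp threshold $\si>-\tfrac34R_*p_{\infty,*}$ matching the coercivity condition. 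What it buys you less of is Part (3): your compactness step produces an unquantified $\be$, so the explicit formula \eqref{eq-be-def} still requires redoing the appendix-style estimates, which you acknowledge but do not carry out — acceptable here, since the statement itself delegates Part (3) to the appendix, but worth being explicit that your soft argument does not replace that computation. Two minor points: your claim $\Im(\Phi Z)>0$ when $\Re Z\le 0$ needs the one-line observation that equality would force $Z=0$ and hence $\Phi Z=0$, which is not the required negative constant; and, like the paper, you pass over the non-generic coincidence $Z(-\pi^2\bar\ka j^2)=0$, at which the pole of $Q$ is removable and the exclusion of $-\pi^2\bar\ka j^2$ from the spectrum deserves a separate check.
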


\begin{proof}
In the same spirit of Proposition \ref{prop-equivalent-systems}, the linear system $\dot{\bf w} = \mathcal L{\bf w}$ is equivalent to the following linear version of \eqref{red-eqns-linear-new-1}:
\begin{subequations}
\label{red-eqns-linear-new}
\begin{align}
\pd_tu &= \bar\ka \De_yu - \bke{ 1 - \frac1{\ga} } \dot z,\quad 0\le y\le1,\qquad u(1,t)=0,\quad \ t>0,\label{eq-u-pde}\\
\dot{\mathcal R} &= -\frac{\bar\ka R_*}{\rho_*}  \pd_yu(1,t) - \frac{R_*}{3\ga\rho_*}\, \dot z,\quad t>0.\label{eq-dotR-u}\\
z(t) &= \frac1{\Rg T_\infty} \bke{ -\frac{2\si}{R_*^2}\mathcal R + \frac{4\mu_l}{R_*}\, \dot{\mathcal R} + \rho_lR_*\ddot{\mathcal R} },\quad t>0.\label{eq-z}
\end{align}
\end{subequations}

Similar to the proof in Proposition \ref{prop-equivalent-systems}, using the eigenfunction decomposition \eqref{eq-u-expansion} in \eqref{eq-u-pde} and testing the equation against $\phi_k$, we have
\EQ{\label{eq-dotc}
\dot c_k(t) = -\bar\ka\la_kc_k(t) - \Ga_k\dot z(t).
}
Taking Laplace transform of \eqref{eq-dotc}, we have
\[
- c_k(0) + \tau\widetilde{c_k}(\tau) 
= - \bar\ka\la_k\widetilde{c_k}(\tau) - \Ga_k(-z(0) + \tau\widetilde z(\tau)),
\]
or
\EQ{\label{eq-tdc}
\widetilde{c_k}(\tau) = \frac{c_k(0) + \Ga_kz(0)}{\bar\ka\la_k+\tau} - \frac{\Ga_k\tau}{\bar\ka\la_k+\tau}\, \widetilde z(\tau).
}
Using the eigenfunction decomposition \eqref{eq-u-expansion} in \eqref{eq-dotR-u}, we get
\EQ{\label{eq-dotR-u-expand}
\dot{\mathcal R} = \sum_{j=1}^\infty c_j(t) \La_j - \frac{R_*}{3\ga\rho_*}\, \dot z,
}
where $\La_j$ is defined in \eqref{eq-def-La}.
Taking Laplace transform of \eqref{eq-dotR-u-expand}, we deduce
\EQ{\label{eq-tdR}
-\mathcal R(0) + \tau\wt{\mathcal R}(\tau) = \sum_{j=1}^\infty \wt c_j(\tau)\La_j - \frac{R_*}{3\ga\rho_*} (-z(0) + \tau\wt z(\tau)).
}

Taking Laplace transform of \eqref{eq-z}, we derive
\[
\Rg T_\infty \widetilde z(\tau) = -\frac{2\si}{R_*^2} \widetilde R(\tau) + \frac{4\mu_l}{R_*} \bke{-\mathcal R(0) + \tau \widetilde R(\tau) } + \rho_lR_*\bke{-\dot{\mathcal R}(0) - \tau\mathcal R(0) + \tau^2\widetilde R(\tau)},
\]
or
\EQ{\label{eq-tdz}
\bke{\rho_lR_*\tau^2 + \frac{4\mu_l}{R_*}\, \tau - \frac{2\si}{R_*^2}} \widetilde{\mathcal R}(\tau) - \Rg T_\infty\widetilde z(\tau) 
= \rho_lR_* \bke{\dot{\mathcal R}(0) + \tau\mathcal{R}(0)}.
}
Replacing the $\wt c_k$'s and $\wt z$ in \eqref{eq-tdR} using \eqref{eq-tdc} and \eqref{eq-tdz}, multiplying the equation by $4\pi\frac{\rho_*}{R_*}$,  using the identity $\Ga_j\La_j = \frac{2(\ga-1)\bar\ka R_*}{\ga\rho_*}$, we obtain
\EQ{\label{eq-QR}
\tau Q(\tau) \wt{\mathcal R}(\tau) = \text{DATA}(\tau),
}
where $Q(\tau)$ is as in \eqref{eq-Q-simplified}, and $\text{DATA}(\tau)$ is analytic for all $\tau\in\CC$ with $\tau\neq-\bar\ka\la_j = -\pi^2\bar\ka j^2$, $j=1,2,\ldots$, and is defined as
\EQN{
\text{DATA}(\tau) &= 4\pi\frac{\rho_*}{R_*} \Bigg[ \frac{\rho_lR_*\tau}{\Rg T_\infty} \bke{\sum_{j=1}^\infty \frac{\Ga_j\La_j}{\bar\ka\la_j + \tau} + \frac{R_*}{3\ga\rho_*} } \bke{\dot{\mathcal R}(0) + \tau\mathcal R(0)}\\
&\qquad\qquad\quad + \sum_{j=1}^\infty \frac{c_j(0) + \Ga_jz(0)}{\bar\ka\la_j + \tau} + \frac{R_*}{3\ga\rho_*} z(0) + \mathcal R(0) \Bigg].
}
It then follows from \eqref{eq-QR}, \eqref{eq-tdz}, and \eqref{eq-tdc}, that
\EQN{
\wt z(\tau) &= \frac1{\Rg T_\infty } \bkt{ \frac{\rho_lR_*\tau^2 + \frac{4\mu_l}{R_*}\, \tau - \frac{2\si}{R_*^2}}{\tau Q(\tau)}\, \text{DATA}(\tau) - \rho_lR_*\bke{ \dot{\mathcal R}(0) + \tau\mathcal R(0) } },\\
\wt{\mathcal R}(\tau) &= \frac1{\tau Q(\tau)}\, \text{DATA}(\tau),\qquad
\wt{\dot{\mathcal R}}(\tau) = -\mathcal R(0) + \frac1{Q(\tau)}\, \text{DATA}(\tau),\\
\wt{c_k}(\tau) &= \frac{c_k(0) + \Ga_kz(0)}{\bar\ka\la_k+\tau} - \frac{\Ga_k\tau}{\bar\ka\la_k+\tau} \frac1{\Rg T_\infty } \bkt{ \frac{\rho_lR_*\tau^2 + \frac{4\mu_l}{R_*}\, \tau - \frac{2\si}{R_*^2}}{\tau Q(\tau)}\, \text{DATA}(\tau) - \rho_lR_*\bke{ \dot{\mathcal R}(0) + \tau\mathcal R(0) } },
}
which amounts to $\wt{\bf w}(\tau) = (\mathcal L - \tau I)^{-1} \wt{\bf w}(0)$ for all $\tau\in\CC$ with $\tau\neq -\pi^2\bar\ka j^2$, $j=1,2,\ldots$, and $\tau Q(\tau)\neq0$.
By Lemma \ref{lem-negative-upper-bound}, there exists $\be>0$ such that $\Re(\tau) < -\be$ for all $\tau$ satisfying $Q(\tau) = 0$.
Thus, $\norm{(\mathcal L - \tau I)^{-1}}_{\ell^2\to\ell^2} \le C(\be)$  for all $\tau\neq0$ with $\Re(\tau) > -\be$.
This completes the proof of the proposition.
\end{proof}

\begin{proposition}
The linear operator $\mathcal L$ defined in \eqref{eq-matrixC} has a one-dimensional kernel $\ker\mathcal L = \textup{span}({\bf b})$.
\EQ{\label{eq-def-bfb}
{\bf b} = (-\frac{2\si}{\Rg T_\infty R_*^2},1,0,0,0,\ldots)^\top,
}
Moreover, ${\bf b}^\dagger\in \textup{coker}\,\mathcal L $ where
\EQ{\label{eq-def-tilde-bfb}
{\bf b}^\dagger = ( \frac{4\pi}3, 4\pi \frac{\rho_*}{R_*}, 0, \frac{\ga}{\ga-1}\Ga_1, \frac{\ga}{\ga-1}\Ga_2,\ldots ).
}
\end{proposition}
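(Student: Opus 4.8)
The plan is to prove both claims by direct computation with the matrix representation \eqref{eq-matrixC} of $\mathcal L$, reading off rows for the kernel and columns for the cokernel. First I would check $\mathcal L{\bf b}={\bf 0}$ for ${\bf b}$ as in \eqref{eq-def-bfb}: since the $\dot{\mathcal R}$-component and all $c_j$-components of ${\bf b}$ vanish, rows $1$, $2$, and every row $k+3$ of $\mathcal L{\bf b}$ are automatically zero, while the third row gives $\frac{\Rg T_\infty}{\rho_lR_*}\bke{-\frac{2\si}{\Rg T_\infty R_*^2}}+\frac{2\si}{\rho_lR_*^3}=0$. For one-dimensionality, I would take ${\bf w}=(z,\mathcal R,\dot{\mathcal R},c_1,c_2,\dots)$ in the domain (so $\sum_j j^4|c_j|^2<\infty$) with $\mathcal L{\bf w}={\bf 0}$ and unwind: row $2$ forces $\dot{\mathcal R}=0$; row $1$, which equals $\frac{3\ga\rho_*}{R_*}\bke{-\dot{\mathcal R}+\sum_j\La_jc_j}$, then forces $\sum_j\La_jc_j=0$; row $k+3$, which equals $\Ga_k\frac{3\ga\rho_*}{R_*}\bke{\dot{\mathcal R}-\sum_j\La_jc_j}-\bar\ka\la_kc_k$, now forces $\bar\ka\la_kc_k=0$, hence $c_k=0$ for all $k$ (since $\bar\ka>0$, $\la_k=\pi^2k^2>0$); finally row $3$ gives $z=-\frac{2\si}{\Rg T_\infty R_*^2}\mathcal R$. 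Thus ${\bf w}=\mathcal R\,{\bf b}$, proving $\ker\mathcal L=\mspan({\bf b})$.

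For the cokernel statement, I would first note ${\bf b}^\dagger\in\ell^2$ because its tail entries $\frac{\ga}{\ga-1}\Ga_k$ decay like $1/k$ by \eqref{eq-Ga-formula}; by definition of the adjoint it then suffices to show ${\bf b}^\dagger\mathcal L={\bf 0}$, which yields $\mathcal L^*({\bf b}^\dagger)^\top={\bf 0}$, i.e.\ $({\bf b}^\dagger)^\top\in\ker\mathcal L^*=\textup{coker}\,\mathcal L$; and since by Proposition \ref{prop-spectrum} the eigenvalue $0$ is simple and isolated, ${\bf b}^\dagger$ in fact spans the cokernel. I would then compute ${\bf b}^\dagger\mathcal L$ by pairing ${\bf b}^\dagger$ with each column of $\mathcal L$: columns $1$ and $2$ vanish because the only nonzero entry of $\mathcal L$ there sits in the third row, where ${\bf b}^\dagger$ has a $0$; the third column gives $\frac{4\pi}{3}\bke{-\frac{3\ga\rho_*}{R_*}}+4\pi\frac{\rho_*}{R_*}+\frac{3\ga^2\rho_*}{(\ga-1)R_*}\sum_k\Ga_k^2$, and inserting $\Ga_k^2=\frac{8(\ga-1)^2}{\pi\ga^2}k^{-2}$ and $\sum_k k^{-2}=\pi^2/6$ gives $\sum_k\Ga_k^2=\frac{4\pi(\ga-1)^2}{3\ga^2}$, so the three terms collapse to $4\pi\frac{\rho_*}{R_*}\bke{-\ga+1+(\ga-1)}=0$; the $(j+3)$-rd column gives, after the $\sum_k\Ga_k^2$ contribution cancels the amount $4\pi(\ga-1)\frac{\rho_*}{R_*}\La_j$ against the leading term, the residual $4\pi\frac{\rho_*}{R_*}\La_j-\frac{\ga}{\ga-1}\bar\ka\la_j\Ga_j$, which vanishes upon substituting the explicit formulas \eqref{eq-def-La}, \eqref{eq-Ga-formula}, \eqref{eq-eigen-formula} for $\La_j,\Ga_j,\la_j$ — equivalently, via the identity $\Ga_j\La_j=\frac{2(\ga-1)\bar\ka R_*}{\ga\rho_*}$ together with $\la_j=\pi^2j^2$. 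Hence ${\bf b}^\dagger\mathcal L={\bf 0}$.

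I do not expect a genuine obstacle here: the argument is essentially finite bookkeeping once the series are under control. The only points requiring care are the convergence and termwise manipulation of $\sum_j\La_jc_j$ and $\sum_k\Ga_k^2$ on the dense domain $\{\sum_j j^4|c_j|^2<\infty\}$ (the first converges by Cauchy--Schwarz since $\La_j=O(j)$, the second absolutely), and keeping track of constants precisely enough that the Basel identity $\sum_k k^{-2}=\pi^2/6$ produces the exact cancellations used in the cokernel computation.
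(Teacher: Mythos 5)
Your proof is correct, and for the one-dimensionality of the kernel you take a genuinely different route from the paper. The paper deduces $\dim\ker\mathcal L=1$ by citing Proposition~\ref{prop-spectrum} (the Laplace-transform spectral analysis, where $\tau=0$ is shown to have multiplicity one), then merely observes $\mathcal L{\bf b}={\bf 0}$. You instead unwind $\mathcal L{\bf w}={\bf 0}$ row by row: row $2$ gives $\dot{\mathcal R}=0$, row $1$ then forces $\sum_j\La_jc_j=0$, rows $k+3$ then collapse to $\bar\ka\la_kc_k=0$ so every $c_k=0$, and row $3$ pins $z=-\tfrac{2\si}{\Rg T_\infty R_*^2}\mathcal R$. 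This is more elementary and self-contained (no appeal to the resolvent computation), at the mild cost of needing the series $\sum_j\La_jc_j$ to converge on the domain $\sum j^4|c_j|^2<\infty$, which you correctly note follows from Cauchy--Schwarz since $\La_j=O(j)$. Your verification of ${\bf b}^\dagger\mathcal L={\bf 0}$ matches the paper's: the key is the identity $\sum_k\Ga_k^2=\tfrac{4\pi(\ga-1)^2}{3\ga^2}$, and for the tail columns the constancy of $\Ga_j\La_j$ together with $\la_j\Ga_j^2=\tfrac{8\pi(\ga-1)^2}{\ga^2}$ closes the cancellation. One small caveat on your final remark: the proposition only asserts ${\bf b}^\dagger\in\textup{coker}\,\mathcal L$, and the natural reading of that is the orthogonality $\langle{\bf b}^\dagger,\mathcal L{\bf w}\rangle=0$ for all ${\bf w}$ in the domain, which is exactly what your column computation establishes; strictly speaking $({\bf b}^\dagger)^\top$ is not in the graph domain of $\mathcal L^*$ because $\sum_k k^4\Ga_k^2=\infty$, so the ``spans $\ker\mathcal L^*$'' gloss should be understood in this weaker annihilator sense rather than literally. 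That is a side comment and does not affect the validity of the proof of the stated proposition.
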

\begin{proof}
It is a direct consequence of Proposition \ref{prop-spectrum} that $\dim\ker\mathcal L = 1$ since $\tau=0$ has multiplicity one.
Moreover, it is easy to check that $\mathcal L{\bf b} = {\bf 0}$.
So $\ker\mathcal L = \textup{span}({\bf b})$.
On the other hand, one can check directly ${\bf b}^\dagger \mathcal L = {\bf 0}$ by using the identity $\sum_{j=1}^\infty\Ga_j^2 = \frac{4(\ga-1)^2\pi}{3\ga^2}$.
We note that ${\bf b}^\dagger$ satisfies the linearized constant mass constraint
\EQ{\label{eq-u-mass-conserve} 
\frac{\ga}{\ga-1} \sum_{j=1}^\infty \Ga_j c_j = \int_{B_1} u = -\frac{4\pi}3 z - 4\pi \frac{\rho_*}{R_*}\, \mathcal R.
}
\end{proof}

Using $\Rg T_\infty \rho_* = p_{\infty,*} + 2\si/R_*$,
\[
\bka{{\bf b}^\dagger, {\bf b} } = \frac{4\pi p_{\infty,*}}{3\Rg T_\infty R_*} + \frac{8\pi\rho_*}{3R_*}.
\]
Normalize ${\bf b}^\dagger$ as
\EQ{\label{eq-def-K}
{\bf b}_0^\dagger = K {\bf b}^\dagger,\qquad
K:=\bke{ \frac{4\pi p_{\infty,*}}{3\Rg T_\infty R_*} + \frac{8\pi\rho_*}{3R_*} }^{-1},
}
so that $\bka{{\bf b}_0^\dagger, {\bf b}}=1$.

\subsection{Toward a center manifold formulation}

Denote $X = \ker \mathcal L = \text{span}({\bf b})$, and let $Y$ be the orthogonal complement of $X$ in $\ell^2$.
Decompose
\[
{\bf w} = {\bf x} + {\bf y},\qquad
{\bf x} = Q_1{\bf w} := \bka{{\bf b}_0^\dagger, {\bf w}}\, {\bf b},\quad
{\bf y} = Q_2{\bf w} := {\bf w} - Q_1{\bf w}.
\]
Since $\mathcal L{\bf b} = {\bf b}^\dagger \mathcal L = {\bf 0}$, we have
\[
Q_1\mathcal{L}{\bf w} = \mathcal{L}Q_1{\bf w} = {\bf 0},\qquad Q_2\mathcal{L}{\bf w} = \mathcal{L}Q_2{\bf w} = \mathcal{L}{\bf w}
\]
In particular, $\mathcal{L}Q_2$ is the restriction of $\mathcal{L}$ on $Y$.
Then we derive, from \eqref{eq-carr-6.3.1-simple-form}, a system of $({\bf x},{\bf y})$
\begin{subequations}
\label{eq-carr-6.3.4}
\begin{align}
\dot{\bf x} &= Q_1\mathcal N({\bf x} + {\bf y}, \dot{\bf x} + \dot{\bf y}) = Q_1\bkt{ \mathcal N^1({\bf x} + {\bf y}) [\dot{\bf x} + \dot{\bf y}] } + Q_1\mathcal N^0({\bf x} + {\bf y}),\label{eq-carr-6.3.4-x}\\ 
\dot{\bf y} &= \mathcal L {\bf y} + Q_2 \mathcal N({\bf x} + {\bf y}, \dot{\bf x} + \dot{\bf y}) = \mathcal L {\bf y} + Q_2 \bkt{ \mathcal N^1({\bf x} + {\bf y}) [\dot{\bf x} + \dot{\bf y}] } + Q_2\mathcal N^0({\bf x} + {\bf y}) .\label{eq-carr-6.3.4-y}
\end{align}
\end{subequations}

In order to apply the center manifold analysis developed in Appendix \ref{sec-center-manifold}, we check the setup of the system \eqref{eq-carr-6.3.4} in the following proposition.

\begin{proposition}\label{prop-invariance-decay}
The subspaces $X$ and $Y$ are $\mathcal L$-invariant and $e^{\mathcal L t}$-invariant, respectively.
Moreover, 
\EQ{\label{eq-fast-decay-Q2}
\norm{e^{\mathcal L t} Q_2 }_{\ell^2\to\ell^2} \le ce^{-\be t},\quad t\ge0,
}
for some $c>0$, where $\be$ given in \eqref{eq-be-def}.
\end{proposition}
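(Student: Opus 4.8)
The plan is to deduce the statement from the spectral picture already established in Proposition~\ref{prop-spectrum}, once it is known that $\mathcal L$ generates an analytic semigroup on $\ell^2$. First I would record that $Q_1$ and $Q_2 = I - Q_1$ are bounded complementary projections (bounded because ${\bf b}\in\ell^2$ and ${\bf b}_0^\dagger\in\ell^2$, the latter since $\Gamma_j = O(1/j)$), with $X = \operatorname{Range}Q_1 = \ker\mathcal L = \operatorname{span}({\bf b})$ and $Y = \operatorname{Range}Q_2 = \ker Q_1$. From $\mathcal L{\bf b} = {\bf 0}$ and ${\bf b}_0^\dagger\mathcal L = {\bf 0}$ one reads off $\mathcal L Q_1 = 0$ and $Q_1\mathcal L = 0$ on $D(\mathcal L)$, hence $\mathcal L Q_2 = Q_2\mathcal L = \mathcal L$; thus $\mathcal L X = \{{\bf 0}\}\subset X$ and $\mathcal L(Y\cap D(\mathcal L)) = Q_2\mathcal L(Y\cap D(\mathcal L))\subset Y$, so $X$ and $Y$ are $\mathcal L$-invariant. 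The same identities give $Q_i(\tau I-\mathcal L)^{-1} = (\tau I-\mathcal L)^{-1}Q_i$ for $\tau$ in the resolvent set, so $Q_1$ and $Q_2$ commute with $e^{\mathcal L t}$ once the semigroup is known to exist; hence $X$ and $Y$ are $e^{\mathcal L t}$-invariant, $e^{\mathcal L t}|_X = \mathrm{Id}$, and $e^{\mathcal L t}Q_2 = e^{\mathcal L t}|_Y\,Q_2$.

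Next I would establish that $\mathcal L$ generates an analytic semigroup. Write $\mathcal L = \mathcal A + \mathcal B$, where $\mathcal A$ retains the bounded top-left $3\times 3$ block of \eqref{eq-matrixC} together with the diagonal tail $\operatorname{diag}(-\bar\ka\la_j)_{j\ge1} = \operatorname{diag}(-\bar\ka\pi^2 j^2)_{j\ge1}$, and $\mathcal B$ collects the off-diagonal coupling entries (those $\propto\La_j$, $\Ga_j\La_k$, $\Ga_j$). Since $\la_j=\pi^2 j^2\to\infty$, $\mathcal A$ generates a bounded analytic semigroup on $\ell^2$ with $D(\mathcal A) = \{{\bf w}:\sum_j j^4|c_j|^2<\infty\}$. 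Using $\Ga_j = O(1/j)$, $\La_j = O(j)$ and Cauchy--Schwarz in the weighted sequence spaces, $\mathcal B$ is $\mathcal A$-bounded with $\mathcal A$-bound $0$ (indeed $\|\mathcal B(\mu I-\mathcal A)^{-1}\|_{\ell^2\to\ell^2}\to 0$ as $\mu\to+\infty$), so the standard perturbation theorem yields that $\mathcal L = \mathcal A + \mathcal B$ generates an analytic semigroup $e^{\mathcal L t}$ on $\ell^2$ with $D(\mathcal L) = D(\mathcal A)$. (This can also be read off from parabolic theory applied to the linear free-boundary system \eqref{red-eqns-linear-new}, in the spirit of Theorem~\ref{thm-bv-3.1}.)

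Finally, on the invariant subspace $Y$ the part $\mathcal L_Y := \mathcal L|_Y$ generates the analytic semigroup $e^{\mathcal L t}|_Y$, and by Proposition~\ref{prop-spectrum} its spectrum is $\sigma(\mathcal L_Y) = \sigma(\mathcal L)\setminus\{0\}\subset\{\tau:\Re\tau\le-\be\}$ (indeed strictly, by the proof of that proposition). Since analytic semigroups are immediately norm continuous, the spectral mapping theorem applies and the growth bound of $e^{\mathcal L_Y t}$ equals its spectral bound; hence $\|e^{\mathcal L t}|_Y\|_{\ell^2\to\ell^2}\le c\,e^{-\be t}$ for $t\ge 0$, with $\be$ as in \eqref{eq-be-def} (chosen with a margin below $-\sup\Re(\sigma(\mathcal L)\setminus\{0\})$). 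Equivalently, one may deduce the same bound directly from the Bromwich representation $e^{\mathcal L t}Q_2 = \tfrac1{2\pi i}\int_{\Gamma_{-\be}} e^{\tau t}(\tau I-\mathcal L)^{-1}Q_2\,d\tau$, using the uniform resolvent bound $\|(\tau I-\mathcal L)^{-1}\|\le C(\be)$ of Proposition~\ref{prop-spectrum}(2) on $\{\Re\tau\le-\be\}$ together with the $O(|\tau|^{-1})$ decay of the resolvent at infinity (from analyticity) to justify convergence and a sectorial deformation of the contour. Combining with $e^{\mathcal L t}Q_2 = e^{\mathcal L t}|_Y\,Q_2$ gives \eqref{eq-fast-decay-Q2}.

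The only genuinely nontrivial point is the generation claim: verifying that $\mathcal L$, with the domain $\{\sum_j j^4|c_j|^2<\infty\}$, generates an analytic semigroup despite the \emph{unbounded} off-diagonal coupling — e.g.\ the first row of \eqref{eq-matrixC} has entries $\La_j\tfrac{3\ga\rho_*}{R_*}$ growing like $j$. This rests on a careful relative-boundedness estimate trading the $j^{-1}$ decay of $\Ga_j$ against the two-derivative gain in $\la_j\sim j^2$; once it is in place, the invariance of $X,Y$ and the passage from Proposition~\ref{prop-spectrum}(2) to the exponential semigroup bound are routine.
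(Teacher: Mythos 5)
Your proposal is correct and reaches the same conclusion, but the decisive step is argued by a genuinely different route than the paper's. For the invariance claims the paper argues that $X=\ker\mathcal L$ is trivially $\mathcal L$-invariant and then shows $Y$ is $e^{\mathcal Lt}$-invariant by decomposing $e^{\mathcal Lt}{\bf y}_0=\al_{\bf x}(t){\bf x}+\al_{\bf y}(t){\bf y}$ and observing $\dot\al_{\bf x}\equiv0$; your algebraic identities $\mathcal LQ_1=Q_1\mathcal L=0$ (from $\mathcal L{\bf b}={\bf 0}$ and ${\bf b}^\dagger\mathcal L={\bf 0}$), which give commutation of $Q_1,Q_2$ with the resolvent and hence with $e^{\mathcal Lt}$, are cleaner and avoid the awkwardness of writing the $Y$-component as a scalar multiple of a single fixed ${\bf y}$ in an infinite-dimensional space. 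For the estimate \eqref{eq-fast-decay-Q2} the paper goes directly from the uniform resolvent bound of Proposition \ref{prop-spectrum}(2) to the growth bound via the Gearhart--Pr\"uss theorem for $C_0$-semigroups on Hilbert spaces, with no analyticity needed; you instead first prove that $\mathcal L$ generates an \emph{analytic} semigroup by splitting off the diagonal part and showing the coupling terms are $\mathcal A$-bounded with relative bound $0$ (your estimate trading the $j^{-1}$ decay of $\Ga_j$ against $\la_j\sim j^2$ does go through), and then invoke the spectral mapping theorem or a Bromwich contour deformation. Your route is longer, but it buys something the paper leaves implicit: an explicit verification that $e^{\mathcal Lt}$ exists as a semigroup on $\ell^2$ with the stated domain, which Gearhart--Pr\"uss also presupposes. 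The one shared caveat is that both arguments deliver the rate $-\be+\ve$ for every $\ve>0$ unless the resolvent bound (respectively the spectral bound) holds on a slightly larger half-plane; you flag this by allowing a margin in the choice of $\be$, which is the right fix.
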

\begin{proof}
Obviously, $X$ is $\mathcal L$-invariant since $X= \ker \mathcal L$.
To show $Y$ is $e^{\mathcal L t}$-invariant, let ${\bf y}_0\in Y$ and fix ${\bf x}\in X$, ${\bf x}\neq{\bf 0}$, and ${\bf y}\in Y$, ${\bf y}\neq{\bf 0}$.
Decompose $e^{\mathcal L t}{\bf y}_0 = \al_{\bf x}(t){\bf x} + \al_{\bf y}(t){\bf y}$.
Since $\frac{d}{dt} (e^{\mathcal L t}{\bf y}_0) = \mathcal L (e^{\mathcal L t}{\bf y}_0)$ and ${\bf x}\in X=\ker \mathcal L$, $\dot\al_{\bf x}(t){\bf x} + \dot\al_{\bf y}(t){\bf y} = \al_{\bf y}(t)\mathcal L{\bf y}$ which implies $\dot\al_{\bf x}(t) = 0$.
But since $\al_{\bf x}(0)=0$ in view of ${\bf y}_0\in Y$, we have $\al_{\bf x}\equiv0$. 
So we deduce $e^{\mathcal L t}{\bf y}_0 =  \al_{\bf y}(t){\bf y}\in Y$ and conclude that $Y$ is $e^{\mathcal L t}$-invariant. 
Since $\text{Range}(Q_2) = Y = X^\perp$, $X = \ker\mathcal L$, the operator estimate \eqref{eq-fast-decay-Q2} then follows from the spectrum analysis in Proposition \ref{prop-spectrum} and the Gearhart-Pr\"uss theorem \cite{Gearhart-TAMS1978,Pruss-TAMS1984} for $C_0$ semigroups on Hilbert spaces.
This proves the proposition.
\end{proof}

The system \eqref{eq-carr-6.3.4} is equivalent to those systems in Proposition \ref{prop-equivalent-systems}, namely, \eqref{red-eqns-linear-new-1} and \eqref{eq-carr-6.3.1-simple-form} as well as the original free boundary problem \eqref{red-eqns}--\eqref{eq-bv-3.14prime}.

A curve ${\bf y} = h({\bf x})$, defined for $|{\bf x}|$ small, is said to be an \emph{invariant manifold} for \eqref{eq-carr-6.3.4} if the solution $({\bf x}(t),{\bf y}(t))$ of \eqref{eq-carr-6.3.4} starting from $({\bf x}(0), h({\bf x}(0)))$ satisfies ${\bf y}(t) = h({\bf x}(t))$.
A \emph{center manifold} for \eqref{eq-carr-6.3.4} is an invariant manifold that is tangent to $X$ space at the origin.

\subsection{Manifold of equilibria, $\mathcal M_*$, as a local center manifold through $(\rho_*,R_*)$}

In the following lemma, we show that the manifold of equilibria $\mathcal M_*$ (given in \eqref{eq-equilib-manifold}) is a local center manifold for \eqref{eq-carr-6.3.4}.

\begin{lemma}\label{lem-mfld-equil-ceneter-mfld}
Let $\rho_{**}(\al)$ and $R_{**}(\al)$ satisfy 
\EQ{\label{eq-rho-R-double-star}
\Rg T_\infty \rho_{**}(\al) = p_{\infty,*} + \frac{2\si}{R_{**}(\al)},\qquad
R_{**}(\al) = \frac{-\al+\sqrt{\al^2 + 4R_*^2}}2.
} 
Then $(\rho_{**}(0),R_{**}(0)) = (\rho_*, R_*)$ and
\EQ{\label{eq-center-mfd}
{\bf y} = h({\bf x}) = h(\al{\bf b}) 
:= 
\begin{bmatrix}
\rho_{**}(\al) - \rho_*\\
R_{**}(\al) - R_*\\
0\\
0\\
0\\
\vdots
\end{bmatrix}
}
is a local center manifold for the system \eqref{eq-carr-6.3.4}.
Specifically, for $\al$ small enough the dynamics on the center manifold is trivial. That is, for $({\bf x}(t),{\bf y}(t)) = ({\bf x}(t), h({\bf x}(t)))$, \eqref{eq-carr-6.3.4-x} becomes
\EQ{\label{eq-trivial-dynamics}
\dot\al(t) = 0.
}
\end{lemma}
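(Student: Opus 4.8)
The plan is to verify directly that the curve $\{h(\al{\bf b}): |\al| \text{ small}\}$ consists of equilibria of the full nonlinear system \eqref{eq-carr-6.3.1-simple-form} (equivalently \eqref{eq-carr-6.3.4}), and that this curve is tangent to $X = \ker\mathcal L$ at the origin; the triviality of the dynamics \eqref{eq-trivial-dynamics} is then immediate. First I would identify the curve geometrically: a point ${\bf w} = (z,\mathcal R,\dot{\mathcal R},c_1,c_2,\ldots)^\top$ of the form $(\rho_{**}(\al)-\rho_*,\, R_{**}(\al)-R_*,\,0,\,0,\,0,\ldots)^\top$ corresponds, under the decomposition \eqref{eq-perturbation}, to $\rho\equiv\rho_{**}(\al)$ constant on $B_{R_{**}(\al)}$, $R\equiv R_{**}(\al)$, $\dot R = 0$. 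By Part (2) of Proposition \ref{prop-equilib}, the relations \eqref{eq-rho-R-double-star} are exactly the steady-state algebraic equations \eqref{eq-steadystate} (the first being \eqref{eq-steadystate-b} and the second, $R_{**}(\al)(R_{**}(\al)+\al) = R_*^2$, is the rearrangement of the mass constraint $R_{**}^3\rho_{**} = R_*^3\rho_*$ written so that $R_{**}^2 + \al R_{**} - R_*^2 = 0$). Hence each such point is a genuine time-independent solution of the reduced free boundary problem \eqref{red-eqns}--\eqref{eq-bv-3.14prime}, so its image ${\bf w} = h(\al{\bf b})$ satisfies $\dot{\bf w} = 0$, i.e. $\mathcal L{\bf w} + \mathcal N({\bf w},{\bf0}) = {\bf0}$. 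In particular the curve is invariant and the flow along it is stationary, which is \eqref{eq-trivial-dynamics}.

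Next I would check the parametrization and the tangency condition. One computes $R_{**}(0) = \tfrac12(0 + \sqrt{4R_*^2}) = R_*$, and then $\rho_{**}(0) = \tfrac1{\Rg T_\infty}(p_{\infty,*} + 2\si/R_*) = \rho_*$ by \eqref{eq-steadystate-b}; so $h({\bf0}) = {\bf0}$ and the curve passes through the base equilibrium. For tangency: differentiate \eqref{eq-rho-R-double-star} in $\al$ at $\al=0$. From $R_{**}^2 + \al R_{**} - R_*^2 = 0$ we get $2R_{**}R_{**}' + R_{**} + \al R_{**}' = 0$, so $R_{**}'(0) = -R_*/(2R_*) = -\tfrac12$; and from $\Rg T_\infty\rho_{**} = p_{\infty,*} + 2\si/R_{**}$ we get $\Rg T_\infty \rho_{**}' = -2\si R_{**}'/R_{**}^2$, hence $\rho_{**}'(0) = \si/(\Rg T_\infty R_*^2)$. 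Therefore
\[
\frac{d}{d\al}\Big|_{\al=0} h(\al{\bf b}) = \Big(\tfrac{\si}{\Rg T_\infty R_*^2},\, -\tfrac12,\, 0,0,\ldots\Big)^\top,
\]
which is the parallel to ${\bf b} = (-\tfrac{2\si}{\Rg T_\infty R_*^2},1,0,0,\ldots)^\top$ (namely $-\tfrac12{\bf b}$). Since this tangent vector lies in $X = \mathrm{span}({\bf b})$, the curve is tangent to $X$ at the origin; it is therefore a local center manifold for \eqref{eq-carr-6.3.4}, the zero eigenvalue of $\mathcal L$ being simple (Proposition \ref{prop-spectrum}) with all other spectrum in $\{\Re\tau\le-\be\}$. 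I should also note that $h$ is smooth in $\al$ near $0$ and $h({\bf x}) = h(\al{\bf b}) = O(\al)$ with $Dh({\bf0})$ mapping into $X$, so $h$ genuinely defines a graph ${\bf y} = h({\bf x})$ over $X$ into $Y$ after composing with the projection $Q_2$ (which kills the leading order term, giving ${\bf y} = Q_2 h({\bf x}) = O(|{\bf x}|^2)$, consistent with tangency to $X$).

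The only mild subtlety — not really an obstacle — is bookkeeping: one must confirm that a constant-in-space $\rho$ together with a constant $R$ really does reduce all the nonlinear terms $F,G,H$ in \eqref{red-eqns-linear-nonlinear-term} and the linear part to zero, i.e. that $u\equiv 0$, $\dot z = 0$, $\dot{\mathcal R} = \ddot{\mathcal R} = 0$ on the curve so that \eqref{eq-u-pde-1}--\eqref{eq-z-1} are satisfied; this is exactly the content of ``setting $\pd_t\rho = \pd_t R = 0$'' in the derivation of \eqref{eq-steadystate} given in the proof of Proposition \ref{prop-equilib}, so it needs only to be invoked rather than re-derived. Equivalently, one checks $\mathcal L h(\al{\bf b}) + \mathcal N^0(h(\al{\bf b})) = {\bf0}$: the first-order-in-$\al$ part of $\mathcal L h(\al{\bf b})$ vanishes because $h'(0) \parallel {\bf b} \in \ker\mathcal L$, and the remaining discrepancy is absorbed by $\mathcal N^0$, which by construction collects precisely the terms making up the nonlinear steady-state relation. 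Thus the verification is a direct substitution, and the lemma follows.
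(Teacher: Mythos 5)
Your approach is genuinely different from the paper's and, if executed carefully, cleaner: rather than substituting directly into \eqref{eq-carr-6.3.4-x}--\eqref{eq-carr-6.3.4-y} and tracking the nonlinear terms as the paper does, you propose to recognize that the candidate center manifold is a curve of exact equilibria of the reduced system, from which the triviality \eqref{eq-trivial-dynamics} and tangency follow with little computation. However, as written the argument has two concrete errors.

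First, you have misidentified the state vector. On the graph $\mathbf y = h(\mathbf x)$, the point of the phase space $Z$ is
\[
\mathbf w \;=\; \mathbf x + \mathbf y \;=\; \al\mathbf b + h(\al\mathbf b)
\;=\;
\Bigl(-\tfrac{2\si}{\Rg T_\infty R_*^2}\al + \rho_{**}(\al)-\rho_*,\ \ \al + R_{**}(\al)-R_*,\ 0,\ 0,\ \ldots\Bigr)^{\!\top},
\]
not $h(\al\mathbf b) = (\rho_{**}(\al)-\rho_*,\, R_{**}(\al)-R_*,\,0,\ldots)^{\!\top}$. Thus to conclude $\dot\al = 0$ via the fixed-point argument, it is $\al\mathbf b + h(\al\mathbf b)$ that must be checked to be an equilibrium of \eqref{eq-carr-6.3.1-simple-form}, i.e.\ one must verify $\mathcal L h(\al\mathbf b) + \mathcal N^0\bigl(\al\mathbf b + h(\al\mathbf b)\bigr) = \mathbf 0$ (using $\al\mathbf b\in\ker\mathcal L$), not $\mathcal L h(\al\mathbf b) + \mathcal N^0\bigl(h(\al\mathbf b)\bigr) = \mathbf 0$. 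The corresponding density--radius pair is $\rho = \rho_{**}(\al) - \tfrac{2\si}{\Rg T_\infty R_*^2}\al$ and $R = R_{**}(\al)+\al$; one may check that it satisfies $\Rg T_\infty\rho = p_{\infty,*}+2\si/R$ \emph{precisely because} $R_{**}(R_{**}+\al)=R_*^2$, so the conclusion is right, but the verification you record is for the wrong point.

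Second, the assertion that $R_{**}^2 + \al R_{**} - R_*^2 = 0$ ``is the rearrangement of the mass constraint $R_{**}^3\rho_{**} = R_*^3\rho_*$'' is false. Imposing equal masses $\rho_{**}R_{**}^3 = \rho_* R_*^3$ together with the steady-state relation $\Rg T_\infty\rho = p_{\infty,*}+2\si/R$ yields $p_{\infty,*}R_{**}^3 + 2\si R_{**}^2 = p_{\infty,*}R_*^3 + 2\si R_*^2$, which for $R>0$ forces $R_{**}=R_*$; mass is \emph{not} preserved along the manifold of equilibria $\mathcal M_*$ --- indeed $\mathcal M_*$ is parametrized by mass, and the center direction $\mathbf b\in\ker\mathcal L$ is exactly the mass-changing direction. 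The quadratic $R_{**}^2 + \al R_{**} - R_*^2 = 0$ is instead derived in the paper by imposing the invariance condition \eqref{eq-carr-6.3.4-y} on the ansatz $h(\al\mathbf b)$, and its content is geometric (how the graph sits over $X$), not conservation of mass.
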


\begin{proof}
We first show that ${\bf y} = h({\bf x})$ is an invariant manifold for \eqref{eq-carr-6.3.4}.
For ${\bf x}(t) = \al(t){\bf b}$,
\[
{\bf x} + h({\bf x}) = 
\begin{bmatrix}
-\frac{2\si}{\Rg T_\infty R_*^2} \al + \rho_{**}(\al) - \rho_*\\
\al + R_{**}(\al) - R_*\\
0\\
0\\
0\\
\vdots
\end{bmatrix},
\qquad
\dot{\bf x} + h'({\bf x})\dot{\bf x} = \dot\al
\begin{bmatrix}
-\frac{2\si}{\Rg T_\infty R_*^2} + \rho_{**}'(\al)\\
1+R_{**}'(\al)\\
0\\
0\\
0\\
\vdots
\end{bmatrix}.
\]
Thus, for ${\bf F}^1, {\bf G}^1,{\bf H}^1$ and $F^0, G^0, H^0$ defined in \eqref{def-FGH-simple-form} we have
${\bf F}^1({\bf x} + h({\bf x})) = {\bf 0}$, $F^0({\bf x} + h({\bf x})) = 0$,
\[
{\bf G}^1({\bf x} + h({\bf x})) 
= ( J(\al), 0, 0, 0, 0,\cdots)^\top,\qquad
G^0({\bf x} + h({\bf x})) = 0,
\]
where
\EQ{\label{eq-def-J}
J(\al) = \frac{-\bke{\rho_* + \frac{2\si}{\Rg T_\infty R_*} }\al - \rho_*R_{**}(\al) + R_*\rho_{**}(\al)}{3\ga\rho_*\bke{-\frac{2\si}{\Rg T_\infty R_*^2}\al + \rho_{**}(\al)}},
}
and
\[
{\bf H}^1({\bf x} + h({\bf x}))  = (0, 0, \frac{\rho_l(\al + R_{**}(\al) - R_*)}{\Rg T_\infty}, 0, 0, \cdots)^\top,
\]
\[
H^0({\bf x} + h({\bf x})) = \frac{2\si}{\Rg T_\infty R_*^2} \frac{(\al + R_{**}(\al) - R_*)^2}{\al + R_{**}(\al)}.
\]
Thus, by \eqref{def-N1-N0-simple-form} we get
\[
\mathcal N^1({\bf x} + h({\bf x})) \bkt{\dot{\bf x} + h'({\bf x})\dot{\bf x}} = 
\bke{-\frac{2\si}{\Rg T_\infty R_*^2} + \rho_{**}'(\al)} \frac{3\ga\rho_*}{R_*} J(\al)\dot\al
 \begin{bmatrix}
1\\
0\\
0\\
-\Ga_1\\
-\Ga_2\\
\vdots
\end{bmatrix},
\]
and
\[
\mathcal N^0({\bf x} + h({\bf x})) 
= \begin{bmatrix}
0\\
0\\
- \frac{2\si}{\rho_lR_*^3} \frac{(\al + R_{**}(\al) - R_*)^2}{\al + R_{**}(\al)}\\
0\\
0\\
\vdots
\end{bmatrix}.
\]
Therefore, $({\bf x}, h({\bf x}))$ solves \eqref{eq-carr-6.3.4-x} if and only if, by using the identity $\sum_{j=1}^\infty\Ga_j^2 = \frac{4(\ga-1)^2\pi}{3\ga^2}$,
\EQN{
\dot\al {\bf b}
&= \dot{\bf x}
= Q_1\bkt{ \mathcal N^1({\bf x} + h({\bf x}) ) [\dot{\bf x} + h'({\bf x})\dot{\bf x}]} + Q_1\mathcal N^0({\bf x} + h({\bf x}) )\\
&= \bka{{\bf b}_0^\dagger, \mathcal N^1({\bf x} + h({\bf x}) ) [\dot{\bf x} + h'({\bf x})\dot{\bf x}] } {\bf b} + \bka{{\bf b}_0^\dagger, Q_1\mathcal N^0({\bf x} + h({\bf x}) )} {\bf b}\\
&= K \bket{ \bke{-\frac{2\si}{\Rg T_\infty R_*^2} + \rho_{**}'(\al)} \frac{3\ga\rho_*}{R_*} J(\al)\dot\al \bke{\frac{4\pi}3 - \frac{\ga}{\ga-1} \sum_{j=1}^\infty \Ga_j^2} + 0 } {\bf b}\\
&= K\frac{4\pi}{3\ga}\bke{-\frac{2\si}{\Rg T_\infty R_*^2} + \rho_{**}'(\al) } \frac{3\ga\rho_*}{R_*} J(\al) \dot\al {\bf b},
}
which is equivalent to 
\[
\dot\al\bket{ 1 + \frac{4\pi K}{3\ga}\bke{-\frac{2\si}{\Rg T_\infty R_*^2} + \rho_{**}'(\al) } \frac{3\ga\rho_*}{R_*} J(\al) } = 0.
\]
Since $J(\al)\to0$ as $\al\to0$, the above equation yields $\dot\al = 0$ for all $\al$ sufficiently small.
This shows that the dynamics on the local center manifold is trivial.

We now verify equation \eqref{eq-carr-6.3.4-y}.
Note that $({\bf x}, h({\bf x}))$ solves \eqref{eq-carr-6.3.4-y} if and only if 
\EQ{\label{eq-carr-6.3.4-y-on-mfld}
h'({\bf x})\dot{\bf x} = \mathcal L h({\bf x}) + Q_2\bkt{\mathcal N^1({\bf x}+h({\bf x}) [\dot{\bf x} + h'({\bf x})\dot{\bf x}]} + Q_2\mathcal N^0({\bf x}+h({\bf x}).
}
Since $Q_1\bkt{ \mathcal N^1({\bf x} + h({\bf x}) ) [\dot{\bf x} + h'({\bf x})\dot{\bf x}]} = {\bf 0}$ for sufficiently small $\al$ and  $Q_1\mathcal N^0({\bf x} + h({\bf x}) ) \equiv 0$, we have 
$Q_2\bkt{\mathcal N^1({\bf x}+h({\bf x}) [\dot{\bf x} + h'({\bf x})\dot{\bf x}] } = \mathcal N^1({\bf x}+h({\bf x}) [\dot{\bf x} + h'({\bf x})\dot{\bf x}]$ for sufficiently small $\al$ and  $Q_2\mathcal N^0({\bf x}+h({\bf x})) \equiv \mathcal N^0({\bf x}+h({\bf x}))$.
Thus, for sufficiently small $\al$, \eqref{eq-carr-6.3.4-y-on-mfld} becomes
\[
h'({\bf x})\dot{\bf x} = \mathcal L h({\bf x}) + \mathcal N^1({\bf x}+h({\bf x}) [\dot{\bf x} + h'({\bf x})\dot{\bf x}] + N^0({\bf x}+h({\bf x})),
\]
or equivalently,
\EQN{
\dot\al
\begin{bmatrix}
\rho_{**}'(\al)\\
R_{**}'(\al)\\
0\\
0\\
0\\
\vdots
\end{bmatrix}
&= \begin{bmatrix}
0\\
0\\
\frac{\Rg T_\infty}{\rho_lR_*}(\rho_{**}(\al)-\rho_*) + \frac{2\si}{\rho_lR_*^3}(R_{**}(\al)-R_*)\\
0\\
0\\
\vdots
\end{bmatrix}
+ \bke{-\frac{2\si}{\Rg T_\infty R_*^2} + \rho_{**}'(\al)} J(\al)\dot\al
 \begin{bmatrix}
1\\
0\\
0\\
-\Ga_1\\
-\Ga_2\\
\vdots
\end{bmatrix}\\
&\quad +  \begin{bmatrix}
0\\
0\\
-\frac{2\si}{\rho_lR_*^3} \frac{(\al+R_{**}(\al)-R_*)^2}{\al+R_{**}(\al)}\\
0\\
0\\
\vdots
\end{bmatrix}.
}
Since $\dot\al=0$ for $\al$ sufficiently small, the above equation is further equivalent to
\[
0 = \frac{\Rg T_\infty}{\rho_lR_*}(\rho_{**}(\al)-\rho_*) + \frac{2\si}{\rho_lR_*^3}(R_{**}(\al)-R_*) -\frac{2\si}{\rho_lR_*^3} \frac{(\al+R_{**}(\al)-R_*)^2}{\al+R_{**}(\al)}.
\]
Using \eqref{eq-rho-R-double-star} and $\Rg T_\infty\rho_* = p_{\infty,*} + 2\si/R_*$, we derive from above a quadratic equation
\[
R_{**}^2 + \al R_{**} - R_*^2 = 0,
\]
for which the solution is $R_{**}(\al) = \bke{-\al\pm\sqrt{\al^2 + 4R_*^2}} / 2$.
In view of the condition $R_{**}(0) = R_*$, we choose the plus sign:
\[
R_{**}(\al) = \frac{-\al+\sqrt{\al^2 + 4R_*^2}}2.
\]

Finally, we verify that ${\bf y} = h({\bf x})$ is tangent to $X$ at the origin.
Differentiating \eqref{eq-rho-R-double-star} with respect to $\al$, evaluating at $\al=0$ and using $R_{**}(0) = R_*$, we get $\Rg T_\infty \rho_{**}'(0) = -(2\si/R_*^2) R_{**}'(0)$.
So,
\[
\frac{d}{d\al} \Big|_{\al=0} h(\al{\bf b}) = 
\begin{bmatrix}
\rho_{**}'(0)\\
R_{**}'(0)\\
0\\
0\\
0\\
\vdots
\end{bmatrix}
=
\begin{bmatrix}
-\frac{2\si}{\Rg T_\infty R_*^2}\, R_{**}'(0)\\
R_{**}'(0)\\
0\\
0\\
0\\
\vdots
\end{bmatrix}
= R_{**}'(0) {\bf b} \in X.
\]
This completes the proof of Lemma \ref{lem-mfld-equil-ceneter-mfld}.
\end{proof}

\subsection{Nonlinear estimates}

In order to apply Proposition \ref{prop-stab-center-mfd}, we derive the following estimates for the nonlinear terms in the dynamical system \eqref{eq-carr-6.3.1-simple-form}.

\begin{proposition}\label{prop-nonlinear-est}
Let $\mathcal N({\bf w}, {\bf p})$ be given in \eqref{eq-def-N}. 
Let ${\bf w}=(z,\mathcal R,\dot{\mathcal R}, c_1, c_2,\ldots)^\top$ be obtained from the solution $(\overline\rho, R)\in C^{2+2\al,1+\al}_{y,t}\times C^{3+\al}_t$ of \eqref{red-eqns-fix-domain} by means of \eqref{eq-perturbation} and \eqref{eq-u-expansion}. 
Then $\mathcal N({\bf w}, {\bf p}) \in \ell^2$.
Moreover,
$\mathcal N({\bf 0}, {\bf p}) = {\bf 0}$ for all ${\bf p}$ and $\pd_{\bf w} \mathcal N({\bf 0}, {\bf 0}) = \pd_{\bf p} \mathcal N({\bf 0}, {\bf 0}) = {\bf O}$.
\end{proposition}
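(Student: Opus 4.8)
The plan is to verify the three assertions --- $\mathcal N({\bf w},{\bf p})\in\ell^2$, $\mathcal N({\bf 0},{\bf p})={\bf 0}$, and the vanishing of the first derivatives at the origin --- by working directly from the explicit formulas \eqref{def-FGH-simple-form} for ${\bf F}^1, F^0, {\bf G}^1, G^0, {\bf H}^1, H^0$ together with the block structure \eqref{def-N1-N0-simple-form} of $\mathcal N^1$ and $\mathcal N^0$. First I would record the regularity input: since $(\overline\rho,R)\in C^{2+2\al,1+\al}_{y,t}\times C^{3+\al}_t$ with $\overline\rho$ bounded away from zero and $R$ bounded away from zero, the quantities $z,\mathcal R,\dot{\mathcal R},\ddot{\mathcal R}$ are bounded, $u(\cdot,t)\in C^{2+2\al}_y(B_1)$ with $\De_y u\in L^2(B_1)$, and hence (as in \eqref{eq-cj-fast-decay-initial} of Proposition \ref{prop-a-priori}) $\{j^2 c_j\}_{j\ge1}\in\ell^2$; also $\{j d_j\}=\{j\dot c_j\}\in\ell^2$ follows from differentiating \eqref{eq-dotc-1} or from $\pd_t u\in L^2$. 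These are exactly the summability facts needed below.

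For the $\ell^2$-membership, the components of $\mathcal N$ split into: (i) the top three entries, which are finite linear combinations of $G$ and $H$ evaluated at ${\bf w},{\bf p}$, hence obviously finite --- one just checks $G,H$ are finite using boundedness of $z,\mathcal R$ and $\sum \sqrt{\pi/2}\,(-1)^j j c_j\in\ell^2\hookrightarrow\ell^1$-summable since $\{jc_j\}\in\ell^2$ actually need $\{jc_j\}\in\ell^1$; I would instead note $\pd_y u(1,t)$ is finite because $u\in C^1_y(\overline{B_1})$, which bypasses the series; and (ii) the tail entries $-\Ga_j\frac{3\ga\rho_*}{R_*}G + F_j$ where $F_j=\int_{B_1}F\phi_j\,dx$. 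Since $\sum_j\Ga_j^2<\infty$ (the identity $\sum\Ga_j^2=\tfrac{4(\ga-1)^2\pi}{3\ga^2}$), the $\Ga_j G$-part is in $\ell^2$. For $\{F_j\}\in\ell^2$ it suffices that $F\in L^2(B_1)$; inspecting \eqref{def-F}, the first term is a bounded coefficient times $\De_y u\in L^2$, the second is $|\nb_y u|^2$ divided by something bounded below, and $|\nb_y u|\in L^\infty(B_1)$ by the $C^{2+2\al}$ bound so $|\nb_y u|^2\in L^2$, and the third is a bounded coefficient times $(\tfrac13 y\pd_y u+u)\in L^2$. Hence $F\in L^2(B_1)$ and $\{F_j\}\in\ell^2$, giving $\mathcal N({\bf w},{\bf p})\in\ell^2$.

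For $\mathcal N({\bf 0},{\bf p})={\bf 0}$: setting ${\bf w}={\bf 0}$ means $z=0$, $\mathcal R=0$, $\dot{\mathcal R}=0$, $c_j=0$ for all $j$, hence $u\equiv 0$ and $\nb_y u\equiv 0$. Then every bracketed difference like $\big[\tfrac1{(R_*+\mathcal R)^2(\rho_*+u+z)}-\tfrac1{R_*^2\rho_*}\big]$ vanishes, every term carries an explicit factor of $u$, $c_j$, $\mathcal R$, or $z$, so $F^0=G^0=H^0=0$ and ${\bf F}^1,{\bf G}^1,{\bf H}^1$ have vanishing nonzero entries; thus $\mathcal N^0({\bf 0})={\bf 0}$ and $\mathcal N^1({\bf 0}){\bf p}={\bf 0}$ for every ${\bf p}$, as claimed. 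Finally, for $\pd_{\bf w}\mathcal N({\bf 0},{\bf 0})=\pd_{\bf p}\mathcal N({\bf 0},{\bf 0})={\bf O}$ I would argue that $\mathcal N$ collects precisely the \emph{nonlinear} (quadratic and higher) remainder after the linear part $\mathcal L{\bf w}$ was extracted in the derivation of \eqref{eq-carr-6.3.1-simple-form}; concretely, each scalar building block $F^0,G^0,H^0$ is a sum of products of at least two of the small quantities $\{u,\nb_y u, z, \mathcal R\}$ (the bracketed differences being themselves $O(\mathcal R)+O(z)+O(u)$), so $F^0,G^0,H^0=O(\|{\bf w}\|^2)$, and the bilinear pieces $\mathcal N^1({\bf w}){\bf p}=\langle{\bf F}^1({\bf w}),{\bf p}\rangle$ etc.\ satisfy $\mathcal N^1({\bf w}){\bf p}=O(\|{\bf w}\|\,\|{\bf p}\|)$ since ${\bf F}^1,{\bf G}^1,{\bf H}^1$ vanish at ${\bf w}={\bf 0}$. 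Hence $\mathcal N({\bf w},{\bf p})=O(\|{\bf w}\|^2+\|{\bf w}\|\|{\bf p}\|)$ near the origin, which forces all first partials at $({\bf 0},{\bf 0})$ to vanish.

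\textbf{Main obstacle.} The one genuinely technical point is the uniform-in-$j$ control needed to land the tail $\{F_j\}$ (and the analogous $\{F_j^0\}$, $\{F_j^1\}$) in $\ell^2$ rather than merely each being finite: this rests on the Parseval-type bound $\sum_j F_j^2=\|P F\|_{L^2(B_1)}^2\le\|F\|_{L^2(B_1)}^2$ for the projection onto the span of the $\phi_j$, so the work is really in showing $F(\cdot,t)\in L^2(B_1)$ uniformly, i.e.\ controlling $\De_y u$ and $|\nb_y u|^2$ in $L^2$, which is exactly where the $C^{2+2\al}_y$ regularity of $\overline\rho$ from Theorem \ref{thm-asystab} (via Theorem \ref{thm-bv-3.1}) is used; everything else is bookkeeping on the explicit formulas.
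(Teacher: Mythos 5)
Your proof is correct, and for the main technical step it takes a genuinely cleaner route than the paper's. To show $\{F_j\}_{j\ge1}\in\ell^2$, the paper expands $F_j$ explicitly in terms of $\{j^2c_j\}$, $\Gamma_j$, and the cross-term integrals $\int_{B_1} y\,\partial_y\phi_k\,\phi_j\,dx$ (which it computes in closed form), and then controls the resulting double sum via Minkowski's inequality together with the uniform bound $\sum_{j\ne k}\frac{j^2}{(k^2-j^2)^2}\lesssim 1$. Your approach sidesteps all of that bookkeeping: since $F(\cdot,t)$ is a radial function and $\{\phi_j\}$ is an orthonormal basis of the radial subspace of $L^2(B_1)$, the Bessel/Parseval inequality gives $\sum_j F_j^2\le\|F\|_{L^2(B_1)}^2$, so everything reduces to $F\in L^2(B_1)$, which follows at once from $\Delta_yu\in L^2$, $|\nabla_yu|\in L^\infty$, and the boundedness below of the denominators guaranteed by the a priori stability bound \eqref{eq-bv-4.30}. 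That is arguably more robust, since it does not depend on the exact form of the inner products $\int y\,\partial_y\phi_k\,\phi_j$. Your treatment of the finiteness of $G$ — identifying $\sum_j\sqrt{\pi/2}\,(-1)^j j\,c_j$ with $\partial_y u(1,t)$ rather than summing the series — is likewise a valid shortcut; the paper instead proves $\{jc_j\}\in\ell^1$ via Cauchy-Schwarz from $\{j^2c_j\}\in\ell^2$. The remaining assertions ($\mathcal N({\bf 0},{\bf p})={\bf 0}$ and vanishing first derivatives via the factorization $\mathcal N=\mathcal N^1({\bf w}){\bf p}+\mathcal N^0({\bf w})$ with $\mathcal N^1({\bf w})=O(\|{\bf w}\|)$ and $\mathcal N^0({\bf w})=O(\|{\bf w}\|^2)$) match the paper's argument essentially verbatim. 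One small stylistic point: the mid-sentence self-correction about $\ell^2$ vs.\ $\ell^1$ summability of $\{jc_j\}$ should be cleaned up in a final write-up, but the resolution you settle on is sound.
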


\begin{proof}
We first show that $\mathcal N({\bf w}, {\bf p}) \in \ell^2$. 
Note that $G({\bf w}, {\bf p})$ is well-defined since $\{jc_j\}_{j=1}^\infty\in\ell^1$: $\sum j|c_j| = \sum j^{-1} j^2 |c_j| \le \bke{\sum j^{-2}}^{1/2} \bke{\sum j^4 c_j^2}^{1/2} < \infty$.
Since $\{\Ga_j\sim (-1)^{-j}/j \}_{j=1}^\infty\in \ell^2$, it remains to show $\{F_j({\bf w}, {\bf p})\}_{j=1}^\infty\in\ell^2_j$.
Indeed, using $|\nb u(y,t)|^2 \le E$, where $E>0$ is a constant independent of $y$ and $t$, we have
\EQN{
F_j({\bf w}, {\bf p}) 
&= \int_{B_1} F({\bf w}, {\bf p}) \phi_j\, dx \\
&= O(1) \bkt{ j^2c_j + \Ga_j + \bke{\frac13 \sum_{k=1}^\infty c_k \int_{B_1} y\pd_y\phi_k\phi_j\, dx + \sum_{k=1}^\infty c_k \int_{B_1} \phi_k\phi_j\, dx }}
}
Since $\int_{B_1} \phi_k\phi_j\, dx = \de_{kj}$ and
\EQ{\label{eq-int-y-pdphik-phij}
\int_{B_1} y\pd_y\phi_k\phi_j\, dx 
&= 4\pi \int_0^1 \bke{\frac{k\pi\cos(k\pi y)}{\sqrt{2\pi}y} - \frac{\sin(k\pi y)}{\sqrt{2\pi}y^2} } \frac{\sin(j\pi y)}{\sqrt{2\pi}y}\, y^3\, dy\\
&= 2k\pi\int_0^1 \cos(k\pi y)\sin(j\pi y) y\, dy - 2\int_0^1 \sin(k\pi y)\sin(j\pi y)\, dy\\
&=
\begin{cases}
-\frac12 - 1 = -\frac32,&\ \text{ if }j=k,\\
k\bke{\frac{(-1)^{k-j}}{k-j} - \frac{(-1)^{k+j}}{k+j}} = (-1)^{k+j} \frac{2jk}{k^2-j^2},&\ \text{ if }j\neq k,
\end{cases}
}
we have
\[
F_j({\bf w}, {\bf p})  = O(1) \bkt{ j^2c_j + j^{-1} + \bke{\frac12 c_j + \sum_{k\neq j} c_k (-1)^{k+j} \frac{2jk}{k^2-j^2} } },
\]
where the first three terms are obviously in $\ell^2$. 
For the last term,
by Minkowski inequality,
\EQN{
\sum_{j=1}^\infty& \bke{\sum_{k\neq j}^\infty c_k (-1)^{k+j} \frac{2jk}{k^2-j^2} }^2
\le  \bkt{ \sum_{k=1}^\infty \bke{ \sum_{j\neq k} c_k^2 \bke{\frac{2jk}{k^2-j^2}}^2 }^{\frac12} }^2
\sim \bkt{ \sum_{k=1}^\infty k|c_k| \bke{ \sum_{j\neq k} \frac{j^2}{(k^2-j^2)^2} }^{\frac12} }^2\\
&\lec \bkt{ \sum_{k=1}^\infty k|c_k| }^2 < \infty,\quad \text{ since $\{jc_j\}_{j=1}^\infty\in\ell^1$,}
}
where we've used
\EQN{
\sum_{j\neq k}& \frac{j^2}{(k^2 - j^2)^2} 
\sim \sum_{j\neq k} \bke{ \frac1{k-j} - \frac1{k+j} }^2
= \sum_{j\neq k} \bkt{ \frac1{(k-j)^2} - \frac2{k^2-j^2} + \frac1{(k+j)^2} }\\
&\le \sum_{j\neq k} \frac1{(k-j)^2} - 2 \sum_{j\neq k} \frac1{k^2-j^2} + \sum_{j=1}^\infty \frac1{j^2}
= \bke{\sum_{j=1}^{k-1} \frac1{(k-j)^2} + \sum_{j=k+1}^\infty \frac1{(k-j)^2} } + \frac3{2k^2} + \frac{\pi^2}6\\
&\le \bke{\sum_{j=1}^\infty \frac1{j^2} + \sum_{j=1}^\infty \frac1{j^2} } + \frac3{2} + \frac{\pi^2}6
= \frac3{2} + \frac{3\pi^2}6,\quad \text{ independent of $k$.}
}
Therefore, $\{F_j({\bf w}, {\bf p})\}_{j=1}^\infty\in\ell^2_j$, and thus $\mathcal N({\bf w}, {\bf p}) \in \ell^2$.

It is readily see from \eqref{def-F}--\eqref{def-H} that $\mathcal N({\bf 0}, {\bf p}) = {\bf 0}$ for all ${\bf p}$.
To estimate the derivative, we adopt the form $\mathcal N({\bf w},{\bf p}) = \mathcal N^1({\bf w}){\bf p} + \mathcal N^0({\bf w})$ derived in \eqref{eq-N-simple-form}.
Note that from \eqref{def-FGH-simple-form}, $|F^1({\bf w})| + |G^1({\bf w})| + |H^1({\bf w})| = O(\norm{\bf w})$ and $|F^0({\bf w})| + |G^0({\bf w})| + |H^0({\bf w})| = O(\norm{\bf w}^2)$.
Thus, $\mathcal N^1({\bf 0}) = {\bf O}$, $\mathcal N^0({\bf 0}) = {\bf 0}$, and $\pd_{\bf w} \mathcal N^0({\bf 0}) = {\bf O}$ by \eqref{def-N1-N0-simple-form}. 
Therefore, we have $\mathcal N({\bf 0}, {\bf p}) = \mathcal N^1({\bf 0}){\bf p} + \mathcal N^0({\bf 0}) = {\bf 0}$ for all ${\bf p}$, $\pd_{\bf w} \mathcal N({\bf 0}, {\bf 0}) = \pd_{\bf w}\mathcal N^0({\bf 0}) = {\bf O}$, $\pd_{\bf p} \mathcal N({\bf 0}, {\bf 0}) = \mathcal N^1({\bf 0}) = {\bf O}$,
completing the proof of the proposition.
\end{proof}

\subsection{Proof of Theorem \ref{thm-nonlinear-exp-decay}; nonlinear exponential stability of the manifold of equilibria $\mathcal M_*$}\label{sec-main-proof}

The proof of Theorem \ref{thm-nonlinear-exp-decay} is based on a center manifold analysis of the system \eqref{eq-carr-6.3.4} adapted from a quasi-linear PDE with {\it a priori} estimates; see Proposition \ref{prop-stab-center-mfd}.
Proposition \ref{prop-stab-center-mfd} requires that we verify 
i) a Lipschitz estimate for the nonlinear term, 
ii) a linear exponential decay estimate, 
iii) existence of a local center manifold, and 
iv) an $\ell^2$ {\it a priori} bound on $\dot{\bf w}$.
\EN{
\item [i)] The nonlinear Lipschitz estimate follows from the expression of the nonlinear term in \eqref{def-FGH-simple-form} and is proved in Proposition \ref{prop-nonlinear-est}. 
\item [ii)] Linear exponential decay follows from Proposition \ref{prop-invariance-decay}.
\item [iii)] The existence of a local center manifold follows from the identification of the manifold of equilibria $\mathcal M_*$ as a local center manifold shown in Lemma \ref{lem-mfld-equil-ceneter-mfld}.
\item [iv)] The {\it a priori} estimate follows from Proposition \ref{prop-a-priori}. 
}
With these observations in place, we argue as follows. 

Recall the free boundary problem \eqref{red-eqns} is equivalent to the system \eqref{red-eqns-fix-domain} in the fixed domain $B_1$, and it is also equivalent to the dynamical system \eqref{eq-carr-6.3.1-simple-form} by Proposition \ref{prop-equivalent-systems}.
From the asymptotic stability result in Theorem \ref{thm-asystab}, 
we have, as $t\to\infty$, that ${\bf w}(t) \to {\bf w}_*$, where ${\bf w}_*$ is described in Proposition \ref{prop-a-priori}.
Also, one has $|\ddot R(t)| + |\dddot R(t)|\to 0$ in Theorem \ref{thm-asystab}.
Moreover, it follows from the equation \eqref{eq-bv-3.10prime-fix-domain} that $\pd_t\overline\rho\to0$ uniformly in $B_1$.
So $\dot{\bf w}(t)\to{\bf 0}$ in $\ell^2$ as $t\to\infty$, and thus we may assume that $\sup_{t\ge0} \norm{\dot{\bf w}(t)}$ is arbitrarily small.

We have shown in Lemma \ref{lem-mfld-equil-ceneter-mfld} that the equation on the local center manifold is trivial, \eqref{eq-trivial-dynamics}.
Therefore, by applying Proposition \ref{prop-stab-center-mfd} to the system \eqref{eq-carr-6.3.1} with the nonlinear estimates Proposition \ref{prop-nonlinear-est}, we conclude that the convergence of ${\bf w}(t)\to{\bf w}_*$ is exponentially fast in $\ell^2$.

This amounts to the exponential convergence of $(u,z,\mathcal R)$ to some $(u_{**},z_{**},\mathcal R_{**})$, where $(\rho_*+u_{**}+z_{**}, R_*+\mathcal R_{**}, 0)$ lies on the manifold of equilibria $\mathcal M_*$, in the following $L^2$ sense: for some $\be_0>0$
\EQ{\label{eq-exp-decay-u2-dotR}
\int_{B_1} (u-u_{**})^2\, dx + (z-z_{**})^2 + (\mathcal R-\mathcal R_{**})^2 + \dot{\mathcal R}^2 = O\bke{ e^{-2\be_0 t}  }\ \text{ as $t\to\infty$.}
}

\bigskip\noindent{\bf From time-decay in $L^2$  norms to time-decay of $L^\infty$ norms of gradients.}
We next want to use the $L^2$ bound of \eqref{eq-exp-decay-u2-dotR} to obtain gradient bounds in $L^\infty$.
For this we make use of the following inequality which is based on the equation and an interpolation inequality.

\begin{lemma}\label{lem-H1-energy-ineq}
Assume that $(u,z,\mathcal R)$ is the solution of the system \eqref{red-eqns-linear-new-1} and that $F$ is given in \eqref{red-eqns-linear-nonlinear-term-f}. Then for any constant $u_{**}$
\EQ{\label{eq-H1-energy-ineq}
\frac12\, \frac{d}{dt} \int_{B_1} |\nb_y u|^2\, dx 
\le -\frac{\bar\ka}2\, \frac{\bke{\int_{B_1} |\nb_y u|^2\, dx + 4\pi u_{**}\pd_yu(1,t) }^2}{\int_{B_1} (u-u_{**})^2\, dx} + \frac1{2\bar\ka} \int_{B_1} F^2\, dx + 4\pi\bke{1-\frac1\ga} \dot z\pd_yu(1,t).
}
\end{lemma}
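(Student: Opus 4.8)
The plan is to differentiate the Dirichlet energy $\frac12\int_{B_1}|\nb_y u|^2\,dx$ in time, substitute the PDE \eqref{eq-u-pde-1} for $\pd_t u$, and then carefully control the resulting boundary and cross terms. First I would write, using $u(1,t)=0$ so that radial integration by parts over $B_1$ produces only one boundary contribution,
\[
\frac12\,\frac{d}{dt}\int_{B_1}|\nb_y u|^2\,dx = \int_{B_1}\nb_y u\cdot\nb_y(\pd_t u)\,dx = -\int_{B_1}\De_y u\,\pd_t u\,dx + 4\pi\,\pd_y u(1,t)\,\pd_t u(1,t),
\]
where the spherical surface measure on $\pd B_1$ contributes the factor $4\pi$. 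Since $u(1,t)=0$ for all $t$, we have $\pd_t u(1,t)=0$, so that boundary term vanishes; the surviving boundary data will instead enter through the Poincaré-type step below. Next I substitute $\De_y u = \bar\ka^{-1}\bigl(\pd_t u + (1-\tfrac1\ga)\dot z - F\bigr)$ from \eqref{eq-u-pde-1}, obtaining
\[
\frac12\,\frac{d}{dt}\int_{B_1}|\nb_y u|^2\,dx = -\frac1{\bar\ka}\int_{B_1}(\pd_t u)^2\,dx - \frac{1-\frac1\ga}{\bar\ka}\,\dot z\int_{B_1}\pd_t u\,dx + \frac1{\bar\ka}\int_{B_1}F\,\pd_t u\,dx.
\]

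The main work is then to bound the right-hand side from above by the claimed expression. For the first term I would rewrite $-\frac1{\bar\ka}\int(\pd_t u)^2$ using the PDE once more to re-express $\int(\pd_t u)^2$ in terms of $\int|\nb_y u|^2$: integrating the PDE against $(u-u_{**})$ for a constant $u_{**}$ gives $\int_{B_1}\pd_t u\,(u-u_{**})\,dx = -\bar\ka\int_{B_1}|\nb_y u|^2\,dx - 4\pi\bar\ka\,u_{**}\pd_y u(1,t) - (1-\tfrac1\ga)\dot z\int_{B_1}(u-u_{**})\,dx + \int_{B_1}F(u-u_{**})\,dx$ (again using $u(1,t)=0$ to handle the boundary term in the Laplacian integration by parts). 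The key inequality is Cauchy–Schwarz in the form
\[
\Bigl(\int_{B_1}\pd_t u\,(u-u_{**})\,dx\Bigr)^2 \le \Bigl(\int_{B_1}(\pd_t u)^2\,dx\Bigr)\Bigl(\int_{B_1}(u-u_{**})^2\,dx\Bigr),
\]
so that $-\frac1{\bar\ka}\int(\pd_t u)^2 \le -\frac1{\bar\ka}\bigl(\int\pd_t u\,(u-u_{**})\bigr)^2 / \int(u-u_{**})^2$; expanding the numerator and keeping only the dominant $-\bar\ka\int|\nb_y u|^2 - 4\pi\bar\ka u_{**}\pd_y u(1,t)$ piece (absorbing the $\dot z$ and $F$ contributions, after a Young's inequality, into the $\frac1{2\bar\ka}\int F^2$ term and the $\dot z\,\pd_y u(1,t)$ term) yields the leading $-\frac{\bar\ka}{2}(\,\cdots)^2/\int(u-u_{**})^2$ on the right of \eqref{eq-H1-energy-ineq}. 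For the remaining terms: the $\frac1{\bar\ka}\int F\,\pd_t u$ term is handled by Young's inequality $\frac1{\bar\ka}\int F\pd_t u \le \frac1{2\bar\ka}\int F^2 + \frac1{2\bar\ka}\int(\pd_t u)^2$, where the $\frac1{2\bar\ka}\int(\pd_t u)^2$ is absorbed into the negative term from the first step; and the cross term $-\frac{1-\frac1\ga}{\bar\ka}\dot z\int_{B_1}\pd_t u\,dx$ is converted, using $\int_{B_1}\pd_t u\,dx = \frac{d}{dt}\int_{B_1}u\,dx$ together with the PDE integrated over $B_1$ (which gives $\int_{B_1}\pd_t u\,dx = \bar\ka\cdot 4\pi\,\pd_y u(1,t) - (1-\tfrac1\ga)\dot z\,|B_1| + \int_{B_1}F\,dx$), into the explicit boundary term $4\pi(1-\tfrac1\ga)\dot z\,\pd_y u(1,t)$ plus lower-order pieces absorbed elsewhere.

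The main obstacle I anticipate is the bookkeeping: organizing the various applications of Cauchy–Schwarz and Young's inequality so that every term not appearing on the right-hand side of \eqref{eq-H1-energy-ineq} is genuinely absorbed (with the correct sign and constant) into one of the three terms that do appear, in particular making sure the coefficient of the leading negative term is exactly $\bar\ka/2$ and not something smaller. A secondary subtlety is that $u_{**}$ is an arbitrary constant, so one must be careful that the identity obtained by testing against $(u-u_{**})$ is exact (using $u(1,t)=0$ crucially, so that the boundary term $-4\pi\bar\ka\,u(1,t)\pd_y u(1,t)$ collapses to $-4\pi\bar\ka u_{**}\pd_y u(1,t)$), rather than only approximate; this is what produces the clean numerator $\int_{B_1}|\nb_y u|^2\,dx + 4\pi u_{**}\pd_y u(1,t)$ in the statement.
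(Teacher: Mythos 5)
Your opening identity $\frac12\frac{d}{dt}\int_{B_1}|\nb_y u|^2\,dx=-\int_{B_1}\De_y u\,\pd_t u\,dx$ is correct and is where the paper also starts (the paper takes the gradient of \eqref{eq-u-pde-1} and integrates by parts, which is equivalent). The divergence, and the gap, comes when you solve the PDE for $\De_y u$ and make $\int_{B_1}(\pd_t u)^2\,dx$ the dissipation quantity. The paper instead substitutes the PDE for $\pd_t u$, keeping $\De_y u$ as the dissipation variable, which gives the \emph{exact} identity
\[
\frac12\,\frac{d}{dt}\int_{B_1}|\nb_y u|^2\,dx=-\bar\ka\int_{B_1}(\De_y u)^2\,dx-\int_{B_1}F\,\De_y u\,dx+4\pi\bke{1-\frac1\ga}\dot z\,\pd_yu(1,t),
\]
where the last term comes from $\bar\ka\De_y u(1,t)+F(1,t)=\bke{1-\frac1\ga}\dot z$ (the PDE evaluated at $y=1$, using $\pd_t u(1,t)=0$) together with $\int_{B_1}\De_y u\,dx=4\pi\pd_yu(1,t)$. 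A single Young inequality on $-\int F\De_y u$ and the Cauchy--Schwarz bound $\bke{\int_{B_1}|\nb_y u|^2\,dx+4\pi u_{**}\pd_yu(1,t)}^2\le\int_{B_1}(u-u_{**})^2\,dx\cdot\int_{B_1}(\De_y u)^2\,dx$ (your testing-against-$(u-u_{**})$ identity, applied to $\De_y u$ rather than to $\pd_t u$) then yield \eqref{eq-H1-energy-ineq} with exactly the stated constants.

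Your route does not deliver the inequality as stated, for two concrete reasons. First, after Cauchy--Schwarz you must lower-bound $\bke{\int\pd_t u\,(u-u_{**})}^2=(-\bar\ka A+B)^2$ with $A=\int|\nb_y u|^2+4\pi u_{**}\pd_yu(1,t)$ and $B=-\bke{1-\frac1\ga}\dot z\int(u-u_{**})+\int F(u-u_{**})$; the cross term $-2\bar\ka AB$ and the $B^2$ term force either a degraded leading coefficient $(1-\epsilon)\bar\ka/2$ or additional remainders ($\dot z^2$, $\dot z\int F$, $(\int F(u-u_{**}))^2/\int(u-u_{**})^2$) that are absent from \eqref{eq-H1-energy-ineq} and cannot be "absorbed" into the three terms that do appear. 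Second, your conversion of $-\frac{1-1/\ga}{\bar\ka}\dot z\int\pd_t u$ via the integrated PDE produces $-4\pi\bke{1-\frac1\ga}\dot z\,\pd_yu(1,t)$, the \emph{opposite} sign of the term in the lemma; the compensating contribution is hidden inside $\int(\pd_t u)^2$ and $\int F\pd_t u$, so your crude absorptions will not reassemble the stated right-hand side, and since $\dot z\,\pd_yu(1,t)$ has no definite sign you cannot pass from your inequality to the stated one. (A weakened variant of the lemma would still suffice for Proposition \ref{prop-decay-nbu}, but as a proof of Lemma \ref{lem-H1-energy-ineq} the plan has a genuine gap.) The fix is simply to not invert for $\De_y u$: keep $\int(\De_y u)^2$ as the dissipation term throughout.
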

\begin{proof}
Taking gradient of \eqref{eq-u-pde-1}, we have $\nb_y(\pd_t u )= \bar\ka\nb_y\De_yu + \nb_y F$.
Thus, we have
\EQN{
\frac12& \frac{d}{dt} \int_{B_1} |\nb_y u|^2\, dx 
= \int_{B_1} \nb_y u\cdot\nb_y(\pd_tu)\, dx
= \bar\ka \int_{B_1} \nb_y u\cdot\nb_y\De_y u\, dx + \int_{B_1} \nb_yu\cdot\nb_yF\,dx\\
&= -\bar\ka \int_{B_1} (\De_y u)^2\, dx - \int_{B_1} (\De_yu)F\, dx + \bar\ka \int_{\pd B_1} \De_y u(\nb_y u\cdot\hat{\bf n})\, dS + \int_{\pd B_1} F (\nb_y u\cdot\hat{\bf n})\, dS\\
&= -\bar\ka \int_{B_1} (\De_y u)^2\, dx - \int_{B_1} (\De_yu)F\, dx + 4\pi (\bar\ka\De_y u(1,t) + F(1,t) )\pd_yu(1,t)
}
Since $u(1,t)=0$ implies $\pd_tu(1,t)=0$, evaluating \eqref{eq-u-pde-1} at the boundary yields
\[
\bar\ka \De_yu(1,t) + F(1,t) = \bke{1-\frac1\ga}\dot z(t).
\]
Substituting the above equation for $\bar\ka\De_y u(1,t) - F(1,t)$ and using Young's inequality, we obtain
\EQ{\label{eq-H1-energy-law}
\frac12& \frac{d}{dt} \int_{B_1} |\nb_y u|^2\, dx = -\bar\ka \int_{B_1} (\De_y u)^2\, dx - \int_{B_1} (\De_yu)F\, dx + 4\pi\bke{1-\frac1\ga} \dot z\pd_yu(1,t)\\&
\le -\bar\ka \int_{B_1} (\De_y u)^2\, dx + \frac{\bar\ka}2 \int_{B_1} (\De_y u)^2\, dx + \frac1{2\bar\ka} \int_{B_1} F^2\, dx + 4\pi\bke{1-\frac1\ga} \dot z\pd_yu(1,t).
}
Now, since $\int_{B_1} |\nb_y u|^2\, dy = - \int_{B_1} (u-u_{**}) \De_yu\, dy - 4\pi u_{**}\pd_yu(1,t)$, by H\"older's inequality we have the interpolation inequality 
\[
\bke{\int_{B_1} |\nb_y u|^2\, dx + 4\pi u_{**} \pd_yu(1,t) }^2 \le \int_{B_1} (u-u_{**})^2\, dx \cdot \int_{B_1}(\De_y u)^2\, dx.
\]
Using the interpolation inequality for $\int_{B_1}(\De_y u)^2\, dx$ in \eqref{eq-H1-energy-law}, the lemma follows.
\end{proof}

With Lemma \ref{lem-H1-energy-ineq} in hand, we are able to obtain a decay rate of $\nb_y u$ from the decay rate of $u$.

\begin{proposition}\label{prop-decay-nbu}
Let $(u,z,\mathcal R)$ be the solution of the the system \eqref{red-eqns-linear-new-1} with the convergence rate in \eqref{eq-exp-decay-u2-dotR}, then
\[
\norm{\nb_y u(\cdot,t)}_{L^\infty(B_1)} = O\bke{e^{-\frac{\be_0}{10}\, t}}.
\]
\end{proposition}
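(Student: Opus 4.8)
The plan is to run a differential-inequality (Gronwall-type) argument on the quantity $E(t) := \int_{B_1}|\nb_y u(\cdot,t)|^2\,dx$, using Lemma \ref{lem-H1-energy-ineq} as the master estimate, and then to convert the resulting $H^1$-decay into $L^\infty$-decay of $\nb_y u$ via the interpolation Lemma \ref{lem-interpolate} together with the uniform $C^{2+2\al}$ bound on $u$ coming from the Lyapunov-stability estimate \eqref{eq-bv-4.30}. The first step is to dispose of the lower-order terms on the right-hand side of \eqref{eq-H1-energy-ineq}. Choosing the constant $u_{**}$ to be the limiting density perturbation appearing in \eqref{eq-exp-decay-u2-dotR}, the denominator $\int_{B_1}(u-u_{**})^2\,dx$ is $O(e^{-2\be_0 t})$, so the first (negative) term is bounded above by $-\frac{\bar\ka}{2}\,e^{2\be_0 t}\,\bigl(\int_{B_1}|\nb_y u|^2 + 4\pi u_{**}\pd_y u(1,t)\bigr)^2$; I need also that $\pd_y u(1,t)\to0$, which follows from Proposition \ref{thm-asystab-1} (uniform convergence $\nb_r\rho(\cdot,t)\to0$), and in fact from the boundary relation $\bar\ka\De_y u(1,t)+F(1,t)=(1-1/\ga)\dot z$ together with $\dot z\to0$ one controls $\pd_y u(1,t)$ in terms of $E(t)^{1/2}$ and exponentially small data. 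The forcing term $\frac{1}{2\bar\ka}\int_{B_1}F^2\,dx$ must be shown to be $O(e^{-c t})$ for some $c>0$: inspecting \eqref{red-eqns-linear-nonlinear-term-f}, every summand of $F$ carries a factor that is either $O(\norm{\bf w})$ (hence exponentially small by the exponential convergence ${\bf w}(t)\to{\bf w}_*$ established earlier in Section \ref{sec-main-proof}) times $\De_y u$ or $\nb_y u$, or is $O(|\dot z|\,E(t)^{1/2})$; using the uniform $C^{2+2\al}$ bound to control $\norm{\De_y u}_{L^2}$ and $\norm{\nb_y u}_{L^\infty}$ this gives $\int_{B_1}F^2\,dx \le C e^{-2\be_0 t}$. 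Similarly the last term $4\pi(1-1/\ga)\dot z\,\pd_y u(1,t)$ is a product of two exponentially small factors.

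With these bounds in place, \eqref{eq-H1-energy-ineq} reduces, after discarding the (favorable) negative term entirely, to a crude inequality $\frac{d}{dt}E(t)\le C e^{-2\be_0 t}$, whence $E(t)$ has a finite limit $E_\infty$ as $t\to\infty$; but by the uniform convergence of $\nb_y u(\cdot,t)$ (Proposition \ref{thm-asystab-1} gives $\nb_y\overline\rho(\cdot,t)\to0$ uniformly, and $\nb_y u = \nb_y\overline\rho$) we must have $E_\infty = 0$, and integrating from $t$ to $\infty$ gives $E(t)\le \frac{C}{2\be_0}e^{-2\be_0 t}$. To squeeze the exponential rate a little more carefully — or simply to be safe about constants — one can instead retain a fraction of the negative term: since $\int_{B_1}|\nb_y u|^2\,dx \le E(t)$ and, by Poincar\'e applied to $u-u_{**}$ on $B_1$ with the Dirichlet-type structure, $E(t)\le C_P\int_{B_1}(u-u_{**})^2 + (\text{boundary contribution})$, the negative term is comparable to $-\bar\ka\,c\,E(t)/\int_{B_1}(u-u_{**})^2\,dx\cdot E(t)$, which is very strongly dissipative once $\int_{B_1}(u-u_{**})^2$ is tiny; in any case the outcome is $\int_{B_1}|\nb_y u(\cdot,t)|^2\,dx = O(e^{-\be_0 t/2})$, say, which is more than enough.

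Finally, to pass from $L^2$-decay of $\nb_y u$ to $L^\infty$-decay, I apply the interpolation Lemma \ref{lem-interpolate} exactly as in the proof of Proposition \ref{thm-asystab-1}: with $k=0$, $n=3$, $p=2$, $m=1$, $\ga=2\al$, $s=2$,
\[
\norm{\nb_y u(\cdot,t)}_{L^\infty(B_1)} \le C_1 \norm{\nb_y u(\cdot,t)}_{L^2(B_1)}^{2/5}\,\norm{\nb_y u(\cdot,t)}_{C_y^{1+2\al}(B_1)}^{3/5} + C_2\norm{\nb_y u(\cdot,t)}_{L^2(B_1)},
\]
and since $\norm{\nb_y u(\cdot,t)}_{C_y^{1+2\al}(B_1)}\sim\norm{\rho(\cdot,t)}_{C_r^{2+2\al}(B_R)}$ is uniformly bounded by \eqref{eq-bv-4.30}, the first term is $O\bigl((e^{-\be_0 t/2})^{1/5}\bigr)=O(e^{-\be_0 t/10})$ and the second is $O(e^{-\be_0 t/4})$; the slower of the two, $O(e^{-\be_0 t/10})$, gives the claimed bound. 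I expect the main obstacle to be step one, namely verifying that $\int_{B_1}F^2\,dx$ and the boundary term $\pd_y u(1,t)$ genuinely decay exponentially — this hinges on combining the exponential convergence ${\bf w}(t)\to{\bf w}_*$ in $\ell^2$ (from the center-manifold argument already concluded) with the uniform-in-time $C^{2+2\al}$ regularity to bound the higher-order factors $\De_y u$, $\nb_y u$ uniformly, so that the small prefactors control the whole expression; the bookkeeping is routine but one must be careful that no term in $F$ involves $\De_y u$ or $\dot z$ multiplied only by an $O(1)$ (rather than $o(1)$) quantity.
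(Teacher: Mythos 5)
Your overall skeleton (the master inequality of Lemma \ref{lem-H1-energy-ineq} followed by the interpolation Lemma \ref{lem-interpolate}) matches the paper's, but the middle step --- extracting exponential decay of $E(t)=\int_{B_1}|\nb_y u|^2$ --- has a genuine gap. You discard the dissipative term and rely on showing $\int_{B_1}F^2\,dx = O(e^{-2\be_0 t})$, arguing that every summand of $F$ carries a coefficient that is ``$O(\norm{\bf w})$, hence exponentially small by the exponential convergence ${\bf w}(t)\to{\bf w}_*$.'' This conflates two different things: $\norm{{\bf w}(t)-{\bf w}_*}$ is exponentially small, but $\norm{{\bf w}(t)}$ converges to $\norm{{\bf w}_*}\neq 0$, since the limiting equilibrium $(\rho_{**},R_{**})$ generically differs from the base point $(\rho_*,R_*)$ about which \eqref{red-eqns-linear-nonlinear-term-f} is expanded. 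Concretely, the coefficient $\bkt{(R_*+\mathcal R)^{-2}(\rho_*+u+z)^{-1}-R_*^{-2}\rho_*^{-1}}$ of $\De_y u$ in $F$ tends to the nonzero constant $R_{**}^{-2}\rho_{**}^{-1}-R_*^{-2}\rho_*^{-1}$, so this term of $F$ is controlled only by $\De_y u$ itself, which at this stage is merely bounded and $o(1)$ with no rate (an exponential rate for $\De_y u$ is essentially a higher-order version of what you are trying to prove). Hence $\frac{d}{dt}E\le Ce^{-2\be_0 t}$ is not established, and your fallback via an ``inverse Poincar\'e'' bound $E(t)\le C_P\int_{B_1}(u-u_{**})^2+\cdots$ is the Poincar\'e inequality written in the wrong direction and is false in general; even in the corrected direction it only yields $E'\le -cE+g(t)$ with $g\to0$ at an unknown rate, which again gives no exponential rate.

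The idea you are missing is the paper's \emph{bounded-quotient} argument, which keeps the dissipative term rather than discarding it: since $\frac{d}{dt}E$ is bounded (by the uniform $C^{2+2\al}_{y}$/$C^{\al}_t$ regularity) and the error terms $\int_{B_1}F^2$ and $\dot z\,\pd_yu(1,t)$ merely tend to zero (no rate needed), the large negative term $-\frac{\bar\ka}{2}\bke{\int_{B_1}|\nb_yu|^2+4\pi u_{**}\pd_yu(1,t)}^2\big/\int_{B_1}(u-u_{**})^2$ must itself stay bounded; combined with the $O(e^{-2\be_0 t})$ decay of the denominator from \eqref{eq-exp-decay-u2-dotR}, this forces $\int_{B_1}|\nb_yu|^2+4\pi u_{**}\pd_yu(1,t)=O(e^{-\be_0 t})$. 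One then needs $\pd_yu(1,t)=O(e^{-\be_0 t/2})$, which the paper gets by applying Barbalat's lemma to $(z-z_{**})e^{(\be_0/2)t}$ to deduce $\dot z=O(e^{-\be_0 t/2})$ and then reading $\pd_yu(1,t)$ off the ODE \eqref{eq-dotR-u-1} (your suggestion of controlling the boundary trace $\pd_yu(1,t)$ ``in terms of $E(t)^{1/2}$'' is not a valid trace estimate). With these two inputs one gets $E(t)=O(e^{-\be_0 t/2})$, and your final interpolation step then correctly produces the rate $\be_0/10$.
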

\begin{proof}
First recall Proposition \ref{prop-equivalent-systems} that the system \eqref{red-eqns-linear-new-1} is equivalent to \eqref{red-eqns}--\eqref{eq-bv-3.14prime}.

\EN{
\item It follows from the Lyapunov stability result for \eqref{red-eqns}--\eqref{eq-bv-3.14prime} in Theorem \ref{thm-bv-4.1-anyperturb} that $u\in C^{2+2\al,1+\al}_{y,t}$ and $\dot z\in C^\al_t$. 
Hence, $\frac{d}{dt} \int_{B_1} |\nb_y u|^2\, dx$ is bounded in $t$. 

\item Moreover, the asymptotic stability result in Theorem \ref{thm-asystab} implies that $\dot z\to0$ and $\norm{\nb_yu(\cdot,t)}_{L^\infty(B_1)}\to0$ as $t\to\infty$.
It also implies that $\int_{B_1} F^2(x,t)\, dx\to0$ as $t\to\infty$, where $F$ given in \eqref{red-eqns-linear-nonlinear-term-f}.

\item Since $z(t) - z_{**} = O(e^{-\be_0 t})$, $\int_0^t (z(\tau) - z_{**}) e^{(\be_0/2)\tau} d\tau$ has a finite limit as $t\to\infty$ and $(z(t) - z_{**}) e^{(\be_0/2) t}$ is uniformly continuous for $t>0$. By the Barbalat's lemma \eqref{eq-barbalat}, $\frac{d}{dt} \bkt{(z(t) - z_{**}) e^{(\be_0/2) t}} \to0$ as $t\to\infty$. This implies $\dot z(t) e^{(\be_0/2) t} + (\be_0/2) \bke{z(t) - z_{**}} e^{(\be_0/2) t} \to0$ as $t\to\infty$.
In particular, we have $\dot z = O(e^{-(\be_0/2)t})$.
It then follows form \eqref{eq-dotR-u-1} and $\dot{\mathcal R} = O(e^{-\be_0 t})$ that $\pd_yu(1,t) = O(e^{-(\be_0/2)t})$.
}
We claim that the quotient 
\EQ{\label{eq-quotient}
\frac{\bke{\int_{B_1} |\nb_y u|^2\, dx + 4\pi u_{**}\pd_yu(1,t) }^2}{\int_{B_1} (u - u_{**})^2\, dx}
}
must be bounded. 
Indeed, if the quotient \eqref{eq-quotient} was unbounded, in view of (1) and (2) the right hand side of \eqref{eq-H1-energy-ineq} would become $-\infty$ as $t\to\infty$ while the left hand side is bounded, violating the inequality \eqref{eq-H1-energy-ineq} in Lemma \ref{lem-H1-energy-ineq}.

Since that $\int_{B_1} (u-u_{**})^2\, dx$ decays exponentially at the rate $2\be_0$ as in \eqref{eq-exp-decay-u2-dotR} and that the quotient \eqref{eq-quotient} is bounded, 
\[
\int_{B_1} |\nb_y u|^2\, dx + 4\pi u_{**}\pd_yu(1,t)  = O(e^{- \be_0 t}).
\] 
Thus, (3) implies $\int_{B_1} |\nb_y u|^2\, dx = O(e^{-(\be_0/2) t})$.
By the interpolation inequality in Lemma \ref{lem-interpolate},
\EQN{
\norm{\nb_y u}_{L^\infty(B_1)} 
\le C_1 \norm{\nb_y u}_{L^2(B_1)}^{\frac25} \norm{\nb_y u}_{C^{1+2\al}_y(B_1)}^{\frac35} + C_2 \norm{\nb_y u}_{L^2(B_1)}
= O\bke{e^{-\frac{\be_0}{10}\,t} }.
}
This proves the proposition.
\end{proof}

Using Proposition \ref{prop-decay-nbu}, we derive the exponential decay of $u-u_{**}$ in the higher norm $\norm{\,\cdot\,}_{W^{1,\infty}}$ from $L^2$ convergence.
By bootstrapping the argument in the proof of Proposition \ref{prop-decay-nbu}, we derive the exponential decay of $u$ in $C^{2+2\al}_y$. 
This amounts to the exponential rate of convergence of $(\rho,R,\dot R)$ toward the center manifold $\mathcal M_*$ in $\oldnorm{\cdot}$ and completes the proof of Theorem \ref{thm-nonlinear-exp-decay}.
\qed

\appendix

\section{Spherically symmetric equilibria of the full liquid / gas model}\label{sec-steadystate}

In this appendix, we prove Proposition \ref{prop-equilib-original}. That is, we show that \eqref{eq-equilibrium} is the unique regular spherically symmetric equilibrium solution to the system \eqref{eq-1.1}--\eqref{eq-1.3} under the radiation condition \eqref{eq-radiation-condition} for $T_l$.

\begin{proof}[Proof of Proposition \ref{prop-equilib-original}]

We consider the full liquid / gas model \eqref{eq-1.1}--\eqref{eq-1.3} and prove Proposition \ref{prop-equilib-original}.
Steady-state solutions of \eqref{eq-1.1}--\eqref{eq-1.3} solve

\begin{subequations}\label{eq-1.1-equilib}
\begin{empheq}[right=\empheqrbrace\text{in $\R^3\setminus \Om_*$,}]{align}
0 =&\, \nu_l\De{\bf v}_{l,*} - {\bf v}_{l,*}\cdot\nb{\bf v}_{l,*} - \frac1{\rho_{l,*}} \nb p_{l,*}, \label{eq-1.1-a-equilib}\\
\div {\bf v}_{l,*} =&\, 0, \label{eq-1.1-b-equilib}\\
\rho_l c_l {\bf v}_{l,*}\cdot\nb T_{l,*} =&\, \ka_l\De T_{l,*} + 2\mu_l\mathbb D({\bf v}_{l,*}) : \nb{\bf v}_{l,*}, \label{eq-1.1-c-equilib}
\end{empheq}
\end{subequations}

 \begin{subequations}\label{eq-1.2-equilib}
\begin{empheq}[right=\empheqrbrace\text{in $\Om_*$,}]{align}
\div(\rho_{g,*}{\bf v}_{g,*}) =&\, 0, \label{eq-1.2-a-equilib}\\
\rho_{g,*}{\bf v}_{g,*}\cdot\nb{\bf v}_{g,*} =&\,
\mu_g\De{\bf v}_{g,*} - \nb p_{g,*}, \label{eq-1.2-b-equilib}\\
\rho_{g,*} T_{g,*} {\bf v}_{g,*}\cdot\nb s_*  =&\, \ka_g\De T_{g,*} + 2 \mu_g\mathbb D({\bf v}_{g,*}): \nb{\bf v}_{g,*}\label{eq-1.2-c-equilib}\\
& - \bke{ \frac23\mu_g - \ze_g}  (\div{\bf v}_{g,*})^2, \notag\\
p_{g,*} =&\, \Rg T_{g,*} \rho_{g,*}, \label{eq-1.2-d-equilib}\\
s_* =&\, c_v \log\bke{\dfrac{p_{g,*}}{\rho_{g,*}^\ga} }, \label{eq-1.2-e-equilib}
\end{empheq}
\end{subequations}

 \begin{subequations}\label{eq-1.3-equilib}
\begin{empheq}[right=\empheqrbrace\text{on $\pd\Om_*$.}]{align}
&\hspace{-0.34cm}{\bf v}_{l,*}(\boldsymbol{\om},t)\cdot\hat{\bf n} = {\bf v}_{g,*}(\boldsymbol{\om},t)\cdot\hat{\bf n} = 0, \label{eq-1.3-a-equilib}\\
\hat {\bf n} \cdot& \bke{-p_{l,*}\mathbb I + 2\mu_l\mathbb D({\bf v}_{l,*})}\label{eq-1.3-b-equilib}\\ 
&- \hat {\bf n} \cdot \bkt{-p_{g,*}\mathbb I + 2\mu_g\bke{\mathbb D({\bf v}_{g,*}) - \frac13(\div {\bf v}_{g,*})\mathbb I} + \ze_g(\div{\bf v}_{g,*})\mathbb I} = \si \hat {\bf n} (\nb_S\cdot {\bf n}), \notag\\
&\hspace{-0.34cm}T_{g,*} = T_{l,*}, \label{eq-1.3-c-equilib}
\end{empheq}
\end{subequations}
For the spherically symmetric case, \eqref{eq-1.1-b-equilib} we have ${\bf v}_{l,*}(x) = v_{l,*}(r) \frac{x}{r}$. Therefore,
 $0=\div{\bf v}_{l,*}= \pd_r v_{l,*}(r) + (2/r)v_{l,*}(r)$ and hence
\EQ{\label{eq-bv-2.1}
\frac1{r^2}\pd_r(r^2 v_{l,*}(r)) = 0,\quad r\ge R_*.
}
Therefore,
\EQ{\label{eq-bv-2.2}
v_{l,*}(r) = \frac{a}{r^2},\quad r\ge R_*,
}
for constant $a$.
But the boundary condition \eqref{eq-1.3-a-equilib} implies $v_{l,*}(R_*) = 0$. So $a=0$, and thus $v_{l,*}\equiv0$.
Therefore, \eqref{eq-1.1-a-equilib} becomes $\nb p_{l,*} = 0$. So the pressure $p_{l,*}$ is a constant and equal to its value at infinity, $p_{\infty,*}$.

For the gas velocity $v_{g,*}$, \eqref{eq-1.2-a-equilib} becomes
\[
\frac1{r^2} \pd_r(\rho_{g,*} v_{g,*})= 0,\quad 0\le r\le R_*,
\]
which implies $\rho_{g,*} v_{g,*}$ is a constant.
Again, the boundary condition \eqref{eq-1.3-a-equilib} implies $v_{g,*}(R_*) = 0$. So $\rho_{g,*} v_{g,*}\equiv0$.
But $\rho_{g,*}\neq0$ since otherwise $s_*$ is singular in \eqref{eq-1.2-e-equilib}.
Therefore, ${\bf v}_{g,*}\equiv{\bf 0}$ and thus \eqref{eq-1.2-b-equilib} becomes $\nb p_{g,*} = 0$.
So $p_{g,*}$ is a constant.
Moreover, by $v_{l,*}=v_{g,*}\equiv0$, \eqref{eq-1.3-b-equilib} yields $-p_{l,*} + p_{g,*} = \frac{2\si}{R_*}$.
So $p_{g,*} = p_{\infty,*} + \frac{2\si}{R_*}$.

For the equations of the temperatures, due to $v_{l,*}=0$ \eqref{eq-1.1-c-equilib} becomes $\De T_{l,*} = 0$ in $\R^3\setminus B_{R_*}$, or, in spherical coordinates,
\[
\frac1{r^2} \pd_r(r^2 \pd_r T_{l,*}) = 0,\quad r\ge R_*,
\]
which implies 
\[
\pd_r T_{l,*}=\frac{a_1}{r^2},\quad r\ge R_*
\] 
for some constant $a_1$.
Integrating over $(r,\infty)$ we get
\[
T_{l,*}(r) = T_\infty - \frac{a_1}r,\quad r\ge R_*.
\]
By the radiation condition \eqref{eq-radiation-condition}, $T_{l,*}(r) = T_\infty + o(r^{-1})$ as $r\to\infty$. 
This gives $a_1=0$ and thus $T_{l,*}\equiv T_\infty$.
On the other hand, \eqref{eq-1.2-c-equilib} becomes $\De T_{g,*} = 0$ in $B_{R_*}$ since $v_{g,*}=0$. 
Since $T_{g,*}$ is regular, $T_{g,*}\equiv T_{g,*}(R_*)=T_\infty$ by the maximum principle.

For the gas density $\rho_{g,*}$, by \eqref{eq-1.2-d-equilib} $\rho_{g,*} = \frac{p_{g,*}}{\Rg T_\infty} = \frac1{\Rg T_\infty} \bke{p_{\infty,*}+\frac{2\si}{R_*}}$.
Due to the conservation of mass \eqref{eq-mass-preserve},
\[
M := \int_{B_{R_0}} \rho_0 
= \lim_{t\to\infty} \int_{B_{R(t)}} \rho_g(\cdot,t)
= \frac{4\pi}3 \rho_{g,*} R_*^3,
\]
where $(\rho_0(x),R_0)$, $\rho_0(x)\ge0, R_0>0$, is the initial data.
Therefore, the steady state $(\rho_{g,*},R_*)$ can be obtained by solving 
 \begin{subequations}\label{eq-steadystate-appdx}
\begin{empheq}[right=\empheqrbrace]
{align}
\dfrac{4\pi}3 \rho_{g,*} R_*^3 &= M, \label{eq-steadystate-a-appdx}\\
\rho_{g,*} &= \frac1{\Rg T_\infty} \bke{p_{\infty,*} + \dfrac{2\si}{R_*}}. \label{eq-steadystate-b-appdx}
\end{empheq}
\end{subequations}
In particular, the stationary radius $R_*$ satisfies the cubic equation
\EQ{\label{eq-cubic-appdx}
p_{\infty,*} R_*^3 + 2\si R_*^2 - \frac{3\Rg T_\infty M}{4\pi} = 0.
}
It is readily seen that for any $M>0$, the cubic equation has a unique positive root $R_*[M]$. 
This proves Proposition \ref{prop-equilib-original}.
\end{proof}

\section{Derivation of the reduced system for $\rho(r,t)$ and $R(t)$: Proof of Proposition \ref{prop:reduction}}\label{sec-reduction}

Considering the uniformity of the pressure $p_g$ in \eqref{eq1.2simplified-b}, we can eliminate $T_g$ by plugging \eqref{eq1.2simplified-d} into \eqref{eq1.2simplified-c} and deduce
\EQ{\label{eq-bv-3.4}
\pd_t s + {\bf v}_g\cdot\nb s = \ka_g \De\bke{\frac1{\rho_g}}.
}
Plugging \eqref{eq1.2simplified-e} into the left hand side of \eqref{eq-bv-3.4} and using \eqref{eq1.2simplified-b}, we have
\EQ{\label{eq-bv-3.5}
c_v \bket{\frac{\pd_t p_g}{p_g} - \frac{\ga}{\rho_g} \bkt{\pd_t\rho_g + {\bf v}_g\cdot\nb\rho_g}} = \ka_g \De\bke{\frac1{\rho_g}}.
}
Using \eqref{eq1.2simplified-a} in \eqref{eq-bv-3.5}, we obtain
\EQ{\label{eq-bv-3.5-nonradial}
c_v \bket{\frac{\pd_t p_g}{p_g} + \ga \div{\bf v}_g} = \ka_g \De\bke{\frac1{\rho_g}},
}
Therefore, the system \eqref{eq1.2simplified} is reduced to 
 \begin{subequations}\label{eq1.2simplified1}
\begin{empheq}[right=\empheqrbrace\text{in $\Om(t)$, $t>0$.}]{align}
\pd_t \rho_g + \div(\rho_g{\bf v}_g) =& 0, \label{eq1.2simplified1-a}\\
\dfrac{\pd_t p_g}{p_g} =& \dfrac{\ka}{c_v} \De\bke{\dfrac1{\rho_g}} - \ga \div {\bf v}_g, \label{eq1.2simplified1-b}
\end{empheq}
\end{subequations}
Expanding the term $\div(\rho_g{\bf v}_g)$ in \eqref{eq1.2simplified1-a} and substituting $\div{\bf v}_g$ using \eqref{eq1.2simplified1-b}, and using the elementary identity 
\EQ{\label{eq-log}
\rho_g \De\bke{\frac1{\rho_g}} = -\De\log\rho_g + \frac{|\nb\rho_g|^2}{\rho_g^2},
} 
we get
\EQ{\label{eq-bv-3.10-nonspherical}
\pd_t\rho_g = \frac\ka{\ga c_v} \De\log\rho_g - \frac{\ka}{\ga c_v} \frac{|\nb\rho_g|^2}{\rho_g^2} - {\bf v}_g\cdot\nb\rho_g + \frac{\pd_tp_g}{\ga p_g} \rho_g.
}

Assuming the bubble is a sphere $B_{R(t)}$ and solutions are spherically symmetric, and
recalling we denoted the radial components of the gas and liquid velocity by $v_g(r,t)$ and $v_l(r,t)$, respectively,
the systems \eqref{eq1.1simplified} and \eqref{eq1.2simplified1} become
 \begin{subequations}\label{eq-1.1-simplified-1}
\begin{empheq}[right=\empheqrbrace\text{for $r\ge R(t)$, $t>0$,}]{align}
\pd_t v_l = \nu_l \bke{ \De_r v_l - \dfrac{2v_l}{r^2}} - v_l \pd_r v_l - \dfrac1{\rho_l} \pd_r p_l, \label{eq-1.1-simplified-1-a}\\
\dfrac1{r^2} \pd_r(r^2 v_l) = 0, \label{eq-1.1-simplified-1-b}
\end{empheq}
\end{subequations}
and
 \begin{subequations}\label{eq-1.2-simplified-1}
\begin{empheq}[right=\empheqrbrace\text{for $0\le r\le R(t)$, $t>0$,}]{align}
\pd_t\rho_g + \dfrac1{r^2} \pd_r (\rho_gr^2 v_g) = 0, \label{eq-1.2-simplified-1-a}\\
\dfrac{\pd_t p_g}{p_g} = \dfrac{\ka}{c_v}
\dfrac1{r^2} \pd_r \bke{r^2 \pd_r\bke{\dfrac1{\rho_g}}} - \ga \dfrac1{r^2} \pd_r(r^2 v_g), \label{eq-1.2-simplified-1-b}
\end{empheq}
\end{subequations}
and the boundary condition \eqref{eq1.3simplified} becomes
 \begin{subequations}\label{eq-1.3-simplified1}
\begin{empheq}[right=\empheqrbrace\text{for $t>0$.}]{align}
v_l(R(t),t) = v_g(R(t),t) = \dot R(t), \label{eq-1.3-simplified1-a}\\
p_g(t) - p_l(R(t),t) + 2\mu_l\pd_rv_l(R(t),t) = \dfrac{2\si}{R(t)}, \label{eq-1.3-simplified1-b}
\\
T(R(t),t) = T_\infty, \label{eq-1.3-simplified1-c}
\end{empheq}
\end{subequations}

The liquid velocity and pressure $(v_l,p_l)$ can be directly solved in terms of $R(t)$, $\dot R(t)$, the liquid pressure $p_l(R(t),t)$ on the bubble wall, and the far-field liquid pressure $p_\infty(t) := p_l(r=\infty,t)$. In fact, the incompressibility condition \eqref{eq-1.1-simplified-1-b} and the kinematic boundary condition \eqref{eq-1.3-simplified1-a} imply
\EQ{\label{eq-vlr}
v_l(r,t) = \frac{(R(t))^2\dot R(t)}{r^2},\quad r\ge R(t),\ t>0.
}
Plugging Equation \eqref{eq-vlr} into Equation \eqref{eq-1.1-simplified-1-a}, we have
\EQ{\label{eq-bv-2.9}
\frac{2R\dot R^2 + R^2 \ddot R}{r^2} 
= \nu_l\bke{\frac{2R^2\dot R}{r^4} - 2\frac{R^2\dot R}{r^4} } + 2 \frac{R^4\dot R^2}{r^5} - \frac1{\rho_l} \pd_rp_l,\quad r\ge R(t),\ t>0.
}
Note that the diffusion term in \eqref{eq-bv-2.9} vanishes. So the reduction using spherical symmetry assumption also works for Euler equation, {\it i.e.}, we can take $\nu_l=0$ in \eqref{eq1.1simplified-a}.
Integrating Equation \eqref{eq-bv-2.9} over $r>R(t)$, we deduce
\EQ{\label{eq-pl}
p_l(r,t) = p_\infty(t) + \rho_l \bke{ \frac{2R(t) (\dot R(t))^2 + (R(t))^2\ddot R(t)}r - \frac{(R(t))^4(\dot R(t))^2}{2r^4} },\quad r\ge R(t),\ t>0.
}
In particular, on the boundary the liquid pressure is
\[
p_l(R(t),t) = p_\infty(t) + \rho_l\bke{\frac32\dot R^2 + R\ddot R},\quad t>0.
\]
Moreover, \eqref{eq-vlr} implies $\pd_rv_l(r,t) = -2(R(t))^2\dot R(t)/r^3$ so that
\[
\pd_rv_l(R(t),t) = -2\, \frac{\dot R(t)}{R(t)}.
\]
This implies
\EQ{\label{eq-bv-2.19prime}
R\ddot R + \frac32 \dot R^2 = \frac1{\rho_l} \bke{p_g(t) - p_\infty(t) - \frac{2\si}R 
-4\mu_l \frac{\dot R}R
},\quad t>0,
}
where the Young--Laplace boundary condition \eqref{eq-1.3-simplified1-b} has been used.

For the gas dynamics in the bubble, by integrating \eqref{eq-1.2-simplified-1-b} in $r$, the radial component of the gas velocity $v_g$ can be expressed in terms of $\rho_g(r,t)$, $\pd_r\rho_g(r,t)$, $p_g(t)$, and $\pd_tp_g(t)$. To be more precise, 
\EQ{\label{eq-bv-3.9}
v_g(r,t) = \frac{\ka}{\ga c_v} \pd_r\bke{\frac1{\rho_g(r,t)}} - \frac{\pd_tp_g(t)}{p_g(t)}\frac{r}{3\ga},\quad 0\le r\le R(t),\ t>0.
}
Using \eqref{eq-bv-3.9} we can eliminate ${\bf v}_g$ in \eqref{eq-bv-3.10-nonspherical} and obtain
\EQ{\label{eq-bv-3.10}
\pd_t\rho_g = \frac\ka{\ga c_v} \De_r\log\rho_g + \frac{\pd_t p_g}{3\ga p_g} r \pd_r\rho_g + \frac{\pd_t p_g}{\ga p_g} \rho_g,\quad 0\le r\le R(t),\ t>0,
}
where $\De_r f = \frac1{r^2}\pd_r(r^2\pd_rf)$ for spherically symmetric functions $f$.
From the boundary condition \eqref{eq-1.3-simplified1-c} for the gas temperature and the equation of state \eqref{eq1.2simplified-d},
\EQ{\label{eq-bv-3.11}
p_g(t) = \Rg \rho_g(R(t),t) T_\infty.
}
Taking time derivative of \eqref{eq-bv-3.11} we obtain
\EQ{\label{eq-bv-3.12}
\frac{\pd_t p_g}{p_g} = \frac{\pd_t\rho_g(R(t),t)}{\rho_g(R(t),t)} + \frac{\pd_r\rho_g(R(t),t)}{\rho_g(R(t),t)} \dot R(t).
}
Evaluating \eqref{eq-bv-3.9} at $r=R(t)$ and using the kinematic boundary condition \eqref{eq-1.3-simplified1-a} it follows that
\EQ{\label{eq-bv-3.13}
\dot R(t) = -\frac\ka{\ga c_v} \frac{\pd_r\rho_g(R(t),t)}{\bke{\rho_g(R(t),t)}^2} - \frac{R(t)}{3\ga} \frac{\pd_t p_g}{p_g}.
}
For the boundary data for the gas density, we use  \eqref{eq-bv-3.11} and \eqref{eq-bv-2.19prime} to deduce
\EQ{\label{eq-bv-3.16}
\rho_g(R(t),t) = \frac1{\Rg T_\infty} \bkt{p_\infty + \frac{2\si}R + \rho_l\bke{R\ddot R + \frac32 (\dot R)^2} }.
}

Collecting the results \eqref{eq-bv-3.10}, \eqref{eq-bv-3.11}, \eqref{eq-bv-3.13},  \eqref{eq-bv-3.16}, we conclude that, under the spherical symmetry assumption, the system \eqref{eq1.1simplified}--\eqref{eq1.3simplified} is reduced to a system of $(\rho(r,t),R(t))$:
\EQ{\label{eq-bv-3.10prime_1}
\pd_t\rho = \frac\ka{\ga c_v} \De_r\log\rho + \frac{\pd_t p}{3\ga p} r \pd_r\rho + \frac{\pd_t p}{\ga p} \rho,\quad 0\le r\le R(t),\ t>0,
}

\EQ{\label{eq-bv-3.14prime_1}
p(t) = \Rg T_\infty \rho(R(t),t),\quad t>0,
}

\EQ{\label{eq-bv-3.15prime_1}
\dot R(t) = -\frac\ka{\ga c_v} \frac{\pd_r\rho(R(t),t)}{\bke{\rho(R(t),t)}^2} - \frac{R(t)}{3\ga} \frac{\pd_t p}p,\quad t>0,
}

\EQ{\label{eq-bv-3.16prime_1}
\rho(R(t),t) = \frac1{\Rg T_\infty} \bkt{p_\infty + \frac{2\si}R 
+ 4\mu_l \frac{\dot R}R
+ 
\rho_l\bke{R\ddot R + \frac32 (\dot R)^2}
}, \quad t>0,
}
where $\rho \equiv \rho_g$, $p\equiv p_g$, $\ka=\ka_g$. This is the  reduced system \eqref{eq-bv-3.10prime}--\eqref{eq-bv-3.16prime}.

\section{A perspective on coercive energy estimate of Biro--Vel\'azquez, and an extension}\label{sec-bv-revisit}

In this appendix, we prove Theorem \ref{thm-XYZ}, which extends the coercivity estimate of  Biro--Vel\'azquez
 to the case where $p_\infty-p_{\infty,*}$ is small in norm. 

\medskip\noindent{\bf Proof of Theorem \ref{thm-XYZ}.}
Let us recall the total energy
\[
\mathcal{E}_{\rm total} = FE + KE_l + U_{g-l} + PV_{p_\infty},
\]
where 
 \begin{subequations}\label{eq-energy-appdx}
\begin{empheq}{align}
FE &= \frac{4\pi c_v}{3\Rg } pR^3 - c_v T_\infty M_0 \log p + c_v\ga T_\infty \int_{B_R} \rho\log\rho,\qquad M_0 = \textrm{Mass}[\rho,R],\label{eq-FE-appdx}\\
KE_l &= 2\pi \rho_l R^3 \dot R^2, \label{eq-KE-appdx}\\
U_{g-l} &= 4\pi \si R^2, \label{eq-Ugl-appdx}\\
PV_{p_\infty} &= \frac{4\pi}3 R^3 p_\infty. \label{eq-PV-appdx}
\end{empheq}
\end{subequations}

The energy is a functional of state variables, which are defined on a deforming regime, $B_{R}$. We fix the region 
to be $B_1$ by setting $x=Ry$, where $y\in B_1$. Defining $\overline\rho(y)=\rho(Rr)$ and using the constitutive relation  $p = \Rg T_\infty \rho(R) = \Rg T_\infty \overline\rho(1)$ we have that 
\[
FE = \frac{4\pi c_v T_\infty}3 \overline\rho(1) R^3 - c_v T_\infty M_0 \log(\Rg T_\infty) - c_v T_\infty M_0 \log\overline\rho(1) + c_v\ga T_\infty R^3 \int_{B_1} \overline\rho \log\overline\rho.
\]
Thus, $\mathcal{E}_{\rm total} $ is a functional of $(\overline\rho, R,\dot R)$:
\EQN{
\mathcal{E}_{\rm total} 
&= \mathcal{E}_{\rm total} [\overline\rho, R,\dot R]\\
&= \frac{4\pi c_v T_\infty}3  \overline\rho(1)R^3 - c_vT_\infty M_0\log(\Rg T_\infty) - c_vT_\infty M_0 \log \overline\rho(1) + c_v\ga T_\infty R^3 \int_{B_1} \overline\rho \log\overline\rho\\
&\quad + 2\pi\rho_lR^3\dot R^2 + 4\pi\si R^2 + \frac{4\pi}3R^3p_\infty.
}

We set $\overline\rho = \rho_* + \varrho$, $R = R_* + \mathcal R$ and expand the total energy $\mathcal{E}_{\rm total}[\overline\rho, R,\dot R]$ at $(\rho_*,R_*,\dot R_*=0)$ along the mass preserving hypersurface $M_0 = \textrm{Mass}[\rho,R]$:
\EQ{\label{eq-energy-expand}
\mathcal{E}_{\rm total}& [\overline\rho,R,\dot R]
= \mathcal{E}_{*} + d\mathcal{E}_{*}[ \varrho, \mathcal R, \dot{\mathcal R}]+ \frac12  d^2\mathcal{E}_{*}[ \varrho, \mathcal R, \dot{\mathcal R}] + O(|( \varrho, \mathcal R, \dot{\mathcal R})|^3),\quad {\rm where}
}
\begin{align*}
\mathcal{E}_{*}&=\mathcal{E}_{\rm total} [\rho_*,R_*,\dot R_*=0]\\
 d\mathcal{E}_{*}[ \varrho, \mathcal R, \dot{\mathcal R}]&= \frac{d}{d\ve}\Big|_{\ve=0} \mathcal{E}_{\rm total} (\rho_*,R_*+\ve\mathcal R,\ve\dot{\mathcal R})\\
 d^2\mathcal{E}_{*}[ \varrho, \mathcal R, \dot{\mathcal R}]&= \frac{d^2}{d\ve^2}\Big|_{\ve=0} \mathcal{E}_{\rm total} (\rho_*,R_*+\ve\mathcal R,\ve\dot{\mathcal R})\
 \end{align*}

To expand along the mass preserving hypersurface, we first use $M_0 = \int_{B_R} \rho\, dx$ to rewrite
\EQN{
R^3 \int_{B_1} \overline\rho \log\overline\rho 
&= \int_{B_R} \rho\log\rho
= \int_{B_R} \rho\log\rho_* + \int_{B_R} \rho\log\frac{\rho}{\rho_*}\\
&= \log\rho_*\int_{B_R} \rho + \int_{B_R} \rho + \int_{B_R} \rho \bke{\log\frac{\rho}{\rho_*} - 1}\\
&= M_0\log\rho_* + M_0 + R^3 \int_{B_1} \overline\rho \bke{\log\frac{\overline\rho}{\rho_*} - 1},
}
giving the following expression for the total energy:
\EQN{
\mathcal{E}_{\rm total} [\overline\rho, R,\dot R]
&= \frac{4\pi c_v T_\infty}3  \overline\rho(1,t)R^3 - c_vT_\infty M_0\log(\Rg T_\infty) - c_vT_\infty M_0 \log \overline\rho(1,t) \\
&\quad+ c_v\ga T_\infty M_0\log\rho_* + c_v\ga T_\infty M_0 + c_v\ga T_\infty R^3 \int_{B_1} \overline\rho \bke{\log\frac{\overline\rho}{\rho_*} - 1}\\
&\quad + 2\pi\rho_lR^3\dot R^2 + 4\pi\si R^2  + \frac{4\pi}3R^3 p_\infty .
}

To expand the logarithmic terms we note that for  $z_*\ne0$ and  $|z-z_*|<\frac12 |z_*|$:
\begin{align}
\Big| \Big( z\left( \log\left(\frac{z}{z_*}\right) - 1 \Big) - \Big(-z_* + \frac1{2z_*} (z-z_*)^2 \right)\Big| &\le \frac{2}{|z_*|}|z-z_*|^3
\label{log1}\\
\Big| \log z - \left( \log z_* + \frac{1}{z_*}(z-z_*)- \frac{1}{2 z_*^2} (z-z_*)^2 \right) \Big| &\le \frac{2}{3|z_*|^3}|z-z_*|^3
\label{log2}\end{align}
Applying \eqref{log1} and \eqref{log2} we have
\EQN{
\mathcal{E}_{\rm total} [\overline\rho, R,\dot R]
&= \frac{4\pi c_v T_\infty}3  \overline\rho(1,t)R^3 - c_vT_\infty M_0\log(\Rg T_\infty)\\
&\quad  - c_vT_\infty M_0\left( \log\rho_* + \frac{1}{\rho_*} \varrho(1)- \frac{1}{2 \rho_*^2} \varrho(1)^2\right)  + 
O\left( \varrho(1)^3\right) \\
&\quad+ c_v\ga T_\infty M_0\log\rho_* + c_v\ga T_\infty M_0\\
&\quad  + c_v\ga T_\infty R^3\left(-\frac{4\pi}{3}\rho_* +\frac{1}{2\rho_*}\int_{B_1} \varrho^2 \right) 
+ O\left( R^3\int_{B_1}| \varrho|^3 \right) \\
&\quad + 2\pi\rho_lR^3\dot R^2 + 4\pi\si R^2 +  \frac{4\pi}3R^3 p_\infty .
}
Rearranging and simplifying gives
\begin{align} 
\mathcal{E}_{\rm total} [\overline\rho, R,\dot R] &=  - c_vT_\infty M_0\log(\Rg T_\infty) + c_v(\ga-1) T_\infty M_0\log\rho_*
 + c_v\ga T_\infty M_0  \label{Estart}\\
&+ \frac{4\pi c_v T_\infty}3  \overline\rho(1)R^3  + c_vT_\infty M_0\left(  -\frac{1}{\rho_*} \varrho(1) + \frac{1}{2 \rho_*^2} \varrho(1)^2\right)  + 
O\left( \varrho(1)^3\right) \nonumber \\
&\quad  + c_v\ga T_\infty R^3\left(-\frac{4\pi}{3}\rho_* +\frac{1}{2\rho_*}\int_{B_1} \varrho^2 \right) 
+ O\left( R^3\int_{B_1} | \varrho|^3 \right)\nonumber \\
&\quad + 2\pi\rho_lR^3\dot R^2 + 4\pi\si R^2 +  \frac{4\pi}3R^3 p_\infty.
\nonumber\end{align}

\medskip\noindent{\bf Verification that $d\mathcal{E}_*[\varrho,\mathcal{R},\dot{\mathcal R}]=0$ when $p_\infty = p_{\infty,*}$.} 
Starting with \eqref{Estart} we calculate:
\begin{align}
d\mathcal{E}_*[ \varrho,\mathcal{R},\dot{\mathcal R}] &= \frac{4\pi c_v T_\infty}{3}\left(3\rho_*R_*^2\mathcal{R} +R_*^3 \varrho(1)\right) -\frac{c_vT_\infty}{\rho_*}\left(\frac{4\pi}{3}\rho_*R_*^3\right) \varrho(1) \label{firstvar}\\
&\quad-4\pi c_v\gamma T_\infty R_*^2\rho_*\mathcal{R}+8\pi\sigma R_*\mathcal{R}+4\pi R_*^2 p_\infty\mathcal{R}\nonumber\\
&= 4\pi R_*^2\left( c_vT_\infty\rho_*(1-\gamma)+ \frac{2\sigma}{R_*}+p_\infty\right) \mathcal R\nonumber\\
&= 4\pi R_*^2\left( -\Rg T_\infty\rho_*+ \frac{2\sigma}{R_*}+p_\infty\right) \mathcal R\qquad 
\left[\gamma-1=\frac{\Rg}{c_v}\ \textrm{by \eqref{eq-def-gamma}}\right]\nonumber\\
&= 4\pi R_*^2 \mathcal P_\infty \mathcal R \qquad \left[ \textrm{by \eqref{eq-steadystate-b}} \right],
\nonumber\end{align}
where $\mathcal P_\infty =  p_\infty - p_{\infty,*}$.
It is readily to see that $d\mathcal{E}_*[ \varrho,\mathcal{R},\dot{\mathcal R}]=0$ when $p_\infty = p_{\infty,*}$. 

\medskip\noindent{\bf Computation of $\frac{1}{2}d^2\mathcal{E}[ \varrho, \mathcal R, \dot{\mathcal R}]$.}

From \eqref{Estart} we compute the quadratic terms:

\begin{align}
\frac{1}{2}d^2\mathcal{E}[ \varrho, \mathcal R, \dot{\mathcal R}] &= 4\pi c_v T_\infty \left( \rho_*R_* \mathcal{R}^2 + R_*^2 \varrho(1) \mathcal{R}\right)  + \frac{c_v T_\infty M_0}{2\rho_*^2} \varrho(1)^2 \label{2var}\\
&\quad + \frac{c_v\gamma T_\infty R_*^3}{2\rho_*} \int_{B_1} \varrho^2 - 4\pi c_v\gamma\rho_* T_\infty R_* \mathcal{R}^2\nonumber\\
&\quad + 2\pi\rho_l R_*^3 \dot{\mathcal{R}}^2 + 4\pi \sigma \mathcal{R}^2 + 4\pi R_*  p_\infty \mathcal{R}^2.\nonumber
\end{align}

Next, using that the perturbed bubble is assumed to have  mass equal to $M_0={\rm Mass}(\rho_*,R_*)$, we express the cross-term just above in terms of a quadratic expression in $\varrho$ as follows: 
\begin{align*}
M_0=R^3 \int_{B_1}\overline\rho &= (R_*+\mathcal{R})^3 \int_{B_1}\left(\rho_*+ \varrho\right)\\
&= M_0 + 4\pi R_*^2 \rho_* \mathcal{R} + R_*^3 \int_{B_1} \varrho + O\Big(\mathcal{R}^2 + \left(\int_{B_1} \varrho\right)^2 \Big)
\end{align*}
and therefore
\begin{equation}
\mathcal{R} = -\frac{R_*}{4\pi\rho_*}\int_{B_1} \varrho + O\Big(\mathcal{R}^2 + \left(\int_{B_1} \varrho\right)^2 \Big).
\label{cRvrho}
\end{equation}
Substitution of \eqref{cRvrho} into \eqref{2var} we obtain a leading expression entirely in terms of the perturbed density
 $ \varrho$. We list the various terms that we rewrite exclusively in terms of  $ \varrho$:
 \begin{align*}
 4\pi c_v T_\infty \rho_*R_* \mathcal{R}^2 &= \frac{c_v T_\infty \rho_* R_*^3}{4\pi\rho_*^2} \left(\int_{B_1} \varrho\right)^2 + O\left( |\mathcal{R}|^3 + \left(\int_{B_1} | \varrho|\right)^3 \right)\\
 4\pi c_v T_\infty  R_*^2 \varrho(1) \mathcal{R} &= -\frac{c_v T_\infty R_*^3}{\rho_*}\ \varrho(1) \int_{B_1} \varrho + O\left( |\mathcal{R}|^3 + | \varrho(1)|^3 + \left(\int_{B_1} | \varrho|\right)^3 \right)
\\
 - 4\pi c_v\gamma\rho_* T_\infty R_* \mathcal{R}^2 &= -\frac{c_v\gamma T_\infty R_*^3}{4\pi\rho_*} \left(\int_{B_1} \varrho\right)^2 + O\left( |\mathcal{R}|^3 + \left(\int_{B_1} | \varrho|\right)^3 \right)\\
 4\pi\left(\si +  R_* p_\infty\right)\mathcal{R}^2 &=  \frac{1}{4\pi \rho_*^2} \left(\frac{\si}{R_*} +  p_\infty\right)R_*^3\left(\int_{B_1} \varrho\right)^2 + O\left( |\mathcal{R}|^3 + \left(\int_{B_1} | \varrho|\right)^3 \right).
\end{align*} 
Inserting these expressions into \eqref{2var}, we obtain
\begin{align}
\frac{1}{2}d^2\mathcal{E}[ \varrho, \mathcal R, \dot{\mathcal R}] &= 
\frac{c_v T_\infty M_0}{2\rho_*^2} \varrho(1)^2   + \frac{c_v\gamma T_\infty R_*^3}{2\rho_*} \int_{B_1} \varrho^2 + 2\pi \rho_l R_*^3 \dot{\mathcal{R}}^2
\label{2var-1}\\
 &\quad + \frac{R_*^3}{4\pi \rho_*^2}\left( c_v(1-\gamma)T_\infty\rho_* + \frac{\sigma}{R_*}+p_\infty \right) \left(\int_{B_1} \varrho\right)^2 - \frac{c_v T_\infty R_*^3}{\rho_*} \varrho(1) \int_{B_1} \varrho
 \nonumber \\
 &\quad + O\left( |\mathcal{R}|^3 +  | \varrho(1)|^3 +\left(\int_{B_1} | \varrho|\right)^3 \right)
 \nonumber\end{align}
The coefficient of the fourth term on the right of \eqref{2var-1} can be simplified using the relation $1-\gamma= -\Rg/c_v$
 and the  relation $\Rg T_\infty\rho_* = p_{\infty,*} + 2\sigma/R_*$ between the equilibrium density and bubble radius:
 \[c_v(1-\gamma)T_\infty\rho_* + \frac{\sigma}{R_*}+p_\infty = -\Rg T_\infty\rho_* + \frac{\sigma}{R_*}+p_\infty=-\frac{\sigma}{R_*} + \mathcal P_\infty 
 ,\]
where $\mathcal P_\infty =  p_\infty - p_{\infty,*}$.
 Thus, 
 \begin{align}
\frac{1}{2}d^2\mathcal{E}[ \varrho, \mathcal R, \dot{\mathcal R}] &= 
\frac{c_v T_\infty M_0}{2\rho_*^2} \varrho(1)^2   + \frac{c_v\gamma T_\infty R_*^3}{2\rho_*} \int_{B_1} \varrho^2 + 2\pi \rho_l R_*^3 \dot{\mathcal{R}}^2
\label{2var-2}\\
 &\quad - \frac{\sigma R_*^2}{4\pi \rho_*^2}\left(\int_{B_1} \varrho\right)^2
  + \frac{R_*^3}{4\pi \rho_*^2} \mathcal P_\infty \left(\int_{B_1} \varrho\right)^2 
  - \frac{c_v T_\infty R_*^3}{\rho_*} \varrho(1) \int_{B_1} \varrho \nonumber\\
  &\quad  + O\left( |\mathcal{R}|^3 +  | \varrho(1)|^3 +\left(\int_{B_1} | \varrho|\right)^3 \right)
 \nonumber\end{align}
 Using that $M_0=\rho_* R_*^3 |B_1|$, we may rewrite \eqref{2var-2} as 
 \begin{align}
\frac{1}{2}d^2\mathcal{E}[ \varrho, \mathcal R, \dot{\mathcal R}] &= 
\frac{c_v T_\infty R_*^3 |B_1|}{2} \left( 
\frac{ \varrho(1)^2}{\rho_*} 
- 2\frac{ \varrho(1)}{\rho_*} \frac{1}{|B_1|}\int_{B_1} \varrho  \right)  + \frac{c_v\gamma T_\infty R_*^3 }{2\rho_*} \int_{B_1} \varrho^2 + 2\pi \rho_l R_*^3 \dot{\mathcal{R}}^2
\label{2var-3}\\
 &\quad - \frac{\sigma R_*^2}{4\pi \rho_*^2}\left(\int_{B_1} \varrho\right)^2   
 + \frac{R_*^3}{4\pi \rho_*^2} \mathcal P_\infty \left(\int_{B_1} \varrho\right)^2 
 + O\left( |\mathcal{R}|^3 +  | \varrho(1)|^3 +\left(\int_{B_1} | \varrho|\right)^3 \right) \nonumber
 \end{align}
 or 
 \begin{align}
\frac{1}{2}d^2\mathcal{E}[ \varrho, \mathcal R, \dot{\mathcal R}]  &=  
\frac{c_v T_\infty R_*^3 |B_1|}{2\rho_*} \left( \varrho(1)- \frac{1}{|B_1|}\int_{B_1} \varrho \right)^2
  + \frac{c_v\gamma T_\infty R_*^3}{2\rho_*} \int_{B_1} \varrho^2 + 2\pi \rho_l R_*^3 \dot{\mathcal{R}}^2 \label{2var-4}\\
  &\quad - \left[ \frac{\sigma R_*^2}{4\pi \rho_*^2} + \frac{c_vT_\infty R_*^3}{2 \rho_*|B_1|} \right]\left(\int_{B_1} \varrho\right)^2
  + \frac{R_*^3}{4\pi \rho_*^2} \mathcal P_\infty \left(\int_{B_1} \varrho\right)^2  \nonumber\\
 &\quad + O\left( |\mathcal{R}|^3 +  | \varrho(1)|^3 +\left(\int_{B_1} | \varrho|\right)^3 \right)
 \nonumber\end{align}
 By the Cauchy-Schwarz inequality $\left(\int_{B_1} \varrho\right)^2\le |B_1| \int_{B_1} \varrho^2 $ and therefore
  \begin{align}
\frac{1}{2}d^2\mathcal{E}[ \varrho, \mathcal R, \dot{\mathcal R}]  &\ge  
\frac{c_v T_\infty R_*^3 |B_1|}{2\rho_*} \left( \varrho(1)- \frac{1}{|B_1|}\int_{B_1} \varrho \right)^2
   + 2\pi \rho_l R_*^3 \dot{\mathcal{R}}^2 \label{2var-5}\\
  &\quad + \bke{\frac{c_v\gamma T_\infty R_*^3}{2\rho_*}- \left[ \frac{\sigma R_*^2|B_1|}{4\pi \rho_*^2} + \frac{c_vT_\infty R_*^3}{2 \rho_*} \right] } \int_{B_1} \varrho^2 
  + \frac{R_*^3}{4\pi \rho_*^2}\mathcal P_\infty \left(\int_{B_1} \varrho\right)^2 \nonumber\\
  &\quad + O\left( |\mathcal{R}|^3 +  | \varrho(1)|^3 +\left(\int_{B_1} | \varrho|\right)^3 \right)
 \nonumber\end{align}

Finally, we find 
for the constant in \eqref{2var-5} that
 \begin{equation} \frac{c_v\gamma T_\infty R_*^3}{ 2\rho_*}- \left[ \frac{\sigma R_*^2|B_1|}{4\pi \rho_*^2} + \frac{c_vT_\infty R_*^3}{2 \rho_*} \right]
  = \frac{R_*^3}{ \rho_*^2} \left( \frac{p_{\infty,*}}2 + \frac{2\sigma}{3R_*}\right).\label{pos-coeff}\end{equation}
  This follows, yet again, from the relations $\gamma - 1= \Rg/c_v$
 and  $\Rg T_\infty\rho_* = p_{\infty,*} + 2\sigma/R_*$.
For the term involving $\mathcal P_\infty$, using the Cauchy-Schwarz inequality $\left(\int_{B_1} \varrho\right)^2\le |B_1| \int_{B_1} \varrho^2 $,  
\[
\mathcal P_\infty \bke{\int_{B_1} \varrho}^2
\ge -|\mathcal P_\infty| |B_1| \int_{B_1} \varrho^2.
\]
 
Summarizing
 \begin{align}
\mathcal{E}_{\rm total} - \mathcal{E}_*  &\ge
 \frac{c_v T_\infty R_*^3 |B_1|}{2\rho_*} \left( \varrho(1)- \frac{1}{|B_1|}\int_{B_1} \varrho \right)^2
   + 2\pi \rho_l R_*^3 \dot{\mathcal{R}}^2 + \frac{R_*^3}{\rho_*^2} \left( \frac{p_{\infty,*}}2 + \frac{2\sigma}{3R_*}\right)\int_{B_1} \varrho^2 \label{2var-6}\\
  &\quad - 4\pi R_*^2 |\mathcal P_\infty| |\mathcal R| 
  - \frac{R_*^3}{4\pi \rho_*^2} |\mathcal P_\infty| |B_1| \int_{B_1} \varrho^2 + O\left( |\mathcal{R}|^3 +  | \varrho(1)|^3 +\left(\int_{B_1} | \varrho|\right)^3 \right),
 \nonumber\end{align}
 where all explicit terms are non-negative except for the terms involving $\mathcal P_\infty$ which can be made small since $|\mathcal P_\infty| = |p_\infty - p_{\infty,*}|\le \de_0$.

We now conclude the proof by bounding the error term in \eqref{2var-6}  from above by a sufficiently small constant 
 times $\int_{B_1} \varrho^2$. 

For the fourth term on the right hand side of \eqref{2var-6}, using \eqref{cRvrho}, in terms of the perturbed density $ \varrho$,
\[
- 4\pi R_*^2 |\mathcal P_\infty| |\mathcal R|
\ge - \frac{R_*^3}{\rho_*} |\mathcal P_\infty| \int_{B_1} | \varrho| - C_0|\mathcal P_\infty| \left(\int_{B_1} \varrho\right)^2
\]
for some constant $C_0>0$.
Since $| \varrho|\le\de_0\le1$ and $|\mathcal P_\infty| = |p_\infty - p_{\infty,*}|\le\de_0$,
by the Cauchy-Schwarz inequality $\left(\int_{B_1} \varrho\right)^2\le |B_1| \int_{B_1} \varrho^2 $ 
\[
- 4\pi R_*^2 |\mathcal P_\infty| |\mathcal R|
\ge - \frac{R_*^3}{\rho_*} |\mathcal P_\infty| \int_{B_1} \varrho^2 - C_0|\mathcal P_\infty| |B_1| \int_{B_1} \varrho^2
\ge - C_1 \de_0 \int_{B_1} \varrho^2
\]
for some constant $C_1>0$.

Now we estimate the cubic term in the third line on the right hand side of \eqref{2var-6}.
Since $M_0 = \textrm{Mass}[\rho,R]$,
\[
\int_{B_R} (\rho - \rho_*)\, dx
= \int_{B_R} \varrho\, dx 
= M_0 - \frac{4\pi R^3}3 \rho_* 
= \frac{4\pi R_*^3}3 \rho_* - \frac{4\pi R^3}3 \rho_*,
\]
or
\EQ{\label{eq-bv-4.37}
\frac{4\pi\rho_*}3 (R^3 - R_*^3) = -\int_{B_R} (\rho-\rho_*)\, dx,
}
which implies
\EQ{\label{eq-bv-4.38}
|\mathcal R| = |R - R_*| \le \frac3{4\pi\rho_*(R^2+RR_*+R_*^2)} |B_R|^{\frac12} \bke{\int_{B_R} |\rho-\rho_*|^2\, dx}^{\frac12} 
\le C_2 \bke{\int_{B_R} |\rho-\rho_*|^2\, dx}^{\frac12},
}
where $C_2>0$ depends only on $\nu, M_0, T_\infty$.
We now control $| \varrho(1,t)|^3$ by the first and the third terms on the right hand side of \eqref{2var-6}. 
Indeed,
\EQN{
| \varrho(1)| = \abs{ \bke{ \varrho(1) - \frac1{|B_1|} \int_{B_1} \varrho}  + \frac1{|B_1|} \int_{B_1} \varrho\, }
&\le \abs{ \varrho(1) - \frac1{|B_1|} \int_{B_1} \varrho\, } + \frac1{|B_1|^{\frac12}} \bke{\int_{B_1} \varrho^2 }^{\frac12}\\
&\le C_3\bket{\abs{ \varrho(1) - \frac1{|B_1|} \int_{B_1} \varrho \,} + \bke{\int_{B_1} \varrho^2 }^{\frac12}}
}
for some $C_3>0$ depending only on $\nu, M_0, T_\infty$.
Since $| \varrho(1)| = |\rho(R) - \rho_*| \le \de_0$,
\EQ{\label{eq-bv-4.39}
| \varrho(1)|^3 = | \varrho(1)| |\varrho(1)|^2
&\le \de_0 C_3 \bket{\abs{ \varrho(1) - \frac1{|B_1|} \int_{B_1} \varrho\, } + \bke{\int_{B_1} \varrho^2 }^{\frac12}}^2\\
&\le 2 \de_0 C_3 \abs{ \varrho(1) - \frac1{|B_1|} \int_{B_1} \varrho\, }^2 + 2\de_0 C_3 \int_{B_1} \varrho^2.
}
Using \eqref{eq-bv-4.38} and \eqref{eq-bv-4.39}, one has
\EQN{
& O\left( |\mathcal{R}|^3 +  | \varrho(1)|^3 +\left(\int_{B_1} | \varrho|\right)^3 \right)\\
&\ge 
 - C \bke{\int_{B_R} (\rho-\rho_*)^2 }^{\frac32} - C\de_0 \abs{ \varrho(1) - \frac1{|B_1|} \int_{B_1} \varrho\, }^2 - C\de_0 \int_{B_1} \varrho^2
 - C \int_{B_1}| \varrho|^3
}
for some $C>0$ depending only on $\nu, M_0, T_\infty$.

Consequently, using $|\mathcal P_\infty| = |p_\infty - p_{\infty,*}|\le \de_0$, \eqref{2var-6} can be further computed as 
\EQN{
\mathcal{E}_{\rm total} - \mathcal{E}_*  
&\ge - C_1 \de_0 \int_{B_1} \varrho^2
+ \bke{\frac{c_v T_\infty R_*^3 |B_1|}{2\rho_*} - C\de_0} \bke{ \varrho(1) - \frac1{|B_1|} \int_{B_1} \varrho }^2 + \frac{R_*^3}{\rho_*}\bke{ \frac{p_{\infty,*}}2 + \frac{2\si}{3R_*} } \int_{B_1} \varrho^2\\
&\quad  - C\de_0 |B_R|^{\frac12} \int_{B_R} (\rho-\rho_*)^2 \,dx - 2C\de_0 \int_{B_1} \varrho^2 - \de_0 \frac{R_*^3}{4\pi\rho_*} |B_1| \int_{B_1} \varrho^2\\
&\ge \Theta \bke{\int_{B_R} (\rho-\rho_*)^2 }
}
for some constant $\Theta>0$, provided $\de_0>0$ is sufficiently small. 
Note that we've used $\int_{B_1} \varrho^2\, dy = R^{-3}\int_{B_R}(\rho - \rho_*)^2\, dx$ in which $R^{-3}\ge \nu^3$ above. 
This completes the proof of Theorem \ref{thm-XYZ}.

\section{An interpolation lemma}

\begin{lemma}\label{lem-interpolate}
Let $\Om$ be a bounded Lipschitz domain in $\R^n$, $k< m$, and $0<\ga\le1$. For $u\in C^\infty(\Om)$,
\[
\norm{\nb^k u}_{L^\infty(\Om)} \le  C_1 \norm{u}_{L^p(\Om)}^\la\norm{u}_{C^{m,\ga}(\Om)}^{1-\la} + C_2 \norm{u}_{L^s(\Om)}
\]
for arbitrary $s\ge1$,
where $-\frac{k}n = \frac{\la}p - (1-\la) \frac{m}n$, and the constants $C_1$, $C_2$ depend on the domain $\Om$ and on $s$ in addition to the other parameters.
\end{lemma}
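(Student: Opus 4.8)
\textbf{Proof proposal for Theorem \ref{lem-interpolate}.}

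The plan is to reduce the stated estimate to a classical Gagliardo--Nirenberg interpolation inequality on the bounded Lipschitz domain $\Om$, and then absorb the scaling mismatch through the additive $L^s$ term. First I would recall the standard Gagliardo--Nirenberg inequality in the form: for $\Om$ a bounded Lipschitz domain, $0\le k<m$, $0<\ga\le1$, $1\le p\le\infty$, and $\la\in(0,1]$ satisfying the dimensional balance
\[
-\frac kn = \frac\la p - (1-\la)\,\frac{m+\ga}{n},
\]
one has $\norm{\nb^k u}_{L^\infty(\Om)}\le C\norm{u}_{L^p(\Om)}^\la\norm{u}_{C^{m,\ga}(\Om)}^{1-\la} + C\norm{u}_{L^p(\Om)}$; see, e.g., Nirenberg's original paper or standard texts on interpolation. (One checks the precise exponent bookkeeping: since $C^{m,\ga}(\Om)$ controls $m$ derivatives plus a H\"older seminorm of the top derivative, the natural ``smoothness order'' is $m+\ga$, which is what the interpolation exponent $\la$ should reflect; the statement in the lemma writes $\frac mn$, so I would first confirm whether the intended reading uses $m+\ga$ or whether the $\ga$-contribution is being absorbed — in the applications in the body, $m\in\{1,2\}$, $\ga=2\al$, $k\in\{0,1\}$, $p=2$, $n=3$, and one can verify the exponents $\tfrac25,\tfrac35$ and $\tfrac47,\tfrac37$ used there are consistent with $m+\ga$ in the denominator.)

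Next I would handle the replacement of the lower-order term $C\norm{u}_{L^p(\Om)}$ by $C_2\norm{u}_{L^s(\Om)}$ for \emph{arbitrary} $s\ge1$. On a bounded domain, if $s\ge p$ this is immediate from H\"older's inequality: $\norm{u}_{L^p(\Om)}\le|\Om|^{1/p-1/s}\norm{u}_{L^s(\Om)}$. If $s<p$, one uses instead an interpolation/absorption trick: write $\norm{u}_{L^p}\le \norm{u}_{L^s}^{\th}\norm{u}_{L^\infty}^{1-\th}$ for the appropriate $\th\in(0,1)$, bound $\norm{u}_{L^\infty}\le\norm{u}_{C^{m,\ga}}$, and then apply Young's inequality $ab\le \ep a^q + C_\ep b^{q'}$ to split off a small multiple of $\norm{u}_{C^{m,\ga}}$, which can be absorbed into the leading interpolation term on the right-hand side (after first using Young's inequality there too, in the form $\norm{u}_{L^p}^\la\norm{u}_{C^{m,\ga}}^{1-\la}\le \ep\norm{u}_{C^{m,\ga}} + C_\ep\norm{u}_{L^p}$, to see the leading term already dominates a full copy of $\norm{u}_{L^p}$). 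The constants $C_1,C_2$ then depend on $\Om$, on $s$, and on the other parameters, as claimed.

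The main obstacle I anticipate is not any single hard estimate but rather the careful exponent/scaling bookkeeping: ensuring the dimensional-balance relation as written ($-k/n = \la/p - (1-\la)m/n$) is exactly the one under which the Gagliardo--Nirenberg inequality holds with an $L^\infty$ left-hand side, and tracking the H\"older exponent $\ga$ correctly through that relation. A secondary technical point is that on a general bounded \emph{Lipschitz} (not smooth) domain one needs an extension operator $E:C^{m,\ga}(\Om)\to C^{m,\ga}(\R^n)$ bounded on the relevant scales (Stein's extension theorem suffices for Lipschitz domains) so that the whole-space interpolation inequality can be transferred to $\Om$; I would invoke this extension at the outset and then work on $\R^n$, restricting at the end. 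Once the domain is reduced to $\R^n$, the remaining steps are the standard dyadic/Littlewood--Paley or direct kernel estimates underlying Gagliardo--Nirenberg, which I would cite rather than reprove.
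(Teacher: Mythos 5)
Your overall strategy (cite Gagliardo--Nirenberg, transfer to $\Om$ by extension, adjust the lower-order term) is the same as the paper's, but you resolve the key exponent question incorrectly. The paper applies Gagliardo--Nirenberg in the form
\[
\norm{\nb^k u}_{L^\infty(\Om)} \le C_1 \norm{u}_{L^p(\Om)}^{\la}\,\norm{\nb^m u}_{L^\infty(\Om)}^{1-\la} + C_2\norm{u}_{L^s(\Om)},
\]
i.e.\ with the $m$-th derivative measured in $L^\infty$, so the balance relation is exactly $-\tfrac kn=\tfrac{\la}p-(1-\la)\tfrac mn$ as stated; the H\"older index $\ga$ never enters the scaling and is used only through the trivial bound $\norm{\nb^m u}_{L^\infty(\Om)}\le\norm{u}_{C^{m,\ga}(\Om)}$. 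Your parenthetical verification is wrong: the exponents used in the body, $\tfrac25,\tfrac35$ and $\tfrac47,\tfrac37$, are independent of $\al$ and come from solving $0=\tfrac{\la}{2}-(1-\la)\tfrac m3$ with $m=1$ (giving $\la=\tfrac25$) and $m=2$ (giving $\la=\tfrac47$); a balance with $m+\ga=m+2\al$ in the denominator would produce $\al$-dependent exponents such as $\la=2(1+2\al)/(5+4\al)$, which do not match. So the version of the inequality you propose to prove is not the one stated or used.

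That said, your approach is repairable rather than doomed: since $\la=\frac{M-k}{n/p+M}$ is increasing in the smoothness order $M$, the $(m+\ga)$-balance gives a \emph{larger} exponent $\la'$, and on a bounded domain $\norm{u}_{L^p(\Om)}\le C\norm{u}_{C^{m,\ga}(\Om)}$, so $\norm{u}_{L^p}^{\la'}\norm{u}_{C^{m,\ga}}^{1-\la'}\le C\norm{u}_{L^p}^{\la}\norm{u}_{C^{m,\ga}}^{1-\la}$ and your sharper inequality would imply the stated one. But you neither commit to this reading nor supply that reduction, and the check you do offer to settle the ambiguity is erroneous. The remainder of your argument (H\"older for $s\ge p$, interpolation plus Young and absorption for $s<p$, Stein extension for Lipschitz domains) is sound and in fact more careful than the paper, which simply cites the inequality on $\Om$ with the $L^s$ lower-order term.
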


\begin{proof}
By Gagliardo--Nirenberg interpolation inequality, 
\[
\norm{\nb^k u}_{L^\infty(\Om)} \le C_1 \norm{u}_{L^p(\Om)}^{\la} \norm{\nb^m u}_{L^\infty(\Om)}^{1-\la} + C_2 \norm{u}_{L^s(\Om)}
\]
for arbitrary $s\ge1$, where 
\[
0 = \frac{k}{n} - \frac{m}{n}(1-\la) + \frac{\la}p,
\]
and the constants $C_1$, $C_2$ depend on the domain $\Om$ and on $s$ in addition to the other parameters.
The lemma then follows since $\norm{\nb^m u}_{L^\infty(\Om)} \le \norm{u}_{C^{m,\ga}(\Om)}$.
\end{proof}

\section{Estimate of the exponential decay rate $\be$ in the linearized system}\label{sec-negative-upper-bound}

In this appendix, we prove parts (2) and (3) of Theorem \ref{prop-spectrum}. In particular, we investigate the location of the roots of the meromorphic function $Q(\tau)$ defined in \eqref{eq-Q-simplified} which corresponds to the spectrum of the linear operator $\mathcal L$.

\begin{lemma}\label{lem-negative-upper-bound}
There exists a negative upper bound for the real parts of the roots of the meromorphic function $Q(\tau)$ in \eqref{eq-Q-simplified}.
More precisely, there exists $\be>0$ such that $\xi<-\be$ for all roots $\tau=\xi+i\eta$ of $Q(\tau)$.
The constant $\be$ can be chosen as
\EQ{\label{eq-be-def}
&\be = \min\Bigg\{ \bke{1 - \sqrt{\dfrac{1-\frac1\ga}{\frac{3p_{\infty,*}R_*+6\si}{2p_{\infty,*}R_*+6\si} - \frac1\ga}}}\pi^2\bar\ka,\,  \sqrt{ \frac{\Rg T_\infty \rho_*}{\rho_lR_*^2} },\\
&\qquad\qquad\quad \frac{2\mu_l}{\rho_lR_*^2} + \mathbbm{1}_{\De\le0}\, \frac{\Rg T_\infty \rho_*}{\pi^4\bar\ka \rho_lR_*^2} \bke{1-\frac1\ga} \bkt{\frac{\pi^4}{90} + O\bke{\bke{\frac1{\pi^2\bar\ka}\sqrt{ \frac{\Rg T_\infty \rho_*}{\rho_lR_*^2} }}^{3/2}}} - \mathbbm{1}_{\De>0}\, \frac{\sqrt{\De}}{2\rho_lR_*} \Bigg\},
}
in which
\[
\De:= \bke{ \frac{4\mu_l}{R_*} }^2 - 8\rho_l\Rg T_\infty \rho_*.
\]
\end{lemma}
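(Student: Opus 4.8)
\textbf{Proof proposal for Lemma \ref{lem-negative-upper-bound}.}
The plan is to locate the zeros of the meromorphic function $Q(\tau)$ defined in \eqref{eq-Q-simplified} by writing $Q(\tau) = A(\tau)\,B(\tau) + C$, where
\[
A(\tau) = \frac{1}{\Rg T_\infty}\bke{\frac{4\pi}{3\ga} + \frac{8(\ga-1)}{\pi\ga}\sum_{j=1}^\infty \frac{\pi^2\bar\ka}{\pi^2\bar\ka j^2 + \tau}},\quad
B(\tau) = \rho_lR_*\tau^2 + \frac{4\mu_l}{R_*}\tau - \frac{2\si}{R_*^2},\quad C = 4\pi\frac{\rho_*}{R_*}.
\]
First I would record the structural facts: $A(\tau)$ is analytic off the poles $\{-\pi^2\bar\ka j^2\}$, and $B(\tau)$ is a quadratic whose two roots $\tau_\pm = \frac{-2\mu_l/R_* \pm \sqrt{(2\mu_l/R_*)^2 + 2\rho_l\si/R_*\cdot R_*}}{\rho_lR_*}$ are real (one positive, one negative) when $\si>0$; more precisely the sign of $\De = (4\mu_l/R_*)^2 - 8\rho_l\Rg T_\infty\rho_*$ controls whether the ``shifted'' quadratic $\rho_lR_*\tau^2 + (4\mu_l/R_*)\tau + (\Rg T_\infty\rho_* - 2\si/R_*) + \cdots$ has real or complex roots, which is the origin of the case split $\mathbbm 1_{\De\le 0}$ versus $\mathbbm 1_{\De>0}$ in \eqref{eq-be-def}. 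The three candidate values in the minimum come from three different mechanisms that can each push a zero toward the imaginary axis: (i) a zero near one of the poles $-\pi^2\bar\ka j^2$, dominated by the $j=1$ term, giving the first entry; (ii) a zero near the (shifted) quadratic's root modulus $\sqrt{\Rg T_\infty\rho_*/(\rho_lR_*^2)}$, giving the second entry; and (iii) a zero near the real part of the quadratic's root, $-2\mu_l/(\rho_lR_*^2)$, with a correction from the tail of the series evaluated at that point, giving the third entry.

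The key steps, in order, would be: (1) show that for $\Re\tau$ large and negative (more negative than all the candidate bounds) one has $|Q(\tau)|>0$ by a direct estimate — when $\tau$ is far in the left half plane and bounded away from the poles, $A(\tau)\to \frac{4\pi}{3\ga\Rg T_\infty}$ up to a small error, $|B(\tau)|\sim \rho_lR_*|\tau|^2\to\infty$, so $Q(\tau)\ne 0$; (2) handle the region near the poles: write $\tau = -\pi^2\bar\ka j^2 + \de$, extract the singular term $\frac{8(\ga-1)}{\pi\ga}\cdot\frac{\pi^2\bar\ka}{\de}$ from $A$, and show that the resulting equation $Q=0$ forces $|\de|$ (equivalently $1-\Re\tau/(\pi^2\bar\ka j^2)$) to be bounded below; the worst case is $j=1$ and gives the factor $\bke{1-\sqrt{(1-1/\ga)/(\tfrac{3p_{\infty,*}R_*+6\si}{2p_{\infty,*}R_*+6\si}-1/\ga)}}$ after using $\Rg T_\infty\rho_* = p_{\infty,*}+2\si/R_*$ and the exact sum $\sum_j \Ga_j^2 = \frac{4(\ga-1)^2\pi}{3\ga^2}$ (this is what produces the ratio $\frac{3p_{\infty,*}R_*+6\si}{2p_{\infty,*}R_*+6\si}$); (3) handle the bounded region away from the poles by rewriting $Q(\tau)=0$ as $B(\tau) = -C/A(\tau)$ and analyzing, for $\tau = \xi + i\eta$ with $\xi$ in a compact interval, where equality can hold — using $\Re B$, $\Im B$, and the monotonicity/positivity properties of $A(\tau)$ on the relevant range; the bounds $\sqrt{\Rg T_\infty\rho_*/(\rho_lR_*^2)}$ and the $\mu_l$-term arise by isolating $\Im B = 0$ (forcing $\eta = -2\mu_l/(\rho_l R_*^2)$ up to corrections) and then examining $\Re B = -C/A$; (4) assemble the three regimes into the single lower bound $-\be$ with $\be$ as in \eqref{eq-be-def}, and read off part (2) of Proposition \ref{prop-spectrum} (existence of the gap) and part (3) (the explicit formula). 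The resolvent bound $\norm{(\mathcal L-\tau I)^{-1}}_{\ell^2\to\ell^2}\le C(\be)$ for $\Re\tau\le -\be$, $\tau\ne 0$, then follows from the explicit formulas for $\wt z, \wt{\mathcal R}, \wt{\dot{\mathcal R}}, \wt{c_k}$ derived in the proof of Proposition \ref{prop-spectrum} together with $\sum j^{-2}<\infty$ and uniform lower bounds on $|Q(\tau)|$, $|\bar\ka\la_j + \tau|$ in that half-plane.

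The main obstacle I anticipate is step (3): the careful real/imaginary-part analysis of $B(\tau) = -C/A(\tau)$ in the bounded-away-from-poles regime, where one must simultaneously control the infinite series $\sum_j \frac{\pi^2\bar\ka}{\pi^2\bar\ka j^2 + \tau}$ (which for $\tau$ near $-2\mu_l/(\rho_lR_*^2)$ with small imaginary part needs an asymptotic expansion — this is where the $\frac{\pi^4}{90} = \zeta(4)\cdot\text{const}$ and the $O\bigl((\tfrac{1}{\pi^2\bar\ka}\sqrt{\Rg T_\infty\rho_*/(\rho_lR_*^2)})^{3/2}\bigr)$ error terms enter) and extract a strict negative upper bound rather than merely $\Re\tau<0$. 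The series expansion has to be uniform in the compact $\xi$-range, and one must check that the quadratic $B$ and the perturbation $C/A$ cannot conspire to give a zero with real part arbitrarily close to zero; getting the constants to match \eqref{eq-be-def} exactly (as opposed to some cruder bound) is the delicate bookkeeping. The other regimes (1) and (2) I expect to be comparatively routine estimates. A secondary subtlety is making sure the case split on $\De$ is handled correctly — when $\De>0$ the quadratic already has a real root and the subtracted term $-\sqrt{\De}/(2\rho_lR_*)$ accounts for how far that root sits to the right of $-2\mu_l/(\rho_lR_*^2)$, whereas when $\De\le 0$ the roots are complex and the series-correction term is what matters.
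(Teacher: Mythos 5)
Your overall strategy coincides with the paper's: both analyze $Q(\tau)=0$ directly through the factorization $Q=AB+C$, proceed by case analysis, exploit the strict positivity of the terms of the Dirichlet series, use the equilibrium relation $\Rg T_\infty\rho_*=p_{\infty,*}+2\si/R_*$ together with $\sum_j j^{-2}=\pi^2/6$, and obtain the $\pi^4/90$ correction from the expansion of $\sum_j (j^4+B^2)^{-1}$. The paper organizes the cases by the size of $|\eta|=|\Im\tau|$ (small nonzero $\eta$, large $\eta$, and $\eta=0$, the last being where $\De$ and $\sqrt{\De}$ enter) rather than by distance from the poles, and the first two entries of the minimum arise from a single contradiction argument with two simultaneous hypotheses rather than from two separate mechanisms; but the sources you identify for each entry are essentially the right ones.

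Two points in your sketch would not survive execution as written. First, your step (1) is false: $Q$ has zeros with arbitrarily negative real part. On each real interval $-\pi^2\bar\ka(j+1)^2<\xi<-\pi^2\bar\ka j^2$ the series in $A$ sweeps through all of $\R$ while $B(\xi)>0$ for $\xi$ very negative, so $Q=AB+C$ vanishes somewhere in that interval. This does not threaten the lemma, which only asserts an upper bound on $\Re\tau$, but that region must simply be left alone, not ``estimated away.'' Second, and more substantively, the $2\mu_l/(\rho_lR_*^2)$ bound does not come from ``$\Im B=0$ up to corrections.'' The equation $\Im Q=\Re A\,\Im B+\Im A\,\Re B=0$ is regrouped in the paper so that it reads $(\text{positive})\cdot\bke{2\rho_lR_*\xi+4\mu_l/R_*}+(\text{positive})\cdot\bke{\rho_lR_*(\xi^2+\eta^2)+2\si/R_*^2}=0$, which forces the strict one-sided inequality $\xi<-2\mu_l/(\rho_lR_*^2)$ for every root with $\eta\ne0$; this is an exact sign argument, not a perturbative localization, and a perturbative version would leave corrections of uncontrolled sign and hence no spectral gap. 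The quantitative improvement (the $\pi^4/90$ term) then comes from substituting the imaginary-part identity back into the real part and, in the large-$\eta$ regime, from the lower bound $|B(\tau)|^2>\rho_l^2R_*^2\eta^4$ obtained by comparing $\tau$ with the real roots of the quadratic. If you replace the heuristic in your step (3) by this sign decomposition, the remainder of your plan matches the paper's proof, including the derivation of the resolvent bound from the explicit Laplace-transform formulas.
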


\begin{proof}[Proof of Lemma \ref{lem-negative-upper-bound}]
Let $\tau = \xi + i\eta$ be a root of $Q(\tau)$, {\it i.e.}, $Q(\tau)=0$. Plugging $\tau = \xi + i\eta$, $\xi\in\R,\eta\in\R$, into \eqref{eq-Q-simplified}, we have
\[
Q(\xi + i\eta) = \frac1{\Rg T_\infty} \bke{\Xi_1 + iH_1} \bke{\Xi_2 + iH_2} + 4\pi\,\frac{\rho_*}{R_*},
\]
where
\EQ{\label{eq-Xi-Eta}
\Xi_1 &= \frac{4\pi}{3\ga} + \frac{8(\ga-1)}{\pi\ga}\sum_{j=1}^\infty \frac{\pi^2\bar\ka\bke{\pi^2\bar\ka j^2 + \xi}}{\bke{\pi^2\bar\ka j^2 + \xi}^2 + \eta^2},\\
H_1 &=  - \frac{8(\ga-1)}{\pi\ga} \sum_{j=1}^\infty \frac{\pi^2\bar\ka\eta}{\bke{\pi^2\bar\ka j^2 + \xi}^2 + \eta^2},\\
\Xi_2 &= \rho_lR_*\bke{\xi^2 - \eta^2} + \frac{4\mu_l}{R_*}\, \xi - \frac{2\si}{R_*^2},\\
H_2 &= \rho_lR_*(2\xi\eta) + \frac{4\mu_l}{R_*}\, \eta. 
}
Setting real and imaginary parts of $Q$ equal to zero, we obtain
\EQ{\label{eq-real-imaginary-Q}
\text{real part: }& \frac1{\Rg T_\infty} \bke{\Xi_1\Xi_2 - H_1H_2} + 4\pi\, \frac{\rho_*}{R_*} = 0,\\
\text{imaginary part: }& \frac1{\Rg T_\infty} \bke{\Xi_1H_2 + H_1\Xi_2} = 0.
}
The real part in \eqref{eq-real-imaginary-Q} reads
\EQ{\label{eq-real-part-1}
0 &= \frac1{\Rg T_\infty} \bigg[ \bke{ \frac{4\pi}{3\ga} + \frac{8(\ga-1)}{\pi\ga}\sum_{j=1}^\infty \frac{\pi^2\bar\ka\bke{\pi^2\bar\ka j^2 + \xi}}{\bke{\pi^2\bar\ka j^2 + \xi}^2 + \eta^2} } \bke{\rho_lR_*\bke{\xi^2-\eta^2} + \frac{4\mu_l}{R_*}\, \xi - \frac{2\si}{R_*^2} }\\
&\qquad\qquad\qquad\qquad\qquad + \frac{8(\ga-1)}{\pi\ga} \sum_{j=1}^\infty \frac{\pi^2\bar\ka\eta^2}{\bke{\pi^2\bar\ka j^2 + \xi}^2 + \eta^2 } \bke{\rho_lR_*(2\xi) + \frac{4\mu_l}{R_*}}\bigg] + 4\pi\, \frac{\rho_*}{R_*}\\
&= \frac1{\Rg T_\infty} \bigg[ \bke{ \frac{4\pi}{3\ga} + \frac{8(\ga-1)}{\pi\ga}\sum_{j=1}^\infty \frac{\pi^4\bar\ka^2j^2}{\bke{\pi^2\bar\ka j^2 + \xi}^2 + \eta^2} } \bke{\rho_lR_*\bke{\xi^2-\eta^2} + \frac{4\mu_l}{R_*}\, \xi - \frac{2\si}{R_*^2} }\\
&\qquad\qquad\quad + \frac{8(\ga-1)}{\pi\ga} \sum_{j=1}^\infty \frac{\pi^2\bar\ka}{\bke{\pi^2\bar\ka j^2 + \xi}^2 + \eta^2 } \bke{\rho_lR_*\xi(\xi^2+\eta^2) + \frac{4\mu_l}{R_*}(\xi^2+\eta^2) - \frac{2\si}{R_*^2}\, \xi }\bigg] + 4\pi\, \frac{\rho_*}{R_*}.
}
When $\eta\neq0$, the imaginary part in \eqref{eq-real-imaginary-Q} reads
\EQ{\label{eq-imaginary-part-1}
0 &= \bke{\frac{4\pi}{3\ga} + \frac{8(\ga-1)}{\pi\ga} \sum_{j=1}^\infty \frac{\pi^2\bar\ka\bke{\pi^2\bar\ka j^2 + \xi}}{\bke{\pi^2\bar\ka j^2 + \xi}^2 + \eta^2}} \bke{\rho_lR_*(2\xi) + \frac{4\mu_l}{R_*} }\\
&\qquad\qquad\qquad\qquad\qquad - \frac{8(\ga-1)}{\pi\ga} \sum_{j=1}^\infty \frac{\pi^2\bar\ka}{\bke{\pi^2\bar\ka j^2 + \xi}^2 + \eta^2 } \bke{\rho_lR_*(\xi^2 - \eta^2) + \frac{4\mu_l}{R_*}\, \xi - \frac{2\si}{R_*^2} }\\
&= \bke{\frac{4\pi}{3\ga} + \frac{8(\ga-1)}{\pi\ga} \sum_{j=1}^\infty \frac{\pi^4\bar\ka^2 j^2}{\bke{\pi^2\bar\ka j^2 + \xi}^2 + \eta^2}} \bke{\rho_lR_*(2\xi) + \frac{4\mu_l}{R_*} }\\
&\qquad\qquad\qquad\qquad\qquad + \frac{8(\ga-1)}{\pi\ga} \sum_{j=1}^\infty \frac{\pi^2\bar\ka}{\bke{\pi^2\bar\ka j^2 + \xi}^2 + \eta^2 } \bke{\rho_lR_*(\xi^2 + \eta^2) + \frac{2\si}{R_*^2} }.
}
For $\eta\neq0$, the equation \eqref{eq-imaginary-part-1} implies 
\EQ{\label{eq-imaginary-part-2}
\rho_lR_*(2\xi) + \frac{4\mu_l}{R_*} < 0,
}
which gives
\EQ{\label{eq-xi-first-upper-bound}
\xi < - \frac{2\mu_l}{\rho_lR_*^2} \le 0,\qquad \eta\neq0.
}
The equation \eqref{eq-imaginary-part-1} also implies 
\EQ{\label{eq-imaginary-part-3}
\frac{8(\ga-1)}{\pi\ga} \sum_{j=1}^\infty \frac{\pi^2\bar\ka}{\bke{\pi^2\bar\ka j^2 + \xi}^2 + \eta^2 }
= - \bke{\frac{4\pi}{3\ga} + \frac{8(\ga-1)}{\pi\ga} \sum_{j=1}^\infty \frac{\pi^4\bar\ka^2 j^2}{\bke{\pi^2\bar\ka j^2 + \xi}^2 + \eta^2}} \dfrac{\rho_lR_*(2\xi) + \frac{4\mu_l}{R_*}}{\rho_lR_*(\xi^2+\eta^2) + \frac{2\si}{R_*^2}}.
}
Plugging \eqref{eq-imaginary-part-3} into the real part \eqref{eq-real-part-1}, we derive
\EQ{\label{eq-real-part-2}
0 &= \frac1{\Rg T_\infty} \bke{ \frac{4\pi}{3\ga} + \frac{8(\ga-1)}{\pi\ga}\sum_{j=1}^\infty \frac{\pi^4\bar\ka^2j^2}{\bke{\pi^2\bar\ka j^2 + \xi}^2 + \eta^2} }
\Bigg[ \rho_lR_*\bke{\xi^2-\eta^2} + \frac{4\mu_l}{R_*}\, \xi - \frac{2\si}{R_*^2} \\
&\quad - \dfrac{\bke{ \rho_lR_*(2\xi) + \frac{4\mu_l}{R_*} } \bke{ \rho_lR_*\xi(\xi^2+\eta^2) + \frac{4\mu_l}{R_*}(\xi^2+\eta^2) - \frac{2\si}{R_*^2}\, \xi } }{\rho_lR_*(\xi^2+\eta^2) + \frac{2\si}{R_*^2}} \Bigg] + 4\pi\, \frac{\rho_*}{R_*}\\
&=: \frac1{\Rg T_\infty} \bke{ \frac{4\pi}{3\ga} + \frac{8(\ga-1)}{\pi\ga}\sum_{j=1}^\infty \frac{\pi^4\bar\ka^2j^2}{\bke{\pi^2\bar\ka j^2 + \xi}^2 + \eta^2} } \La + 4\pi\, \frac{\rho_*}{R_*},
}
where $\La$ is the square bracket on the right hand side of the first equation.
A straightforward calculation shows that
\EQN{
&\bke{ \rho_lR_*(\xi^2+\eta^2) + \frac{2\si}{R_*^2} }\La \\
&= -\rho_l^2R_*^2 (\xi^2 + \eta^2)^2 - \frac{4\mu_l}{R_*} \bke{\rho_lR_*(2\xi) + \frac{4\mu_l}{R_*} } (\xi^2 + \eta^2) + \frac{2\si}{R_*^2} \bke{2\rho_lR_*\xi^2 + 2\,\frac{4\mu_l}{R_*}\, \xi - 2\rho_lR_*\eta^2} - \bke{\frac{2\si}{R_*^2}}^2\\
&> -\rho_l^2R_*^2 (\xi^2 + \eta^2)^2 - \frac{2\si}{R_*^2}\, 2\rho_lR_*(\xi^2+\eta^2) - \bke{\frac{2\si}{R_*^2}}^2 = -\bke{\rho_lR_*(\xi^2+\eta^2) + \frac{2\si}{R_*^2}}^2,
}
where we've used \eqref{eq-imaginary-part-2} and so $2\rho_lR_*\xi^2 + 2\,\frac{4\mu_l}{R_*}\, \xi = \xi \bke{\rho_lR_*(2\xi) + 2\,\frac{4\mu_l}{R_*} } > -2\rho_lR_*\xi^2$ in the last inequality.
This implies
\EQ{\label{eq-Lambda-ineq}
\La > -\bke{\rho_lR_*(\xi^2+\eta^2) + \frac{2\si}{R_*^2}}.
}
Using \eqref{eq-Lambda-ineq} in \eqref{eq-real-part-2}, we get
\EQ{\label{eq-real-part-3}
0 &> - \frac1{\Rg T_\infty} \bke{ \frac{4\pi}{3\ga} + \frac{8(\ga-1)}{\pi\ga}\sum_{j=1}^\infty \frac{\pi^4\bar\ka^2j^2}{\bke{\pi^2\bar\ka j^2 + \xi}^2 + \eta^2} } \bke{ \rho_lR_*(\xi^2+\eta^2) + \frac{2\si}{R_*^2} } + 4\pi\, \frac{\rho_*}{R_*}.
}
Suppose 
\EQ{\label{eq-xi-assumption-1}
\xi\ge-\th\pi^2\bar\ka,
} 
where $\th\in(0,1)$ to be chosen. Then $\xi\ge-\th\pi^2\bar\ka j^2$ for all $j=1,2,\ldots$.
We further assume that 
\EQ{\label{eq-xi-assumption-2}
\xi \ge - \sqrt{\frac{2p_{\infty,*}}{\rho_lR_*^2} + \frac{4\si}{\rho_lR_*^3} - \eta^2},
}
provided $\eta^2 \le \frac{2p_{\infty,*}}{\rho_lR_*^2} + \frac{4\si}{\rho_lR_*^3}$, so that $\rho_lR_*(\xi^2+\eta^2) + \frac{2\si}{R_*^2} \le \frac{2p_{\infty,*}}{R_*} + \frac{6\si}{R_*^2}$.
Then \eqref{eq-real-part-3} gives
\EQN{
0 &> - \frac1{\Rg T_\infty} \bke{ \frac{4\pi}{3\ga} + \frac{8(\ga-1)}{\pi\ga}\, \frac1{(1-\th)^2} \sum_{j=1}^\infty \frac1{j^2} } \bke{ \frac{2p_{\infty,*}}{R_*} + \frac{6\si}{R_*^2} } + 4\pi\, \frac{\rho_*}{R_*}.
}
Using $\sum_{j=1}^\infty j^{-2} = \pi^2/6$, one has
\[
3\Rg T_\infty\, \frac{\rho_*}{R_*} < \bkt{\frac1\ga + \bke{1-\frac1\ga} \frac1{(1-\th)^2}} \bke{\frac{2p_{\infty,*}}{R_*} + \frac{6\si}{R_*^2} },
\]
or, equivalently, using $\Rg T_\infty \rho_* = p_{\infty,*} + 2\si/R_*$,
\[
\th > 1 - \sqrt{\dfrac{1-\frac1\ga}{\frac{3p_{\infty,*}R_*+6\si}{2p_{\infty,*}R_*+6\si} - \frac1\ga}}.
\]
We simply choose 
\[
\th = 1 - \sqrt{\dfrac{1-\frac1\ga}{\frac{3p_{\infty,*}R_*+6\si}{2p_{\infty,*}R_*+6\si} - \frac1\ga}} \in (0,1)
\]
to reach a contradiction to \eqref{eq-xi-assumption-1} and \eqref{eq-xi-assumption-2}.
Therefore, we have for $\eta\in\R$ with $\eta\neq0$ and $\eta^2\le \frac{2p_{\infty,*}}{\rho_lR_*^2} + \frac{4\si}{\rho_lR_*^3}$ that
\[
\xi < - \min\bket{ \bke{1 - \sqrt{\dfrac{1-\frac1\ga}{\frac{3p_{\infty,*}R_*+6\si}{2p_{\infty,*}R_*+6\si} - \frac1\ga}}}\pi^2\bar\ka,\, \sqrt{\frac{2p_{\infty,*}}{\rho_lR_*^2} + \frac{4\si}{\rho_lR_*^3} - \eta^2}}.
\]
Combining \eqref{eq-xi-first-upper-bound}, for $\eta\in\R$ with $\eta\neq0$ and $\eta^2\le \frac{p_{\infty,*}}{\rho_lR_*^2} + \frac{2\si}{\rho_lR_*^3} $ we have
\EQ{\label{eq-xi-upper-bound-eta-small}
\xi < - \max\bket{\frac{2\mu_l}{\rho_lR_*^2},\, \min\bket{ \bke{1 - \sqrt{\dfrac{1-\frac1\ga}{\frac{3p_{\infty,*}R_*+6\si}{2p_{\infty,*}R_*+6\si} - \frac1\ga}}}\pi^2\bar\ka,\, \sqrt{  \frac{p_{\infty,*}}{\rho_lR_*^2} + \frac{2\si}{\rho_lR_*^3} }} }.
}

Now, we consider the case $\eta^2 > \frac{p_{\infty,*}}{\rho_lR_*^2} + \frac{2\si}{\rho_lR_*^3} $. Since $\eta\neq0$, the imaginary part in \eqref{eq-real-imaginary-Q} gives the identity $\Xi_1 = - \frac{H_1}{H_2}\, \Xi_2$. Using this identity in the real part in \eqref{eq-real-imaginary-Q}, we derive
\[
4\pi\, \frac{\rho_*}{R_*}\, \Rg T_\infty H_2 = H_1\bke{\Xi_2^2 + H_2^2},
\]
which implies
\EQ{\label{eq-Xi-Eta-square}
4\pi\, \frac{\rho_*}{R_*}\, \Rg T_\infty \bke{\rho_lR_*(2\xi) + \frac{4\mu_l}{R_*} } = -\frac{8(\ga-1)}{\pi\ga} \sum_{j=1}^\infty \frac{\pi^2\bar\ka}{\bke{\pi^2\bar\ka j^2 + \xi}^2 + \eta^2}\, \bke{\Xi_2^2 + H_2^2}.
}
To find a positive lower bound for $\Xi_2^2 + H_2^2$,
note that
\[
\Xi_2^2 + H_2^2 = \abs{\rho_lR_*\tau^2 + \frac{4\mu_l}{R_*}\, \tau - \frac{2\si}{R_*^2}}^2 = \rho_l^2R_*^2 \abs{\tau - \tau_+}^2 \abs{\tau - \tau_-}^2,
\]
where 
\[
\tau_{\pm} = \dfrac{-\dfrac{4\mu_l}{R_*} \pm \sqrt{\bke{\dfrac{4\mu_l}{R_*}}^2 + 4\rho_lR_*\, \dfrac{2\si}{R_*^2}}}{2\rho_lR_*}
\]
are on the real axis.
By the triangular inequality $|\tau - \tau_{\pm}| > |\eta|$, and so
\[
\Xi_2^2 + H_2^2 > \rho_l^2 R_*^2 \eta^4.
\]
Therefore, \eqref{eq-Xi-Eta-square} yields
\[
4\pi\, \frac{\rho_*}{R_*}\, \Rg T_\infty \bke{\rho_lR_*(2\xi) + \frac{4\mu_l}{R_*} } < -\frac{8(\ga-1)}{\pi\ga} \sum_{j=1}^\infty \frac{\pi^2\bar\ka}{\pi^4\bar\ka^2 j^4 + \eta^2}\, \rho_l^2R_*^2\eta^4.
\]
Since $\eta^2 > \frac{p_{\infty,*}}{\rho_lR_*^2} + \frac{2\si}{\rho_lR_*^3}$ and $\frac{a^2}{\pi^4\bar\ka^2 j^4 + a}$ is increasing in $a$ for $a = \eta^2 \ge 0$,
we further derive 
\EQ{\label{eq-Xi-Eta-square-1}
4\pi\, \frac{\rho_*}{R_*}\, \Rg T_\infty \bke{\rho_lR_*(2\xi) + \frac{4\mu_l}{R_*} } 
&< -\frac{8(\ga-1)}{\pi\ga} \sum_{j=1}^\infty \frac{\pi^2\bar\ka}{\pi^4\bar\ka^2 j^4 + \bke{\frac{p_{\infty,*}}{\rho_lR_*^2} + \frac{2\si}{\rho_lR_*^3}}}\, \rho_l^2R_*^2 \bke{\frac{p_{\infty,*}}{\rho_lR_*^2} + \frac{2\si}{\rho_lR_*^3}}^2\\
&= -\frac{8(\ga-1)}{\pi\ga}\, \frac1{\pi^2\bar\ka} \sum_{j=1}^\infty \frac1{j^4 + B^2}\, \rho_l^2R_*^2 \bke{\frac{p_{\infty,*}}{\rho_lR_*^2} + \frac{2\si}{\rho_lR_*^3}}^2,
}
where
\[
B:= \frac1{\pi^2\bar\ka}\sqrt{ \frac{p_{\infty,*}}{\rho_lR_*^2} + \frac{2\si}{\rho_lR_*^3}}.
\]
Using 
\EQN{
\sum_{j=1}^\infty \frac1{j^4 + B^2} 
&= \frac{e^{\frac{i\pi}4}\pi\cot\bke{e^{\frac{i\pi}4}\pi\sqrt{B}} + e^{\frac{3i\pi}4}\pi\cot\bke{e^{\frac{3i\pi}4}\pi\sqrt{B}} }{4B^{3/2}} - \frac1{2B^2}\\
&= \dfrac{\frac{2\pi}{\sqrt2} \bke{\frac{\cot\bke{\pi\sqrt{B/2}}\csch^2\bke{\pi\sqrt{B/2}} + \csc^2\bke{\pi\sqrt{B/2}} \coth\bke{\pi\sqrt{B/2}}}{\cot^2\bke{\pi\sqrt{B/2}} + \coth^2\bke{\pi\sqrt{B/2}}} }}{4B^{3/2}} - \frac1{2B^2}\\
& = \dfrac{\frac{2\pi}{\sqrt2} \bke{ \frac1{\pi\sqrt{B/2}} + \frac4{45} \bke{\pi\sqrt{B/2}}^3 + O\bke{\bke{\pi\sqrt{B/2}}^6}}}{4B^{\frac32}} - \frac1{2B^2}\\
& = \frac{\pi^4}{90} + O\bke{B^{3/2}}\ \text{ for }B\ll1,
}
we have from \eqref{eq-Xi-Eta-square-1} that
\EQN{
4\pi\, &\frac{\rho_*}{R_*}\, \Rg T_\infty \bke{\rho_lR_*(2\xi) + \frac{4\mu_l}{R_*} } \\
&< -\frac{8(\ga-1)}{\pi\ga}\, \frac1{\pi^2\bar\ka} \bke{\frac{\pi^4}{90} + O\bke{\bke{\frac1{\pi^2\bar\ka}\sqrt{ \frac{p_{\infty,*}}{\rho_lR_*^2} + \frac{2\si}{\rho_lR_*^3} }}^{3/2}}}  \rho_l^2R_*^2 \bke{\frac{p_{\infty,*}}{\rho_lR_*^2} + \frac{2\si}{\rho_lR_*^3}}^2.
}
Consequently, we have for $\eta^2 > \frac{p_{\infty,*}}{\rho_lR_*^2} + \frac{2\si}{\rho_lR_*^3} $ that
\EQ{\label{eq-xi-upper-bound-eta-large}
\xi< -\frac{2\mu_l}{\rho_lR_*^2} - \frac{\Rg T_\infty \rho_*}{\pi^4\bar\ka \rho_lR_*^2} \bke{1-\frac1\ga} \bke{\frac{\pi^4}{90} + O\bke{\bke{\frac1{\pi^2\bar\ka}\sqrt{\frac{p_{\infty,*}}{\rho_lR_*^2} + \frac{2\si}{\rho_lR_*^3}} }^{3/2}}},
}
where $\Rg T_\infty \rho_* = p_{\infty,*} + 2\si/R_*$ is used.

It remains to consider the case $\eta=0$. 
We first show that $\xi<0$.
Suppose for the sake of contradiction that $\xi\ge0$ then
\EQN{
Q(\xi) 
&= \frac1{\Rg T_\infty} \bke{\frac{4\pi}{3\ga} + \frac{8(\ga-1)}{\pi\ga} \sum_{j=1}^\infty \frac{\pi^2\bar\ka}{\pi^2\bar\ka j^2 + \xi} } \bke{\rho_lR_*\xi^2 + \frac{4\mu_l}{R_*}\, \xi }\\
&\quad\quad\quad - \frac{2\si}{\Rg T_\infty R_*^2} \bke{\frac{4\pi}{3\ga} + \frac{8(\ga-1)}{\pi\ga} \sum_{j=1}^\infty \frac{\pi^2\bar\ka}{\pi^2\bar\ka j^2 + \xi} } + 4\pi\,\frac{\rho_*}{R_*},
}
where the second line is greater than
\[
 - \frac{2\si}{\Rg T_\infty R_*^2} \bke{\frac{4\pi}{3\ga} + \frac{8(\ga-1)}{\pi\ga} \sum_{j=1}^\infty \frac{\pi^2\bar\ka}{\pi^2\bar\ka j^2} } + 4\pi\,\frac{\rho_*}{R_*}
=  \frac{8\pi\rho_*}{3R_*} + \frac{4\pi p_{\infty,*}}{3\Rg T_\infty R_*} > 0.
\]
Here the identities $\sum_{j=1}^\infty j^{-2} = \frac{\pi^2}6$ and $\Rg T_\infty\rho_* = p_{\infty,*} + \frac{2\si}{R_*}$ are used. 
This yields $Q(\xi)>0$, which contradicts to the fact that $Q(\tau)=0$. Thus, we have $\xi<0$ for $\eta=0$.

Now we search for a negative upper bound for $\xi$ when $\eta=0$.
Suppose 
\EQ{\label{eq-xi-assumption-3}
\xi \ge -\th_0\pi^2\bar\ka,
}
where $0<\th_0<1$ to be chosen. Suppose further that 
\EQ{\label{eq-xi-assumption-4}
\xi > \frac{-\frac{4\mu_l}{R_*} + \sqrt{\De}}{2\rho_lR_*}\quad \text{ if }\De:= \bke{ \frac{4\mu_l}{R_*} }^2 - 4\rho_lR_* \bke{ \frac{2p_{\infty,*}}{R_*} + \frac{4\si}{R_*^2}} >0
}
such that 
\EQ{\label{eq-xi-assumption-4-consequence}
\rho_lR_*\xi^2 + \frac{4\mu_l}{R_*}\, \xi - \frac{2\si}{R_*^2} \ge -\frac{2p_{\infty,*}}{R_*} - \frac{6\si}{R_*^2}.
} 
Note that  the inequality \eqref{eq-xi-assumption-4-consequence} always holds when $\De\le0$.
Then
\EQN{
0 = Q(\xi) &= \frac1{\Rg T_\infty} \bke{\frac{4\pi}{3\ga} + \frac{8(\ga-1)}{\pi\ga} \sum_{j=1}^\infty \frac{\pi^2\bar\ka}{\pi^2\bar\ka j^2 + \xi} } \bke{\rho_lR_*\xi^2 + \frac{4\mu_l}{R_*}\, \xi - \frac{2\si}{R_*^2}} + 4\pi\,\frac{\rho_*}{R_*}\\
&> - \frac1{\Rg T_\infty} \bke{\frac{4\pi}{3\ga} + \frac{8(\ga-1)}{\pi\ga(1-\th_0)} \sum_{j=1}^\infty \frac{\pi^2\bar\ka}{\pi^2\bar\ka j^2} } \bke{ \frac{2p_{\infty,*}}{R_*} + \frac{6\si}{R_*^2}} + 4\pi\,\frac{\rho_*}{R_*}\\
&= - \frac1{\Rg T_\infty} \bke{\frac{4\pi}{3\ga} + \frac{4\pi}3 \bke{1-\frac1\ga} \frac1{1-\th_0} } \bke{ \frac{2p_{\infty,*}}{R_*} +  \frac{6\si}{R_*^2}} + 4\pi\,\frac{\rho_*}{R_*}
}
So
\[
4\pi\Rg T_\infty\, \frac{\rho_*}{R_*} < \bke{\frac{4\pi}{3\ga} + \frac{4\pi}3 \bke{1-\frac1\ga} \frac1{1-\th_0} } \bke{\frac{2p_{\infty,*}}{R_*} + \frac{6\si}{R_*^2} },
\]
or, equivalently,
\[
\th_0 > 1 - \dfrac{1-\dfrac1\ga }{\dfrac{3\Rg T_\infty\, \frac{\rho_*}{R_*}}{\frac{2p_{\infty,*}}{R_*} + \frac{6\si}{R_*^2}} - \dfrac1{\ga} } 
= 1 - \dfrac{1-\dfrac1\ga }{ \frac{3p_{\infty,*}R_*+6\si}{2p_{\infty,*}R_*+6\si} - \dfrac1{\ga} },
\]
where $\Rg T_\infty \rho_* = p_{\infty,*} + \frac{2\si}{R_*}$ has been used in the last equation.
We then choose 
\[
\th_0 = 1 - \dfrac{1-\dfrac1\ga }{ \dfrac{3p_{\infty,*}R_*+6\si}{2p_{\infty,*}R_*+6\si} - \dfrac1{\ga} } \in (0,1)
\]
to reach a contradiction to \eqref{eq-xi-assumption-3} and \eqref{eq-xi-assumption-4}. 
Hence we derive for $\eta=0$
\EQ{\label{eq-xi-upper-bound-eta-zero}
\xi < 
\begin{cases}
- \min\bket{ \bke{ 1 - \dfrac{1-\dfrac1\ga }{ \dfrac{3p_{\infty,*}R_*+6\si}{2p_{\infty,*}R_*+6\si} - \dfrac1{\ga} } }\pi^2\bar\ka,\, \dfrac{ \dfrac{4\mu_l}{R_*} - \sqrt{\De}}{2\rho_lR_*} }&\ \text{ if }\De>0,\\
- \bke{ 1 - \dfrac{1-\dfrac1\ga }{ \dfrac{3p_{\infty,*}R_*+6\si}{2p_{\infty,*}R_*+6\si} - \dfrac1{\ga} } }\pi^2\bar\ka&\ \text{ if }\De\le0.
\end{cases}
}
Combining the upper bounds \eqref{eq-xi-upper-bound-eta-small}, \eqref{eq-xi-upper-bound-eta-large}, and \eqref{eq-xi-upper-bound-eta-zero} for different cases of $\eta$ and using the identity $\Rg T_\infty \rho_* = p_{\infty,*} + \frac{2\si}{R_*}$, the lemma follows.

\end{proof}

\section{Rate of convergence of slow solutions approaching to center manifold for a class of fully nonlinear autonomous systems}\label{sec-center-manifold}

As mentioned in the paragraph below Theorem \ref{thm-nonlinear-exp-decay}, there are several obstacles preventing us from direct applying center manifold theorem to prove the exponential decay in nonlinear bubble oscillations.
One of which is the quasilinear character of the problem \eqref{eq-carr-6.3.1-simple-form}.
For this purpose, we develop in this appendix a geometric theory for a class of fully nonlinear autonomous systems which covers the quasilinear system \eqref{eq-carr-6.3.1-simple-form}.

We study a larger class of fully nonlinear autonomous systems of the form $\dot{\bf w} = \mathcal L{\bf w} + \mathcal N({\bf w}, \dot{\bf w})$ that includes the quasilinear autonomous system \eqref{eq-carr-6.3.1-simple-form} of our concern.
Assuming that the solution of such equation converges toward a given center manifold and that the time derivative of the solution is sufficiently small for all time, we prove that the convergence rate is exponential.
The proof is an adaptation of \cite[Sections 2.4 and 6.3]{Carr-book1981}, where the existence and stability of center manifold for semilinear equations are established.

\bigskip\noindent{\bf Setup of the fully nonlinear autonomous system, assumptions on the solution, and the center manifold.}
Let $Z$ be a Banach space with norm $\norm{\,\cdot\,}$.
We consider the evolution equation
\EQ{\label{eq-carr-6.3.1-general}
\dot{\bf w} = \mathcal L{\bf w} + \mathcal N({\bf w}, \dot{\bf w}),\quad
{\bf w}(0) \in Z,
}
where $\mathcal N({\bf w},{\bf p}):Z\times Z\to Z$ has a uniformly continuous second derivative with $\mathcal N({\bf 0},{\bf p}) = {\bf 0}$ and $\pd_{({\bf w}, {\bf p})}\mathcal N({\bf 0}, {\bf 0}) = {\bf O}$.

Assume

(i) $Z = X\,\oplus\, Y$ where $X$ is finite dimensional and $Y$ is closed.

(ii) $X$ is $\mathcal L$-invariant and that if $\mathcal A:= \mathcal L|_X$, then the real parts of the eigenvalues of $\mathcal A$ are all zeros.

(iii) $Y$ is $e^{\mathcal L t}$-invariant. Let $Q_1$ be a projection on $X$ (not necessarily along $Y$) and $Q_2 := I - Q_1$. For some positive constants $b, c$,
\EQ{\label{eq-carr-6.3.2}
\norm{e^{\mathcal L t} Q_2 } \le ce^{-bt},\quad t\ge0.
}

Let ${\bf w}$ be a solution of \eqref{eq-carr-6.3.1-general}.
Decompose ${\bf w}$ into ${\bf w} = {\bf x} + {\bf y}$ where ${\bf x} = Q_1{\bf w}$ and ${\bf y} = Q_2{\bf w}$.
Let $\mathcal B = Q_2\mathcal L$.
Then equation \eqref{eq-carr-6.3.1} can be written as 
\EQ{\label{eq-carr-6.3.4-original}
\dot{\bf x} &= \mathcal A{\bf x} + f({\bf x}, {\bf y}, \dot{\bf x}, \dot{\bf y}),\\
\dot{\bf y} &= \mathcal B{\bf y} + g({\bf x}, {\bf y}, \dot{\bf x}, \dot{\bf y}),
}
where
\[
f({\bf x}, {\bf y}, \dot{\bf x}, \dot{\bf y}) = Q_1\mathcal N({\bf x}+{\bf y}, \dot{\bf x}+\dot{\bf y}),\quad
g({\bf x}, {\bf y}, \dot{\bf x}, \dot{\bf y}) = Q_2\mathcal N({\bf x}+{\bf y}, \dot{\bf x}+\dot{\bf y}).
\]

A curve ${\bf y} = h({\bf x})$, defined for $|{\bf x}|$ small, is said to be an \emph{invariant manifold} for \eqref{eq-carr-6.3.4-original} if the solution $({\bf x}(t),{\bf y}(t))$ of \eqref{eq-carr-6.3.4-original} through $({\bf x}(0),h({\bf x}(0)))$ satisfies ${\bf y}(t) = h({\bf x}(t))$.
A \emph{center manifold} for \eqref{eq-carr-6.3.4-original} is an invariant manifold that is tangent to $X$ space at the origin.

By the assumption on the nonlinearity $\mathcal N({\bf w},\dot{\bf w})$, there exists a continuous function $k(\ve)$ with $k(0)=0$ such that 
\EQ{\label{eq-carr-2.3.5}
\norm{ f({\bf x}, {\bf y}, \dot{\bf x}, \dot{\bf y}) } + \norm{ g({\bf x}, {\bf y}, \dot{\bf x}, \dot{\bf y}) } &\le \ve k(\ve),\\
\norm{ f({\bf x}, {\bf y}, \dot{\bf x}, \dot{\bf y}) - f({\bf x}', {\bf y}', \dot{\bf x}', \dot{\bf y}') } &\le k(\ve) \bkt{ \norm{{\bf x} - {\bf x}'} + \norm{{\bf y} - {\bf y}'} + \norm{\dot{\bf x} - \dot{\bf x}'} + \norm{\dot{\bf y} - \dot{\bf y}'} },\\
\norm{ g({\bf x}, {\bf y}, \dot{\bf x}, \dot{\bf y}) - g({\bf x}', {\bf y}', \dot{\bf x}', \dot{\bf y}') } &\le k(\ve) \bkt{ \norm{{\bf x} - {\bf x}'} + \norm{{\bf y} - {\bf y}'} + \norm{\dot{\bf x} - \dot{\bf x}'} + \norm{\dot{\bf y} - \dot{\bf y}'} },
}
for all ${\bf x}, {\bf x}'\in X$, ${\bf y}, {\bf y}'\in Y$ and all $\dot{\bf x}, \dot{\bf x}', \dot{\bf y}, \dot{\bf y}' \in Z$ with $\norm{({\bf x}, {\bf y})}, \norm{(\dot{\bf x}, \dot{\bf y})}, \norm{({\bf x}',{\bf y}')}, \norm{(\dot{\bf x}', \dot{\bf y}')} < \ve$.

Let $\mathcal M$ be a center manifold for \eqref{eq-carr-6.3.4-original} given by ${\bf y} = h({\bf x})$.
If we substitute ${\bf y}(t)=h({\bf x}(t))$ into \eqref{eq-carr-6.3.4-original} we obtain
\EQ{
h'({\bf x}) \bkt{\mathcal A{\bf x} + f({\bf x}, h({\bf x}), \dot{\bf x}, h'({\bf x})\dot{\bf x})}
= \mathcal Bh({\bf x}) + g({\bf x}, h({\bf x)}, \dot{\bf x}, h'({\bf x})\dot{\bf x}).
}
The equation on the center manifold is given by
\EQ{\label{eq-carr-6.3.5}
\dot{\bf u} = \mathcal A{\bf u} + f({\bf u}, h({\bf u}), \dot{\bf u}, h'({\bf u})\dot{\bf u}).
}

We assume that ${\bf w}(t)$ converges to some element in $\mathcal M$, as $t\to\infty$, and that $\sup_{t\ge0} \norm{ \dot{\bf w}(t) }$ is sufficiently small.

\bigskip\noindent{\bf Rate of convergence to the center manifold.}
The following lemma describes that the trajectory shadows the center manifold
and corresponds to \cite[Lemma 2.4.1]{Carr-book1981}.

\begin{lemma}\label{lem-carr-2.4.1}
Let $({\bf x}(t), {\bf y}(t))$ be a solution of \eqref{eq-carr-6.3.4-original} with $\norm{({\bf x}(0), {\bf y}(0))}$ and $\norm{(\dot{\bf x}(t), \dot{\bf y}(t))}$, for all $t\ge0$, sufficiently small.
Then there exist positive $C_1$ and $\be_1$ such that 
\[
\norm{{\bf y}(t) - h({\bf x}(t))} \le C_1 e^{-\be_1 t} \norm{{\bf y}(0) - h({\bf x}(0))}
\]
for all $t\ge0$.
\end{lemma}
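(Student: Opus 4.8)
\textbf{Proof proposal for Lemma \ref{lem-carr-2.4.1}.}
The plan is to adapt the argument of \cite[Lemma 2.4.1]{Carr-book1981}, which treats the semilinear case, to the present fully nonlinear autonomous setting, using the {\it a priori} smallness of $\dot{\bf w}(t)=(\dot{\bf x}(t),\dot{\bf y}(t))$ for all $t\ge0$ as the substitute for the usual a priori control on the solution itself. First I would introduce the deviation from the center manifold, ${\bf v}(t) := {\bf y}(t) - h({\bf x}(t))$, and derive its evolution equation by differentiating: using the second equation in \eqref{eq-carr-6.3.4-original} and the invariance identity for $h$ (the displayed equation just before \eqref{eq-carr-6.3.5}), one obtains
\[
\dot{\bf v} = \mathcal B{\bf v} + \bkt{ g({\bf x},{\bf y},\dot{\bf x},\dot{\bf y}) - g({\bf x},h({\bf x}),\dot{\bf x},h'({\bf x})\dot{\bf x}) } - h'({\bf x})\bkt{ f({\bf x},{\bf y},\dot{\bf x},\dot{\bf y}) - f({\bf x},h({\bf x}),\dot{\bf x},h'({\bf x})\dot{\bf x}) }.
\]
The point of this rearrangement is that the inhomogeneous terms are \emph{differences} of $f$ and $g$, so the Lipschitz estimates \eqref{eq-carr-2.3.5} make them $O(k(\ve))$ times $\norm{{\bf y}-h({\bf x})} + \norm{\dot{\bf y} - h'({\bf x})\dot{\bf x}}$, i.e. controlled by $\norm{{\bf v}}$ together with a derivative-deviation term.

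Next I would write the variation-of-constants (Duhamel) formula for ${\bf v}$ on $Y$, namely ${\bf v}(t) = e^{\mathcal B(t-s)}{\bf v}(s) + \int_s^t e^{\mathcal B(t-\tau)}\bkt{\cdots}(\tau)\, d\tau$, and invoke the exponential decay bound \eqref{eq-carr-6.3.2}, $\norm{e^{\mathcal L t}Q_2}\le ce^{-bt}$, together with the fact that $\mathcal B = Q_2\mathcal L$ acts on $Y = \mathrm{Range}(Q_2)$, so $\norm{e^{\mathcal B t}}_{Y\to Y}\le ce^{-bt}$. Since $h$ is $C^1$ with $h'(\mathbf 0)=\mathbf O$ (tangency), on a small ball $\norm{h'({\bf x})}$ is small, so $\norm{\dot{\bf y} - h'({\bf x})\dot{\bf x}} \le \norm{\dot{\bf y}} + \norm{h'({\bf x})}\,\norm{\dot{\bf x}}$ is bounded by $\sup_{t}\norm{\dot{\bf w}(t)}$, which is as small as we wish by hypothesis; this lets the derivative-deviation contribution be absorbed as a small perturbation. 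This yields an integral inequality of the form
\[
\norm{{\bf v}(t)} \le c e^{-b(t-s)}\norm{{\bf v}(s)} + c\,K \int_s^t e^{-b(t-\tau)}\norm{{\bf v}(\tau)}\, d\tau + (\text{small}),
\]
where $K = K(\ve)$ can be made arbitrarily small by shrinking the neighbourhood and $\sup_t\norm{\dot{\bf w}}$.

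From here a Gronwall-type argument on $e^{bt}\norm{{\bf v}(t)}$ gives $\norm{{\bf v}(t)} \le c\, e^{-(b-cK)(t-s)}\norm{{\bf v}(s)}$; choosing $\ve$ so small that $cK < b/2$ and setting $\be_1 = b - cK > 0$, $C_1 = c$, $s=0$ produces the claimed bound $\norm{{\bf y}(t)-h({\bf x}(t))} \le C_1 e^{-\be_1 t}\norm{{\bf y}(0)-h({\bf x}(0))}$. One technical wrinkle: to close the Gronwall step one needs a priori that $\norm{{\bf v}(t)}$ stays bounded (so that the integral converges and the "small" remainder really is small uniformly in $t$); this is supplied by the standing assumption that ${\bf w}(t)$ converges to an element of $\mathcal M$ as $t\to\infty$, hence $\norm{{\bf v}(t)}\to 0$ and in particular $\sup_t\norm{{\bf v}(t)}<\infty$. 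I expect the main obstacle to be precisely this bookkeeping — ensuring that the derivative-deviation term $\norm{\dot{\bf y}-h'({\bf x})\dot{\bf x}}$ is genuinely controlled uniformly in $t$ (rather than only the solution deviation), and that the Lipschitz constants $k(\ve)$ from \eqref{eq-carr-2.3.5} are evaluated along a trajectory that provably remains in the small ball where those estimates hold; both are handled by combining the convergence hypothesis with the smallness of $\sup_t\norm{\dot{\bf w}(t)}$, but the argument must be arranged so that no estimate is circular.
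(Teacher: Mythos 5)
Your overall strategy is the right one and matches the paper's: set ${\bf v}(t) := {\bf y}(t) - h({\bf x}(t))$, derive its evolution equation from the invariance identity, apply the Lipschitz estimates \eqref{eq-carr-2.3.5}, Duhamel with the semigroup bound \eqref{eq-carr-6.3.2}, and finish by Gronwall. But there is a genuine gap exactly where you flag the ``technical wrinkle,'' and the fix is different from what you propose.

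The Lipschitz estimate gives a right-hand side of the form $k(\ve)\bke{\norm{{\bf v}} + \norm{\dot{\bf v}}}$, since $\dot{\bf v} = \dot{\bf y} - h'({\bf x})\dot{\bf x}$. You bound $\norm{\dot{\bf v}}$ by $\sup_t\norm{\dot{\bf w}(t)}$, a small \emph{constant}, and push it into a ``$+\text{(small)}$'' remainder. That remainder is not transient: $\int_0^t e^{-b(t-\tau)}\cdot k(\ve)\cdot\sup_s\norm{\dot{\bf w}(s)}\,d\tau$ converges to $k(\ve)\sup_s\norm{\dot{\bf w}(s)}/b$, a fixed constant, as $t\to\infty$. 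Gronwall then yields $\norm{{\bf v}(t)} \le C e^{-\be_1 t}\norm{{\bf v}(0)} + O\bke{\sup_s\norm{\dot{\bf w}(s)}}$, which shows decay to a small neighborhood of $\mathcal M$ rather than the claimed exponential decay to zero. Your concluding step, which simply drops the additive remainder before setting $\be_1 = b - cK$, is where the argument fails.

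The missing ingredient is a bootstrap bound on $\norm{\dot{\bf v}}$ in terms of $\norm{{\bf v}}$ itself, obtained from the evolution equation: since $\dot{\bf v} = \mathcal B{\bf v} + \mathcal R$ and $\norm{\mathcal R} \le \de(\ve)\bke{\norm{{\bf v}}+\norm{\dot{\bf v}}}$, one absorbs the $\de(\ve)\norm{\dot{\bf v}}$ term to the left and obtains
\[
\norm{\dot{\bf v}} \le \bke{1-\de(\ve)}^{-1}\bke{\norm{\mathcal B}+\de(\ve)}\,\norm{{\bf v}} =: C_0\norm{{\bf v}}.
\]
With this in hand, the inhomogeneity in the Duhamel integral is $\le \de(\ve)(1+C_0)\norm{{\bf v}(\tau)}$, \emph{proportional to} $\norm{{\bf v}(\tau)}$ rather than to a constant, and the Gronwall step then closes cleanly with $\be_1 = b - c\,\de(\ve)(1+C_0)>0$ for $\ve$ small. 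Note that this makes the smallness assumption on $\sup_t\norm{\dot{\bf w}(t)}$ play a different role than you give it: it is used to ensure the arguments of $f,g$ stay in the $\ve$-ball where \eqref{eq-carr-2.3.5} holds, not to make the $\norm{\dot{\bf v}}$ contribution small in absolute terms.
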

\begin{proof}
Let $({\bf x}(t),{\bf y}(t))$ be a solution of \eqref{eq-carr-6.3.4-original} with $({\bf x}(0),{\bf y}(0))$ sufficiently small.
Let ${\bf z}(t) = {\bf y}(t) - h({\bf x}(t))$, then
\EQ{\label{eq-carr-2.4.2}
\dot{\bf z} = \mathcal B{\bf z} + \mathcal R({\bf x},{\bf z},\dot{\bf x},\dot{\bf z})
}
where
\EQ{\label{eq-def-R}
\mathcal R({\bf x},{\bf z},\dot{\bf x},\dot{\bf z}) &= h'({\bf x}) \bkt{f({\bf x}, h({\bf x}), \dot{\bf x}, h'({\bf x})\dot{\bf x}) - f({\bf x},{\bf z}+h({\bf x}), \dot{\bf x}, \dot{\bf z} + h'({\bf x})\dot{\bf x})}\\
&\quad + g({\bf x},{\bf z}+h({\bf x}), \dot{\bf x}, \dot{\bf z} + h'({\bf x})\dot{\bf x}) - g({\bf x}, h({\bf x}), \dot{\bf x}, h'({\bf x})\dot{\bf x}).
}
Using the hypotheses of $f$ and $g$ and the bounds on $h$, 
\EQ{\label{eq-R-est}
\norm{ \mathcal R({\bf x},{\bf z},\dot{\bf x},\dot{\bf z}) }
&\le \norm{ h'({\bf x}) } \Big( \norm{ f({\bf x}, h({\bf x}), \dot{\bf x}, h'({\bf x})\dot{\bf x}) - f({\bf x},{\bf z}+h({\bf x}), \dot{\bf x}, h'({\bf x})\dot{\bf x}) } \\
&\qquad\qquad\quad + \norm{ f({\bf x}, {\bf z} + h({\bf x}), \dot{\bf x}, h'({\bf x})\dot{\bf x}) - f({\bf x},{\bf z}+h({\bf x}), \dot{\bf x}, \dot{\bf z} + h'({\bf x})\dot{\bf x}) } \Big)\\
&\quad + \norm{ g({\bf x},{\bf z}+h({\bf x}), \dot{\bf x}, \dot{\bf z} + h'({\bf x})\dot{\bf x}) - g({\bf x}, h({\bf x}), \dot{\bf x}, \dot{\bf z} + h'({\bf x})\dot{\bf x}) }\\
&\quad + \norm{ g({\bf x}, h({\bf x}), \dot{\bf x}, \dot{\bf z} + h'({\bf x})\dot{\bf x}) - g({\bf x}, h({\bf x}), \dot{\bf x}, h'({\bf x})\dot{\bf x}) }\\
&\le \de(\ve) \bkt{ \norm{\bf z} + \norm{\dot{\bf z}} },
}
if $\norm{\bf z}, \norm{\dot{\bf z}} < \ve$, 
for some continuous function $\de(\ve)$ with $\de(0)=0$.
Using \eqref{eq-carr-6.3.2} we obtain, from \eqref{eq-carr-2.4.2},
\EQ{\label{eq-z-norm-ineq}
\norm{ {\bf z}(t) } \le ce^{-bt} \norm{{\bf z}(0)} + c\de(\ve) \int_0^t e^{-b(t-s)} \bkt{ \norm{\bf z(s)} + \norm{\dot{\bf z}(s)}  } ds.
}
Using \eqref{eq-R-est} in \eqref{eq-carr-2.4.2} one has
\[
\norm{\dot{\bf z}} \le \norm{\mathcal B}\norm{\bf z} + \norm{\mathcal R({\bf x},{\bf z},\dot{\bf x},\dot{\bf z})}
\le \norm{\mathcal B}\norm{\bf z} + \de(\ve)\bkt{\norm{\bf z} + \norm{\dot{\bf z}} },
\]
and so
\EQ{\label{eq-bound-dotz-by-z}
\norm{\dot{\bf z}} \le C_0 \norm{\bf z},\qquad
C_0 = \bke{1-\de(\ve)}^{-1} \bke{\norm{\mathcal B} + \de(\ve) }.
}
Therefore, \eqref{eq-z-norm-ineq} yields 
\EQ{
e^{bt} \norm{ {\bf z}(t) } 
&\le c \norm{{\bf z}(0)} + c\de(\ve) \bke{1 + C_0 } \int_0^t e^{bs} \norm{\bf z(s)} ds
}
By Gronwall's lemma,
\[
e^{bt} \norm{ {\bf z}(t) }  \le c \norm{{\bf z}(0)} e^{c\de(\ve)  \bke{1 + C_0 } t }.
\]
The lemma follows.
\end{proof}

\begin{proposition}\label{prop-stab-center-mfd}
Suppose that the zero solution of \eqref{eq-carr-6.3.5} is Lyapunov stable.
Let $({\bf x}(t), {\bf y}(t))$ be a solution of \eqref{eq-carr-6.3.4-original}.
There exists $\ve>0$ such that if $\norm{({\bf x}(0), {\bf y}(0))} < \ve$ and
if $\norm{(\dot{\bf x}(t), \dot{\bf y}(t))} < \ve$ for all $t\ge0$,
then there exists a solution ${\bf u}(t)$ of \eqref{eq-carr-6.3.5} such that as $t\to\infty$,
\EQ{\label{eq-carr-2.4.5}
{\bf x}(t) &= {\bf u}(t) + O(e^{-b_1 t}),\\
{\bf y}(t) &= h({\bf u}(t)) + O(e^{-b_1 t}),
}
where $b_1 = \min(b, \be_1)$, $b$ and $\be_1$ are given in the assumption \eqref{eq-carr-6.3.2} and in Lemma \ref{lem-carr-2.4.1}, respectively.
\end{proposition}

\begin{proof}
The proof is based on that of \cite[Theorem 2.4.2]{Carr-book1981}.
Let $({\bf x}(t), {\bf y}(t))$ be a solution of \eqref{eq-carr-6.3.4-original}. 
Since the zero solution of \eqref{eq-carr-6.3.5} is Lyapunov stable, solutions ${\bf u}(t)$ of \eqref{eq-carr-6.3.5} are Lyapunov stable if ${\bf u}(0)$ is sufficiently small.
Let ${\bf u}(t)$ be a solution of \eqref{eq-carr-6.3.5} with ${\bf u}(0)$ sufficiently small.
Let ${\bf z}(t) = {\bf y}(t) - h({\bf x}(t))$, $\boldsymbol{\phi}(t) = {\bf x}(t) - {\bf u}(t)$.
Then
\begin{subequations}
\begin{align}
\dot{\bf z} &= \mathcal B{\bf z} + \mathcal R(\boldsymbol{\phi} + {\bf u},{\bf z},\dot{\boldsymbol{\phi}} + \dot{\bf u},\dot{\bf z}),\label{eq-carr-2.4.7}\\
\dot{\boldsymbol{\phi}} &= \mathcal A\boldsymbol{\phi} + \mathcal V(\boldsymbol{\phi},{\bf z},\dot{\boldsymbol{\phi}},\dot{\bf z}),\label{eq-carr-2.4.8}
\end{align}
\end{subequations}
where $\mathcal R$ is defined in \eqref{eq-def-R} and
\EQN{
\mathcal V(\boldsymbol{\phi},{\bf z},\dot{\boldsymbol{\phi}},\dot{\bf z})
&= f({\bf u}+\boldsymbol{\phi}, {\bf z} + h({\bf u}+\boldsymbol{\phi}), \dot{\bf u} + \dot{\boldsymbol{\phi}}, \dot{\bf z} + h'({\bf u}+\boldsymbol{\phi}) (\dot{\bf u} + \dot{\boldsymbol{\phi}})) - f({\bf u}, h({\bf u}), \dot{\bf u}, h'({\bf u})\dot{\bf u}).
}

We now formulate \eqref{eq-carr-2.4.7}--\eqref{eq-carr-2.4.8} as a fixed point problem.
Let $\mathscr X$ be the set of continuous differentiable functions $\boldsymbol{\phi}:[0,\infty)\to X$ of \eqref{eq-carr-2.4.8} with $\norm{\boldsymbol{\phi}(t) e^{at}}\le 1$ and $\norm{\dot{\boldsymbol{\phi}}(t) e^{at}} \le a$ for all $t\ge0$, where $a=b/2$ in which $b$ is defined in \eqref{eq-carr-6.3.2}.
We define the norm $\norm{\boldsymbol{\phi}}_{\mathscr X} = \sup\{\norm{\boldsymbol{\phi}(t) e^{at}} + \norm{\dot{\boldsymbol{\phi}}(t) e^{at}} : t\ge0\}$.
By the assumption on the operator $\mathcal A$, we can decompose $\mathcal A$ into $\mathcal A = \mathcal A_1 + \mathcal A_2$ where
\EQ{\label{eq-carr-2.4.3-4}
\norm{ e^{\mathcal A_1 t}{\bf x} } = \norm{ {\bf x} },\qquad
\norm{ \mathcal A_2{\bf x}} \le (b/4) \norm{ {\bf x} },
}
where $b$ is defined by \eqref{eq-carr-6.3.2}.
Then \eqref{eq-carr-2.4.8} can be written as
\[
\dot{\boldsymbol{\phi}} = \mathcal A_1\boldsymbol{\phi} + \bkt{ \mathcal A_2\boldsymbol{\phi}  + \mathcal V(\boldsymbol{\phi},{\bf z},\dot{\boldsymbol{\phi}},\dot{\bf z}) },
\]
and $\boldsymbol{\phi}(\infty) = 0$ for $\boldsymbol{\phi}\in\mathscr X$.
Let ${\bf z}(t)$ be a given solution of \eqref{eq-carr-2.4.7}.
By Duhamel's formula, a solution $\boldsymbol{\phi}\in\mathscr X$ of \eqref{eq-carr-2.4.8} must satisfy
\[
\boldsymbol{\phi}(t) = -\int_t^\infty e^{\mathcal A_1(t-s)} \bkt{ \mathcal A_2\boldsymbol{\phi}(s)  + \mathcal V(\boldsymbol{\phi}(s),{\bf z}(s),\dot{\boldsymbol{\phi}}(s),\dot{\bf z}(s)) } ds.
\]
Thus, a solution $\boldsymbol{\phi}\in\mathscr X$ of \eqref{eq-carr-2.4.8} is a fixed point of the mapping $T$ that defined by
\EQ{\label{eq-carr-2.4.9}
(T\boldsymbol{\phi})(t) = -\int_t^\infty e^{\mathcal A_1(t-s)} \bkt{ \mathcal A_2\boldsymbol{\phi}(s)  + \mathcal V(\boldsymbol{\phi}(s),{\bf z}(s),\dot{\boldsymbol{\phi}}(s),\dot{\bf z}(s)) } ds.
}

Using the bounds on $f$, $g$, $h$, and the fact that $\mathcal R({\bf x},{\bf 0},\dot{\bf x},{\bf 0}) = {\bf 0}$ and $\mathcal V({\bf 0}, {\bf 0}, {\bf 0}, {\bf 0}) = {\bf 0}$, there is a continuous function $k(\ve)$ with $k(0)=0$ such that if $\boldsymbol{\phi}_1,\boldsymbol{\phi}_2\in X$, ${\bf z}_1, {\bf z}_2\in Y$, and $\dot{\boldsymbol{\phi}}_1,\dot{\boldsymbol{\phi}}_2, \dot{\bf z}_1, \dot{\bf z}_2\in Z$ with $\norm{(\boldsymbol{\phi}_i, {\bf z}_i)}$, $\norm{(\dot{\boldsymbol{\phi}}_i, \dot{\bf z}_i)} < \ve$, $i=1,2$,
then
\EQ{\label{eq-carr-2.4.10}
&\norm{ \mathcal R(\boldsymbol{\phi}_1,{\bf z}_1,\dot{\boldsymbol{\phi}}_1,\dot{\bf z}_1) - \mathcal R(\boldsymbol{\phi}_2,{\bf z}_2,\dot{\boldsymbol{\phi}}_2,\dot{\bf z}_2) }\\
&\qquad\qquad\qquad\qquad\qquad\quad \le k(\ve) \bkt{ \norm{({\bf z}_1, \dot{\bf z}_1)} \bke{ \norm{ \boldsymbol{\phi}_1 - \boldsymbol{\phi}_2 } + \norm{ \dot{\boldsymbol{\phi}}_1 - \dot{\boldsymbol{\phi}}_2 } } + \norm{ {\bf z}_1 - {\bf z}_2 } + \norm{ \dot{\bf z}_1 - \dot{\bf z}_2 } },\\
&\norm{ \mathcal V(\boldsymbol{\phi}_1,{\bf z}_1,\dot{\boldsymbol{\phi}}_1,\dot{\bf z}_1) - \mathcal V(\boldsymbol{\phi}_2,{\bf z}_2,\dot{\boldsymbol{\phi}}_2,\dot{\bf z}_2) } \le k(\ve) \bkt{ \norm{ {\bf z}_1 - {\bf z}_2 } + \norm{ \dot{\bf z}_1 - \dot{\bf z}_2 } + \norm{ \boldsymbol{\phi}_1 - \boldsymbol{\phi}_2 } + \norm{ \dot{\boldsymbol{\phi}}_1 - \dot{\boldsymbol{\phi}}_2 } }.
}
By the same argument as in the proof of Lemma \ref{lem-carr-2.4.1}, one has
\EQ{\label{eq-carr-2.4.11}
\norm{{\bf z}(t)} \le C_1 \norm{{\bf z}(0)} e^{-\be_1 t},
}
where
\[
\be_1 = b - ck(\ve)  \bke{1 + C_0 },\qquad
C_0 = \bke{1-k(\ve)}^{-1} \bke{\norm{\mathcal B} + k(\ve)}.
\]
Using \eqref{eq-carr-2.4.3-4}, we obtain, from \eqref{eq-carr-2.4.9}, that
\EQ{\label{eq-bound-Tphi}
\norm{ T\boldsymbol{\phi}(t) }
&\le \int_t^\infty \frac{b}4 |\boldsymbol{\phi}(s)|\, ds + \int_t^\infty \norm{ \mathcal V(\boldsymbol{\phi}(s),{\bf z}(s),\dot{\boldsymbol{\phi}}(s),\dot{\bf z}(s)) } ds\\
&= \frac{a}4 \int_t^\infty \norm{ \boldsymbol{\phi}(s) } ds + \int_t^\infty \norm{ \mathcal V(\boldsymbol{\phi}(s),{\bf z}(s),\dot{\boldsymbol{\phi}}(s),\dot{\bf z}(s)) - \mathcal V({\bf 0}, {\bf 0}, {\bf 0}, {\bf 0}) } ds\\
&\le \frac{e^{-at}}2 + k(\ve) \int_t^\infty \bkt{ \norm{ \boldsymbol{\phi}(s) } + \norm{ {\bf z}(s) } + \norm{ \dot{\boldsymbol{\phi}}(s) } + \norm{ \dot{\bf z}(s)} } ds,
}
where we've used $\mathcal V({\bf 0}, {\bf 0}, {\bf 0}, {\bf 0}) = {\bf 0}$ in the second equation and \eqref{eq-carr-2.4.10} in the last inequality. 
In view of \eqref{eq-bound-dotz-by-z} in the proof of Lemma \ref{lem-carr-2.4.1} and \eqref{eq-carr-2.4.11}, 
\EQ{\label{eq-bound-z-dotz}
\norm{ {\bf z}(s) } + \norm{ \dot{\bf z}(s)} \le \bke{1 + C_0 } \norm{ {\bf z}(s) }
\le \bke{1 + C_0 } C_1 \norm{{\bf z}(0)} e^{-\be_1 s}
=: C_2\, e^{-\be_1 s}.
}
Together with the hypothesis $\boldsymbol{\phi}\in \mathscr X$, we derive
\[
\norm{ T\boldsymbol{\phi}(t) } \le \frac{e^{-at}}2 + k(\ve) \int_t^\infty \bkt{ (1+a) e^{-as} + C_2 e^{-\be_1 s} } ds
\le e^{-at}
\]
for $\ve$ sufficiently small such that $\be_1 = b - ck(\ve)  \bke{1 + C_0 }\ge b/2 = a$ and $k(\ve)\le\min\{(1+a)^{-1}, C_2^{-1}\}/ 2$.
To estimate $\frac{d}{dt} T\boldsymbol{\phi}$, we compute
\EQN{
\norm{ \frac{d}{dt} T\boldsymbol{\phi}(t) } 
&= \norm{ A_2\boldsymbol{\phi}(t)  + V(\boldsymbol{\phi}(t),{\bf z}(t),\dot{\boldsymbol{\phi}}(t),\dot{\bf z}(t)) }\\
&\le \frac{a}2 \norm{ \boldsymbol{\phi}(t) } + k(\ve) \bke{ \norm{ \boldsymbol{\phi}(t) } + \norm{ {\bf z}(t) } + \norm{ \dot{\boldsymbol{\phi}}(t) } + \norm{ \dot{\bf z}(t)} }\\
&\le \bke{\frac{a}2 + k(\ve) (1+a) } e^{-at} + k(\ve) C_2 e^{-\be_1t}
\le ae^{-at}
}
for $\ve$ sufficiently small.
This proves $T$ maps $\mathscr X$ into $\mathscr X$.

We now show that $T$ is a contraction on $\mathscr X$.
Let $\boldsymbol{\phi}_1, \boldsymbol{\phi}_2\in \mathscr X$ and let ${\bf z}_1, {\bf z}_2$ be the corresponding solutions of \eqref{eq-carr-2.4.7} with ${\bf z}_i(0) = {\bf z}_0$, $i=1,2$.
We first estimate ${\bf v}(t) = {\bf z}_1(t) - {\bf z}_2(t)$.
From \eqref{eq-carr-2.4.7} and \eqref{eq-carr-2.4.10}, 
\EQ{\label{eq-v-bound}
\norm{{\bf v}(t)} \le ck(\ve) \int_0^t e^{-b(t-s)} \bkt{ \norm{({\bf z}_1(s), \dot{\bf z}_1(s))} \bke{ \norm{ \boldsymbol{\phi}_1(s) - \boldsymbol{\phi}_2(s) } + \norm{ \dot{\boldsymbol{\phi}}_1(s) - \dot{\boldsymbol{\phi}}_2(s) } } + \norm{ {\bf v}(s) } + \norm{ \dot{\bf v}(s) } } ds.
}
Since 
\[
\dot{\bf v} = \mathcal B{\bf v} + \mathcal R(\boldsymbol{\phi}_1 + {\bf u},{\bf z}_1,\dot{\boldsymbol{\phi}}_1 + \dot{\bf u},\dot{\bf z}_1) - \mathcal R(\boldsymbol{\phi}_2 + {\bf u},{\bf z}_2,\dot{\boldsymbol{\phi}}_2 + \dot{\bf u},\dot{\bf z}_2),
\]
using \eqref{eq-carr-2.4.10}, we get
\[
\norm{ \dot{\bf v} } \le \norm{\mathcal B} \norm{ v } + k(\ve) \bkt{ \norm{({\bf z}_1, \dot{\bf z}_1)} \bke{ \norm{ \boldsymbol{\phi}_1 - \boldsymbol{\phi}_2 } + \norm{ \dot{\boldsymbol{\phi}}_1 - \dot{\boldsymbol{\phi}}_2 } } + \norm{ {\bf v} } + \norm{ \dot{\bf v} } }.
\]
So
\[
\norm{ \dot{\bf v} } \le (1 - k(\ve))^{-1} \bkt{(\norm{\mathcal B} + k(\ve)) \norm{\bf v} + k(\ve) \norm{({\bf z}_1, \dot{\bf z}_1)} \bke{ \norm{ \boldsymbol{\phi}_1 - \boldsymbol{\phi}_2 } + \norm{ \dot{\boldsymbol{\phi}}_1 - \dot{\boldsymbol{\phi}}_2 } } },
\]
implying
\[
\norm{ \dot{\bf v}(s) } \le C_3 \norm{{\bf v}(s) }+ k_1(\ve) \norm{({\bf z}_1, \dot{\bf z}_1)} \norm{\boldsymbol{\phi}_ 1 - \boldsymbol{\phi}_2}_{\mathscr X} e^{-as},\qquad k_1(0) = 0,
\]
for some $C_3>0$. 
Together with $\norm{({\bf z}_1(s), \dot{\bf z}_1(s))} \le C_2 e^{-\be_1s}$, which is followed by \eqref{eq-bound-z-dotz}, \eqref{eq-v-bound} implies
\EQN{
\norm{{\bf v}(t)}
&\le ck(\ve) \int_0^t e^{-b(t-s)} \bkt{ \bke{1 + k_1(\ve) } C_2 e^{-\be_1s} \norm{\boldsymbol{\phi}_ 1 - \boldsymbol{\phi}_2}_{\mathscr X} e^{-as} + (1+C_3) \norm{{\bf v}(s)}} ds\\
&\le C_4 k(\ve) \norm{\boldsymbol{\phi}_ 1 - \boldsymbol{\phi}_2}_{\mathscr X} e^{-bs} + c k(\ve)(1+C_3) \int_0^t e^{-b(t-s)}  \norm{{\bf v}(s)} ds
}
for $\ve$ sufficiently small, where $C_4>0$ is a constant.
By Gronwall's lemma,
\EQ{\label{eq-carr-2.4.12}
\norm{{\bf v}(t)} \le C_4 k(\ve) \norm{\boldsymbol{\phi}_ 1 - \boldsymbol{\phi}_2}_{\mathscr X} e^{-\be_2 t},\qquad
\be_2 = b - c k(\ve)(1+C_3).
}
Using \eqref{eq-carr-2.4.3-4} and \eqref{eq-carr-2.4.12}, 
\EQN{
&\norm{T\boldsymbol{\phi}_1(t) - T\boldsymbol{\phi}_2(t) } \\
&\ \ \le \int_t^\infty \bkt{\frac{a}2 \norm{ \boldsymbol{\phi}_1(s) - \boldsymbol{\phi}_2(s) } + \norm{\mathcal R(\boldsymbol{\phi}_1(s) ,{\bf z}_1(s), \dot{\boldsymbol{\phi}}_1(s), \dot{\bf z}_1(s)) - \mathcal R(\boldsymbol{\phi}_2(s) ,{\bf z}_2(s), \dot{\boldsymbol{\phi}}_2(s), \dot{\bf z}_2(s) )}}\\
&\ \ \le \frac{a}2 \int_t^\infty \norm{\boldsymbol{\phi}_ 1 - \boldsymbol{\phi}_2}_{\mathscr X} e^{-as}\, ds\\
&\ \ \quad + k(\ve) \int_t^\infty \bkt{ \norm{({\bf z}_1(s), \dot{\bf z}_1(s))} \bke{ \norm{ \boldsymbol{\phi}_1(s) - \boldsymbol{\phi}_2(s) } + \norm{ \dot{\boldsymbol{\phi}}_1(s) - \dot{\boldsymbol{\phi}}_2(s) } } + \norm{ {\bf v}(s) } + \norm{ \dot{\bf v}(s) } } ds\\
&\ \ \le \frac12 \norm{\boldsymbol{\phi}_ 1 - \boldsymbol{\phi}_2}_{\mathscr X}\\
&\ \ \quad + k(\ve) \int_t^\infty \bkt{ (1+k_1(\ve))C_2 e^{-\be_1s} \norm{\boldsymbol{\phi}_ 1 - \boldsymbol{\phi}_2}_{\mathscr X} e^{-as} + (1+C_3)C_4 k(\ve) \norm{\boldsymbol{\phi}_ 1 - \boldsymbol{\phi}_2}_{\mathscr X} e^{-\be_2 s} } ds\\
&\ \ \le \al\norm{\boldsymbol{\phi}_ 1 - \boldsymbol{\phi}_2}_{\mathscr X},\qquad \al<1,
}
for $\al$ sufficiently small.
This shows that $T$ is a contraction and, hence, has a unique fixed point.

Note that $T = T_{\bf z}$ and the above analysis proves that $T_{\bf z}$ has a unique fixed point in $\mathscr X$ provided ${\bf z}$ and $\dot{\bf z}$ are sufficiently small. 
Denote $\boldsymbol{\phi}\in\mathscr X$ the fixed point of the contraction $T_{\bf z}$, where ${\bf z} = {\bf y} - h({\bf x})$.
Let ${\bf u}(0) = {\bf x}(0) - \boldsymbol{\phi}(0)$. 
Then let ${\bf u}(t)$ be the solution of \eqref{eq-carr-6.3.5} evolving from ${\bf u}(0)$.
Then $({\bf x}(t), {\bf y}(t))$ can be decomposed as in ${\bf x}(t) = {\bf u}(t) + \boldsymbol{\phi}(t)$ and ${\bf y} = h({\bf x}(t)) + {\bf z}(t)$.
In view of the fact that $\boldsymbol{\phi}\in\mathscr X$ and Lemma \ref{lem-carr-2.4.1}, the asymptotic limits in \eqref{eq-carr-2.4.5} follow, which completing the proof of Proposition \ref{prop-stab-center-mfd}.
\end{proof}

\section{Asymptotic expansion of the local center manifold}

In this appendix,
we check the expression of local center manifold in Lemma \ref{lem-mfld-equil-ceneter-mfld} by asymptotic expansion.

If we substitute ${\bf y} = h({\bf x})$ into \eqref{eq-carr-6.3.4-y}, we see that the center manifold can be obtained by solving 
\[
h'({\bf x}) \bkt{ Q_1 \mathcal N({\bf x} + h({\bf x}), \dot{\bf x} + h'({\bf x})\dot{\bf x})}
= \mathcal L h({\bf x}) + Q_2 \mathcal N({\bf x} + h({\bf x)}, \dot{\bf x} + h'({\bf x})\dot{\bf x}).
\]
We want to show the manifold of equilibria $\mathcal M_*$ is a center manifold by showing it satisfies the above equation.
On the manifold of equilibria $\mathcal M_*$, $\dot{\bf x} = 0$.
So we need to check
\EQ{\label{eq-center-mfd-equilib}
h'({\bf x}) \bkt{ Q_1 \mathcal N({\bf x} + h({\bf x}), {\bf 0})}
= \mathcal L h({\bf x}) + Q_2 \mathcal N({\bf x} + h({\bf x)}, {\bf 0}).
}
Note that
\[
\mathcal N({\bf x} + h({\bf x}), {\bf 0}) = \mathcal N^0({\bf x} + h({\bf x})) = 
 \begin{bmatrix}
0\\
0\\
-\frac{\Rg T_\infty}{\rho_lR_*}\, H^0({\bf x} + h({\bf x}))\\
0\\
0\\
\vdots
 \end{bmatrix}
 =
 \begin{bmatrix}
0\\
0\\
-\frac{2\si}{\rho_lR_*^3} \frac{(\al+R_{**}(\al)-R_*)^2}{\al+R_{**}(\al)}\\
0\\
0\\
\vdots
 \end{bmatrix}.
\]
So $Q_1\mathcal N({\bf x} + h({\bf x}), {\bf 0}) = {\bf 0}$ and $\mathcal N({\bf x} + h({\bf x}), {\bf 0}) = \mathcal N({\bf x} + h({\bf x}), {\bf 0})$.
It suffices to check $\mathcal L h({\bf x}) + Q_2 \mathcal N({\bf x} + h({\bf x)}, {\bf 0}) = {\bf 0}$ in  \eqref{eq-center-mfd-equilib}.
Taylor expand $h({\bf x}) = h(\al{\bf b})$ at $\al=0$, one gets
\[
h({\bf x}) 
= 
\begin{bmatrix}
\rho_{**}(\al) - \rho_*\\
R_{**}(\al) - R_*\\
0\\
0\\
0\\
\vdots
\end{bmatrix}
=
\begin{bmatrix}
\rho_{**,1} \al + \rho_{**,2} \al^2 + \rho_{**,3} \al^3 + \cdots\\
R_{**,1} \al + R_{**,2} \al^2 + R_{**,3} \al^3 + \cdots\\
0\\
0\\
0\\
\vdots
\end{bmatrix},
\]
where $\rho_{**,m} = (m!)^{-1} \rho_{**}^{(m)}(0)$ and $R_{**,m} = (m!)^{-1} R_{**}^{(m)}(0)$ in which $\rho_{**}^{(m)}(0)$ and $R_{**}^{(m)}(0)$ are the $m$-th derivatives of $\rho_{**}$ and $R_{**}$ at $\al=0$, respectively.
Then $\mathcal L h({\bf x}) + \mathcal N({\bf x} + h({\bf x)}, {\bf 0}) = {\bf 0}$ is equivalent to
\EQ{\label{eq-infinite-sum-taylor}
\sum_{m=1}^\infty \bke{ \frac{\Rg T_\infty}{\rho_lR_*} \rho_{**,m} + \frac{2\si}{\rho_lR_*^3} R_{**,m} } \al^m - \frac{2\si}{\rho_lR_*^3} \frac{\bke{\al+R_{**}(\al)-R_*}^2}{\al+R_{**}(\al)} = 0.
}

For $m=1$, we differentiate \eqref{eq-rho-R-double-star} with respect to $\al$, evaluate at $\al=0$ and use $R_{**}(0) = R_*$, we get $\Rg T_\infty \rho_{**}'(0) = -(2\si/R_*^2) R_{**}'(0)$.
So we have
\[
\frac{\Rg T_\infty}{\rho_lR_*} \rho_{**,1} + \frac{2\si}{\rho_lR_*^3} R_{**,1} = 0.
\]

For $m=2$, 
we first expand
\[
\frac{\bke{\al+R_{**}(\al)-R_*}^2}{\al+R_{**}(\al)} = ( 1+R_{**}'(0) )^2 \al^2 + \cdots.
\]
Then the coefficient of $\al^2$ in \eqref{eq-infinite-sum-taylor} is
\EQ{\label{eq-m=2}
\frac{\Rg T_\infty}{\rho_lR_*} \rho_{**,2} + \frac{2\si}{\rho_lR_*^3} R_{**,2} - \frac{2\si}{\rho_lR_*^4} (1+R_{**,1})^2 .
}
By differentiating \eqref{eq-rho-R-double-star} with respect to $\al$ twice, evaluating at $\al=0$ and using $R_{**}(0) = R_*$, 
we have $\Rg T_\infty \rho_{**}''(0) = (4\si/R_*^3) (R_{**}'(0))^2 - (2\si/R_*^2) R_{**}''(0)$.
So
\[
\frac{\Rg T_\infty}{\rho_lR_*} \rho_{**,2} + \frac{2\si}{\rho_lR_*^3} R_{**,2} = (2!)^{-1} \frac{4\si}{\rho_l R_*^4} (R_{**,1})^2.
\]
In order to have the term \eqref{eq-m=2} for $m=2$ vanishing, we require
\[
0 = \frac{2\si}{\rho_lR_*^4} R_{**,1}^2  - \frac{2\si}{\rho_lR_*^4} (1 + R_{**,1})^2,
\] 
whose solution is $R_{**,1} = -1/2$.

For $m=3$, we further expand, using $R_{**,1} = -1/2$, that
\[
\frac{\bke{\al+R_{**}(\al)-R_*}^2}{\al+R_{**}(\al)} = ( 1+R_{**,1} )^2 \al^2 + \bke{\frac3{R_*} R_{**}''(0) - \frac3{4R_*^2} } \al^3 + \cdots
\]
Then the coefficient of $\al^3$ in \eqref{eq-infinite-sum-taylor} is
\EQ{\label{eq-m=3}
\frac{\Rg T_\infty}{\rho_lR_*} \rho_{**,3} + \frac{2\si}{\rho_lR_*^3} R_{**,3} - \frac{2\si}{\rho_lR_*^3} \frac1{3!} \bke{ \frac3{R_*} R_{**}''(0) - \frac3{4R_*^2} }.
}
By differentiating \eqref{eq-rho-R-double-star} with respect to $\al$ three times, evaluating at $\al=0$ and using $R_{**}(0) = R_*$, 
we get $\Rg T_\infty \rho_{**}'''(0) = -(12\si/R_*^4)(R_{**}'(0))^3 + (12\si/R_*^3)R_{**}'(0)R_{**}''(0)  -(2\si/R_*^2) R_{**}'''(0)$.
So we have
\[
\frac{\Rg T_\infty}{\rho_lR_*} \rho_{**,3} = -\frac{2\si}{\rho_lR_*^5} R_{**,1}^3 + \frac{4\si}{\rho_lR_*^4} R_{**,1} R_{**,2} - \frac{2\si}{\rho_lR_*^3} R_{**,3}.
\]
In order to have the term \eqref{eq-m=3} for $m=3$ vanishing, we require
\[
0 = -\frac{2\si}{\rho_lR_*^5} R_{**,1}^3 + \frac{4\si}{\rho_lR_*^4} R_{**,1} R_{**,2} - \frac{2\si}{\rho_lR_*^3} R_{**,3} + \frac{2\si}{\rho_lR_*^3} R_{**,3} - \frac{\si}{\rho_lR_*^3} \bke{ \frac1{R_*} R_{**}''(0) - \frac1{4R_*^2} },
\]
where the third term cancels the fourth.
Using $R_{**,1}=-1/2$ and $R_{**}''(0) = 2R_{**,2}$,
\[
0 = \frac{\si}{4\rho_lR_*^5} - \frac{2\si}{\rho_lR_*^4} R_{**,2} - \frac{2\si}{\rho_lR_*^4} R_{**,2} + \frac{\si}{4\rho_lR_*^5}
\]
for which the solution is $R_{**,2} = 1/(8R_*)$.

Continuing this process, we obtain $R_{**,3} = 0$, $R_{**,4} = -1/(128R_*^3),\ldots$.
Thus,
\[
R_{**}(\al) = R_* - \frac12 \al + \frac1{8R_*} \al^2 -\frac1{128R_*^3} \al^4 + \cdots =  \frac{-\al+\sqrt{\al^2 + 4R_*^2}}2,
\]
which coincides with the center manifold expression in \eqref{eq-rho-R-double-star}.


\begin{thebibliography}{10}

\bibitem{Alexandrov-AMPA1962}
A.~D. Alexandrov.
\newblock A characteristic property of spheres.
\newblock {\em Ann. Mat. Pura Appl. (4)}, 58:303--315, 1962.

\bibitem{Bang-thesis2021}
J. Bang.
\newblock {\em Stationary {N}avier-{S}tokes {E}quations in an {E}xterior
  {D}omain, and {S}ome {I}ntegral {I}dentities for {E}uler and
  {N}avier-{S}tokes {E}quations}.
\newblock ProQuest LLC, Ann Arbor, MI, 2021.
\newblock Thesis (Ph.D.)--Rutgers The State University of New Jersey, School of
  Graduate Studies.

\bibitem{barber1991observation}
B.~P. Barber and S.~J. Putterman.
\newblock Observation of synchronous picosecond sonoluminescence.
\newblock {\em Nature}, 352(6333):318--320, 1991.

\bibitem{barber1992light}
B.~P. Barber and S.~J. Putterman.
\newblock Light scattering measurements of the repetitive supersonic implosion
  of a sonoluminescing bubble.
\newblock {\em Phys. Rev. Lett.}, 69(26):3839, 1992.

\bibitem{barber1994sensitivity}
B.~P. Barber, C.~Wu, R.~L{\"o}fstedt, P.~H. Roberts, and S.~J. Putterman.
\newblock Sensitivity of sonoluminescence to experimental parameters.
\newblock {\em Phys. Rev. Lett.}, 72(9):1380, 1994.

\bibitem{bv-SIMA2000}
Z.~Biro and J.~J.~L. Velazquez.
\newblock Analysis of a free boundary problem arising in bubble dynamics.
\newblock {\em SIAM J. Math. Anal.}, 32(1):142--171, 2000.

\bibitem{Bramson-MAMS1983}
M.~Bramson.
\newblock Convergence of solutions of the {K}olmogorov equation to travelling
  waves.
\newblock {\em Mem. Amer. Math. Soc.}, 44(285):iv+190, 1983.

\bibitem{BLD-PRL1995}
M.~P. Brenner, D.~Lohse, and T.~F. Dupont.
\newblock Bubble shape oscillations and the onset of sonoluminescence.
\newblock {\em Phys. Rev. Lett.}, 75(5):954, 1995.

\bibitem{CIS-JFM2008}
M.~Calvisi, J.~Iloreta, and A.~Szeri.
\newblock Dynamics of bubbles near a rigid surface subjected to a lithotripter
  shock wave. Part 2. Reflected shock intensifies non-spherical cavitation
  collapse.
\newblock {\em J. Fluid Mech.}, 616:63--97, 2008.

\bibitem{Carr-book1981}
J.~Carr.
\newblock {\em Applications of centre manifold theory}, volume~35 of {\em
  Applied Mathematical Sciences}.
\newblock Springer-Verlag, New York-Berlin, 1981.

\bibitem{CL-book1955}
E.~A. Coddington and N.~Levinson.
\newblock {\em Theory of Ordinary Differential Equations}.
\newblock McGraw-Hill Book Co., Inc., New York-Toronto-London, 1955.

\bibitem{CTW-SIMA2013}
O.~Costin, S.~Tanveer, and M.~I. Weinstein.
\newblock The lifetime of shape oscillations of a bubble in an unbounded,
  inviscid, and compressible fluid with surface tension.
\newblock {\em SIAM J. Math. Anal.}, 45(5):2924--2936, 2013.

\bibitem{CR-ARFM2008}
C.~C. Coussios and R.~A. Roy.
\newblock Applications of acoustics and cavitation to noninvasive therapy and
  drug delivery.
\newblock {\em Annu. Rev. Fluid Mech.}, 40:395--420, 2008.

\bibitem{CLWB-JFM2013}
G.~Curtiss, D.~Leppinen, Q.~Wang, and J.~Blake.
\newblock Ultrasonic cavitation near a tissue layer.
\newblock {\em J. Fluid Mech.}, 730:245--272, 2013.

\bibitem{KE-1972}
D.~Epstein and J.~Keller.
\newblock Expansion and contraction of planar, cylinderical and spherical
  underwater gas bubbles.
\newblock {\em J. Acoust. Soc. Am.}, 52:975--980, 1972.

\bibitem{ES-JDE1998}
Joachim Escher and Gieri Simonett.
\newblock A center manifold analysis for the {M}ullins--{S}ekerka model.
\newblock {\em J. Differ. Equ.}, 143(2):267--292, 1998.

\bibitem{Feireisl-book2004}
E.~Feireisl.
\newblock {\em Dynamics of viscous compressible fluids}, volume~26 of {\em
  Oxford Lecture Series in Mathematics and its Applications}.
\newblock Oxford University Press, Oxford, 2004.

\bibitem{FL-ARFM1997}
Z.~C. Feng and L.~G. Leal.
\newblock Nonlinear bubble dynamics.
\newblock In {\em Annual review of fluid mechanics, {V}ol. 29}, volume~29 of
  {\em Annu. Rev. Fluid Mech.}, pages 201--243. Annual Reviews, Palo Alto, CA,
  1997.

\bibitem{FPB-ARBE2007}
K.~Ferrara, R.~Pollard, and M.~Borden.
\newblock Ultrasound microbubble contrast agents: fundamentals and application
  to gene and drug delivery.
\newblock {\em Annu. Rev. Biomed. Eng.}, 9:415--447, 2007.

\bibitem{FDFADL-EJMBF2018}
T.~Fourest, E.~Deletombe, V.~Faucher, M.~Arrigoni, J.~Dupas, and J.-M. Laurens.
\newblock Comparison of {K}eller-{M}iksis model and finite element bubble
  dynamics simulations in a confined medium. {A}pplication to the hydrodynamic
  ram.
\newblock {\em Eur. J. Mech. B Fluids}, 68:66--75, 2018.

\bibitem{FOSY-JMAA2015}
T.~Funaki, M.~Ohnawa, Y.~Suzuki, and S.~Yokoyama.
\newblock Existence and uniqueness of solutions to stochastic
  {R}ayleigh--{P}lesset equations.
\newblock {\em J. Math. Anal. Appl.}, 425(1):20--32, 2015.

\bibitem{Galdi-book1994}
G.~P. Galdi.
\newblock {\em An introduction to the mathematical theory of the
  {N}avier-{S}tokes equations. {V}ol. {II}}, volume~39 of {\em Springer Tracts
  in Natural Philosophy}.
\newblock Springer-Verlag, New York, 1994.
\newblock Nonlinear steady problems.

\bibitem{Gearhart-TAMS1978}
Larry Gearhart.
\newblock Spectral theory for contraction semigroups on {H}ilbert space.
\newblock {\em Trans. Am. Math. Soc.}, 236:385--394, 1978.

\bibitem{Goldsztein-SAM2004}
G.~H. Goldsztein.
\newblock Collapse and rebound of a gas bubble.
\newblock {\em Stud. Appl. Math.}, 112(2):101--132, 2004.

\bibitem{Goodman:86}
J.~Goodman.
\newblock Nonlinear asymptotic stability of viscous shock profiles for
  conservation laws.
\newblock {\em Arch. Rational Mech. Anal.}, 95(4):325--344, 1986.

\bibitem{IFS-JFM2008}
J.~Iloreta, N.~Fung, and A.~Szeri.
\newblock Dynamics of bubbles near a rigid surface subjected to a lithotripter
  shock wave. part 1. consequences of interference between incident and
  reflected waves.
\newblock {\em J. Fluid Mech.}, 616:43--61, 2008.

\bibitem{KK-JAP1956}
J.~B. Keller and I.~I. Kolodner.
\newblock Damping of underwater explosion bubble oscillations.
\newblock {\em J. Appl. Phys.}, 27(10):1152--1161, 1956.

\bibitem{KM-JASA1980}
J.~B. Keller and M.~Miksis.
\newblock Bubble oscillations of large amplitude.
\newblock {\em J. Acoust. Soc. Am.}, 68(2):628--633, 1980.

\bibitem{KFTKSCSZ-JFM2007}
E.~Klaseboer, S.~W. Fong, C.~K. Turangan, B.~C. Khoo, A.~J. Szeri, M.~L.
  Calvisi, G.~N. Sankin, and P.~Zhong.
\newblock Interaction of lithotripter shockwaves with single inertial
  cavitation bubbles.
\newblock {\em J. Fluid Mech.}, 593:33--56, 2007.

\bibitem{KS-JAP1944}
M.~Kornfeld and L.~Suvorov.
\newblock On the destructive action of cavitation.
\newblock {\em J. Appl. Phys.}, 15(6):495--506, 1944.

\bibitem{LSU-book1967}
O.~A. Lady\v{z}enskaja, V.~A. Solonnikov, and N.~N. Ural$'$ceva.
\newblock {\em Linear and quasi-linear equations of parabolic type}.
\newblock Izdat. ``Nauka'', Moscow, 1967.

\bibitem{LW-vbas-linear}
C.-C. Lai and M. I. Weinstein.
\newblock Thermal relaxation toward equilibrium and periodically pulsating spherical bubbles in an incompressible liquid.
\newblock {\em arXiv:2305.03569}.

\bibitem{Leighton-IJMPB2004}
T.~Leighton.
\newblock From seas to surgeries, from babbling brooks to baby scans: the
  acoustics of gas bubbles in liquids.
\newblock {\em Internat. J. Modern Phys. B}, 18(25):3267--3314, 2004.

\bibitem{Leighton-book2012}
T.~Leighton.
\newblock {\em The Acoustic Bubble}.
\newblock Academic Press, London (2012).

\bibitem{LFCMRHDW-Ultra2008}
T.~Leighton, F.~Fedele, A.~Coleman, C.~McCarthy, S.~Ryves, A.~Hurrell,
  A.~De~Stefano, and P.~White.
\newblock A passive acoustic device for real-time monitoring of the efficacy of
  shockwave lithotripsy treatment.
\newblock {\em Ultrasound Med. Biol.}, 34(10):1651--1665, 2008.

\bibitem{LTBW-PRSL2013}
T.~G. Leighton, C.~K. Turangan, A.~R. Jamaluddin, G.~J. Ball, and P.~R. White.
\newblock Prediction of far-field acoustic emissions from cavitation clouds
  during shock wave lithotripsy for development of a clinical device.
\newblock {\em Proc. R. Soc. Lond. Ser. A Math. Phys. Eng. Sci.},
  469(2150):20120538, 21, 2013.

\bibitem{LK-UQ2006}
T.~A. Leslie and J.~E. Kennedy.
\newblock High-intensity focused ultrasound principles, current uses, and
  potential for the future.
\newblock {\em Ultrasound Q.}, 22(4):263--272, 2006.


\bibitem{Liu-thesis2018}
W.~Liu.
\newblock {\em Bubble dynamics in a compressible viscous liquid}.
\newblock PhD thesis, University of Birmingham, 2018.

\bibitem{Longuet-JFM1989}
M.~S. Longuet-Higgins.
\newblock Monopole emission of sound by asymmetric bubble oscillations. {I}.
  {N}ormal modes.
\newblock {\em J. Fluid Mech.}, 201:525--541, 1989.

\bibitem{MB-book2002}
A.~J. Majda and A.~L. Bertozzi.
\newblock {\em Vorticity and incompressible flow}, volume~27 of {\em Cambridge
  Texts in Applied Mathematics}.
\newblock Cambridge University Press, Cambridge, 2002.

\bibitem{McKean-CPAM1983}
H.~P. McKean.
\newblock Stabilization of solutions of a caricature of the
  {F}itz{H}ugh-{N}agumo equation.
\newblock {\em Comm. Pure Appl. Math.}, 36(3):291--324, 1983.

\bibitem{McKean-CPAM1984}
H.~P. McKean.
\newblock Stabilization of solutions of a caricature of the
  {F}itz{H}ugh-{N}agumo equation. {II}.
\newblock {\em Commun. Pure Appl. Math.}, 37(3):299--301, 1984.

\bibitem{Novotny-book2004}
A.~Novotn\'{y} and I.~Stra\v{s}kraba.
\newblock {\em Introduction to the mathematical theory of compressible flow},
  volume~27 of {\em Oxford Lecture Series in Mathematics and its Applications}.
\newblock Oxford University Press, Oxford, 2004.

\bibitem{OAIDVDL-BJ2006}
C.-D. Ohl, M.~Arora, R.~Ikink, N.~De~Jong, M.~Versluis, M.~Delius, and
  D.~Lohse.
\newblock Sonoporation from jetting cavitation bubbles.
\newblock {\em Biophys. J.}, 91(11):4285--4295, 2006.

\bibitem{OS-MFDPF-2016}
M.~Ohnawa and Y.~Suzuki.
\newblock Mathematical and numerical analysis of the {R}ayleigh-{P}lesset and
  the {K}eller equations.
\newblock In {\em Mathematical fluid dynamics, present and future}, volume 183
  of {\em Springer Proc. Math. Stat.}, pages 159--180. Springer, Tokyo, 2016.
  
\bibitem{Plesset-JAM1949}
M.~Plesset.
\newblock The dynamics of cavitation bubbles.
\newblock {\em J. Appl. Mech.}, 16(3):277--282, 1949.

\bibitem{PC-JFM1971}
M.~S. Plesset and R.~B. Chapman.
\newblock Collapse of an initially spherical vapour cavity in the neighbourhood
  of a solid boundary.
\newblock {\em J. Fluid Mech.}, 47(2):283--290, 1971.

\bibitem{PP-ANFM1977}
M.~S. Plesset and A.~Prosperetti.
\newblock Bubble dynamics and cavitation.
\newblock {\em Annu. Rev. Fluid Mech.}, 9(1):145--185, 1977.

\bibitem{Prosperetti-JFM1991}
A.~Prosperetti.
\newblock The thermal behaviour of oscillating gas bubbles.
\newblock {\em J. Fluid Mech.}, 222:587--616, 1991.

\bibitem{Prosperetti-PF2004}
A.~Prosperetti.
\newblock Bubbles.
\newblock {\em Phys. Fluids}, 16(6):1852--1865, 2004.

\bibitem{Pruss-TAMS1984}
Jan Pr\"{u}ss.
\newblock On the spectrum of {$C_{0}$}-semigroups.
\newblock {\em Trans. Am. Math. Soc.}, 284(2):847--857, 1984.

\bibitem{PR-PRL1998}
S.~J. Putterman and P.~H. Roberts.
\newblock Comment on ``bubble shape oscillations and the onset of
  sonoluminescence''.
\newblock {\em Phys. Rev. Lett.}, 80:3666--3667, Apr 1998.

\bibitem{Rayleigh-London1917}
L.~Rayleigh.
\newblock Viii. On the pressure developed in a liquid during the collapse of a
  spherical cavity.
\newblock {\em Lond. Edinb. Dublin Philos. Mag. J. Sci.}, 34(200):94--98, 1917.

\bibitem{RG-JGRSE1999}
M.~Ripepe and E.~Gordeev.
\newblock Gas bubble dynamics model for shallow volcanic tremor at stromboli.
\newblock {\em J. Geophys. Res. Solid Earth}, 104(B5):10639--10654, 1999.

\bibitem{RHIWFC-TJU2006}
W.~W. Roberts, T.~L. Hall, K.~Ives, J.~S. Wolf, J.~B. Fowlkes, and C.~A. Cain.
\newblock Pulsed cavitational ultrasound: a noninvasive technology for
  controlled tissue ablation (histotripsy) in the rabbit kidney.
\newblock {\em J. Urol.}, 175(2):734--738, 2006.

\bibitem{SW-SIMA2011}
A.~M. Shapiro and M.~I. Weinstein.
\newblock Radiative decay of bubble oscillations in a compressible fluid.
\newblock {\em SIAM J. Math. Anal.}, 43(2):828--876, 2011.

\bibitem{SW-JFM2018}
W.~R. Smith and Q.~Wang.
\newblock Radiative decay of the nonlinear oscillations of an adiabatic
  spherical bubble at small {M}ach number.
\newblock {\em J. Fluid Mech.}, 837:1--18, 2018.

\bibitem{SHLC-JAP2004}
W.~Song, M.~Hong, B.~Lukyanchuk, and T.~Chong.
\newblock Laser-induced cavitation bubbles for cleaning of solid surfaces.
\newblock {\em J. Appl. Phys.}, 95(6):2952--2956, 2004.

\bibitem{Suslick-Science1990}
K.~S. Suslick.
\newblock Sonochemistry.
\newblock {\em Science}, 247(4949):1439--1445, 1990.

\bibitem{TS-BJSME1977}
Y.~Tomita and A.~Shima.
\newblock On the behavior of a spherical bubble and the impulse pressure in a
  viscous compressible liquid.
\newblock {\em Bull. JSME}, 20(149):1453--1460, 1977.



\bibitem{VanGorder-JFM2016}
R.~A. Van~Gorder.
\newblock Dynamics of the {R}ayleigh--{P}lesset equation modelling a gas-filled
  bubble immersed in an incompressible fluid.
\newblock {\em J. Fluid Mech.}, 807:478--508, 2016.

\bibitem{Vokurka-AAUA1986}
K.~Vokurka.
\newblock Comparison of {R}ayleigh's, {H}erring's, and {G}ilmore's models of
  gas bubbles.
\newblock {\em Acta Acust. United Acust.}, 59(3):214--219, 1986.

\bibitem{Winkler-ARMA2014}
M.~Winkler.
\newblock Stabilization in a two-dimensional chemotaxis-{N}avier--{S}tokes
  system.
\newblock {\em Arch. Ration. Mech. Anal.}, 211(2):455--487, 2014.

\bibitem{XA-PF2007}
J.~Xu and D.~Attinger.
\newblock Acoustic excitation of superharmonic capillary waves on a meniscus in
  a planar microgeometry.
\newblock {\em Phys. Fluids}, 19(10):108107, 2007.

\bibitem{ZL-JH2012}
Y.-n. Zhang and S.-c. Li.
\newblock Effects of liquid compressibility on radial oscillations of gas
  bubbles in liquids.
\newblock {\em J. Hydrodyn.}, 24(5):760--766, 2012.

\bibitem{ZP-JFM2020}
G.~Zhou and A.~Prosperetti.
\newblock Modelling the thermal behaviour of gas bubbles.
\newblock {\em J. Fluid Mech.}, 901:R3, 15, 2020.

\end{thebibliography}
\end{document}